\numberwithin{equation}{section}
\theoremstyle{plain}
\newtheorem{theorem}{Theorem}[section]
\newtheorem{lemma}{Lemma}[section]
\newtheorem{proposition}{Proposition}[section]
\theoremstyle{remark}
\newtheorem{remark}{Remark}[section]
\newtheorem{definition}{Definition}[section]
\newtheorem{Assumption}{Assumption}[section]
\newcommand{\KL}{{\mathrm{D}_{\mathrm{KL}}}}
\newcommand{\IA}{I^A}
\newcommand{\NellA}{\widetilde{N}^A_{\ell}}
\newcommand{\muA}{\mu^A}
\newcommand{\fA}{f^A}
\newcommand{\FA}{F^A}
\newcommand{\muAM}{\mu^{\pmb{\mathscr{A}}}}
\newcommand{\IAM}{I^{\pmb{\mathscr{A}}}}
\newcommand{\NAM}{\widetilde{N}^{\pmb{\mathscr{A}}}}
\newcommand{\fAM}{f^{\pmb{\mathscr{A}}}}
\newcommand{\FAM}{F^{\pmb{\mathscr{A}}}}
\newcommand{\HI}[5]{H^{(\mathrm{I})}_{#1,#2;#3,#4}(#5)}
\newcommand{\HIM}
{H^{(\mathrm{I})}}
\newcommand{\NI}[1]{\mathcal{N}^{(\mathrm{I})}(#1)}
\newcommand{\NIM}{\mathcal{N}^{(\mathrm{I})}}
\newcommand{\HII}[6]{H^{(\mathrm{II})}_{#1,#2;#3,#4}(#5,#6)}
\newcommand{\HIIM}{H^{(\mathrm{II})}}
\newcommand{\NII}[2]{\mathcal{N}^{(\mathrm{II})}(#1,#2)}
\newcommand{\NIIM}{\mathcal{N}^{(\mathrm{II})}}
\newcommand{\UIM}{U^{(0)}}
\newcommand{\UIIM}{U^{(1)}}
\newcommand{\UI}[2]{U^{(0)}(#1;#2)}
\newcommand{\UII}[3]{U^{(1)}(#1;#2,#3)}
\newcommand{\UIsubscr}[4]{U^{(0)}_{#1,#2}(#3;#4)}
\newcommand{\UIIsubscr}[5]{U^{(1)}_{#1,#2}(#3;#4,#5)}
\newcommand{\Dtilde}{\mathscr{D}}
\newcommand{\ctilde}{e}
\begin{document}

\begin{frontmatter}
\title{Variational Inference for Latent Variable Models in High Dimensions}
\runtitle{Variational Inference for Latent Variable Models in High Dimensions}

\begin{aug}
\author[A]{\fnms{Chenyang}~\snm{Zhong}\ead[label=e1]{cz2755@columbia.edu}},
\author[A]{\fnms{Sumit}~\snm{Mukherjee}\thanksref{t1}\ead[label=e2]{sm3949@columbia.edu}}
\and
\author[A]{\fnms{Bodhisattva}~\snm{Sen}\thanksref{t2}\ead[label=e3]{b.sen@columbia.edu}}

\thankstext{t1}{Supported by NSF grant DMS-2113414.}
\thankstext{t2}{Supported by NSF grants DMS-2311062 and DMS-2515520.}
\address[A]{Department of Statistics, Columbia University\printead[presep={,\ }]{e1,e2,e3}}

\end{aug}

\begin{abstract}
Variational inference (VI) is a popular method for approximating intractable posterior distributions in Bayesian inference and probabilistic machine learning. In this paper, we introduce a general framework for quantifying the statistical accuracy of mean-field variational inference (MFVI) for posterior approximation in Bayesian latent variable models with categorical local latent variables (and arbitrary global latent variables). Utilizing our general framework, we capture the exact regime where MFVI `works' for the celebrated latent Dirichlet allocation  model. Focusing on the mixed membership stochastic blockmodel, we show that the vanilla fully factorized MFVI, often used in the literature, is suboptimal. We propose a partially grouped VI algorithm for this model and show that it works, and derive its exact finite-sample performance. We further illustrate that our bounds are tight for both the above models. Our proof techniques, which extend the framework of nonlinear large deviations, open the door for the analysis of MFVI in other latent variable models.
\end{abstract}

\begin{keyword}[class=MSC]
\kwd[Primary ]{62F15}
\kwd{62C10}
\kwd[; secondary ]{60F10}
\end{keyword}

\begin{keyword}
\kwd{Collapsed posterior}
\kwd{hierarchical Bayes}
\kwd{latent Dirichlet allocation (LDA)}
\kwd{mean-field approximation}
\kwd{mixed membership models}
\kwd{nonlinear large deviations}
\kwd{partially grouped variational inference}
\end{keyword}

\end{frontmatter}

\section{Introduction}\label{Sect.1}

Bayesian latent variable models are ubiquitous in Bayesian statistics and probabilistic machine learning. In modern applications, these models typically involve a large number of parameters and latent variables, resulting in complex and high-dimensional posteriors that are computationally intractable. For such scenarios, traditional Markov chain Monte Carlo (MCMC) approaches often suffer from lengthy burn-in periods and generally lack scalability \cite{blei2017variational}. 

Recently, variational inference (VI) \cite{jordan1999introduction, bishop2006pattern, wainwright2008graphical, blei2017variational} has emerged as a popular and scalable alternative method for approximating intractable posterior distributions in large-scale applications (where the number of observations and dimensionality are both large) and is typically orders of magnitude faster than MCMC methods. Among the various forms of VI, arguably the most widely used and important is mean-field variational inference (MFVI) \cite{wainwright2008graphical, blei2017variational}, which approximates the intractable posterior by a product distribution. This approach has been widely adopted in statistics and machine learning, thanks to efficient algorithmic implementations based on coordinate ascent variational inference (CAVI) \cite{bishop2006pattern, blei2017variational,durante2019conditionally, bhattacharya2023convergence, arnese2024convergence, lavenant2024convergence, castillo2024variational, kim2024flexible}. 

Despite its empirical success, the statistical accuracy of MFVI in approximating the true posterior distribution remains to be understood. In this paper, we introduce a general framework for analyzing the accuracy of MFVI in Bayesian latent variable models. Our general results, presented in Section \ref{Sect.2}, provide tight explicit finite-sample bounds on the discrepancy between the variational posterior obtained from MFVI and the true posterior. To the best of our knowledge, these are the first results of such kind in the literature. In our results, we allow the relevant parameter\slash latent variable dimension to grow with the sample size, thereby ensuring the applicability of our results to high-dimensional settings.  

To illustrate the effectiveness of our general framework, we apply the theory developed to two representative and widely used (hierarchical) Bayesian latent variable models: the \textit{latent Dirichlet allocation model} and the \textit{mixed membership stochastic blockmodel}. We briefly introduce these two models next. \vspace{0.05in}

\noindent{\bf{Latent Dirichlet allocation model.}} Latent Dirichlet allocation (LDA) is a probabilistic generative model proposed in \cite{pritchard2000inference, falush2003inference} in the context of population genetics and was used in \cite{blei2003latent} to model text corpora. In the context of topic modeling, the goal of LDA is to discover hidden thematic structures (topics) within a large collection of text documents while assuming that each document is composed of multiple (latent) topics in varying proportions. The model posits that each word's presence in a document can be attributed to one of the document's constituent topics. Given a collection of documents, LDA aims to infer both the topic distributions for each document and the topics that generate each word in each document. In the last 20 years, various extensions of this model have been introduced, including correlated topic models \cite{blei2005correlated} and collaborative topic models \cite{wang2011collaborative}; see \cite{vayansky2020review} for a comprehensive survey of generalizations of LDA. In addition to population genetics and topic modeling, LDA and its generalizations have also been applied to various other fields, including musicology \cite{lieck2020tonal}, epidemiology \cite{parker2023results}, and social science \cite{laureate2023systematic}. We refer to \cite{bing2022likelihood, ke2024using} and the references therein for frequentist approaches to topic modeling. \vspace{0.05in}

\noindent{\bf{Mixed mebership stochastic blockmodel.}} The mixed membership stochastic blockmodel (MMSB), introduced in \cite{airoldi2008mixed}, is a probabilistic model for analyzing pairwise measurements and inferring hidden attributes of individuals in networks. Unlike the stochastic blockmodel \cite{wang1987stochastic,snijders1997estimation}, which assigns each individual to a single group, MMSB introduces a membership probability-like vector for each individual and allows the individual to belong to multiple groups or communities. MMSB has been widely employed in network inference tasks, including applications in social networks \cite{xing2010state, snijders2011statistical}, protein interaction networks \cite{airoldi2006mixed,wang2022ppisb}, and gene regulatory networks \cite{xing2010state}. For extensions of MMSB and related models, we refer to \cite[Section 2.4]{abbe2018community} and \cite{song2024survival}. Additionally, frequentist approaches to latent community detection can be found in \cite{LR, zhang2016minimax, abbe2018community, gao2018community} and the references therein.

\subsection{General setup for Bayesian latent variable models and mean-field variational inference}\label{Sect.1.1}

Let $m,n\in\mathbb{N}^{*}:=\{1,2,3,\cdots\}$ and $d_1,d_2,\cdots,d_m,K_1,K_2,\cdots,K_n\in\mathbb{N}^{*}$. In Bayesian latent variable models, we assume a group of \emph{global latent variables} $\pmb{\theta}\equiv (\pmb{\theta}_1,\pmb{\theta}_2,\cdots,\pmb{\theta}_m)$ and a group of \emph{local latent variables} $\mathbf{Z}\equiv (Z_1,Z_2,\cdots,Z_n)$, where $\pmb{\theta}_j\in\mathbb{R}^{d_j}$ for each $j\in[m]:=\{1,2,\cdots,m\}$ and $Z_i\in [K_i]$ for each $i\in[n]$. Here and henceforth, we will use bold symbols to denote vectors and matrices that are random. For notational simplicity, we use the same bold symbols to denote random quantities and dummy variables.

In this paper, we consider the case where the local latent variables are {\it categorical}. The joint distribution $\mathbb{P}(\pmb{\theta},\mathbf{Z})$ of the latent variables $(\pmb{\theta},\mathbf{Z})$ is referred to as the {\it prior distribution}, and we denote by $\mathbb{p}(\pmb{\theta},\mathbf{Z})$ the density of $\mathbb{P}(\pmb{\theta},\mathbf{Z})$ with respect to (w.r.t.) a fixed product base measure $\mathcal{M}\equiv \prod_{j=1}^m \mathcal{M}_{g,j}\otimes\prod_{i=1}^n\mathcal{M}_{l,i}$ 
on $\prod_{j=1}^m\mathbb{R}^{d_j}\times\prod_{i=1}^n[K_i]$. The conditional density of the observed data $\mathbf{X}$ (w.r.t.~a fixed base measure on the data space), given the latent variables $(\pmb{\theta},\mathbf{Z})$, is given by the likelihood function $L(\mathbf{X}\mid\pmb{\theta},\mathbf{Z})$. 

The central object in Bayesian inference and prediction is the \emph{posterior distribution} $\mathbb{P}(\pmb{\theta},\mathbf{Z}\mid\mathbf{X})$, which is the conditional distribution of the latent variables $(\pmb{\theta},\mathbf{Z})$ given the data $\mathbf{X}$. The density of the posterior distribution is given, in terms of the prior density and the likelihood function, by
\begin{equation*}
   \mathbb{p}(\pmb{\theta},\mathbf{Z}\mid\mathbf{X})
   \propto  \, \mathbb{p}(\pmb{\theta},\mathbf{Z})L(\mathbf{X}\mid\pmb{\theta},\mathbf{Z}).
\end{equation*}

\emph{Mean-field variational inference} (MFVI) seeks to approximate the target posterior distribution $\mathbb{P}(\pmb{\theta},\mathbf{Z}\mid\mathbf{X})$ by a mean-field distribution, that is, a fully factorized distribution. For any two probability distributions $P_1$ and $P_2$ defined on the same measurable space, we denote by $\KL(P_1\,\|\;P_2)$ the Kullback-Leibler (KL) divergence from $P_2$ to $P_1$:
\begin{equation*}
    \KL(P_1\,\|\;P_2):=\mathbb{E}_{P_1}\bigg[\log\bigg(\frac{dP_1}{dP_2}\bigg)\bigg],\qquad \text{ if } P_1\ll P_2,
\end{equation*}
and $\KL(P_1\,\|\;P_2):= +\infty$ otherwise; here $\frac{dP_1}{dP_2}$ is the Radon-Nikodym derivative of $P_1$ w.r.t.~$P_2$. Let $\mathcal{P}$ denote the family of mean-field distributions on $(\pmb{\theta},\mathbf{Z})$; i.e., $P\in\mathcal{P}$ if and only if the density of $P$ (w.r.t.~the product base measure $\mathcal{M}$) has the form $\prod_{j=1}^m \xi_j(\pmb{\theta}_j)\cdot \prod_{i=1}^n \zeta_i(Z_i)$, where $\xi_j$ is a density on $\mathbb{R}^{d_j}$ w.r.t. $\mathcal{M}_{g,j}$ for each $j\in[m]$, and $\zeta_i$ is a density on $[K_i]$ w.r.t. $\mathcal{M}_{l,i}$ for each $i\in[n]$. 

MFVI finds a distribution $\hat{P} \in \mathcal{P}$---called a {\it variational posterior}---that is closest in KL divergence to the posterior distribution $\mathbb{P}(\pmb{\theta},\mathbf{Z}\mid\mathbf{X})$, i.e.,
\begin{equation}\label{vari3.1}
\hat{P}\,\in\,\arg \min_{P\in\mathcal{P}} \KL \big(P\, \big \|\; \mathbb{P}(\pmb{\theta},\mathbf{Z}\mid\mathbf{X}) \big),
\end{equation}
where we assume that the minimum is attained. In both our applications (LDA and MMSB) the class of product probability measures $\mathcal{P}$ is supported on a compact set ensuring that minimizers exist (since the KL divergence is lower semicontinuous). It is worth noting that while optimization over the space of probability measures $\mathcal{P}$ may seem formidable, in many cases,~\eqref{vari3.1} reduces to a finite-dimensional optimization problem over Euclidean spaces (see e.g.,~\cite[Section 4.1]{blei2017variational}).

Besides the aforementioned fully factorized MFVI, a variant called \emph{collapsed variational inference} (collapsed VI) is also widely used in the literature \cite{teh2006collapsed, foulds2013stochastic}. In collapsed VI, one considers the family $\mathcal{Q}$ of mean-field (i.e., product) distributions on the local latent variables $\mathbf{Z}$, i.e., $Q\in\mathcal{Q}$ if and only if the density of $Q$ (w.r.t.~the marginal product base measure $\prod_{i=1}^n\mathcal{M}_{l,i}$) has the form $\prod_{i=1}^{n} \zeta_i(Z_i)$, where $\zeta_i$ is a density on $[K_i]$ for each $i\in[n]$. We then approximate the marginal (``collapsed'') posterior distribution $\mathbb{P}(\mathbf{Z}\mid\mathbf{X})$ by a distribution $\hat{Q}$ in $\mathcal{Q}$ that is closest in KL divergence to the true collapsed posterior, i.e.,
\begin{equation}\label{vari3.2}
  \hat{Q}\,\in\,\arg\min_{Q\in\mathcal{Q}} \KL\big(Q\,\big\|\;\mathbb{P}(\mathbf{Z}\mid\mathbf{X})\big).
\end{equation}

In this paper, we investigate the accuracy of the variational posteriors $\hat{P}$ and $\hat{Q}$ in approximating the full and collapsed posteriors, respectively. In Section \ref{Sect.2} (Theorems~\ref{Theorem2.1} and \ref{Theorem2.2}), we quantify this discrepancy by deriving explicit tight bounds on the KL divergences
$\KL \big(\hat{P}\,  \|\; \mathbb{P}(\pmb{\theta},\mathbf{Z}\mid\mathbf{X}) \big)$ and $\KL\big(\hat{Q}\, \|\;\mathbb{P}(\mathbf{Z}\mid\mathbf{X})\big)$ for general Bayesian latent variable models with categorical local latent variables where the conditional distribution of $\pmb{\theta}$ given $\mathbf{Z}$ and $\mathbf{X}$ factorizes (see Assumption~\ref{Assump:Cond-ind}). Given the prevalence of intractable posterior distributions in modern Bayesian statistics and probabilistic machine learning, as well as the widespread use of VI in these settings, our results have important implications for downstream inference and prediction tasks.

For both LDA and MMSB, our general results characterize the exact regime for the validity of VI. Moreover, we derive \emph{tight} non-asymptotic upper and lower bounds on the approximation accuracy of VI in these models (see Sections \ref{Sect.1.2} and \ref{Sect.1.3} for details). Notably, our results remain robust under model misspecification, when the  observed data $\mathbf{X}$ is not generated from the assumed model. Additionally, our results are applicable in the high-dimensional setting, encompassing both sparse and dense parameter regimes, and hold uniformly across all choices of hyperparameters, under only one assumption on the model parameters (see~\eqref{conditionalpha} for LDA and~\eqref{conditionalpha2} for MMSB).

Our results also have direct implications on the mean field approximation of the {\it log-marginal density\slash log evidence}\footnote{In the machine learning literature, $\mathbb{p}(\mathbf{X})$ is usually referred to as the~\textit{evidence}.} $\log \mathbb{p}(\mathbf{X})$ of the data $\mathbf{X}$---the {\it log-partition function\slash log-normalizing constant} for the posterior distribution $\mathbb{P}(\pmb{\theta},\mathbf{Z}\mid\mathbf{X})$. Notice that, for any $P \in \mathcal{P}$ with density $p(\pmb{\theta},\mathbf{Z})$ (w.r.t.~the product base measure $\mathcal{M}$), 
one can easily deduce that 
\begin{equation}\label{eq:VI-ELBO}
\KL \big(P\, \big \|\; \mathbb{P}(\pmb{\theta},\mathbf{Z}\mid\mathbf{X}) \big) = \log \mathbb{p}(\mathbf{X}) - \mathrm{ELBO}(P) \ge 0,
\end{equation}
where $\mathrm{ELBO}(P)$ is called the \textit{evidence lower bound} (ELBO)\footnote{The ELBO plays an important role in the computation of VI: it is an efficiently computable lower bound on the log evidence.} at $P$ and is defined as
\begin{equation}\label{ELBO_def}
   \mathrm{ELBO}(P):=\mathbb{E}_P[\log \mathbb{p}(\pmb{\theta},\mathbf{Z},\mathbf{X})]-\mathbb{E}_P[\log p(\pmb{\theta},\mathbf{Z})], \qquad \mbox{ for all } \;\; P\in\mathcal{P};
\end{equation}
here $\mathbb{p}(\pmb{\theta},\mathbf{Z},\mathbf{X}):=\mathbb{p}(\pmb{\theta},\mathbf{Z})L(\mathbf{X}\mid\pmb{\theta},\mathbf{Z})$ is the complete likelihood of the latent variables and the data. Thus, from~\eqref{eq:VI-ELBO}, the minimization problem in~\eqref{vari3.1} is equivalent to maximizing  ELBO$(P)$ in~\eqref{ELBO_def} over $\mathcal{P}$. Moreover, as\footnote{This viewpoint is crucial in the development of the CAVI algorithm.}  $$\KL \big(\hat{P}\, \big \|\; \mathbb{P}(\pmb{\theta},\mathbf{Z}\mid\mathbf{X}) \big) \approx 0 \qquad \Leftrightarrow \qquad \log \mathbb{p}(\mathbf{X}) \approx \mathrm{ELBO}(\hat{P}),$$ 
our main results immediately imply tight non-asymptotic lower and upper bounds on the log evidence. Note that, due to its connection to the log evidence, the ELBO has been widely used as a model selection criterion (see the discussion in \cite[Section 2.2]{blei2017variational} and the references therein). 

The rest of the Introduction is organized as follows. Sections \ref{Sect.1.2} and \ref{Sect.1.3} present detailed introductions to LDA and MMSB, as well as our main results on the statistical accuracy of VI in these two models. The main contributions of this paper are summarized in Section \ref{Sect.1.4}. Section \ref{Sect.1.5} discusses related work, while Section~\ref{Sect.1.6} outlines the structure of the paper.

\subsection{Latent Dirichlet allocation (LDA)}\label{Sect.1.2}

In this subsection, we discuss LDA~\cite{pritchard2000inference, blei2003latent, falush2003inference}, which is a widely used probabilistic model for topic modeling. 

Assume that there are in total $D\in \mathbb{N}^{*}$ documents and that the vocabulary set is $[V]$ with $V\in\mathbb{N}^{*}$. For every $d\in [D]$, let $n_d\in\mathbb{N}^{*}$ be the number of words in the $d$th document. We let $n :=\sum_{d=1}^D  n_d $ be the total number of words. We assume that there are $K \in\mathbb{N}^{*}$ latent topics.

We fix $\pmb{\alpha}\equiv(\alpha_1,\alpha_2,\cdots,\alpha_K) \in (0,\infty)^K$ and a $K\times V$ matrix $\pmb{\eta} \equiv (\eta_{\ell,r})_{\ell\in [K], r\in [V]}$ such that $\eta_{\ell,r}\geq 0$ for every $\ell\in [K]$ and $ r\in [V]$, and $\sum_{r=1}^V \eta_{\ell,r}=1$ for every $\ell\in [K]$. Here $\pmb{\alpha}$ and $\pmb{\eta}$ are treated as hyperparameters. For each $\ell\in [K]$, $\alpha_{\ell}$ encapsulates the proportion of topic $\ell$ in the documents. For each $\ell\in [K]$ and $r\in [V]$, $\eta_{\ell,r}$ represents the probability of picking the $r$th vocabulary from the $\ell$th topic for a word.

For each document $d\in [D]$, we independently perform the following steps. We first sample a $K$-dimensional probability vector $\pmb{\pi}_d \equiv (\pi_{d,1},\cdots,\pi_{d,K})\sim \text{Dir}(\alpha_1,\alpha_2,\cdots,\alpha_K)$, where by $\text{Dir}(\alpha_1,\alpha_2,\cdots,\alpha_K)$ we mean the Dirichlet distribution of order $K$ with parameters $\alpha_1,\cdots,\alpha_K>0$. For each $d\in [D]$, $\pmb{\pi}_d$ gives the proportion of the $K$ topics in the $d$th document. For each $i\in [n_d]$, we sample a topic $Z_{d,i} \mid \pmb{\pi}_d \stackrel {i.i.d.} \sim \text{Cat}(\pmb{\pi}_d)$, where $\text{Cat}(\pmb{\pi}_d)$ denotes the categorical distribution with parameter $\pmb{\pi}_d$. Here, $Z_{d,i} \in [K]$ denotes the latent topic from which the $i$th word is generated in the $d$th document. Now, for every $i\in [n_d]$, we sample the $i$th word in the $d$th document as: 
\begin{eqnarray*}
   (X_{d,i} \mid Z_{d,i} = \ell) \stackrel{ind}{\sim} \text{Cat}(\pmb{\eta}_\ell),
\end{eqnarray*}
where by $\pmb{\eta}_\ell$ we mean the $\ell$th row of the matrix $\pmb \eta$. The observations are $\mathbf{X}:=(X_{d,i})_{d\in [D],i\in [n_d]}$. The local latent variables are $\mathbf{Z}:=(Z_{d,i})_{d\in [D], i\in [n_d]} \in [K]^n$, whereas the global latent variables are $\{\pmb{\pi}_d\}_{d\in[D]}$.

The full and collapsed posteriors in LDA can be derived as follows. For any $d\in [D],\ell\in [K]$, we let $N_{d,\ell}$ be the number of words in the $d$th document originating from topic $\ell$, i.e., 
\begin{equation}\label{defNdl}
    N_{d,\ell}:=|\{i\in [n_d]: Z_{d,i}=\ell\}|.
\end{equation}
For any $d\in [D], \ell\in [K], r\in [V]$, we let $N_{d,\ell,r}$ be the number of times the $r$th vocabulary word appears from topic $\ell$ in the $d$th document, i.e.,
\begin{equation}\label{defNdlr}
    N_{d,\ell,r}:=|\{i\in [n_d]: Z_{d,i}=\ell, X_{d,i}=r\}|.
\end{equation}
From the above generation process, a simple calculation shows that the posterior distribution of the latent variables given the data $\mathbf{X}$ is
\begin{equation}\label{Eq3.1}
    \mathbb{P} \left(\mathbf{Z},\{\pmb{\pi}_d\}_{d\in [D]}\mid\mathbf{X} \right) 
   \propto \prod_{d=1}^D \prod_{\ell=1}^K \pi_{d,\ell}^{N_{d,\ell}+\alpha_{\ell}-1} \cdot \prod_{d=1}^D \prod_{\ell=1}^K \prod_{r=1}^V \eta_{\ell,r}^{N_{d,\ell,r}}.
\end{equation}
Integrating out $\{\pmb{\pi}_d\}_{d\in [D]}$ in (\ref{Eq3.1}), we obtain the collapsed posterior:
\begin{eqnarray}\label{Eq3.2}
    \mathbb{P}(\mathbf{Z}\mid\mathbf{X})\propto \prod_{d=1}^D\prod_{\ell=1}^K\prod_{r=1}^V \eta_{\ell,r}^{N_{d,\ell,r}}\cdot \prod_{d=1}^D \frac{\prod_{\ell=1}^K \Gamma(N_{d,\ell}+\alpha_{\ell})}{\Gamma\Big(n_d+\sum_{\ell=1}^K \alpha_{\ell}\Big)} .
\end{eqnarray}

In applications of LDA, the full posterior $\mathbb{P}(\mathbf{Z},\{\pmb{\pi}_d\}_{d\in [D]}\mid\mathbf{X})$ and the collapsed posterior 
$\mathbb{P}(\mathbf{Z}\mid\mathbf{X})$ play a crucial role, as they capture key latent information---the topic proportions in each document and the topic assignments for each word. However, as expressed in~\eqref{Eq3.1} and~\eqref{Eq3.2}, these posteriors are complex distributions with intractable normalizing constants. To analyze and approximate these effectively, VI has become an indispensable tool.

\subsubsection{Validity of mean-field variational inference (MFVI)}

In the following, we establish the validity of MFVI for LDA. Let $\hat{P}$ and $\hat{Q}$ denote the full and collapsed variational posteriors, respectively, as defined in (\ref{vari3.1}) and (\ref{vari3.2}), for the LDA model. The following result, proved in Section \ref{Sect.3}, provides a non-asymptotic upper bound on the KL divergence between the variational and true posteriors. In this result, we allow the variables $D$, $K$, $V$, and $n$ to grow arbitrarily, and impose no conditions whatsoever; we only require that the hyperparameters $\{\alpha_{\ell}\}_{\ell\in [K]}$ are uniformly bounded. Since the number of latent variables in the collapsed posterior $\mathbb{P}(\mathbf{Z}\mid\mathbf{X})$ is $n$, we normalize the KL divergence between the variational and true posteriors by $n$. 

\begin{theorem}\label{Theorem_LDA_UBD}
Suppose that for each $r\in [V]$, there exists at least one $\ell\in [K]$ such that $\eta_{\ell,r}>0$.\footnote{Otherwise $r \in [V]$ can be removed from the vocabulary.} Assume that
\begin{equation}\label{conditionalpha}
    \alpha_{\ell}\in (0,C_0],\quad\text{ for every }\ell\in [K],
\end{equation}
where $C_0>0$ is fixed and independent of $n$. Then,
\begin{enumerate}
    \item[(a)] for the full posterior we have \begin{equation*}\label{upper1.3}
    \frac{1}{n} \sup_{\mathbf{X}\in [V]^n} \KL\big(\hat{P} \,\big\| \; \mathbb{P}(\mathbf{Z},\{\pmb{\pi}_d\}_{d\in [D]}\mid\mathbf{X}) \big)\leq  \frac{CDK}{n}\log\Big(\frac{n}{DK}+2\Big);
\end{equation*}
    \item[(b)] for the collapsed posterior we have 
    \begin{equation*}\label{upper1.4}
    \frac{1}{n} \sup_{\mathbf{X}\in [V]^n} \KL\big(\hat{Q} \,\big\| \; \mathbb{P}(\mathbf{Z}\mid\mathbf{X}) \big)\leq \frac{CDK}{n}\log\Big(\frac{n}{DK}+2\Big),
\end{equation*}
\end{enumerate}
where $C$ is a positive constant that only depends on $C_0$.
\end{theorem}

Under the assumptions in Theorem \ref{Theorem_LDA_UBD}, if $DK=o(n)$, the right-hand sides of (a) and (b) converge to $0$ as $n\rightarrow\infty$. Thus, the variational posteriors $\hat{P}$ and $\hat{Q}$ give consistent approximations to the true and collapsed posteriors throughout the regime $DK=o(n)$. 

Theorem \ref{Theorem_LDA_UBD} holds uniformly across all choices of hyperparameters (under mild conditions); in particular, this allows for arbitrarily small values of $\alpha_{\ell}$ and $\eta_{\ell,r}$ (where $\ell\in [K]$ and $r\in [V]$). These parameters are even allowed to be vanishing at arbitrary rates as $n$ increases (such parameters would yield sparse topic vectors and sparse topic proportions). Therefore, our results cover both sparse and dense regimes. We also do not impose any restriction on the observed data $\mathbf{X}$, which allows us to conclude that MFVI is valid even under model misspecification (i.e., when $\mathbf{X}$ is not generated from the assumed model). Using~\eqref{eq:VI-ELBO}, Theorem~\ref{Theorem_LDA_UBD} immediately implies that the gap between the log evidence $\log \mathbb{p}(\mathbf{X})$ and the supremum of the ELBO over $\mathcal{P}$ (recall~\eqref{vari3.1} and~\eqref{ELBO_def}) is upper bounded by $CDK\log\big(\frac{n}{DK}+2\big)$. 

As will be demonstrated in Theorem \ref{Theorem_LDA_LBD} (proved in Appendix \ref{Appendix_C}) below, the condition $DK=o(n)$ is indeed necessary for both the full and collapsed variational posteriors to be valid. Therefore, Theorem \ref{Theorem_LDA_UBD} covers the exact parameter regime where MFVI is valid. 

\begin{theorem}\label{Theorem_LDA_LBD}
Assume that $K\geq 2$, $\alpha_{\ell}=1\slash 2$ for all $\ell\in[K]$, $\eta_{\ell,r}=V^{-1}$ for all $\ell\in [K], r\in [V]$, and $n_1=\cdots = n_D$. Then there exists a positive absolute constant $c_0$, such that when $DK\leq c_0 n$, for any $\mathbf{X}\in [V]^n$, we have
\begin{equation*}
   \frac{1}{n}\KL\big(\hat{P} \,\big\| \; \mathbb{P}(\mathbf{Z},\{\pmb{\pi}_d\}_{d\in [D]}\mid\mathbf{X}) \big)\geq  \frac{1}{n} 
 \KL\big(\hat{Q}\,\big\|\;\mathbb{P}(\mathbf{Z}\mid\mathbf{X})\big)\geq \frac{DK}{5n}\log\Big(\frac{n}{DK}+2\Big).
\end{equation*}
\end{theorem}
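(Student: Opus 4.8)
The plan is to prove the two displayed inequalities separately, reducing everything to a single-document computation. For the \emph{first} inequality, write the full variational optimum as $\hat P(\mathbf{Z},\{\pmb{\pi}_d\}) = \prod_{d,i}\zeta_{d,i}(Z_{d,i})\cdot\prod_d\xi_d(\pmb{\pi}_d)$. Its $\mathbf{Z}$-marginal $\hat P_{\mathbf{Z}} = \prod_{d,i}\zeta_{d,i}$ is again a product, hence lies in $\mathcal{Q}$. Marginalizing out $\{\pmb{\pi}_d\}$ can only decrease KL divergence (data processing), so $\KL(\hat P\,\|\,\mathbb{P}(\mathbf{Z},\{\pmb{\pi}_d\}\mid\mathbf{X}))\ge\KL(\hat P_{\mathbf{Z}}\,\|\,\mathbb{P}(\mathbf{Z}\mid\mathbf{X}))\ge\KL(\hat Q\,\|\,\mathbb{P}(\mathbf{Z}\mid\mathbf{X}))$, the last step because $\hat Q$ minimizes over $\mathcal{Q}$. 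This reduces the theorem to the stated lower bound on the collapsed KL.

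Next I reduce to one document. Under $\eta_{\ell,r}=V^{-1}$ the likelihood factor $\prod_{d,\ell,r}\eta_{\ell,r}^{N_{d,\ell,r}}=V^{-n}$ is constant in $\mathbf{Z}$, so by~\eqref{Eq3.2} and $\alpha_\ell=1/2$ the collapsed posterior factorizes across documents, $\mathbb{P}(\mathbf{Z}\mid\mathbf{X})\propto\prod_{d}\prod_{\ell}\Gamma(N_{d,\ell}+1/2)$. Since $\mathcal{Q}$ also factorizes across $(d,i)$, the optimization~\eqref{vari3.2} splits into $D$ identical per-document problems (here $n_1=\cdots=n_D=n_0:=n/D$), giving $\KL(\hat Q\,\|\,\mathbb{P}(\mathbf{Z}\mid\mathbf{X}))=D\cdot\mathrm{KL}_0$, where $\mathrm{KL}_0:=\min_{\{\zeta_i\}}\KL(\prod_{i=1}^{n_0}\zeta_i\,\|\,\mathbb{P}_0)$ and $\mathbb{P}_0(\mathbf{z})\propto\prod_{\ell=1}^K\Gamma(N_\ell+1/2)$ with $N_\ell=N_\ell(\mathbf{z})$ the topic counts. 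Because $n/(DK)=n_0/K$, it suffices to prove $\mathrm{KL}_0\ge\tfrac K5\log(\tfrac{n_0}K+1)$.

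To lower bound $\mathrm{KL}_0$, note that $\mathbb{P}_0$ depends on $\mathbf{z}$ only through the count vector $N=(N_1,\dots,N_K)$ and is uniform given $N$; hence the push-forward under $\mathbf{z}\mapsto N$ loses nothing on the $\mathbb{P}_0$ side, and data processing gives $\KL(\prod_i\zeta_i\,\|\,\mathbb{P}_0)\ge\KL(\mu_N\,\|\,P_N)$, where $\mu_N$ is the law of $N$ under $\prod_i\zeta_i$ (a sum of $n_0$ independent categorical vectors) and $P_N$ is the Dirichlet--multinomial law with all parameters $1/2$. Writing $g(t):=\log[\Gamma(t+1)/\Gamma(t+1/2)]$, a direct expansion of $-\log P_N$ yields the exact identity $\KL(\mu_N\,\|\,P_N)=A_0+\sum_{\ell=1}^K\mathbb{E}_{\mu_N}[g(N_\ell)]-H(\mu_N)$, with deterministic term $A_0=\log\frac{\Gamma(n_0+K/2)}{\Gamma(n_0+1)}-\log\Gamma(K/2)+\tfrac K2\log\pi$. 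The decisive point is that $N$ lives on the hyperplane $\sum_\ell N_\ell=n_0$: labeling topics so that $m_1:=\mathbb{E}N_1$ is largest (hence $m_1\ge n_0/K$), the relation $N_1=n_0-\sum_{\ell\ge2}N_\ell$ gives $H(\mu_N)\le\sum_{\ell\ge2}H(N_\ell)$, sparing one coordinate. For each independent sum one has $\mathrm{Var}(N_\ell)\le m_\ell$ and the maximum-entropy bound $H(N_\ell)\le\tfrac12\log(2\pi e(m_\ell+1))$; pairing this with $g(t)\ge\tfrac12\log(t+\tfrac14)$ (a Gautschi-type inequality) and the concentration estimate $\mathbb{E}[\log(N_\ell+\tfrac14)]\ge\log(m_\ell+1)-C$ makes the $\log m_\ell$ contributions cancel, so each $\ell\ge2$ term $\mathbb{E}[g(N_\ell)]-H(N_\ell)$ is at least an absolute constant. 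The retained $\ell=1$ term contributes $\mathbb{E}[g(N_1)]\ge\tfrac12\log(\tfrac{n_0}K+1)-C$. Thus $\sum_\ell\mathbb{E}[g(N_\ell)]-H(\mu_N)\ge\tfrac12\log(\tfrac{n_0}K+1)-CK$, and combining with the growth $A_0\gtrsim(\tfrac K2-1)\log n_0-\log\Gamma(K/2)+\tfrac K2\log\pi$ of the deterministic part yields $\mathrm{KL}_0\ge\tfrac K5\log(\tfrac{n_0}K+1)$ once the lower-order $O(K)$ terms are absorbed into the constant $1/5$ (small $K$ and small $n_0/K$ being handled by the same elementary estimates).

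I expect the main obstacle to be the \emph{uniform} control over all product measures $\prod_i\zeta_i$, in particular without assuming the optimal mean field is symmetric. This is precisely what forces the reduction to the count marginal $\mu_N$ and the entropy bookkeeping above: the free $\tfrac12\log(n_0/K)$ arises from the mismatch between the $K$ Gamma-factors in $\mathbb{P}_0$ and the $K-1$ effective degrees of freedom of $H(\mu_N)$ on the simplex, while the per-coordinate cancellation of $\log m_\ell$ (enabled by the sub-Poisson bound $\mathrm{Var}(N_\ell)\le m_\ell$) is what prevents an adversarial $\{\zeta_i\}$ from inflating the entropy relative to the $g$-terms. Tracking the absolute constants through the Gamma asymptotics to land exactly at the factor $1/5$ is routine but is the most laborious part of the argument.
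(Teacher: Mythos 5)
Your proposal is correct, and for the core lower bound it takes a genuinely different route from the paper. The opening moves coincide: the first inequality is proved exactly as in the paper (data processing plus the observation that the $\mathbf{Z}$-marginal of $\hat P$ lies in $\mathcal{Q}$), and your reduction to $D$ identical single-document problems (with $n_0:=n/D$) is sound since both the collapsed posterior and the family $\mathcal{Q}$ factorize over documents. From there, the paper invokes Lemma \ref{Lemma2.1} and bounds the two resulting pieces separately: a lower bound on the log-partition function $\log\mathcal{S}_{n,K}$ via an exact expansion over count vectors, Stirling, and lattice-point counting ($\binom{n_0+K-1}{K-1}\ge(n_0/K)^{K-1}$), together with a uniform upper bound on the mean-field free energy $\sup_y\{\mathbb{E}_{Q_y}[f(\mathbf{Z})]-I(y)\}$ using the moment estimate $\mathbb{E}_{Q_y}[N_{d,\ell}\log N_{d,\ell}]\le\widetilde{N}_{d,\ell}(y)\log\widetilde{N}_{d,\ell}(y)+1$ and Jensen applied to the coordinate entropies. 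You instead make a single data-processing projection onto the sufficient statistic $N$ and compute $\KL(\mu_N\,\|\,P_N)$ against the exactly normalized Dirichlet--multinomial, so no partition-function estimate is needed (its role is played exactly by your deterministic term $A_0$), and the leading $\tfrac{K-1}{2}\log(n_0/K)$ emerges as a transparent degrees-of-freedom count: $K$ Gamma-factor terms $\mathbb{E}[g(N_\ell)]\approx\tfrac12\log m_\ell$ against only $K-1$ entropies $H(N_\ell)$ on the hyperplane $\sum_\ell N_\ell=n_0$. Tellingly, both arguments run on the same probabilistic engine---$\mathrm{Var}(N_\ell)\le\mathbb{E}[N_\ell]$ under any product law---which the paper uses to tame $\mathbb{E}[N\log N]$ and you use to cap $H(N_\ell)\le\tfrac12\log(2\pi e(m_\ell+1))$ via the integer maximum-entropy bound; both routes land on the same pre-constant $\tfrac{K-1}{2}$, so the final passage to $1/5$ under $DK\le c_0 n$ is identical. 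What your route buys is conceptual economy and an exact KL identity in place of two matched asymptotic estimates; what it costs is three auxiliary lemmas the paper does not need---Kershaw's inequality $g(t)\ge\tfrac12\log(t+\tfrac14)$, the maximum-entropy-given-variance bound for integer-valued variables, and the estimate $\mathbb{E}[\log(N_\ell+\tfrac14)]\ge\log(m_\ell+1)-C$ for heterogeneous Bernoulli sums (which indeed follows from the multiplicative Chernoff lower tail $\mathbb{P}(N_\ell\le m_\ell/2)\le e^{-m_\ell/8}$, small $m_\ell$ being trivial)---all standard and checkable, so only routine verification separates your sketch from a complete proof. One small point of phrasing: your per-coordinate terms $\mathbb{E}[g(N_\ell)]-H(N_\ell)$ for $\ell\ge 2$ are bounded below by a \emph{negative} absolute constant, which is precisely what your $-CK$ correction absorbs; your displayed conclusion makes clear this is what you intend.
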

Theorem \ref{Theorem_LDA_LBD} shows that the non-asymptotic upper bounds in Theorem \ref{Theorem_LDA_UBD} provide \emph{optimal convergence rates} for the KL divergence between the variational and true posteriors. We expect a similar conclusion (as in Theorem \ref{Theorem_LDA_LBD}) to hold for general $\alpha_{\ell}$'s ($\ell\in [K]$) that are uniformly bounded away from $0$ and $\infty$, possibly at the expense of a more technical proof.
\begin{remark}\label{Remark1.3}
Theorems~\ref{Theorem_LDA_UBD} and~\ref{Theorem_LDA_LBD} together provide a comprehensive characterization of the validity of MFVI in terms of the parameters $D,K,n$. Specifically, Theorem \ref{Theorem_LDA_UBD} establishes that MFVI remains a valid approximation when $DK=o(n)$, while Theorem \ref{Theorem_LDA_LBD} demonstrates that MFVI becomes unreliable when $DK\asymp n$. In this regime, we expect the `TAP correction' to the mean-field approximation to work better, akin to the behavior observed in \cite{ghorbani2019instability}. Moreover, it is plausible that under this scaling, the number of near optimizers of the MFVI problem grows exponentially in $n$, similar to what happens for the optimizers of the Hamiltonian in related models in \cite{chen2018energy}.
\end{remark}

\subsection{Mixed membership stochastic blockmodel (MMSB)}\label{Sect.1.3}

In this subsection, we examine the MMSB, introduced in \cite{airoldi2008mixed}, which provides a probabilistic framework for analyzing complex network structures. Below, we describe the model in more detail. 

Let $n,K\in \mathbb{N}^{*}$ with $n\geq 2$. We fix a $K$-dimensional vector $\pmb{\alpha}\equiv (\alpha_1,\alpha_2,\cdots,\alpha_K)$ and a $K\times K$ matrix $\mathbf{B}\equiv (B_{\ell,\ell'})_{\ell,\ell'\in [K]}$, which are treated as hyperparameters. For each $\ell\in [K]$, $\alpha_{\ell}$ reflects the proportion of members belonging to the $\ell$th group in the network, and we require that $\alpha_{\ell}> 0$. For each $\ell,\ell'\in [K]$, $B_{\ell,\ell'}\in[0,1]$ represents the probability of having an edge between a node from group $\ell$ and a node from group $\ell'$. 

The MMSB model concerns a directed graph with node set $[n]$. Each node $i\in [n]$ represents an individual. The (directed) edges in the graph are generated as follows. For each $i\in [n]$, we draw a $K$-dimensional mixed membership probability vector $\pmb{\pi}_i \equiv (\pi_{i,1},\cdots,\pi_{i,K})\sim \text{Dir}(\alpha_1,\cdots,\alpha_K)$, where for each $\ell\in [K]$, $\pi_{i,\ell}$ gives the probability of node $i$ belonging to group $\ell$. For each pair of nodes $(i,j)\in [n]^2$ with $i\neq j$ we call $i$ the initiator and $j$ the receiver, and we do the following:
\begin{itemize}
    \item Draw membership for the initiator, $Z_{i\rightarrow j}\sim \text{Cat}(\pmb{\pi}_i)$.
    \item Draw membership for the receiver, $Z_{i\leftarrow j}\sim\text{Cat}(\pmb{\pi}_j)$.
    \item Sample $X_{i,j} \mid Z_{i\rightarrow j},Z_{i\leftarrow j} \sim \text{Bernoulli}(B_{Z_{i\rightarrow j},Z_{i\leftarrow j}} )$. If $X_{i,j}=1$, there is a directed edge from $i$ to $j$; otherwise, there is no directed edge from $i$ to $j$. 
\end{itemize}
The observations are $(X_{i,j})_{i,j\in [n]: i\neq j}$, which we denote by $\mathbf{X}$. We denote by
\begin{equation*}
    \mathbf{Z}_{\rightarrow}:=(Z_{i\rightarrow j})_{i,j\in [n]:i\neq j}, \qquad \mbox{and}\qquad \mathbf{Z}_{\leftarrow}:=(Z_{i\leftarrow j})_{i,j\in [n]:i\neq j}.
\end{equation*}
Here $\mathbf{Z}_{\rightarrow}$ and $\mathbf{Z}_{\leftarrow}$ represent the local latent variables, and $\{\pmb{\pi}_i\}_{i=1}^n$ form the global latent variables. The full and collapsed posteriors in MMSB can be derived as follows. 
For any $i\in [n]$ and $\ell\in [K]$, we let
\begin{equation}\label{defNil}
    N_{\rightarrow,i,\ell}:=|\{j\in [n]\backslash \{i\}: Z_{i\rightarrow j}=\ell\}|,\quad \quad N_{\leftarrow,i,\ell}:=|\{j\in [n]\backslash\{i\}: Z_{j\leftarrow i}=\ell\}|,
\end{equation}
and $N_{i,\ell} := N_{\rightarrow,i,\ell} + N_{\leftarrow,i,\ell}$. Note that $N_{\rightarrow,i,\ell}$ is the number of times node $i$ belongs to the $\ell$th group when interacting with the other $n-1$ nodes as the initiator, and $N_{\leftarrow,i,\ell}$ is the number of times node $i$ belongs to the $\ell$th group when interacting with the other $n-1$ nodes as the receiver. For any $\ell,\ell'\in [K]$, we let
\begin{eqnarray*}
    A_{\ell,\ell'} & := & \left|\{(i,j)\in [n]^2:i\neq j, Z_{i \rightarrow j}=\ell, Z_{i\leftarrow j}=\ell'\} \right|, \qquad \mbox{and} \\
    M_{\ell,\ell'} &:=&  \left|\{(i,j)\in [n]^2:i\neq j, Z_{i \rightarrow j}=\ell, Z_{i\leftarrow j}=\ell', X_{i,j}=1\} \right|.
\end{eqnarray*}
Here, $A_{\ell,\ell'}$ represents the number of pairs of nodes whose group memberships are given by $\ell$ and $\ell'$ when they interact, and $M_{\ell,\ell'}$ represents the number of such pairs of nodes that have a directed edge between them. From the above data generating process, a simple calculation shows that the posterior distribution of the latent variables given the data $\mathbf{X}$ is
{\small \begin{equation}\label{Eq4.1}
\mathbb{P}\left(\mathbf{Z}_{\rightarrow},\mathbf{Z}_{\leftarrow},\{\pmb{\pi}_i\}_{i\in [n]}\mid\mathbf{X}\right) \propto \prod_{i=1}^n\prod_{\ell=1}^K \pi_{i,\ell}^{N_{i,\ell}+\alpha_{\ell}-1}\cdot \prod_{\ell=1}^K \prod_{\ell'=1}^K B_{\ell,\ell'}^{M_{\ell,\ell'}} (1-B_{\ell,\ell'})^{A_{\ell,\ell'}-M_{\ell,\ell'}}.
\end{equation}
}Integrating out $\{\pmb{\pi}_{i}\}_{i\in [n]}$ in (\ref{Eq4.1}), we obtain the collapsed posterior
{\small \begin{equation}\label{mmsb} \mathbb{P}\left(\mathbf{Z}_{\rightarrow},\mathbf{Z}_{\leftarrow}\mid\mathbf{X} \right) \propto \prod_{\ell=1}^K \prod_{\ell'=1}^K B_{\ell,\ell'}^{M_{\ell,\ell'}} (1-B_{\ell,\ell'})^{A_{\ell,\ell'}-M_{\ell,\ell'}} \cdot \prod_{i=1}^n \frac{\prod_{\ell=1}^K\Gamma(N_{i,\ell}+\alpha_{\ell})}{\Gamma\big(2n-2+\sum_{\ell=1}^K \alpha_{\ell}\big)}.
\end{equation}
}

The full and collapsed posteriors allow inference about the membership probability-like vector $\pmb{\pi}_i$ for the $i$th individual, as well as the group memberships of individuals when interacting with others, encapsulated by the local latent variables $\mathbf{Z}_{\rightarrow}$ and $\mathbf{Z}_{\leftarrow}$.
Due to the complicated forms of the full and collapsed posteriors, their normalizing constants are intractable to compute, and VI is crucial for providing efficient approximations to these posteriors. \newline

\noindent{\bf{Partially grouped variational inference.}} In the discussions to follow, for each pair $(i,j)\in[n]^2$ with $i\neq j$, we group the variables $(Z_{i\rightarrow j}, Z_{i\leftarrow j})$ and treat $(Z_{i\rightarrow j}, Z_{i\leftarrow j})$ as a single categorical variable with $K^2$ possible values. Thus, the local latent variables in the model are the grouped variables $(Z_{i\rightarrow j}, Z_{i\leftarrow j})$ for distinct $i,j\in [n]$. We refer to the optimization problems (\ref{vari3.1}) and (\ref{vari3.2}) with such grouping of categorical variables as ``\emph{partially grouped variational inference}'' (partially grouped VI). A similar partial grouping strategy was previously employed in \cite{xing2010state} for logistic-normal MMSB (a variation of MMSB).

In Theorem~\ref{Theorem_MMSB_S} below we show that for MMSB the fully factorized VI (which assumes the independence of all individual categorical variables) does not give a close approximation to the true posterior, while the proposed partially grouped VI is shown to be accurate in approximating the true posterior in Theorem~\ref{Theorem_MMSB_UBD}. In fact, in Appendix~\ref{Sect.4}, we illustrate, via simulations, that $Z_{i\rightarrow j}$ and $Z_{i\leftarrow j}$ (where $i,j\in[n]$ and $i\neq j$) are not approximately independent in general. As far as we are aware, this provably superior performance of partially grouped VI for MMSB has not been observed before in the literature (cf.~\cite{gopalan2013efficient,huang2020mixed}). We further provide a CAVI update algorithm for the computation of the partially grouped VI procedure in Appendix \ref{Appendix_F}.  

\subsubsection{Validity of partially grouped variational inference}

In the following, we establish the validity of partially grouped VI for MMSB. Let $\hat{P}$ and $\hat{Q}$ be the full and collapsed partially grouped variational posteriors for the MMSB as defined in (\ref{vari3.1}) and (\ref{vari3.2}). The following result (proved in Appendix \ref{Appendix_B}) provides a non-asymptotic upper bound on the KL divergence between the variational and true posteriors. 

\begin{theorem}\label{Theorem_MMSB_UBD}
Suppose that the $K \times K$ matrix $\mathbf{B}$ is neither the zero matrix nor the matrix of all $1$'s.\footnote{Otherwise the observations would be trivially $X_{i,j}\equiv 0$ or $X_{i,j}\equiv 1$ for all distinct $i,j\in [n]$.} Assume that
\begin{equation}\label{conditionalpha2}
    \alpha_{\ell}\in (0,C_0],\quad\text{ for every }\ell\in [K],
\end{equation}
where $C_0>0$ is fixed and independent of $n$. Then,  
\begin{itemize}
    \item[(a)] for the full posterior we have \begin{equation*}\label{E1.1}
   \frac{1}{n^2} \sup_{\mathbf{X}\in \{0,1\}^{n(n-1)}} \KL\big(\hat{P}\,\big\|\;\mathbb{P}(\mathbf{Z}_{\rightarrow},\mathbf{Z}_{\leftarrow},\{\pmb{\pi}_{i}\}_{i\in [n]}\mid\mathbf{X})\big)\leq \frac{CK}{n}\log\Big(\frac{n}{K}+2\Big);
\end{equation*}
 \item[(b)] for the collapsed posterior we have \begin{equation*}\label{E1.2}
   \frac{1}{n^2} \sup_{\mathbf{X}\in \{0,1\}^{n(n-1)}} \KL\big(\hat{Q}\,\big\|\;\mathbb{P}(\mathbf{Z}_{\rightarrow},\mathbf{Z}_{\leftarrow}\mid\mathbf{X})\big)\leq \frac{CK}{n}\log\Big(\frac{n}{K}+2\Big),
\end{equation*}
\end{itemize}
where $C$ is a positive constant that only depends on $C_0$. 
\end{theorem}
In the above result, we allow $K$ and $n$ to be arbitrary. As the number of variables in the collapsed posterior $\mathbb{P}(\mathbf{Z}_{\rightarrow},\mathbf{Z}_{\leftarrow}\mid\mathbf{X})$ is $2n(n-1)\asymp n^2$, we naturally scale the KL divergence between the variational and true posteriors by $n^2$. Under the assumptions of Theorem \ref{Theorem_MMSB_UBD}, if $K=o(n)$, the right-hand sides of (a) and (b) above converge to $0$ as $n\rightarrow\infty$. Thus, the partially grouped variational distributions $\hat{P}$ and $\hat{Q}$ give close approximations to the full and collapsed posteriors in the regime when $K=o(n)$. 

We note that in Theorem \ref{Theorem_MMSB_UBD}, $\alpha_{\ell}$ (where $\ell\in[K]$) is allowed to be arbitrarily small and to vanish at arbitrary rates as $n$ increases. Such parameters allow for sparse mixed membership vectors, and thus Theorem \ref{Theorem_MMSB_UBD} encompasses both sparse and dense regimes of MMSB. Further, as we do not impose any restrictions on the observed data $\mathbf{X}$, partially grouped VI maintains its validity even under model misspecification. 

We demonstrate in Theorem \ref{Theorem_MMSB_LBD} (proved in Appendix \ref{Appendix_C}) below that the condition $K=o(n)$ characterizes the exact parameter regime where partially grouped VI is valid. Theorem \ref{Theorem_MMSB_LBD} also shows that the non-asymptotic upper bounds in Theorem \ref{Theorem_MMSB_UBD} provide \emph{optimal convergence rates} for the KL divergence between the variational and true posteriors.  
\begin{theorem}\label{Theorem_MMSB_LBD}
Assume that $K\geq 2$, $\alpha_{\ell}=1\slash 2$ for all $\ell\in [K]$, and $B_{\ell,\ell'}= B\in (0,1)$ for all $\ell,\ell'\in [K]$. Then there exists a positive absolute constant $c_0$, such that when $K\leq c_0 n$, for any $\mathbf{X}\in \{0,1\}^{n(n-1)}$, we have
{\small \begin{eqnarray*}
\frac{1}{n^2}\KL\big(\hat{P}\,\big\|\;\mathbb{P}(\mathbf{Z}_{\rightarrow},\mathbf{Z}_{\leftarrow},\{\pmb{\pi}_{i}\}_{i\in [n]}\mid\mathbf{X})\big) 
 \; \geq \;\frac{1}{n^2} \KL\big(\hat{Q}\,\big\|\;\mathbb{P}(\mathbf{Z}_{\rightarrow},\mathbf{Z}_{\leftarrow}\mid\mathbf{X})\big)\; \geq \;\frac{K}{5n}\log\Big(\frac{n}{K}+2\Big).
\end{eqnarray*}}
\end{theorem}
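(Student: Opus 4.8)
The plan is to first reduce the full‑posterior bound to the collapsed‑posterior bound, then exploit the special structure created by the symmetric choice $B_{\ell,\ell'}\equiv B$ to reduce everything to a single node‑level estimate. For the first inequality I would use that $\hat{P}$ is a product measure over $(\mathbf{Z}_{\rightarrow},\mathbf{Z}_{\leftarrow},\{\pmb{\pi}_i\}_{i\in[n]})$; writing $\hat P_{\mathbf Z}$ for its marginal on $(\mathbf{Z}_{\rightarrow},\mathbf{Z}_{\leftarrow})$, the data‑processing inequality for KL under marginalization gives $\KL(\hat{P}\,\|\,\mathbb{P}(\mathbf{Z}_{\rightarrow},\mathbf{Z}_{\leftarrow},\{\pmb{\pi}_i\}\mid\mathbf{X}))\ge \KL(\hat P_{\mathbf Z}\,\|\,\mathbb{P}(\mathbf{Z}_{\rightarrow},\mathbf{Z}_{\leftarrow}\mid\mathbf{X}))$. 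Since $\hat P_{\mathbf Z}$ is itself a partially grouped product measure it is feasible for (\ref{vari3.2}), so the right‑hand side is at least $\KL(\hat{Q}\,\|\,\mathbb{P}(\mathbf{Z}_{\rightarrow},\mathbf{Z}_{\leftarrow}\mid\mathbf{X}))$, yielding the first inequality. It then remains to lower bound the collapsed KL. Here I would observe that when $B_{\ell,\ell'}\equiv B$ the likelihood factor $\prod_{\ell,\ell'}B^{M_{\ell,\ell'}}(1-B)^{A_{\ell,\ell'}-M_{\ell,\ell'}}$ in (\ref{mmsb}) equals $B^{|E|}(1-B)^{n(n-1)-|E|}$, depending only on the edge count $|E|$ and not on $(\mathbf{Z}_{\rightarrow},\mathbf{Z}_{\leftarrow})$; hence the collapsed posterior reduces to the prior $\mathbb{P}(\mathbf{Z}_{\rightarrow},\mathbf{Z}_{\leftarrow})\propto\prod_{i=1}^n\prod_{\ell=1}^K\Gamma(N_{i,\ell}+\tfrac12)$, which is independent of $\mathbf{X}$.

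Next I would decouple across nodes. Each local variable encodes exactly one node's membership, so the $2n(n-1)$ variables partition into $n$ groups, group $i$ consisting of $\{Z_{i\rightarrow j}\}_{j\ne i}\cup\{Z_{j\leftarrow i}\}_{j\ne i}$, and under the prior these groups are independent, each distributed as a symmetric Dirichlet--multinomial $\mathbb{P}_{\mathrm{DM}}$ on $m:=2(n-1)$ draws into $K$ cells with parameter $\tfrac12$. Crucially, because the partially grouped family factorizes over ordered pairs and the two coordinates of the pair‑factor $Q_{ij}$ on $(Z_{i\rightarrow j},Z_{i\leftarrow j})$ belong to the two \emph{distinct} nodes $i$ and $j$, the restriction $\hat{Q}^{(i)}$ of $\hat{Q}$ to group $i$ is itself a product measure over that group's $m$ coordinates. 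Writing $\mathbb{P}=\prod_i\mathbb{P}_i$ and using subadditivity of entropy, $\KL(\hat{Q}\,\|\,\mathbb{P})\ge\sum_i\KL(\hat{Q}^{(i)}\,\|\,\mathbb{P}_i)\ge n\cdot\inf_{q}\KL(q\,\|\,\mathbb{P}_{\mathrm{DM}})$, the infimum over product measures $q$ on $[K]^m$. The theorem follows once I show $\inf_q\KL(q\,\|\,\mathbb{P}_{\mathrm{DM}})\ge\frac{K}{5}\log(\frac{n}{K}+1)$.

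For this single‑node bound I would compute the partition function exactly, $Z_{\mathrm{DM}}=\pi^{K/2}\Gamma(m+\tfrac K2)/\Gamma(\tfrac K2)$ (via $\sum_N\frac{\Gamma(N+1/2)}{N!}x^N=\sqrt\pi(1-x)^{-1/2}$), and write $\KL(q\,\|\,\mathbb{P}_{\mathrm{DM}})=\log Z_{\mathrm{DM}}-H(q)-\mathbb{E}_q[\sum_\ell\log\Gamma(N_\ell+\tfrac12)]$, where $N_\ell$ are the cell counts. Splitting $H(q)=H_q(\mathbf{N})+H_q(W\mid\mathbf{N})$ and bounding the conditional entropy by $\mathbb{E}_q\log\binom{m}{\mathbf{N}}$ collapses energy and entropy into the single ratio $\Gamma(N+\tfrac12)/N!=\sqrt\pi\binom{2N}{N}4^{-N}$; after the resulting cancellation one is left with $\KL\ge\log\Gamma(m+\tfrac K2)-\log\Gamma(\tfrac K2)-\log\Gamma(m+1)-\sum_\ell\mathbb{E}_q[\log a_{N_\ell}]-H_q(\mathbf{N})$, where $a_N:=\binom{2N}{N}4^{-N}\in(0,1]$. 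The key structural fact is that under a product measure each $N_\ell$ is a sum of independent indicators, hence sub‑Poisson with $\mathrm{Var}(N_\ell)\le\mathbb{E}_q[N_\ell]$; combining the maximum‑entropy bound $H_q(\mathbf{N})\le\sum_\ell\frac12\log(2\pi e(\mathrm{Var}(N_\ell)+\tfrac1{12}))$ with the sharp estimate $-\log a_N\ge\frac12\log(N+1)$ shows these two contributions cancel up to $O(K)$, so that $\KL\ge\log\Gamma(m+\tfrac K2)-\log\Gamma(\tfrac K2)-\log\Gamma(m+1)-CK\ge\frac K2\log(\frac mK)-CK$.

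The main obstacle is exactly this last estimate: both $\log Z_{\mathrm{DM}}$ and the optimal ELBO are of order $m\log m$ and their leading terms cancel, so the entire bound lives in the lower‑order $\Theta(K\log(m/K))$ correction. Extracting it requires (i) the sharp asymptotics of $\Gamma(N+\tfrac12)/N!$ rather than the crude $\Gamma(N+\tfrac12)\le\sqrt\pi\,N!$, which would lose the decisive $-\tfrac12\log N$ term, and (ii) controlling $\mathbb{E}_q[\log(N_\ell+1)]$ against $\log(\mathbb{E}_q[N_\ell]+1)$ uniformly in the mean, which is delicate for small means and would be handled by a concentration (Chernoff) argument when $\mathbb{E}_q[N_\ell]\ge1$ and a direct $O(1)$ bound otherwise. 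Finally, since $n\ge2$ and $K\le c_0 n$ force $m/K=2(n-1)/K\ge 1/c_0$ to be a large constant, choosing the absolute constant $c_0$ small enough absorbs the $O(K)$ slack and upgrades $\frac K2\log(\frac mK)-CK$ to the claimed $\frac K5\log(\frac nK+1)$, uniformly over all $\mathbf{X}\in\{0,1\}^{n(n-1)}$.
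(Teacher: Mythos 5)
Your reduction steps are sound, and they take a genuinely different route from the paper: you use data processing plus the observation that constant $B$ makes the collapsed posterior equal to the collapsed prior, then super-additivity of KL over the $n$ node-groups $\{Z_{i\rightarrow j}\}_{j\neq i}\cup\{Z_{j\leftarrow i}\}_{j\neq i}$ (correctly noting that the restriction of $\hat Q$ to each group is a product measure, since each pair-factor $Q_{i,j}$ contributes exactly one coordinate per group), reducing everything to a single Dirichlet--multinomial bound $\inf_q \KL(q\,\|\,\mathbb{P}_{\mathrm{DM}})$. The paper instead lower-bounds the partition function $\log\mathcal{S}_{n,K}$ by an explicit counting construction (its Lemma on $|\mathcal{D}(\mathbf{u})|$) and separately upper-bounds the mean-field objective $\sup_y\{\mathbb{E}_{Q_y}[f(\mathbf{Z})]-I(y)\}$ via second-moment and entropy-subadditivity arguments, combining them through Lemma \ref{Lemma2.1}(a). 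Your decomposition is cleaner and avoids the counting lemma entirely.

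However, there is a genuine gap in your final estimate. The claimed inequality $\log\Gamma(m+\tfrac K2)-\log\Gamma(\tfrac K2)-\log\Gamma(m+1)\ge \tfrac K2\log(\tfrac mK)-CK$ is false for small $K$: Stirling gives $\log\Gamma(m+\tfrac K2)-\log\Gamma(m+1)=\tfrac K2\log m-\log m+O(K)$, so the deterministic part equals $\big(\tfrac K2-1\big)\log m-\tfrac K2\log\tfrac K2+O(K)$, which falls short of your claim by $\log m$. Since your entropy-versus-$a_{N_\ell}$ cancellation is only one-sided and yields residual $\ge -O(K)$, your derived per-node bound is $\big(\tfrac K2-1\big)\log(\tfrac mK)-CK$. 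For $K=2$ the deterministic term is \emph{identically zero} ($\log\Gamma(m+1)-\log\Gamma(1)-\log\Gamma(m+1)=0$), so you obtain only $\KL\ge -C$ per node, while the theorem (which covers $K\ge 2$) demands a per-node bound of order $\log n$; for $K=3$ the prefactor $\tfrac12$ is also below the required $\tfrac35$. The missing idea is to exploit the linear constraint $\sum_\ell N_\ell=m$: bound $H_q(\mathbf{N})\le\sum_{\ell\neq\ell^*}H(N_\ell)$ with $\ell^*$ the cell of largest mean $\mu_{\ell^*}\ge m/K$, so the term $\tfrac12\,\mathbb{E}_q[\log(N_{\ell^*}+1)]\ge\tfrac12\log(\tfrac mK+1)-C$ survives uncancelled. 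This upgrades the per-node bound to $\tfrac{K-1}{2}\log(\tfrac mK)-CK$, which suffices for all $K\ge2$ after shrinking $c_0$. This is exactly the role of the extra $\tfrac12\log n$ term in the paper's proof (arising from comparing $2n\log((n-1)!)$ with $2n(n-1)\log(2(n-1))$ against the $-\tfrac12 n\log n$ in its partition-function bound), which the paper uses in the step $\tfrac12\log n+\big(\tfrac K2-1\big)\log(\tfrac nK)\ge\tfrac{K-1}{2}\log(\tfrac nK)$ precisely to handle $K=2$.
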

We expect a similar conclusion to hold for general $\alpha_{\ell}$'s ($\ell\in [K]$) that are uniformly bounded away from $0$ and $\infty$, possibly at the expense of a more technical argument. We note in passing that a comment similar to Remark \ref{Remark1.3} also applies here. In particular, the validity of partially grouped VI for the MMSB holds if $K=o(n)$, and breaks down when $K\asymp n$.

\subsubsection{Suboptimality of fully factorized variational inference for MMSB}\label{SuboptimalfullVI}

Consider the family $\mathcal{P}^{\mathrm{ff}}$ of fully factorized mean-field distributions on $(\mathbf{Z}_{\rightarrow},\mathbf{Z}_{\leftarrow},\{\pmb{\pi}_i\}_{i\in [n]})$; i.e., $P\in\mathcal{P}^{\mathrm{ff}}$ if and only if the density of $P$ w.r.t. a product base measure has the form
\begin{equation*}
    \prod_{i,j\in [n]:i\neq j}\zeta_{i \rightarrow j}(Z_{i\rightarrow j})\cdot \prod_{i,j\in [n]: i\neq j}\zeta_{i \leftarrow j}(Z_{i\leftarrow j}) \cdot \prod_{i=1}^n \xi_i(\pmb{\pi}_i).
\end{equation*}
Fully factorized VI finds a distribution $\hat{P}^{\mathrm{ff}}\in\mathcal{P}^{\mathrm{ff}}$ that is closest in KL divergence to the posterior distribution (\ref{Eq4.1}), i.e., 
\begin{equation*}
    \hat{P}^{\mathrm{ff}}\in\arg\min_{P\in \mathcal{P}^{\mathrm{ff}}} \KL\big(P\,\big\|\;\mathbb{P}(\mathbf{Z}_{\rightarrow},\mathbf{Z}_{\leftarrow},\{\pmb{\pi}_{i}\}_{i\in [n]}\mid\mathbf{X})\big).
\end{equation*}
The following result (proved in Appendix \ref{Appendix_D}) gives an example where the fully factorized variational posterior $\hat{P}^{\mathrm{ff}}$ does not yield an accurate approximation of the true posterior. 
\begin{theorem}\label{Theorem_MMSB_S}
Assume that $K=2$, $0<B_{1,1}=B_{2,2} < B_{1,2}=B_{2,1}$, $\alpha_1=\alpha_2=1\slash 2$, and $X_{i,j}=1$ for all $i,j\in [n]$ with $i\neq j$. Then there exists a positive constant $\delta$ that only depends on $\mathbf{B} \equiv (B_{\ell,\ell'})_{\ell,\ell'\in [2]}$, such that for $n$ sufficiently large,
\begin{equation*}
    \frac{1}{n^2}\KL\big(\hat{P}^{\mathrm{ff}}\, \big\|\; \mathbb{P}(\mathbf{Z}_{\rightarrow},\mathbf{Z}_{\leftarrow},\{\pmb{\pi}_{i}\}_{i\in [n]}\mid\mathbf{X}) \big)\geq \delta.
\end{equation*}
\end{theorem}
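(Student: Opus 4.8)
The plan is to recast the claim as a free-energy (ELBO) gap for the \emph{collapsed} posterior and then to exhibit an explicit partially grouped distribution that strictly outperforms every fully factorized one by an amount of order $n^2$. Write $a:=B_{1,1}=B_{2,2}$, $b:=B_{1,2}=B_{2,1}$ and $s:=\log(b/a)>0$; the strict positivity of $s$ is exactly where $a<b$ enters. Let $D_{\mathrm{diff}}:=A_{1,2}+A_{2,1}$ count the ordered pairs with $Z_{i\rightarrow j}\neq Z_{i\leftarrow j}$, and let $\hat{Q}^{\mathrm{ff}}$ denote the minimizer of $\KL(\cdot\,\|\,\mathbb{P}(\mathbf{Z}_{\rightarrow},\mathbf{Z}_{\leftarrow}\mid\mathbf{X}))$ over product measures on $(\mathbf{Z}_{\rightarrow},\mathbf{Z}_{\leftarrow})$.

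\emph{First}, I would reduce to the collapsed posterior. For any $P\in\mathcal{P}^{\mathrm{ff}}$ the chain rule for KL divergence gives $\KL(P\,\|\,\mathbb{P}(\mathbf{Z}_{\rightarrow},\mathbf{Z}_{\leftarrow},\{\pmb{\pi}_i\}\mid\mathbf{X}))=\KL(P_{\mathbf{Z}}\,\|\,\mathbb{P}(\mathbf{Z}_{\rightarrow},\mathbf{Z}_{\leftarrow}\mid\mathbf{X}))+\mathbb{E}_{P_{\mathbf{Z}}}[\KL(P_{\pi\mid\mathbf{Z}}\,\|\,\mathbb{P}(\{\pmb{\pi}_i\}\mid\mathbf{Z},\mathbf{X}))]\geq\KL(P_{\mathbf{Z}}\,\|\,\mathbb{P}(\mathbf{Z}_{\rightarrow},\mathbf{Z}_{\leftarrow}\mid\mathbf{X}))$, and the $\mathbf{Z}$-marginal $P_{\mathbf{Z}}$ of a product measure is again a product measure. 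Minimizing over $P$ yields $\KL(\hat{P}^{\mathrm{ff}}\,\|\,\cdot)\geq\KL(\hat{Q}^{\mathrm{ff}}\,\|\,\cdot)$, so it suffices to bound the collapsed quantity below by $\delta n^2$. With $X_{i,j}\equiv 1$ and $\alpha_\ell=1/2$, the unnormalized collapsed density in~\eqref{mmsb} is, up to a constant, $g(\mathbf{Z})=(b/a)^{D_{\mathrm{diff}}}\prod_{i}\Gamma(N_{i,1}+\tfrac12)\Gamma(N_{i,2}+\tfrac12)$, and writing $\mathcal{Z}:=\sum_{\mathbf{Z}}g(\mathbf{Z})$ we have $\KL(\hat{Q}^{\mathrm{ff}}\,\|\,\cdot)=\log\mathcal{Z}-\sup_{Q\ \mathrm{product}}\{\mathbb{E}_Q[\log g]+H(Q)\}$. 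For any product $Q$ with coordinate marginals $q_{i\rightarrow j},q_{i\leftarrow j}$, both $H(Q)$ and $\mathbb{E}_Q[D_{\mathrm{diff}}]$ decouple over pairs, whereas $\mathbb{E}_Q[\log\Gamma(N_{i,\ell}+\tfrac12)]$ couples the pairs only through $\bar{N}_{i,\ell}:=\mathbb{E}_Q[N_{i,\ell}]$; since each $N_{i,\ell}$ is a sum of independent coordinates with variance $O(n)$, a Stirling estimate lets me replace $\mathbb{E}_Q[\log\Gamma(N_{i,\ell}+\tfrac12)]$ by $\log\Gamma(\bar{N}_{i,\ell}+\tfrac12)$ at a cost of $o(n^2)$.

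\emph{Next}, for the lower bound on $\log\mathcal{Z}$ I would use the Gibbs variational principle ($\log\mathcal{Z}\geq\mathbb{E}_{\nu}[\log g]+H(\nu)$ for any $\nu$) with the explicit partially grouped trial $\tilde{Q}=\prod_{i\neq j}\tilde{\zeta}_{ij}$ in which each grouped pair follows the single-pair Gibbs law $\tilde{\zeta}_{ij}(\ell,\ell')\propto e^{s\,\mathbbm{1}(\ell\neq\ell')}$. By symmetry every coordinate marginal is uniform, so $\bar{N}_{i,\ell}=n-1$; and the within-pair correlation links $Z_{i\rightarrow j}$ to $Z_{i\leftarrow j}$, which feed the counts of two distinct nodes, so the counts remain sums of independent terms and the same concentration applies. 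This yields $\log\mathcal{Z}\geq n(n-1)\log(2+2e^s)+2n\log\Gamma(n-\tfrac12)+o(n^2)$, the first term being $n(n-1)$ times the single-pair log-partition function $\log\sum_{\ell,\ell'}e^{s\,\mathbbm{1}(\ell\neq\ell')}$.

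\emph{Finally}, the crux is the matching upper bound $\sup_{Q\ \mathrm{product}}\{\mathbb{E}_Q[\log g]+H(Q)\}\leq n(n-1)(\tfrac{s}{2}+2\log 2)+2n\log\Gamma(n-\tfrac12)+o(n^2)$. Because $\log(2+2e^s)-(\tfrac{s}{2}+2\log 2)=\log\cosh(s/2)>0$, subtracting the two bounds gives $\KL(\hat{Q}^{\mathrm{ff}}\,\|\,\cdot)\geq n(n-1)\log\cosh(s/2)-o(n^2)\geq\delta n^2$ with, say, $\delta=\tfrac14\log\cosh(s/2)$, which depends only on $\mathbf{B}$. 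The main obstacle is exactly this upper bound, which encodes a three-way tension that no product measure can resolve: making $\mathbb{E}_Q[D_{\mathrm{diff}}]$ large forces anti-aligned, nearly deterministic marginals and thus destroys the entropy; pushing the marginals toward $1/2$ to recover entropy caps $\mathbb{E}_Q[D_{\mathrm{diff}}]$ at $\tfrac12 n(n-1)$; and the convex $\log\Gamma$ term rewards imbalanced nodes ($\bar{N}_{i,2}$ near $0$ or $2(n-1)$), yet pure nodes force $\mathbf{Z}$ into a frustrated two-block pattern whose within-block pairs are ``same,'' again capping $D_{\mathrm{diff}}$ at $\tfrac12 n(n-1)$ and the entropy at $0$. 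Turning this heuristic into a uniform bound over all $\{q_{i\rightarrow j},q_{i\leftarrow j}\}$ is the hard part: after the Stirling reduction one must maximize an explicit function of the marginals and show that the pure-node regime (full $\log\Gamma$ gain $\asymp 2n^2\log 2$, but $D_{\mathrm{diff}}\leq\tfrac12 n(n-1)$ and zero entropy) and the balanced regime yield the \emph{same} leading value, with no interpolation exceeding it. I would establish this by bounding each node's $\log\Gamma$ contribution against the imbalance it requires and offsetting that imbalance against the simultaneous entropy and likelihood deficits it creates, which is precisely the step where the nonlinear large deviations machinery of the paper is needed.
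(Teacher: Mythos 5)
Your setup matches the paper's: the chain-rule reduction to the collapsed posterior, the identity $\KL\big(\hat{Q}^{\mathrm{ff}}\,\big\|\,\mathbb{P}(\mathbf{Z}_{\rightarrow},\mathbf{Z}_{\leftarrow}\mid\mathbf{X})\big)=\log\mathcal{Z}-\sup_{Q\ \mathrm{product}}\{\mathbb{E}_Q[\log g]+H(Q)\}$ (Lemma \ref{Lemma2.1}), and the lower bound on $\log\mathcal{Z}$ via a within-pair Gibbs trial are all there; indeed your trial $\tilde{\zeta}_{ij}(\ell,\ell')\propto e^{s\mathbbm{1}_{\ell\neq\ell'}}$ is exactly the paper's $y_{i,j;1,1}=y_{i,j;2,2}=\frac{1-t}{2}$, $y_{i,j;1,2}=y_{i,j;2,1}=\frac{t}{2}$ at $t=\frac{e^{\Delta}}{1+e^{\Delta}}$ with $\Delta:=\log B_{1,2}-\log B_{1,1}$ (your $s$), and your gap constant $\log\cosh(s/2)$ is what the paper's $h\big(\frac{e^{\Delta}}{1+e^{\Delta}}\big)-h\big(\frac12\big)$ evaluates to. The Stirling replacement of $\mathbb{E}_Q[\log\Gamma(N_{i,\ell}+1/2)]$ by $\log\Gamma(\bar{N}_{i,\ell}+1/2)$ at $O(n)$ cost is likewise carried out in the paper via the elementary bound $\mathbb{E}_Q[N\log N]\le \bar{N}\log\bar{N}+1$, so that step is sound.

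The genuine gap is your final step: the uniform upper bound $\sup_{Q\ \mathrm{product}}\{\cdots\}\leq n(n-1)\big(\tfrac{s}{2}+2\log 2\big)+2n\log\Gamma(n-\tfrac12)+o(n^2)$ is precisely the hard content of the theorem, and you leave it as a heuristic. You correctly flag the danger — the pure two-block regime and the balanced regime tie exactly at leading order — but you give no mechanism ruling out interpolations, and your appeal to ``the nonlinear large deviations machinery of the paper'' is misdirected: Theorems \ref{Theorem2.1}--\ref{Theorem2.2} control the gap between a log-partition function and its mean-field variational value, not the gap between the variational suprema over two different product families, which is what you need. Notably, the paper never proves your global upper bound. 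Instead (Lemmas \ref{LE1} and \ref{LE2}) it works with an exact fully factorized maximizer $y^*$ and its first-order stationarity conditions, which force $e^{-\Delta}\,\widetilde{N}_{i,1}(y^*)/\widetilde{N}_{i,2}(y^*)\le y^*_{i\rightarrow j,1}/y^*_{i\rightarrow j,2}\le e^{\Delta}\,\widetilde{N}_{i,1}(y^*)/\widetilde{N}_{i,2}(y^*)$, and then runs a dichotomy: if fewer than $\tau n(\tau n-1)$ pairs have both marginals in $[\tau,1-\tau]$, the stationarity relations yield an explicit upper bound of the balanced form (only in this case is anything like your bound established); if many pairs are $\tau$-mixed, the paper perturbs $y^*$ on those pairs via $y^{\dagger}_{i,j;\ell,\ell'}=y^*_{i\rightarrow j,\ell}y^*_{i\leftarrow j,\ell'}+(-1)^{\mathbbm{1}_{\ell=\ell'}}t$ — a perturbation that preserves every node count $\widetilde{N}_{i,\ell}$ (so the $\log\Gamma$ terms are untouched), gains $2t\Delta$ per pair linearly from the off-diagonal $B$ weight, and pays only $O(t^2/\tau^2)$ in entropy by the second-order estimate $|\phi(a+u)-\phi(a)-(1+\log a)u|\le u^2/a$. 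Taking $t\asymp\tau^2$ then produces a grouped competitor beating $y^*$ by $\gtrsim n^2$, bounding the \emph{difference} of suprema without ever pinning down $\sup_{\mathrm{ff}}$ itself. Without either this marginal-preserving perturbation device or an actual proof of your global upper bound, your argument does not close.
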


\subsection{Contributions}\label{Sect.1.4}

Although there has been some work \cite{Alquier-JMLR-2016, pati2018statistical, wang2018frequentist, zhang2020convergence} (see Section \ref{Sect.1.5} below for a detailed description of these references) in studying the behavior of MFVI, most of the existing literature either deals with low-dimensional settings or assumes sparsity conditions. As far as we are aware, this is the first paper that provides upper bounds that quantify the accuracy of MFVI for both the full and collapsed variational posteriors in high-dimensional settings in a wide class of latent variable models, including LDA and MMSB. Further, we provide examples to demonstrate that our bounds are tight, up to constants. Our results do not require a true frequentist data generating model, neither do we need contraction properties of the posterior. They also accommodate both dense and sparse parameter regimes, giving the theory wide applicability. An important feature of our results is that we allow for model misspecification, i.e., the observed data $\mathbf{X}$ need not be generated from the assumed model. Moreover, our results for the validity of MFVI do not need the identifiability of parameters in these latent variable models; for complicated hierarchical Bayesian models, identifiability can be tricky to verify and quite often requires further assumptions on the model (see \cite{dunson2000bayesian, gu2023bayesian} and the references therein).

Our analysis of the accuracy of MFVI builds upon and extends the framework of {\it nonlinear large deviations}. The subject of nonlinear large deviations originates from the work of Chatterjee and Dembo \cite{chatterjee2016nonlinear}.  The goal of nonlinear large deviations is to develop a theory of large deviations for nonlinear functions $f(X_1,\cdots,X_n)$ of independent random variables $X_1,\cdots,X_n$. At the core of this framework is an error bound, expressed in terms of the properties of $f$ (such as the gradient and Hessian of $f$), for approximating the log-partition function\slash log-normalizing constant of the Gibbs distribution with density proportional to $e^{f(X_1,\cdots,X_n)}$. In our context, we use these tools to approximate the log-partition function\slash log-normalizing constant of the posterior distribution---which is nothing but the marginal log-likelihood of the observed data $\mathbf{X}$. Lemma~\ref{Lemma2.1} in Section \ref{Sect.2} directly relates the KL divergence between the variational approximation and the true collapsed posterior, our primary quantity of interest, to the log-partition function.

Chatterjee and Dembo's work primarily focused on applications to large deviations of subgraph counts in Erd\"os-R\'enyi random graphs and statistical physics. See \cite{eldan2018gaussian, yan2020nonlinear, augeri2021transportation} and the references therein for recent developments on nonlinear large deviations. Extending this framework, we introduce a novel set of theoretical tools that yield substantially improved error bounds for approximating the log-partition function compared to previous works. A brief summary of the new proof techniques involved is outlined below:
\begin{itemize}
    \item[(i)] In our analysis, we first relate the approximation accuracy of full VI to the approximation accuracy of collapsed VI. We note that this result does not follow directly from existing results in nonlinear large deviations. To obtain this result, we utilize a novel argument that combines bounds on the accuracy of collapsed VI with the Gibbs variational principle. 
    \item[(ii)] To analyze the approximation accuracy of collapsed VI we use local bounds on the Hessian of the  Hamiltonian\footnote{Following terminology from statistical physics, we refer to $\log \mathbb{p}(\pmb{\theta},\mathbf{Z} ,\mathbf{X})$ in~\eqref{ELBO_def} as the `Hamiltonian' of the posterior; similarly, $\log \mathbb{p}(\mathbf{Z}, \mathbf{X})$ is referred to as the Hamiltonian of the collapsed posterior.}
    of the collapsed posterior, instead of `uniform bounds' as used in~\cite{chatterjee2016nonlinear, yan2020nonlinear}. 
    \item[(iii)] We use error terms $\Delta_1(y)$ and $\Delta_2(y)$ (see~\eqref{eq:error_term1} and~\eqref{eq:error_term2}) that differ from those used in \cite{chatterjee2016nonlinear,yan2020nonlinear}; these error terms are easier to control and can be bounded more precisely. At a high level, our error term $\Delta_1(y)$ is obtained via a two-term Taylor's approximation, whereas the error-bound in \cite{chatterjee2016nonlinear, yan2020nonlinear} corresponds to a one-term Taylor's approximation. 
    \item[(iv)] We bound both the first and second moments of the error terms in (iii), with a more precise control on the first moment. In \cite{chatterjee2016nonlinear,yan2020nonlinear}, only second moment bounds are used. To utilize our first and second moment bounds, we derive a new lemma (see Lemma \ref{Lem2.1}) which may be of independent interest.
\end{itemize}
The combination of (ii)-(iv) above results in substantially smaller `smoothness terms' as introduced in Definition \ref{DefSmoothnessTerm} (cf.~\cite[Theorem 1.6]{chatterjee2016nonlinear}). See Section \ref{Sect.2.2} for an overview of the proof strategy and detailed discussions on the technical innovations of our results.

We note that the reduction from full VI to collapsed VI in (i) above is crucial for applications to LDA, MMSB, and many other latent variable models---direct application of existing nonlinear large deviations results to the full posterior fails to capture the validity of MFVI for the full posterior. The improved bounds in (ii)-(iv) are crucial for establishing the validity of collapsed VI with optimal rates of convergence for both LDA and MMSB in a broad parameter regime which captures the entire regime where MFVI works (as presented in Theorems \ref{Theorem_LDA_UBD} and \ref{Theorem_MMSB_UBD}).

For the MMSB model, we show that fully factorized VI does not give an accurate approximation to the true posterior, and propose a partially grouped VI algorithm which yields a precise approximation to the true posterior. To the best of our knowledge, this is the first work to demonstrate that fully factorized VI is provably suboptimal compared to partially grouped VI for the MMSB model.

\subsection{Related work}\label{Sect.1.5}

We summarize previous theoretical results on the validity of MFVI as follows. Wang and Blei \cite{wang2018frequentist} established frequentist consistency and asymptotic normality of MFVI when the parameter dimension is fixed. Notably, their variational posterior approximates the marginal posterior of the global latent variables, with local latent variables integrated out, rather than the full posterior distribution. Zhang and Gao \cite{zhang2020convergence} study convergence rate of variational posterior distributions to the true parameter in nonparametric and high-dimensional inference. Pati, Bhattacharya, and Yang \cite{pati2018statistical} provide general conditions for obtaining optimal risk bounds for point estimates acquired from MFVI. Both \cite{zhang2020convergence} and \cite{pati2018statistical} focus on estimation accuracy in a frequentist Bayesian framework, assuming the existence of a true parameter, but do not assess how well the variational posterior distribution approximates the true posterior. Ray and Szab\'o \cite{ray2022variational} study estimation consistency of MFVI for high-dimensional Bayesian linear regression with sparse priors. Across these works, the assumptions generally involve either a fixed parameter dimension or sparsity constraints (i.e., where only a small fraction of parameters are nonzero), which are designed to ensure that the posteriors ``contract'' around some frequentist true data generating model. Alquier, Ridgway, and Chopin~\cite{Alquier-JMLR-2016} investigate the theoretical properties of variational approximations in the context of Gibbs posteriors\footnote{Unlike standard Bayesian inference, which is based on a likelihood function, Gibbs posteriors arise in generalized Bayesian inference settings where the likelihood is replaced by an empirical risk function.} (also called PAC-Bayes posteriors) and provide finite-sample upper bounds on the prediction risk for the variational approximation. In the context of high-dimensional Bayesian linear regression with i.i.d. regression coefficients, Mukherjee and Sen \cite{mukherjee2022variational} established the leading-order correctness of the mean-field approximation to the log-normalizing constant of the posterior distribution.

Now we discuss previous theoretical work on MFVI that is related to LDA and MMSB. For LDA, \cite{pati2018statistical} obtained results on the approximation accuracy of points estimates derived from MFVI under a sparsity condition. In particular, \cite{pati2018statistical} assumed that the number of topics with a non-zero proportion is much smaller than $K$ and that the number of words with non-zero proportion within each topic is much smaller than $V$. Additionally, \cite{pati2018statistical} worked with frequentist consistency, i.e., assuming data are generated from a true, fixed parameter and studying the concentration of the variational posterior around the true parameter. In \cite{ghorbani2019instability}, an instability of MFVI for topic models was stated. The model discussed in \cite{ghorbani2019instability} is different from the original model in \cite{blei2003latent}: the model in \cite{ghorbani2019instability} assumes that the topic vectors are generated from a Dirichlet\slash Gaussian prior distribution and that the noise distribution is i.i.d. Gaussian. We believe that our results can be extended to this setup, verifying the validity of MFVI if $DK+KV=o(n)$. As \cite{ghorbani2019instability} considers the parameter regime where $K$ is fixed and $V\asymp n$, it is not surprising that MFVI fails in this setting. 

For the stochastic blockmodel---a simpler version of MMSB---earlier theoretical results on MFVI can be found in \cite{bickel2013asymptotic,zhang2020theoretical,gaucher2021optimality}. We note that these works assume a true, fixed parameter and focus on estimation accuracy.

\subsection{Organization}\label{Sect.1.6}

The rest of the paper is organized as follows. In Section \ref{Sect.2}, we state the meta-theorems (Theorems \ref{Theorem2.1} and \ref{Theorem2.2}) for the validity of MFVI for general latent variable models with categorical local latent variables. In Section \ref{Sect.3}, we apply the meta-theorems to LDA and give a proof of Theorem \ref{Theorem_LDA_UBD}. We conclude with a brief discussion in Section~\ref{Sect.5}, highlighting several open directions for future research. Appendix~\ref{Sect.4} presents numerical studies that demonstrate the superior performance of partially grouped VI compared to fully factorized VI for MMSB. The proofs of Theorems \ref{Theorem_LDA_LBD}-\ref{Theorem_MMSB_S} as well as Theorems \ref{Theorem2.1}-\ref{Theorem2.2} are presented in Appendices~\ref{Appendix_A}-\ref{Appendix_D}. Appendix~\ref{Appendix_E} gives some auxiliary lemmas used in the proofs of the main results. In Appendix~\ref{Appendix_F} we present the implementation details of the CAVI algorithm for partially grouped VI in MMSB.

\section{Main results for general latent variable models}\label{Sect.2}

In this section, we establish general results for verifying the validity of MFVI for general latent variable models with categorical local latent variables. Throughout the remainder of the paper, for any finite set $S$, we denote by $|S|$ its cardinality. By convention, we define $0\log{0}:=0,\log{0}:=-\infty$, and $\exp(-\infty):=0$.

\subsection{Main results}\label{Sect.2.1}

Recall our basic setup from Section \ref{Sect.1.1}. We introduce some notation below. We fix a product probability measure $\gamma$ on $\prod_{j=1}^m\mathbb{R}^{d_j}\times \prod_{i=1}^n [K_i]$, the space where the posterior distribution $\mathbb{P}(\pmb{\theta},\mathbf{Z}\mid\mathbf{X})$ resides, and denote it by
\begin{equation}\label{base}
    \gamma=\nu_{1}\otimes \cdots\otimes \nu_{m}\otimes \mu_{1}\otimes\cdots\otimes\mu_{n},
\end{equation}
where $\nu_j$ is a probability distribution on $\mathbb{R}^{d_j}$ for each $j\in [m]$, and $\mu_i$ is a probability distribution on $[K_i]$ for each $i\in[n]$. We also denote
\begin{equation}\label{defmu}
    \nu:=\nu_{1}\otimes \cdots\otimes \nu_{m},\qquad \mbox{and} \qquad \mu:=\mu_{1}\otimes\cdots\otimes\mu_{n}.
\end{equation}
We assume that the conditional distribution of $\pmb{\theta} = (\pmb{\theta}_1, \pmb{\theta}_2,\cdots, \pmb{\theta}_m)$ given $\mathbf{Z}$ and $\mathbf{X}$ factorizes, i.e., 
\begin{equation*}
\frac{d\, \mathbb{P}(\pmb{\theta}\mid\mathbf{Z},\mathbf{X})}{d \nu } \propto \exp\bigg(\sum_{j=1}^m r_j(\pmb{\theta}_j,\mathbf{Z})\bigg), 
\end{equation*}
for some functions $r_1,\ldots, r_m$ (which can depend on $\mathbf{X}$; but since $\mathbf{X}$ is fixed in the posterior distribution, we suppress this dependence for notational simplicity). In particular, this is equivalent to making the following assumption on the posterior density.
\begin{Assumption}\label{Assump:Cond-ind}
    Suppose that the posterior distribution $\mathbb{P}(\pmb{\theta},\mathbf{Z}\mid\mathbf{X})$ has a density, w.r.t.~the base measure $\gamma$, given by
\begin{equation}\label{eq:Post-Form}
\frac{d\, \mathbb{P}(\pmb{\theta},\mathbf{Z}\mid\mathbf{X})}{d \gamma } \propto \exp(r(\pmb{\theta},\mathbf{Z})), \qquad \quad \mbox{where} \quad r(\pmb{\theta},\mathbf{Z}) :=r_0(\mathbf{Z})+\sum_{j=1}^m r_j(\pmb{\theta}_j,\mathbf{Z}),
\end{equation}
for some functions $r_0, r_1,\ldots, r_m$ (here, as before, we hide the dependence of $r(\pmb{\theta},\mathbf{Z})$ on $\mathbf{X}$). 
\end{Assumption}
The above assumption is satisfied by a wide range of Bayesian latent variable models, including LDA and MMSB. Let
\begin{equation}\label{def_f}
    f(\mathbf{Z}) := \log\bigg(\int_{\prod_{j=1}^m\mathbb{R}^{d_j}}\exp(r(\pmb{\theta},\mathbf{Z}))d\nu(\pmb{\theta})\bigg),
\end{equation}
and note that the collapsed posterior $\mathbb{P}(\mathbf{Z}\mid\mathbf{X})$ satisfies
\begin{equation}\label{def_pro}
    \frac{d \mathbb{P}(\mathbf{Z}\mid\mathbf{X})}{d \mu} \propto\exp(f(\mathbf{Z})).
\end{equation} 
Hereafter, we refer to $r(\cdot,\cdot)$ and $f(\cdot)$ as the \emph{Hamiltonians} of the full posterior and the collapsed posterior, respectively.

We denote $\mathbf{K}:=(K_1,\cdots,K_n)$. 
For any possible value of the local latent variables $z=(z_i)_{i\in [n]}\in\prod_{i=1}^n [K_i]$, we let $G(z) \equiv (G_{i,\ell}(z))_{i\in [n],\ell\in [K_i]}$ be the one-hot encoding of $z$ defined as 
\begin{equation}\label{eq:G}
G_{i,\ell}(z) :=\mathbbm{1}_{z_i=\ell} \qquad \mbox{ for all } \;\; i\in[n],\ell\in [K_i].
\end{equation}
Let $\mathcal{Y}_{n,\mathbf{K}}$ be the set of one-hot encodings of elements in $\prod_{i=1}^n[K_i]$ obtained this way, i.e.,
\begin{equation}
  \mathcal{Y}_{n,\mathbf{K}}:=\bigg\{y=(y_{i,\ell})_{i\in[n],\ell\in[K_i]}\in \prod_{i=1}^n\{0,1\}^{K_i}:\sum_{\ell=1}^{K_i} y_{i,\ell}=1 \text{ for every }i\in [n]\bigg\},
\end{equation}
and let $\hat{\mathcal{Y}}_{n,\mathbf{K}}$ be the continuous relaxation of $\mathcal{Y}_{n,\mathbf{K}}$, i.e., 
\begin{equation}\label{eq:Y_nK}
  \hat{\mathcal{Y}}_{n,\mathbf{K}}:=\bigg\{y=(y_{i,\ell})_{i\in[n],\ell\in[K_i]}\in \prod_{i=1}^n [0,1]^{K_i}: \sum_{\ell=1}^{K_i} y_{i,\ell}=1\text{ for every }i\in [n]\bigg\}.
\end{equation}
For any $y  =(y_{i,\ell})_{i\in[n], \ell\in[K_i]}\in \hat{\mathcal{Y}}_{n,\mathbf{K}}$, let $Q_y$ denote the corresponding product probability distribution on $\prod_{i=1}^n[K_i]$, given by, for $z=(z_i)_{i\in [n]} \in \prod_{i=1}^n[K_i]$,
\begin{equation}\label{defQy}
    Q_y(z)=\prod_{i=1}^ny_{i,z_i}.
\end{equation}

For any $y=(y_{i,\ell})_{i\in [n],\ell\in [K_i]}\in \prod_{i=1}^n[0,1]^{K_i}$, $i_0\in[n]$, and $\ell_0\in [K_{i_0}]$, we let $\UII{y}{i_0}{\ell_0} \equiv \big(\UIIsubscr{i}{\ell}{y}{i_0}{\ell_0}\big)_{i\in [n],\ell\in [K_i]}\in \prod_{i=1}^n[0,1]^{K_i}$ be such that 
\begin{equation}\label{def_U}
    \UIIsubscr{i}{\ell}{y}{i_0}{\ell_0}=\begin{cases}
       y_{i,\ell} & \text{ if }i\neq i_0\\
       \mathbbm{1}_{\ell=\ell_0} &  \text{ if }i=i_0
    \end{cases}
\end{equation}
for all $i\in[n]$ and $\ell\in [K_i]$. Thus, for any $y\in\prod_{i=1}^n[0,1]^{K_i}$, $\UII{y}{i_0}{\ell_0}$ just changes the $i_0$th row of $y$, i.e.,  $(y_{i_0,1},\cdots,$ $y_{i_0,K_{i_0}})$, to the $\ell_0$th canonical vector in $\mathbb{R}^{K_{i_0}}$. For any $y=(y_{i,\ell})_{i\in [n],\ell\in [K_i]}\in\prod_{i=1}^n[0,1]^{K_i}$ and $i_0\in [n]$, we let $\UI{y}{i_0} \equiv \big(\UIsubscr{i}{\ell}{y}{i_0}\big)_{i\in [n],\ell\in [K_i]}\in \prod_{i=1}^n [0,1]^{K_i}$ be such that 
\begin{equation}\label{def_Utilde}
    \UIsubscr{i}{\ell}{y}{i_0}=\begin{cases}
       y_{i,\ell} & \text{ if }i\neq i_0\\
       0 &  \text{ if }i=i_0
    \end{cases}
\end{equation}
for all $i\in[n]$ and $\ell\in [K_i]$. Note that for $y\in\prod_{i=1}^n[0,1]^{K_i}$, $\UI{y}{i_0}$ changes the $i_0$th row of $y$ (which is $(y_{i_0,1},\cdots,y_{i_0,K_{i_0}})$) to the zero vector and leaves the other components unchanged.

We assume the existence of a `smooth' function $R(\pmb{\theta},y)$ which extends the definition of $r(\pmb{\theta},z)$ (see~\eqref{eq:Post-Form}) for $z\in\prod_{i=1}^n[K_i]$ (a discrete vector) to $y\in \prod_{i=1}^n[0,1]^{K_i}$. The following makes this precise.
\begin{Assumption}\label{Assump:R}
We assume that there exists a real-valued function $R(\cdot,\cdot)$ on $\prod_{j=1}^m\mathbb{R}^{d_j}\times \prod_{i=1}^n[0,1]^{K_i}$ with the form (cf.~\eqref{eq:Post-Form})
\begin{equation}\label{def_form.R}
    R(\pmb{\theta},y) = R_0(y)+\sum_{j=1}^m R_j(\pmb{\theta}_j,y),\qquad \mbox{ for all } \;\;(\pmb{\theta},y)\in \prod_{j=1}^m\mathbb{R}^{d_j}\times \prod_{i=1}^n[0,1]^{K_i}, 
\end{equation}
such that for any $\pmb{\theta}\in\prod_{j=1}^m\mathbb{R}^{d_j}$, $R(\pmb{\theta},\cdot)$ is twice continuously differentiable in $\prod_{i=1}^n(0,1)^{K_i}$, $R(\pmb{\theta},\cdot)$ and all its first and second order derivatives extend continuously to the boundary, and
\begin{equation}\label{def_R}
    R(\pmb{\theta},G(z))=r(\pmb{\theta}, z), \qquad \mbox{ for all } \;\; z\in \prod_{i=1}^n [K_i].
\end{equation}
\end{Assumption}

In a similar manner, we assume that there exists a `smooth' function $F(\cdot)$ on $\prod_{i=1}^n[0,1]^{K_i}$ which extends the definition of $f(\cdot)$.
\begin{Assumption}\label{Assump:F}
We assume that $F(\cdot)$ is twice continuously differentiable in $\prod_{i=1}^n (0,1)^{K_i}$ and that $F(\cdot)$ and all its first and second order derivatives extend continuously to the boundary of its domain. We further assume that \begin{equation}\label{eq:F}
F(y)=\log\bigg(\int_{\prod_{j=1}^m\mathbb{R}^{d_j}}\exp(R(\pmb{\theta},y))d\nu(\pmb{\theta})\bigg),\qquad \mbox{ for all } \;\; y\in\hat{\mathcal{Y}}_{n,\mathbf{K}}.
\end{equation}
\end{Assumption}

By \eqref{def_f} and \eqref{def_R}, the above display implies that for any $z\in\prod_{i=1}^n [K_i]$,
\begin{equation}\label{eq:FG}
    F(G(z))=f(z).
\end{equation}
Thus, $F(y)$ is essentially an extension of $f(z)$ defined for any $y\in\prod_{i=1}^n[0,1]^{K_i}$ instead of the discrete vector $z\in\prod_{i=1}^n[K_i]$. 

For any $y\in\prod_{i=1}^n [0,1]^{K_i}$ and $i,j\in [n],\ell\in[K_i],\ell'\in [K_j]$, we define 
\begin{equation}
    F_{i,\ell}(y):=\frac{\partial F(y)}{\partial y_{i,\ell}}, \qquad \qquad F_{i,\ell;j,\ell'}(y):=\frac{\partial^2 F(y)}{\partial y_{i,\ell}\partial y_{j,\ell'}}.
\end{equation}
For any $\pmb{\theta}\in\prod_{j=1}^m\mathbb{R}^{d_j}$, $y\in\prod_{i=1}^n[0,1]^{K_i}$, and $i,j\in [n],\ell\in[K_i],\ell'\in[K_j]$, we define
\begin{equation}\label{def_Rde}
    R_{i,\ell}(\pmb{\theta},y):=\frac{\partial R(\pmb{\theta},y)}{\partial y_{i,\ell}}, \qquad \qquad R_{i,\ell;j,\ell'}(\pmb{\theta},y):=\frac{\partial^2 R(\pmb{\theta},y)}{\partial y_{i,\ell}\partial y_{j,\ell'}}.
\end{equation}
For any $y\in \hat{\mathcal{Y}}_{n,\mathbf{K}}$, we define
\begin{equation}\label{def_I}
I(y):=\sum_{i=1}^n\sum_{\ell=1}^{K_i}y_{i,\ell}\log\bigg(\frac{y_{i,\ell}}{\mu_i(\ell)}\bigg),
\end{equation}
where we recall the definition of $\mu_i$ from~\eqref{defmu}; here $y_{i,\ell}\log\Big(\frac{y_{i,\ell}}{\mu_i(\ell)}\Big):=0$ if $y_{i,\ell}=0$. Note that $I(y)$ is the KL divergence between the probability distribution $Q_y$ (defined in~\eqref{defQy}) and the base measure $\mu$ in~\eqref{defmu}.

We now introduce the main terms that will appear in the error bounds in our results. Definitions \ref{Global_GH}-\ref{Local_Hess_new} below pertain to the gradients and Hessians of the functions $F(\cdot)$ and $R(\cdot,\cdot)$.

\begin{definition}[Uniform gradient and Hessian bounds of $F$ and $R$]\label{Global_GH}
For any $i\in [n]$ and $\ell\in [K_i]$, we define
\begin{equation}\label{def_b}
    b_{i,\ell}:=\sup_{y\in\prod_{i=1}^n[0,1]^{K_i}} |F_{i,\ell}(y)|.
\end{equation}
For any $i,j\in [n]$ and $\ell\in[K_i],\ell'\in[K_j]$, we define
\begin{equation*}
    c_{i,\ell;j,\ell'}:=\sup_{y\in\prod_{i=1}^n[0,1]^{K_i}} |F_{i,\ell;j,\ell'}(y)|, \qquad \qquad  \ctilde_{i,\ell;j,\ell'}:=\sup_{\substack{\pmb{\theta}\in\,\mathrm{supp}(\nu),\\ y\in\prod_{i=1}^n[0,1]^{K_i}}} |R_{i,\ell;j,\ell'}(\pmb{\theta},y)|,
\end{equation*}
where $\mathrm{supp}(\nu)$ is the support of $\nu$.
\end{definition}

\begin{definition}[Local Hessian bound I]\label{Local_Hess}
 For any $y=(y_{i,\ell})_{i\in[n],\ell\in[K_i]}\in\prod_{i=1}^n[0,1]^{K_i}$, we define $\NI{y}$ to be the set of $y'=(y'_{i,\ell})_{i\in[n],\ell\in[K_i]}\in\prod_{i=1}^n[0,1]^{K_i}$ such that for some $i_0,i_0'\in [n]$, we have $y'_{i,\ell}=y_{i,\ell}$ for all $i\in[n]\backslash\{i_0,i_0'\}$ and $\ell\in [K_i]$. For any $i,j\in [n]$ and $\ell\in[K_i],\ell'\in [K_j]$, we define
\begin{equation*}
     \HI{i}{\ell}{j}{\ell'}{y}:=\sup_{y'\in\NI{y}} |F_{i,\ell;j,\ell'}(y')|. 
\end{equation*}
\end{definition}
\begin{definition}[Local Hessian bound II]\label{Local_Hess_new}
For any $y=(y_{i,\ell})_{i\in[n],\ell\in[K_i]}\in\prod_{i=1}^n[0,1]^{K_i}$ and $M\geq 1$, we define $\NII{y}{M}$ to be the set of $y'=(y'_{i,\ell})_{i\in[n],\ell\in[K_i]}\in\prod_{i=1}^n[0,1]^{K_i}$ such that for some $i_0\in [n]$, we have $M^{-1}y_{i,\ell}\leq y'_{i,\ell} \leq M y_{i,\ell}$ for all $i\in[n]\backslash\{i_0\}$ and $\ell\in[K_i]$. For any $i,j\in [n]$ and $\ell\in[K_i],\ell'\in [K_j]$, we define
\begin{equation*}
    \HII{i}{\ell}{j}{\ell'}{y}{M}:=\sup_{y'\in\NII{y}{M}} |F_{i,\ell;j,\ell'}(y')|. 
\end{equation*}
\end{definition} 
\begin{remark}
In Definitions \ref{Local_Hess} and \ref{Local_Hess_new}, $\NI{y}$ and $\NII{y}{M}$ can be interpreted as `neighborhoods' of $y$. $\NI{y}$ denotes the neighborhood of $y$ consisting of all $y' \in \prod_{i=1}^n[0,1]^{K_i}$ that are different from $y$ in at most two `rows', whereas $\NII{y}{M}$ denotes the neighborhood of $y$ consisting of all $y'$ where the ratio of each entry of $y'$ to the corresponding entry in $y$ is uniformly (lower and upper) bounded (by $M^{-1}$ and $M$), except for one `row' of $y$. Note that $\HI{i}{\ell}{j}{\ell'}{y}$ and $\HII{i}{\ell}{j}{\ell'}{y}{M}$ provide upper bounds on the absolute values of the second order partial derivatives of $F$ in these neighborhoods.
\end{remark}

Definitions \ref{DefSmoothnessTerm} and \ref{DefComplexityTerm} below introduce the error terms (the smoothness term $E_1$ and the complexity term $E_2(\epsilon)$) that appear in Theorems \ref{Theorem2.1} and \ref{Theorem2.2}. 

\begin{definition}[Smoothness term]\label{DefSmoothnessTerm}
For any $j\in [n]$, we define
\begin{equation}\label{defM}
    M_j:=\exp\Big(2\max_{i\in[n]}\max_{\substack{\ell\in[K_i],\ell'\in [K_j]}}\{c_{i,\ell;j,\ell'}\}\Big).
\end{equation}
For any $y\in\prod_{i=1}^n[0,1]^{K_i}$ and $i,j\in [n]$, we define
\begin{equation*}
    \Phi_{i,j}(y):=\exp\bigg(2\sum_{\ell'=1}^{K_j}\max_{\ell\in [K_i]}\big\{\HI{i}{\ell}{j}{\ell'}{y}\big\}y_{j,\ell'}\bigg)-1.  
\end{equation*}
Now we define the `smoothness term' as
\begin{eqnarray}\label{E2.1}
    E_1&:=& \sup_{y\in\hat{\mathcal{Y}}_{n,\mathbf{K}}}\bigg\{\sum_{i=1}^n\max_{j\in[n]}\big\{\Phi_{j,i}(y)\big\}\bigg\} \cdot \sup_{\substack{y\in\hat{\mathcal{Y}}_{n,\mathbf{K}},\\i\in[n],\ell\in[K_i]}}\bigg\{\sum_{j=1}^n\sum_{\ell'=1}^{K_j} \HII{i}{\ell}{j}{\ell'}{y}{M_i}y_{j,\ell'}\bigg\}\nonumber\\
    &&\hspace{0.5in} +\sup_{y\in\hat{\mathcal{Y}}_{n,\mathbf{K}}}\bigg\{\sum_{i=1}^n\max_{\ell\in[K_i]}\bigg\{\sum_{\ell'=1}^{K_i} \HII{i}{\ell}{i}{\ell'}{y}{M_i} y_{i,\ell'}\bigg\}\bigg\}.
\end{eqnarray}
For any $t>0$, we define (recall~\eqref{def_b})
\begin{equation}\label{def_L}
    \Lambda(t):=\log\bigg(2+\frac{\sum_{i=1}^n\max_{\ell\in[K_i]}\{b_{i,\ell}\}}{t}\bigg).
\end{equation}
\end{definition}
\begin{remark}\label{Remark2.3}
Note that for any $y\in\hat{\mathcal{Y}}_{n,\mathbf{K}}$ and $i,j\in [n]$, as $\sum_{\ell'=1}^{K_j} y_{j,\ell'} = 1$,
{\small
\begin{equation}\label{E3.5}
    \;\;\; \sum_{\ell'=1}^{K_j}\max_{\ell\in [K_i]}\big\{\HI{i}{\ell}{j}{\ell'}{y}\big\}y_{j,\ell'}\leq \max_{\ell\in[K_i],\ell'\in [K_j]}\{c_{i,\ell;j,\ell'}\}.
\end{equation}
}Hence, using the definition of $\Phi_{i,j}$ and~\eqref{defM}, we get
\begin{equation}\label{Bounds.Phi}
    \Phi_{i,j}(y)\leq M_{j}-1. 
\end{equation}
\end{remark}

\begin{definition}[Complexity term]\label{DefComplexityTerm}
For any $\epsilon>0$, let $\mathcal{D}(\epsilon)\subseteq\prod_{i=1}^n\mathbb{R}^{K_i}$ be a finite set such that for any $y\in\hat{\mathcal{Y}}_{n,\mathbf{K}}$, there exists $d=(d_{i,\ell})_{i\in[n],\ell\in[K_i]}\in\mathcal{D}(\epsilon)$ that satisfies 
\begin{equation}\label{Eq2.17}
   \sum_{i=1}^n \max_{\ell\in[K_i]}\big|F\big(\UII{y}{i}{\ell}\big)-F\big(\UI{y}{i}\big)-d_{i,\ell}\big|\leq \epsilon.
\end{equation}
We define the `complexity term' 
\begin{equation}\label{def_E}
    E_2(\epsilon):=2\epsilon+\log(|\mathcal{D}(\epsilon)|).
\end{equation}
\end{definition}

We denote by
\begin{equation}\label{defS}
S_{n,\mathbf{K}}:=\sum_{z\,\in\, \prod_{i=1}^n [K_i]} e^{f(z)}\mu(z)
\end{equation}
the partition function\slash normalizing constant of the collapsed posterior $\mathbb{P}(\mathbf{Z}\mid\mathbf{X})$. The following simple lemma connects the log-partition function $\log S_{n,\mathbf{K}}$ with the KL divergence between $\hat{Q}$ and $\mathbb{P}(\mathbf{Z}\mid\mathbf{X})$; see Appendix~\ref{Proof:Lemma2.1} for a proof (cf.~\cite[Section 5.2]{wainwright2008graphical}).

\begin{lemma}\label{Lemma2.1}
Let $Q_y$ denote the product probability distribution determined by $y\in\hat{\mathcal{Y}}_{n,\mathbf{K}}$ as in~\eqref{defQy}. Then we have 
\begin{itemize}
    \item[(a)] $\KL\big(\hat{Q}\,\big\| \; \mathbb{P}(\mathbf{Z}\mid\mathbf{X})\big)=\log S_{n,\mathbf{K}} -\sup_{y\in\hat{\mathcal{Y}}_{n,\mathbf{K}}}\{\mathbb{E}_{Q_y}[f(\mathbf{Z})]-I(y)\}$;
    \item[(b)] if Assumption~\ref{Assump:F} holds, then $|\mathbb{E}_{Q_y}[f(\mathbf{Z})]-F(y)|\leq 2\sum_{i=1}^n\max_{\ell,\ell'\in[K_i]} \{c_{i,\ell;i,\ell'}\}$ for every $y\in\hat{\mathcal{Y}}_{n,\mathbf{K}}$; 
 \item[(c)] if \eqref{eq:FG} holds for all $z\in\prod_{i=1}^n[K_i]$ and $F(\cdot)$ is a convex function on $\prod_{i=1}^n[0,1]^{K_i}$, then $\mathbb{E}_{Q_y}[f(\mathbf{Z})]\geq F(y)$ for every $y\in\hat{\mathcal{Y}}_{n,\mathbf{K}}$.
\end{itemize}
\end{lemma}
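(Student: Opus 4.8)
The plan is to treat the three parts separately; parts (a) and (c) will be short consequences of, respectively, the Gibbs variational principle and Jensen's inequality, while part (b) carries the bulk of the work.

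For part (a), I would begin from the variational characterization of $\hat{Q}$ in \eqref{vari3.2}. By \eqref{def_pro} and \eqref{defS}, the collapsed posterior has mass $\mathbb{P}(\mathbf{Z}=z\mid\mathbf{X})=e^{f(z)}\mu(z)/S_{n,\mathbf{K}}$, so for an arbitrary $y\in\hat{\mathcal{Y}}_{n,\mathbf{K}}$ a direct expansion gives $\KL\big(Q_y\,\|\;\mathbb{P}(\mathbf{Z}\mid\mathbf{X})\big)=\sum_z Q_y(z)\log\frac{Q_y(z)}{\mu(z)}-\mathbb{E}_{Q_y}[f(\mathbf{Z})]+\log S_{n,\mathbf{K}}$. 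Because $Q_y$ and $\mu$ are both product measures, the first term factorizes across the $n$ blocks and equals $I(y)$ as defined in \eqref{def_I}. Since the family $\mathcal{Q}$ of product distributions on $\prod_{i=1}^n[K_i]$ is in bijection with $\hat{\mathcal{Y}}_{n,\mathbf{K}}$ via $y_{i,\ell}=\zeta_i(\ell)$ (recall \eqref{defQy}), minimizing $\KL(\,\cdot\,\|\,\mathbb{P}(\mathbf{Z}\mid\mathbf{X}))$ over $\mathcal{Q}$ is the same as minimizing over $y$, and taking the minimum yields precisely the identity in (a).

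For part (c), the key elementary fact is $\mathbb{E}_{Q_y}[G_{i,\ell}(\mathbf{Z})]=Q_{y,i}(\ell)=y_{i,\ell}$ (from \eqref{eq:G} and \eqref{defQy}), so $\mathbb{E}_{Q_y}[G(\mathbf{Z})]=y$. Combining this with \eqref{eq:FG}, the convexity of $F$, and Jensen's inequality gives $\mathbb{E}_{Q_y}[f(\mathbf{Z})]=\mathbb{E}_{Q_y}[F(G(\mathbf{Z}))]\ge F(\mathbb{E}_{Q_y}[G(\mathbf{Z})])=F(y)$, which is (c).

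For part (b), I would control the Jensen gap $\mathbb{E}_{Q_y}[F(G(\mathbf{Z}))]-F(y)$ by a block-wise telescoping argument that exploits the independence of the blocks $(G_i(\mathbf{Z}))_{i\in[n]}$ under $Q_y$. Introduce hybrid points $Y^{(k)}$ obtained from $G(\mathbf{Z})$ by replacing the first $k$ blocks by their means $y_1,\dots,y_k$; then $Y^{(0)}=G(\mathbf{Z})$, $Y^{(n)}=y$, and $\mathbb{E}_{Q_y}[F(G(\mathbf{Z}))]-F(y)=\sum_{k=1}^n\mathbb{E}[F(Y^{(k-1)})-F(Y^{(k)})]$. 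The $k$th summand differs only in block $k$, so conditioning on the remaining blocks and applying a second-order Taylor expansion in the block-$k$ coordinates makes the linear term vanish (since $\mathbb{E}[G_k(\mathbf{Z})]=y_k$), leaving a remainder governed only by the \emph{diagonal} Hessian entries $F_{k,\ell;k,\ell'}$, each bounded in absolute value by $c_{k,\ell;k,\ell'}$ from Definition~\ref{Global_GH}. Using the one-hot deviation bound $\sum_{\ell}|G_{k,\ell}(\mathbf{Z})-y_{k,\ell}|\le 2$ together with $|(G_k-y_k)^\top H (G_k-y_k)|\le \max_{\ell,\ell'}|H_{\ell,\ell'}|\,\big(\sum_\ell|G_{k,\ell}-y_{k,\ell}|\big)^2$ gives a per-block bound of $2\max_{\ell,\ell'}c_{k,\ell;k,\ell'}$, and summing over $k$ via the triangle inequality yields (b). The main obstacle is precisely this step: one must set up the telescoping so that each increment perturbs a single block, which is what forces only the within-block Hessian to appear and pins down the constant $2$; the remaining estimates are routine.
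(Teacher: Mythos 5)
Your proofs of parts (a) and (c) coincide with the paper's: the same expansion $\KL(Q_y\,\|\,\mathbb{P}(\mathbf{Z}\mid\mathbf{X}))=\log S_{n,\mathbf{K}}-\mathbb{E}_{Q_y}[f(\mathbf{Z})]+I(y)$ plus the bijection between product measures and $\hat{\mathcal{Y}}_{n,\mathbf{K}}$ for (a), and Jensen applied to $f(\mathbf{Z})=F(G(\mathbf{Z}))$ with $\mathbb{E}_{Q_y}[G(\mathbf{Z})]=y$ for (c). For part (b), however, you take a genuinely different and correct route. The paper uses a one-term Taylor expansion with integral remainder, $\mathbb{E}_{Q_y}[F(G(\mathbf{Z}))]-F(y)=\sum_{i,\ell}\int_0^1\mathbb{E}_{Q_y}\big[F_{i,\ell}(tG(\mathbf{Z})+(1-t)y)(G_{i,\ell}(\mathbf{Z})-y_{i,\ell})\big]dt$, and then subtracts the zero-mean term in which the gradient is evaluated at the decoupled point $t\,\UI{G(\mathbf{Z})}{i}+(1-t)y$ (zero by independence, since that factor does not depend on $Z_i$); the mean value theorem applied to the gradient difference, which perturbs only block-$i$ coordinates, then yields the within-block bound $\sum_{\ell'}c_{i,\ell;i,\ell'}G_{i,\ell'}(\mathbf{Z})$ and the constant $2$ after summing, using $\sum_{\ell'}G_{i,\ell'}=1$ and $\sum_{\ell}(G_{i,\ell}+y_{i,\ell})=2$. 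Your Lindeberg-style telescoping over the hybrid points $Y^{(k)}$ with a second-order Taylor expansion per block achieves the same end: independence of the blocks under $Q_y$ kills the linear term because $\nabla_kF(Y^{(k)})$ depends only on blocks $j>k$, the segment between $Y^{(k-1)}$ and $Y^{(k)}$ stays inside $\prod_{i=1}^n[0,1]^{K_i}$ so the suprema $c_{k,\ell;k,\ell'}$ of Definition~\ref{Global_GH} apply, and the Taylor factor $\tfrac12$ combined with $\big(\sum_{\ell}|G_{k,\ell}-y_{k,\ell}|\big)^2\le 4$ recovers the identical constant $2\sum_{i=1}^n\max_{\ell,\ell'\in[K_i]}c_{i,\ell;i,\ell'}$. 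Both arguments exploit the same two ingredients---product structure of $Q_y$ to annihilate the first-order term, and purely within-block Hessian control---and require the same regularity (Assumption~\ref{Assump:F}). What your version buys is a more familiar and transparent mechanism that localizes directly to the diagonal Hessian blocks; what the paper's version buys is uniformity of machinery: the same gradient-centering trick with $\UIM$ is reused essentially verbatim in the proofs of Propositions~\ref{P2.1main} and~\ref{P2.2main}, and again (with $R$ in place of $F$) in the proof of Theorem~\ref{Theorem2.2}, so one device serves all the error bounds.
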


The main results of this section are presented in Theorems \ref{Theorem2.1} and \ref{Theorem2.2} below. Theorem \ref{Theorem2.1}, proved in Appendix~\ref{App:Proof-Upp-Bd}, provides an upper bound on the KL divergence between the collapsed variational posterior $\hat{Q}$ (defined in (\ref{vari3.2})) and the collapsed posterior $\mathbb{P}(\mathbf{Z}\mid\mathbf{X})$. 

\begin{theorem}\label{Theorem2.1}
Suppose Assumption~\ref{Assump:F} holds. There is an absolute positive constant $C$, such that for any $\epsilon,t>0$ such that $t\geq E_1$, the following holds:  
\begin{itemize}
    \item[(a)] we have the following upper and lower bounds on the log-partition function:
\begin{eqnarray*}
    \log S_{n,\mathbf{K}} &\leq & \sup_{y\in\hat{\mathcal{Y}}_{n,\mathbf{K}}}\{F(y)-I(y)\}+C(t + \Lambda(t)) +E_2(\epsilon), \quad \mbox{and} \\
    \log S_{n,\mathbf{K}} & \geq & \sup_{y\in\hat{\mathcal{Y}}_{n,\mathbf{K}}}\{F(y)-I(y)\}-2\sum_{i=1}^n\max_{\ell,\ell'\in[K_i]} \{c_{i,\ell;i,\ell'}\};
\end{eqnarray*}
\item[(b)] moreover, we can upper bound the error of MFVI by 
\begin{eqnarray*}
   \frac{1}{n}\KL\big(\hat{Q} \,\big\| \; \mathbb{P}(\mathbf{Z}\mid\mathbf{X}) \big)\nonumber &\leq& 
    \frac{C (t+ \Lambda(t))+E_2(\epsilon)+2\sum_{i=1}^n\max_{\ell,\ell'\in[K_i]} \{c_{i,\ell;i,\ell'}\}}{n};
\end{eqnarray*}
\item[(c)] finally, if $F(\cdot)$ is a convex function on $\prod_{i=1}^n[0,1]^{K_i}$, then 
\begin{eqnarray*}
    \log S_{n,\mathbf{K}} & \geq & \sup_{y\in\hat{\mathcal{Y}}_{n,\mathbf{K}}}\{F(y)-I(y)\}, \qquad \mbox{and}\\
    \frac{1}{n}\KL\big(\hat{Q} \,\big\| \; \mathbb{P}(\mathbf{Z}\mid\mathbf{X}) \big) & \leq &  \frac{C(t+\Lambda(t))+E_2(\epsilon)}{n}.
\end{eqnarray*}
\end{itemize}
\end{theorem}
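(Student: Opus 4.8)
The plan is to treat the two–sided bound on $\log S_{n,\mathbf{K}}$ in part (a) as the core, since parts (b) and (c) follow formally from it via the exact identity in Lemma~\ref{Lemma2.1}(a). The lower bound in (a) is the easy direction: by the Gibbs variational principle (Donsker--Varadhan), $\log S_{n,\mathbf{K}}=\sup_{Q}\{\mathbb{E}_{Q}[f(\mathbf{Z})]-\KL(Q\,\|\,\mu)\}$ over all probability measures $Q$ on $\prod_{i=1}^n[K_i]$, so restricting the supremum to product measures $Q_y$, for which $\KL(Q_y\,\|\,\mu)=I(y)$, gives $\log S_{n,\mathbf{K}}\ge \sup_{y\in\hat{\mathcal{Y}}_{n,\mathbf{K}}}\{\mathbb{E}_{Q_y}[f(\mathbf{Z})]-I(y)\}$. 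Replacing $\mathbb{E}_{Q_y}[f(\mathbf{Z})]$ by $F(y)$ through Lemma~\ref{Lemma2.1}(b), at the cost of $2\sum_i\max_{\ell,\ell'}c_{i,\ell;i,\ell'}$, produces the stated lower bound; when $F$ is convex, Lemma~\ref{Lemma2.1}(c) removes this error and yields the sharper lower bound in (c). For (b) I would then substitute both bounds of (a) into the identity $\KL(\hat{Q}\,\|\,\mathbb{P}(\mathbf{Z}\mid\mathbf{X}))=\log S_{n,\mathbf{K}}-\sup_{y}\{\mathbb{E}_{Q_y}[f]-I(y)\}$ and again invoke Lemma~\ref{Lemma2.1}(b) (resp.\ (c) for the convex case) before dividing by $n$.

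The upper bound on $\log S_{n,\mathbf{K}}$ is the heart of the argument and is where the nonlinear large deviations machinery enters. I would run a covering/tilting argument under the Gibbs (collapsed posterior) measure with density proportional to $e^{f(\mathbf{Z})}$ with respect to $\mu$. The crucial observation is that the conditional law of coordinate $i$ given the rest is proportional to $\mu_i(\ell)\exp\!\big(F(\UII{G(\mathbf{Z})}{i}{\ell})\big)$, so the discrete-gradient profile $F(\UII{G(\mathbf{Z})}{i}{\ell})-F(\UI{G(\mathbf{Z})}{i})$ — precisely the quantity covered at resolution $\epsilon$ by the set $\mathcal{D}(\epsilon)$ of Definition~\ref{DefComplexityTerm} — encodes all the single-site conditionals. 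Conditioning on which cell of $\mathcal{D}(\epsilon)$ this profile lands in partitions the partition function into at most $|\mathcal{D}(\epsilon)|$ pieces, and within each cell the single-site conditionals agree, up to the $\epsilon$-resolution, with a fixed product measure $Q_y$. A two-term Taylor expansion of $F$ along the segment from $\UI{y}{i}$ to $\UII{y}{i}{\ell}$ then lets me replace $\mathbb{E}_{Q_y}[f]$ by $F(y)$ against the entropy $I(y)$, producing the dominant term $\sup_y\{F(y)-I(y)\}$; the cell count contributes $\log|\mathcal{D}(\epsilon)|$ and the resolution $2\epsilon$, i.e.\ exactly $E_2(\epsilon)$.

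The delicate part is controlling the Taylor remainders sharply, and this is where the paper's technical innovations are needed. Rather than a uniform Hessian bound I would use the local bounds $\HIM$ and $\HIIM$ of Definitions~\ref{Local_Hess} and~\ref{Local_Hess_new}, evaluated on the neighborhoods $\NIM$ and $\NIIM$ of the relevant $y$; the multiplicative window $M_i=\exp(2\max c)$ that defines $\NII{y}{M_i}$ is exactly the range within which the tilting can move each row's marginals, as controlled by the uniform cross-Hessian. Aggregating the first-order remainder over coordinates introduces the factors $\Phi_{i,j}(y)$ (built from $\HIM$) multiplying the summed mixed local second derivatives $\HIIM$, together with the diagonal $\HIIM$ contribution — precisely the two pieces of the smoothness term $E_1$. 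The level $t\ge E_1$ is the scale at which I truncate the gradients, whose total range is $\sum_i\max_\ell b_{i,\ell}$; covering this truncated range and a Chernoff/union bound over the truncation produce the contribution $C(t+\Lambda(t))$ with $\Lambda(t)=\log\!\big(2+t^{-1}\sum_i\max_\ell b_{i,\ell}\big)$. I expect the main obstacle to be the moment control of the remainder: one must bound \emph{both} the first and second moments of the error terms under the tilted measure, with the first moment controlled more precisely than the second, and then combine the two estimates into a single exponential-moment bound, which I would isolate as a separate auxiliary lemma. This first-moment refinement, absent in \cite{chatterjee2016nonlinear, yan2020nonlinear}, is exactly what keeps the smoothness contribution at the order $E_1$ rather than the cruder order forced by a uniform-Hessian bound.
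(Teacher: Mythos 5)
Your proposal is correct and follows essentially the same route as the paper's proof: parts (b)--(c) and the lower bound in (a) via Lemma~\ref{Lemma2.1} (your Donsker--Varadhan phrasing is the same content), and the upper bound by restricting to the posterior event on which the two-term-Taylor error terms $\Delta_1+\Delta_2$ are $O(t)$ --- controlled by first- \emph{and} second-moment bounds (Propositions~\ref{P2.1main}--\ref{P2.2main}, with the local Hessian bounds and the Gr\"onwall-type perturbation lemma producing $E_1$) --- and then covering only the tilt $J(G(z),T(G(z)))$ by $\mathcal{D}(\epsilon)$, so that each cell sums to a probability-one product measure and the cost is exactly $E_2(\epsilon)$. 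One minor mislabel: the auxiliary lemma combining the two moment bounds (Lemma~\ref{Lem2.1}) is not an exponential-moment or Chernoff bound --- from first and second moments alone one only gets the Paley--Zygmund-type probability lower bound $\mathbb{P}(W\leq 2A)\geq A^2/(4(A^2+B))$ for the good event, which is precisely what makes the restriction to $\mathcal{A}(t)$ cost only $C\Lambda(t)$.
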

\begin{remark}[The term $\Lambda(t)$]
In relevant applications, including both LDA and MMSB, $\Lambda(t)$ is typically of logarithmic order and is of smaller magnitude than $t$ (see~\eqref{Eq.A33} in Section \ref{Sect.3} and~\eqref{Step2_result} in Appendix \ref{Appendix_B}). Thus, by taking $t\approx E_1$, we expect $t+\Lambda(t) \approx t \approx E_1$. 
\end{remark}
Theorem \ref{Theorem2.1} gives results on the approximation of the log-partition function $\log S_{n,\mathbf{K}}$ and also on the (normalized) KL divergence $\frac{1}{n}\KL (\hat{Q} \, \| \; \mathbb{P}(\mathbf{Z}\mid\mathbf{X}) )$. The bound on the KL divergence follows immediately from the bound on $\log S_{n,\mathbf{K}}$, using Lemma~\ref{Lemma2.1}. Theorem \ref{Theorem2.2} below, proved in Appendix~\ref{App:Proof-Upp-Bd-Full}, provides an upper bound on the KL divergence between the variational posterior $\hat{P}$ (as defined in (\ref{vari3.1})) and the full posterior distribution $\mathbb{P}(\pmb{\theta},\mathbf{Z}\mid\mathbf{X})$. 

\begin{theorem}\label{Theorem2.2}
Suppose Assumptions~\ref{Assump:Cond-ind}-\ref{Assump:F} hold. Then for any $\epsilon,t>0$ such that $t\geq E_1$, we have 
\begin{align*}
 & \frac{1}{n} \KL\big(\hat{P} \,\big\| \; \mathbb{P}(\pmb{\theta},\mathbf{Z}\mid\mathbf{X}) \big)\nonumber\\
  \leq& \;\frac{\log S_{n,\mathbf{K}}-\sup_{y\in\hat{\mathcal{Y}}_{n,\mathbf{K}}}\{F(y)-I(y)\}}{n}+\frac{2\sum_{i=1}^n\max_{\ell,\ell'\in[K_i]}\{\ctilde_{i,\ell;i,\ell'}\}}{n}\nonumber\\
  \leq& \;\frac{C(t+\Lambda(t))+E_2(\epsilon)+2\sum_{i=1}^n\max_{\ell,\ell'\in [K_i]}\{\ctilde_{i,\ell;i,\ell'}\}}{n},
\end{align*}
where $C$ is an absolute positive constant.
\end{theorem}

Theorems \ref{Theorem2.1} and \ref{Theorem2.2} significantly improve upon previous results on nonlinear large deviations (cf.~\cite{chatterjee2016nonlinear,yan2020nonlinear}). The error bounds in these results yield sharp rates of convergence for LDA and MMSB. We provide a high-level overview of the proof strategy for Theorem \ref{Theorem2.1} in Section \ref{Sect.2.2} below. 

\subsection{Overview of the proof strategy and technical highlights}\label{Sect.2.2}

The proof of Theorem \ref{Theorem2.1} involves several novel ingredients. Let $\mathbf{Y}=(Y_{i,\ell})_{i\in[n],\ell\in [K_i]}:=G(\mathbf{Z})$ (recall~\eqref{eq:G}) be the one-hot encoding of $\mathbf{Z}$ where $\mathbf{Z}\sim \mathbb{P}(\mathbf{Z}\mid\mathbf{X})$. 
\begin{definition}[Conditional probability of one coordinate given the others]\label{DefTy}
For any $y=(y_{i,\ell})_{i\in [n],\ell\in [K_i]} $ $ \in\prod_{i=1}^n[0,1]^{K_i}$, let $T(y) \equiv (T_{i,\ell}(y))_{i\in [n], \ell\in [K_i]}\in \hat{\mathcal{Y}}_{n,\mathbf{K}}$ be defined as
\begin{equation}\label{T.exp}
    T_{i,\ell}(y) :=\frac{e^{F(\UII{y}{i}{\ell})}\mu_i(\ell)}{\sum_{s=1}^{K_i} e^{F(\UII{y}{i}{s})}\mu_i(s)},\qquad \text{for all }i\in[n],\ell\in[K_i],
\end{equation}
where $U^{(1)}(y;i,\ell)$ is as in \eqref{def_U}.
\end{definition}
Note that for any $i\in [n]$ and $\ell\in [K_i]$ we have
$
F\big(\UII{\mathbf{Y}}{i}{\ell}\big)=F(G(Z_1,\cdots,Z_{i-1},\ell,Z_{i+1},$ $\cdots, Z_n))=f(Z_1,\cdots,Z_{i-1},\ell,Z_{i+1},\cdots,Z_n),$
and consequently, $$T_{i,\ell}(\mathbf{Y})=\mathbb{P}\big(Z_i=\ell\mid(Z_j)_{j\in[n]\backslash\{i\}}, \mathbf{X}\big)=\mathbb{E}\big[Y_{i,\ell}\mid (Z_j)_{j\in[n]\backslash\{i\}}, \mathbf{X}\big].$$

\begin{definition}[Error terms]\label{Def.A3}
For any $y,y'\in\prod_{i=1}^n[0,1]^{K_i}$, let
\begin{equation}\label{def_J}
J(y,y'):=\sum_{i=1}^n\sum_{\ell=1}^{K_i} y_{i,\ell}\log\bigg(\frac{y'_{i,\ell}}{\mu_i(\ell)}\bigg).
\end{equation}
In particular, note that $J(y,y)=I(y)$ for any $y\in \hat{\mathcal{Y}}_{n,\mathbf{K}}$. For any $y\in \hat{\mathcal{Y}}_{n,\mathbf{K}}$, we define the error terms as (recall $F$ and $I$ from~\eqref{eq:F} and~\eqref{def_I})
\begin{eqnarray}
    \Delta_{1}(y) & := & F(y)-F(T(y))-\sum_{i=1}^n\sum_{\ell=1}^{K_i} F_{i,\ell}(y)(y_{i,\ell}-T_{i,\ell}(y)), \label{eq:error_term1}\\
    \Delta_{2}(y) & := & I(T(y))-J(y,T(y))+\sum_{i=1}^n\sum_{\ell=1}^{K_i} F_{i,\ell}(y)(y_{i,\ell}-T_{i,\ell}(y)).\label{eq:error_term2}
\end{eqnarray}
\end{definition}

We note that the error terms $\Delta_1(y)$ and $\Delta_2(y)$ differ from those used in \cite{chatterjee2016nonlinear,yan2020nonlinear}. In our notation, the error terms used in these works are $F(y)-F(T(y))$ and $I(T(y))-J(y,T(y))$. By keeping track of the extra term $ \sum_{i=1}^n\sum_{\ell=1}^{K_i}F_{i,\ell}(y)(y_{i,\ell}-T_{i,\ell}(y))$ we are able to obtain much more precise control over $\Delta_1(\mathbf{Y})$ and $\Delta_2(\mathbf{Y})$ (see the discussion below). In our analysis, we first show that $\Delta_1(\mathbf{Y}) \approx 0$ and $\Delta_2(\mathbf{Y}) \approx 0$. The precise bounds, involving both first and second moments of $\Delta_1(\mathbf{Y})$ and $\Delta_2(\mathbf{Y})$, are presented in the following two propositions.
\begin{proposition}\label{P2.1main}
Suppose Assumption~\ref{Assump:F} holds. Then, with $b_{i,\ell}$ and $E_1$ as in Definitions~\ref{Global_GH} and~\ref{DefSmoothnessTerm}, respectively, we have \vspace{-0.1in} 
\begin{eqnarray*}
 |\mathbb{E}[\Delta_1(\mathbf{Y})\mid\mathbf{X}]| \leq 4E_1, \qquad  & \mbox{and} &\qquad
\mathbb{E}\big[\Delta_1(\mathbf{Y})^2\mid\mathbf{X}\big] \leq  16\bigg(\sum_{i=1}^n\max_{\ell\in[K_i]}\{b_{i,\ell}\}\bigg)^2. 
\end{eqnarray*}
\end{proposition}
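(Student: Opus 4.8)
The plan is to read $\Delta_1(y)$ (see~\eqref{eq:error_term1}) as the exact second-order Taylor remainder of $F$ along the segment joining $y$ and $T(y)$ (with $T$ as in Definition~\ref{DefTy}). Writing $g(s):=F(y+s(T(y)-y))$ and using $g(1)=g(0)+g'(0)+\int_0^1(1-s)g''(s)\,ds$, the definition of $\Delta_1$ yields the identity
\begin{equation*}
\Delta_1(y)=-\int_0^1(1-s)\sum_{i,j=1}^n\sum_{\ell=1}^{K_i}\sum_{\ell'=1}^{K_j}F_{i,\ell;j,\ell'}\big(y+s(T(y)-y)\big)(T_{i,\ell}(y)-y_{i,\ell})(T_{j,\ell'}(y)-y_{j,\ell'})\,ds.
\end{equation*}
Evaluating at $y=\mathbf{Y}$ and setting $D_{i,\ell}:=Y_{i,\ell}-T_{i,\ell}(\mathbf{Y})$, the structural fact I would exploit is that, since $T_{i,\ell}(\mathbf{Y})=\mathbb{E}[Y_{i,\ell}\mid(Z_k)_{k\neq i},\mathbf{X}]$, each increment has conditional mean zero: $\mathbb{E}[D_{i,\ell}\mid(Z_k)_{k\neq i},\mathbf{X}]=0$.

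The second-moment bound I would obtain first, as it is in fact deterministic. Bounding $|F(y)-F(T(y))|\le\sum_{i,\ell}b_{i,\ell}|y_{i,\ell}-T_{i,\ell}(y)|$ by the fundamental theorem of calculus and the global gradient bound $|F_{i,\ell}|\le b_{i,\ell}$ of Definition~\ref{Global_GH}, and treating the linear term in~\eqref{eq:error_term1} the same way, gives $|\Delta_1(y)|\le 2\sum_{i,\ell}b_{i,\ell}|y_{i,\ell}-T_{i,\ell}(y)|$. At $y=\mathbf{Y}$ the one-hot structure of $Y_{i,\cdot}$ gives $\sum_{\ell=1}^{K_i}|Y_{i,\ell}-T_{i,\ell}(\mathbf{Y})|=2(1-T_{i,Z_i}(\mathbf{Y}))\le 2$, so that $|\Delta_1(\mathbf{Y})|\le 4\sum_{i=1}^n\max_{\ell\in[K_i]}\{b_{i,\ell}\}$ holds deterministically, which immediately yields the stated bound on $\mathbb{E}[\Delta_1(\mathbf{Y})^2\mid\mathbf{X}]$. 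Only the global quantities $b_{i,\ell}$ enter here, so no neighborhood bookkeeping is required.

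For the first-moment bound $|\mathbb{E}[\Delta_1(\mathbf{Y})\mid\mathbf{X}]|\le 4E_1$ I would split the double sum above into its diagonal ($i=j$) and off-diagonal ($i\neq j$) parts, matching the two summands of $E_1$ in Definition~\ref{DefSmoothnessTerm}. For the diagonal part only one row is perturbed relative to the rest, so the Hessian evaluations can be controlled by $\HII{i}{\ell}{i}{\ell'}{\cdot}{M_i}$ after bounding $|D_{i,\ell}|$, and summation reproduces the second summand of $E_1$. For the off-diagonal part I would invoke the conditional-mean-zero property: after conditioning on $(Z_k)_{k\neq i}$ the factor $D_{i,\ell}$ averages out, provided the remaining factor $F_{i,\ell;j,\ell'}(\cdot)D_{j,\ell'}$ is first replaced by a version measurable with respect to $(Z_k)_{k\neq i}$. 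The replacement error stems from the dependence of $T_{j,\ell'}(\mathbf{Y})$ on the single coordinate $Z_i$, and this sensitivity is precisely what $\Phi_{j,i}$ measures: through the softmax representation of $T_{j,\cdot}$ the $\HIM$ bounds enter the exponent, and exponentiating a ratio of conditional probabilities produces the $e^{(\cdot)}-1$ form. The surviving Hessian factor is then controlled by $\HII{i}{\ell}{j}{\ell'}{\cdot}{M_i}$, and combining the two reproduces the product $\sup_y\{\sum_i\max_j\Phi_{j,i}(y)\}\cdot\sup_{y,i,\ell}\{\sum_{j,\ell'}\HII{i}{\ell}{j}{\ell'}{y}{M_i}y_{j,\ell'}\}$.

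The main obstacle is the off-diagonal decoupling, for two intertwined reasons. First, $F_{i,\ell;j,\ell'}$ is evaluated at a point depending on \emph{all} coordinates, including $Z_i$, so $\mathbb{E}[D_{i,\ell}\mid(Z_k)_{k\neq i},\mathbf{X}]=0$ cannot be applied verbatim; one must freeze the $Z_i$-dependence of both the Hessian and of $T_{j,\ell'}(\mathbf{Y})$ and verify that the incurred errors are governed by $\Phi_{j,i}$. Second, the naive straight-line segment from the vertex $\mathbf{Y}$ to the interior point $T(\mathbf{Y})$ does \emph{not} lie in the multiplicative neighborhoods $\NIIM$, since the vanishing entries of $\mathbf{Y}$ get perturbed to nonzero values, so the local Hessian bounds cannot be applied along it directly. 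I would instead carry out the expansion coordinate-by-coordinate, changing one row at a time through the vertices $\UII{\mathbf{Y}}{i}{\ell}$ that underlie the definition of $T$, so that each comparison stays within a one- or two-row neighborhood and the use of $\HIM$ and $\HIIM$ is legitimate. Keeping this neighborhood bookkeeping uniform over $y\in\hat{\mathcal{Y}}_{n,\mathbf{K}}$, so that the final constant collapses to the clean $4E_1$, is the most delicate step.
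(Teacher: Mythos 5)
Your second-moment argument is correct and coincides with the paper's: the bound $|\Delta_1(\mathbf{Y})|\leq 4\sum_{i=1}^n\max_{\ell\in[K_i]}\{b_{i,\ell}\}$ is deterministic, obtained from the global gradient bounds alone (the paper uses $\sum_{\ell}(Y_{i,\ell}+T_{i,\ell}(\mathbf{Y}))=2$, equivalent to your one-hot identity). The first-moment plan, however, has a genuine gap at the off-diagonal decoupling. In your symmetric second-order representation the Hessian factor $F_{i,\ell;j,\ell'}$ is evaluated at $\xi:=\mathbf{Y}+s(T(\mathbf{Y})-\mathbf{Y})$, which depends on $Z_i$ both through row $i$ and through every $T_{k,\cdot}(\mathbf{Y})$, $k\neq i$. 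To exploit $\mathbb{E}[D_{i,\ell}\mid(Z_k)_{k\neq i},\mathbf{X}]=0$ you must replace $F_{i,\ell;j,\ell'}(\xi)$ by a $Z_i$-free version, and you assert the replacement error is ``governed by $\Phi_{j,i}$.'' It is not: $\Phi_{j,i}$ (via the Gr\"onwall argument of Lemma \ref{Lem2.2}) quantifies the sensitivity of $T_{j,\ell'}$ to row $i$, which handles the replacement of $D_{j,\ell'}$, but nothing in the framework controls the \emph{oscillation} of the Hessian itself: Assumption \ref{Assump:F} supplies no third derivatives or modulus of continuity, and Definitions \ref{Local_Hess}--\ref{Local_Hess_new} bound only the \emph{size} of second derivatives on neighborhoods. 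The only available estimate for the frozen-versus-unfrozen Hessian difference is twice the local bound $\HII{i}{\ell}{j}{\ell'}{\xi}{M_i}$, which carries no smallness; summing it against $|D_{i,\ell}|\,|D_{j,\ell'}|$ with $\sum_{\ell}|D_{i,\ell}|\leq 2$ leaves a factor of order $n$ from the sum over $i$, yielding an error of order $n\cdot\sup_{y,i,\ell}\big\{\sum_{j,\ell'}\HII{i}{\ell}{j}{\ell'}{y}{M_i}y_{j,\ell'}\big\}$ rather than $E_1$. In the LDA application this is $O(n)$ versus $E_1=O(K)$, so the stated conclusion fails by an unbounded margin.

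The repair is to not expand the $j$-sum at all. Resumming your double sum over $(j,\ell')$ turns the second-order remainder back into the first-order form $\Delta_1(y)=\int_0^1\sum_{i,\ell}\big(F_{i,\ell}(ty+(1-t)T(y))-F_{i,\ell}(y)\big)(y_{i,\ell}-T_{i,\ell}(y))\,dt$, which is where the paper starts (your identity is its once-integrated version). The paper then subtracts, for each $(i,\ell)$, the comparator $F_{i,\ell}\big(t\UI{\mathbf{Y}}{i}+(1-t)T\big(\UI{\mathbf{Y}}{i}\big)\big)-F_{i,\ell}\big(\UI{\mathbf{Y}}{i}\big)$, which is $(Z_k)_{k\neq i}$-measurable (note $T_{i,\cdot}$ does not depend on row $i$ of its argument), so the tower property annihilates its mean against $Y_{i,\ell}-T_{i,\ell}(\mathbf{Y})$ exactly, with no Hessian-continuity input. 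The two residual gradient differences then need only Hessian \emph{magnitude} bounds: the pair $(\mathbf{Y},\UI{\mathbf{Y}}{i})$ differs in one row, giving the $\HII{i}{\ell}{i}{\ell'}{\cdot}{1}$ summand of $E_1$, while Lemma \ref{Lem2.2} places $t\UI{\mathbf{Y}}{i}+(1-t)T\big(\UI{\mathbf{Y}}{i}\big)$ in $\NII{t\mathbf{Y}+(1-t)T(\mathbf{Y})}{M_i}$ with coordinate gaps $(1-t)\Phi_{j,i}(\mathbf{Y})T_{j,\ell'}(\mathbf{Y})+tY_{i,\ell'}\mathbbm{1}_{j=i}$, producing the product term of $E_1$. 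Your proposed ``coordinate-by-coordinate through the vertices'' fix, if executed, would essentially reconstruct this coupling, at which point you are running the paper's proof. Relatedly, your worry that the segment fails to lie in $\NIIM$-neighborhoods of $\mathbf{Y}$ is an artifact of centering at $\mathbf{Y}$: the paper centers the neighborhood at the segment point $t\mathbf{Y}+(1-t)T(\mathbf{Y})$, which lies in $\hat{\mathcal{Y}}_{n,\mathbf{K}}$ and is covered by the $\sup_y$ in Definition \ref{DefSmoothnessTerm}, so no vertex-by-vertex bookkeeping is needed.
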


\begin{proposition}\label{P2.2main}
Suppose Assumption~\ref{Assump:F} holds. Then we have \vspace{-0.05in}
\begin{eqnarray*}
    |\mathbb{E}[\Delta_2(\mathbf{Y}) \mid \mathbf{X}]| \leq  2E_1, \qquad 
 & \mbox{and} & \qquad
    \mathbb{E}\big[\Delta_2(\mathbf{Y})^2 \mid \mathbf{X}\big] \leq 16\bigg(\sum_{i=1}^n\max_{\ell\in[K_i]}\{b_{i,\ell}\}\bigg)^2. \vspace{-0.1in}
\end{eqnarray*}
\end{proposition}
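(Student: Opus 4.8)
The plan is to first collapse $\Delta_2$ into a single sum whose summands expose both the martingale structure and the role of the Hessian. Expanding $I(T(y))-J(y,T(y))$ with~\eqref{def_I} and~\eqref{def_J}, and substituting $\log(T_{i,\ell}(y)/\mu_i(\ell))=F(\UII{y}{i}{\ell})-\log\big(\sum_{s}e^{F(\UII{y}{i}{s})}\mu_i(s)\big)$ from~\eqref{T.exp}, the $\ell$-independent log-normalizer cancels against $\sum_{\ell}(T_{i,\ell}(y)-y_{i,\ell})=0$ (valid for $y\in\hat{\mathcal{Y}}_{n,\mathbf{K}}$). The same cancellation lets me insert the $\ell$-free quantity $F(\UI{y}{i})$ at no cost, producing
\[
\Delta_2(y)=\sum_{i=1}^n\sum_{\ell=1}^{K_i}\big(T_{i,\ell}(y)-y_{i,\ell}\big)\big[F(\UII{y}{i}{\ell})-F(\UI{y}{i})-F_{i,\ell}(y)\big].
\]
This representation drives both bounds.

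For the second-moment bound I would show that the estimate is in fact deterministic. Writing $F(\UII{y}{i}{\ell})-F(\UI{y}{i})=\int_0^1 F_{i,\ell}(\cdot)\,dt$ along the segment lifting the $i$th row from $0$ to the $\ell$th canonical vector, the uniform gradient bound~\eqref{def_b} gives $|F(\UII{y}{i}{\ell})-F(\UI{y}{i})|\le b_{i,\ell}$, while $|F_{i,\ell}(y)|\le b_{i,\ell}$ directly. Together with $\sum_{\ell}|T_{i,\ell}(y)-y_{i,\ell}|\le 2$, this yields $|\Delta_2(y)|\le 4\sum_{i}\max_{\ell}\{b_{i,\ell}\}$ for every $y$, and squaring gives the claimed second-moment bound without any probabilistic input.

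The first-moment bound is the crux, and is where $E_1$ enters. The structural fact recorded just after Definition~\ref{DefTy} is that $T_{i,\ell}(\mathbf{Y})=\mathbb{E}[Y_{i,\ell}\mid (Z_j)_{j\ne i},\mathbf{X}]$, so $\mathbb{E}[T_{i,\ell}(\mathbf{Y})-Y_{i,\ell}\mid(Z_j)_{j\ne i},\mathbf{X}]=0$. Since $F(\UII{\mathbf{Y}}{i}{\ell})$, $F(\UI{\mathbf{Y}}{i})$, and $F_{i,\ell}(\UI{\mathbf{Y}}{i})$ only involve the $i$th row after it has been overwritten, they are $(Z_j)_{j\ne i}$-measurable and are annihilated upon taking the conditional expectation; subtracting the measurable reference $F_{i,\ell}(\UI{\mathbf{Y}}{i})$ from $F_{i,\ell}(\mathbf{Y})$ then leaves
\[
\mathbb{E}[\Delta_2(\mathbf{Y})\mid\mathbf{X}]=-\sum_{i,\ell}\mathbb{E}\big[(T_{i,\ell}(\mathbf{Y})-Y_{i,\ell})\big(F_{i,\ell}(\mathbf{Y})-F_{i,\ell}(\UI{\mathbf{Y}}{i})\big)\mid\mathbf{X}\big].
\]
I would then bound the gradient increment by integrating the Hessian along the segment restoring the $i$th row of $\mathbf{Y}$ from $0$ to $e_{Z_i}$; each point of this segment differs from $\mathbf{Y}$ only in row $i$ and hence lies in $\NII{\mathbf{Y}}{M_i}$, giving $|F_{i,\ell}(\mathbf{Y})-F_{i,\ell}(\UI{\mathbf{Y}}{i})|\le\sum_{\ell'}\HII{i}{\ell}{i}{\ell'}{\mathbf{Y}}{M_i}Y_{i,\ell'}$. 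Factoring out $\max_{\ell}\{\cdot\}$ via $\sum_{\ell}|T_{i,\ell}(\mathbf{Y})-Y_{i,\ell}|\le 2$ and passing to $\sup_y$ bounds the result by twice the diagonal (second) term of~\eqref{E2.1}, which is at most $2E_1$, so $|\mathbb{E}[\Delta_2(\mathbf{Y})\mid\mathbf{X}]|\le 2E_1$.

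The hard part is this first-moment argument: recognizing that the \emph{only} non-cancelling contribution comes from the dependence of $F_{i,\ell}(\mathbf{Y})$ on the single coordinate $Y_i$ that conditioning integrates out, and choosing the reference point $\UI{\mathbf{Y}}{i}$ so that the residual is a genuine second-order increment — controllable by the local Hessian $\HIIM(\cdot,M_i)$ over $\NII{\mathbf{Y}}{M_i}$ rather than by the far cruder gradient bound $b_{i,\ell}$. Pinning down that the relevant neighborhood is $\NII{\mathbf{Y}}{M_i}$, and therefore that it is the diagonal (rather than the off-diagonal/$\Phi$) part of $E_1$ that governs $\Delta_2$, is the step demanding the most care; the twice continuous differentiability in Assumption~\ref{Assump:F} is what legitimizes the fundamental-theorem-of-calculus steps used throughout.
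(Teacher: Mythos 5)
Your proposal is correct and follows essentially the same route as the paper: the identical representation $\Delta_2(y)=\sum_{i,\ell}(T_{i,\ell}(y)-y_{i,\ell})\big(F(\UII{y}{i}{\ell})-F(\UI{y}{i})-F_{i,\ell}(y)\big)$, the same deterministic gradient bound for the second moment, and the same tower-property cancellation against the $(Z_j)_{j\neq i}$-measurable reference $F_{i,\ell}(\UI{\mathbf{Y}}{i})$ followed by a local-Hessian bound on $F_{i,\ell}(\mathbf{Y})-F_{i,\ell}(\UI{\mathbf{Y}}{i})$ over $\NIIM$. The only differences are cosmetic: you invoke the fundamental theorem of calculus where the paper cites the mean-value theorem, and you bound the Hessian over $\NII{\mathbf{Y}}{M_i}$ directly, whereas the paper uses $\NII{\mathbf{Y}}{1}$ and then notes $M_i\geq 1$ to compare with $E_1$.
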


To obtain Propositions \ref{P2.1main} and \ref{P2.2main}, we carefully bound $\Delta_1(\mathbf{Y})$ and $\Delta_2(\mathbf{Y})$ in terms of \textit{local bounds} on the Hessian of the Hamiltonian (see Definitions \ref{Local_Hess} and \ref{Local_Hess_new}). To achieve this, a lemma (Lemma \ref{Lem2.2} in Appendix~\ref{Appendix_A}) that carefully bounds the change of $T_{i,\ell}(y)$ (see Definition \ref{DefTy}) under local perturbations of $y\in \prod_{i=1}^n[0,1]^{K_i}$ is used. The first and second moment bounds in Propositions \ref{P2.1main} and \ref{P2.2main} are then used in conjunction with Lemma \ref{Lem2.1} (in Appendix~\ref{Appendix_A}) to establish a lower bound on the probability that $\Delta_1(\mathbf{Y})+\Delta_2(\mathbf{Y})$ is small; in previous works \cite{chatterjee2016nonlinear,yan2020nonlinear}, only second moment bounds are used. These technical advancements are crucial for obtaining optimal rates of convergence of variational posteriors and characterizing the exact parameter regimes for the validity of MFVI for both LDA and MMSB.

After obtaining the error bounds on $\Delta_1(\mathbf{Y})$ and $\Delta_2(\mathbf{Y})$, we bound $\log S_{n,\mathbf{K}} $ as follows:
\begin{eqnarray}\label{Eq2.1n}
    \log S_{n,\mathbf{K}} &=&\log\bigg(\sum_{z\in\prod_{i=1}^n [K_i]}e^{F(G(z))}\mu(z)\bigg) \approx \log\bigg(\sum_{z\in\mathcal{A}}e^{F(G(z))}\mu(z)\bigg)\nonumber\\
    & \approx& \log\bigg(\sum_{z\in\mathcal{A}}e^{F(T(G(z)))-I(T(G(z)))+J(G(z),T(G(z)))}\mu(z)\bigg)\nonumber\\
    &\leq& \sup_{y\in\hat{\mathcal{Y}}_{n,\mathbf{K}}}\{F(y)-I(y)\}+\log\bigg(\sum_{z\in\prod_{i=1}^n [K_i]}e^{J(G(z),T(G(z)))}\mu(z)\bigg),
\end{eqnarray}
where $\mathcal{A}$ is a `suitable' subset of $\prod_{i=1}^n [K_i]$ (defined formally in Appendix \ref{Appendix_A}) on which the above  approximations hold. This follows from observing that  
\begin{equation*}
    F(G(z))-F(T(G(z)))+I(T(G(z)))-J(G(z),T(G(z))) \equiv \Delta_1(G(z))+\Delta_2(G(z)) \stackrel{whp}{\approx} 0,
\end{equation*}
using Propositions \ref{P2.1main} and \ref{P2.2main}, where ``whp'' denotes with high probability under the posterior. 
In order to bound the last term in~\eqref{Eq2.1n}, we make use of the covering set $\mathcal{D}(\epsilon)$ given in Definition \ref{DefComplexityTerm}. Note that in the sketch argument (\ref{Eq2.1n}), when approximating $T(G(z))$ using some $p$ constructed from the covering set $\mathcal{D}(\epsilon)$, we only need to bound $|J(G(z),T(G(z)))-J(G(z),p)|$. Previous works \cite{chatterjee2016nonlinear,yan2020nonlinear} require bounding two extra terms $|F(T(G(z)))-F(p)|$ and $|I(T(G(z)))-I(p)|$ in addition to $|J(G(z),T(G(z)))-J(G(z),p)|$, which yields suboptimal error bounds.  

\section{Variational inference for latent Dirichlet allocation}\label{Sect.3}

In this section, we apply the general meta-theorems (Theorems \ref{Theorem2.1} and \ref{Theorem2.2}) presented in Section \ref{Sect.2.1} to verify the validity of MFVI for LDA. This yields the proof of Theorem \ref{Theorem_LDA_UBD}. 

Recall the model setup and notation for LDA presented in Section \ref{Sect.1.2}. Henceforth we focus on the case where $D=1$, and suppress the subscript $d$ in subsequent notation; since the posterior~\eqref{Eq3.1} and the collapsed posterior~\eqref{Eq3.2} factorize over different documents $d\in [D]$, the proof for the general $D$ case follows from the $D=1$ case using Jensen's inequality, as
\begin{equation*}
    \sum_{d=1}^D C K\log\Big(\frac{n_d}{K}+2\Big)\leq CDK\log\Big(\frac{n}{DK}+2\Big).
\end{equation*}

To apply the general framework in Section~\ref{Sect.2.1} to LDA, we set the global latent variable to be $\pmb{\theta}=\pmb{\pi}$ and the local latent variables to be $\mathbf{Z}=(Z_1,\cdots,Z_n)$. Note that the number of categories of the local latent variables are $K_i\equiv K$ for all $i\in [n]$. We denote by $\hat{\mathcal{Y}}_{n,K}$ the set defined in~\eqref{eq:Y_nK} with $K_i\equiv K$ for all $i\in [n]$. For any $z\in[K]^n$ and any $\ell\in[K],r\in[V]$, by a slight abuse of notation (see~\eqref{defNdl} and~\eqref{defNdlr}) and noting that $D=1$, we define
\begin{equation}\label{def_N}
    N_{\ell}(z):=|\{i\in[n]:z_i=\ell\}|,\qquad \mbox{and} \qquad  N_{\ell,r}(z):=|\{i\in[n]: z_i=\ell, X_i=r\}|.
\end{equation}
Denote by $\mathcal{S}_{n,K}$ the normalizing constant\slash partition function for the collapsed posterior (\ref{Eq3.2}): 
\begin{equation}\label{eq:S_nK}
\mathcal{S}_{n,K} :=\sum_{z\in [K]^n} e^{\Upsilon(z)},
\end{equation}
where for any $z\in [K]^n$,
\begin{equation}\label{Eq.A22}
    \Upsilon(z) :=  \sum_{\ell=1}^K\sum_{r=1}^V N_{\ell,r}(z)\log \eta_{\ell,r}+\sum_{\ell=1}^K \log \Gamma(N_{\ell}(z)+\alpha_{\ell}) -\log\Gamma\Big(n+\sum_{\ell=1}^K \alpha_{\ell}\Big).
\end{equation}
Note that $\mathcal{S}_{n,K}$ is also the normalizing constant of the full posterior (\ref{Eq3.1}).

We specialize the general framework in Sections \ref{Sect.1.1} and \ref{Sect.2.1} to LDA as follows. We take $\nu$ to be the Dirichlet distribution of order $K$ with parameters $(1,\cdots,1)$, and set $\mu\equiv\mu_1\otimes\cdots\otimes\mu_n$ with $\mu_i(\ell)=\frac{\eta_{\ell,X_i}}{\sum_{s=1}^K \eta_{s,X_i}}$ for every $i\in[n]$ and $\ell\in [K]$ (recall~\eqref{base} and~\eqref{defmu}). Then, for any $z\in [K]^n$,
\begin{equation}\label{def_mu_form}
    \mu(z)=\prod_{i=1}^n \mu_i(z_i)=\prod_{i=1}^n \frac{\eta_{z_i,X_i}}{\sum_{\ell=1}^K\eta_{\ell,X_i}}=\frac{\prod_{\ell=1}^K \prod_{r=1}^V \eta_{\ell,r}^{N_{\ell,r}(z)}}{\prod_{i=1}^n\big(\sum_{\ell=1}^K\eta_{\ell,X_i}\big)}.
\end{equation}
We define
\begin{equation}\label{defrR}
    r(\pmb{\pi},z):=\sum_{\ell=1}^K(N_{\ell}(z)+\alpha_{\ell}-1)\log\pi_{\ell}-\log\Gamma(K),\quad\mbox{for any }z\in [K]^n,
\end{equation}
\begin{equation}\label{defrR2}
    R(\pmb{\pi},y):=\sum_{\ell=1}^K\big(\widetilde{N}_{\ell}(y)+\alpha_{\ell}-1\big)\log\pi_{\ell}-\log\Gamma(K),\quad\mbox{for any }y\in[0,1]^{nK},
\end{equation}
where $\widetilde{N}_{\ell}(y):=\sum_{i=1}^n y_{i,\ell}$ for every $\ell\in[K]$, and the $-\log\Gamma(K)$ term arises from the density of $\nu=\text{Dir}(1,\cdots,1)$ (see~\eqref{poste.deri} below). We keep the $-\log\Gamma(K)$ term in $r(\cdot,\cdot)$ and $R(\cdot,\cdot)$ to simplify the expressions for $f(\cdot)$ and $F(\cdot)$ in~\eqref{deffF} and~\eqref{deffF2} below. Note that Assumptions \ref{Assump:Cond-ind}-\ref{Assump:R} hold under these specifications: using~\eqref{Eq3.1} and~\eqref{def_mu_form}, and noting that $D=1$, we have
\begin{equation}\label{poste.deri}
\frac{d\mathbb{P}\left(\mathbf{Z},\pmb{\pi}\mid\mathbf{X}\right)}{d\gamma}\propto \frac{ \prod_{\ell=1}^K \pi_{\ell}^{N_{\ell}(\mathbf{Z})+\alpha_{\ell}-1} \cdot  \prod_{\ell=1}^K \prod_{r=1}^V \eta_{\ell,r}^{N_{\ell,r}(\mathbf{Z})}}{\Gamma(K)\cdot\prod_{\ell=1}^K \prod_{r=1}^V \eta_{\ell,r}^{N_{\ell,r}(\mathbf{Z})}}=\exp(r(\pmb{\pi},\mathbf{Z})).
\end{equation}
Moreover, \eqref{defrR} implies that (recall~\eqref{def_f})
\begin{equation}\label{deffF}
    f(z)=\sum_{\ell=1}^K \log\Gamma(N_{\ell}(z)+\alpha_{\ell})-\log\Gamma\Big(n+\sum_{\ell=1}^K\alpha_{\ell}\Big),\quad\mbox{for any }z\in [K]^n.
\end{equation}
We define
\begin{equation}\label{deffF2}
F(y):=\sum_{\ell=1}^K\log\Gamma\big(\widetilde{N}_{\ell}(y)+\alpha_{\ell}\big)-\log\Gamma\Big(n+\sum_{\ell=1}^K\alpha_{\ell}\Big),\quad\mbox{for any }y\in[0,1]^{nK};
\end{equation}
note that Assumption \ref{Assump:F} also holds. By~\eqref{def_I} and~\eqref{def_mu_form}, for any $y\in\hat{\mathcal{Y}}_{n,K}$,
\begin{eqnarray}\label{defI}
    I(y)
    &=&\sum_{i=1}^n\sum_{\ell=1}^K y_{i,\ell}\log y_{i,\ell}-\sum_{i=1}^n\sum_{\ell=1}^K y_{i,\ell}\log\eta_{\ell, X_i}+\sum_{i=1}^n\log\bigg(\sum_{\ell=1}^K \eta_{\ell,X_i}\bigg).
\end{eqnarray}
Moreover, with $f$ and $\mu$ as in~\eqref{deffF} and~\eqref{def_mu_form}, we have 
\begin{eqnarray}\label{partition}
    \sum_{z\in [K]^n} e^{f(z)}\mu(z)&=& \sum_{z\in [K]^n}e^{\sum_{\ell=1}^K \log\Gamma(N_{\ell}(z)+\alpha_{\ell})-\log\Gamma\big(n+\sum_{\ell=1}^K\alpha_{\ell}\big)}\cdot \frac{\prod_{\ell=1}^K \prod_{r=1}^V \eta_{\ell,r}^{N_{\ell,r}(z)}}{\prod_{i=1}^n\big(\sum_{\ell=1}^K\eta_{\ell,X_i}\big)}\nonumber\\
    &=& \sum_{z\in [K]^n}\frac{e^{\Upsilon(z)}}{\prod_{i=1}^n\big(\sum_{\ell=1}^K\eta_{\ell,X_i}\big)}=
    \frac{\mathcal{S}_{n,K}}{\prod_{i=1}^n\big(\sum_{\ell=1}^K\eta_{\ell,X_i}\big)},
\end{eqnarray}
where we use~\eqref{Eq.A22} and~\eqref{eq:S_nK} in the second and third equalities, respectively.

\subsection{Overview of the proof of Theorem~\ref{Theorem_LDA_UBD}}\label{Sect.3.1} 
The proof involves five steps (see \textbf{Steps 1}-\textbf{5} in Section~\ref{Sect.3.2}), with \textbf{Steps 2} and \textbf{3} constituting the core technical components. 

Recall the product measures $\nu,\mu$ and the functions $r,R,f,F$ specified above (after~\eqref{Eq.A22}). By Lemma \ref{Lemma2.1} and Theorem \ref{Theorem2.2}, in order to establish Theorem \ref{Theorem_LDA_UBD}, it suffices to bound the partition function $\mathcal{S}_{n,K}$ (as defined in~\eqref{eq:S_nK}); see \textbf{Step 5} for the proof of this reduction. If $N_{\ell}(z)+\alpha_{\ell}\geq 1$ for all $z\in[K]^n$ and $\ell\in[K]$, then we can approximate the $\log\Gamma(N_{\ell}(z)+\alpha_{\ell})$ term in the expression of $f(z)$ using Stirling's approximation. Then, using the arguments in \textbf{Steps 2} and \textbf{3}, we obtain upper bounds on the smoothness term $E_1$ (see Definition \ref{DefSmoothnessTerm}) and the complexity term $E_2(\epsilon)$ (see Definition \ref{DefComplexityTerm}), which combined with Theorem \ref{Theorem2.1} yields the desired bound on the partition function $\mathcal{S}_{n,K}$. 

To address the general scenario where $N_{\ell}(z)+\alpha_{\ell}$ may be close to $0$ (so that Stirling's approximation is no longer accurate), in \textbf{Step 1}, we restrict the state space $[K]^n$ of the collapsed posterior to a subset $A^n$ (where $A\subseteq[K]$) and perform Stirling's approximation to the relevant Hamiltonian~\eqref{def_Upsilon}. This yields the ``restricted partition function'' $\mathcal{S}_{n,K,A}$ defined in~\eqref{def_SnKA}. In (\ref{Eqq3.1}) below, we show that the original partition function $\mathcal{S}_{n,K}$ can be bounded in terms of $\mathcal{S}_{n,K,A}$, for $A \subseteq [K]$. In order to bound the restricted partition functions, we aim to apply Theorem \ref{Theorem2.1}, which requires bounding the smoothness term $E_1$ and the complexity term $E_2(\epsilon)$. We provide upper bounds for these terms in \textbf{Steps 2} and \textbf{3} (see (\ref{Eq.A33}) and (\ref{Eq.A8})), and obtain desired bounds on the restricted partition functions in \textbf{Step 4}. Finally, by combining the results from \textbf{Steps 1} and \textbf{4}, we attain the desired bound on $\mathcal{S}_{n,K}$ in \textbf{Step 5}.

The proof of Theorem \ref{Theorem_MMSB_UBD} follows similar steps. We present the details in Appendix \ref{Appendix_B}.  

\subsection{Proof of Theorem \ref{Theorem_LDA_UBD}}\label{Sect.3.2}

Throughout this section, we use $C,c$ to denote positive constants that depend only on $C_0$ (as specified in Theorem \ref{Theorem_LDA_UBD}). The values of these constants may change from line to line. To simplify notation, we define
\begin{equation}\label{phioriginal}
    \phi(x):=\begin{cases}
        x\log{x} & \text{ for } x>0\\
        0 & \text{ for } x=0,
    \end{cases}\qquad \mbox{and} \qquad \psi(x):=\begin{cases}
        x\log{x} & \text{ for }x\geq 1 \\
        \frac{x^2-1}{2}  & \text{ for }x\in [0,1).
    \end{cases}
\end{equation}
Here, $\psi(\cdot)$ is a regularized version of $\phi(\cdot)$. In \textbf{Step 1} of the proof, we will use Stirling's approximation to approximate the term $\log \Gamma(N_{\ell}(z)+\alpha_{\ell})$ in~\eqref{Eq.A22} by $\phi(N_{\ell}(z))$, which is further approximated by $\psi(N_{\ell}(z))$ (cf.~\eqref{def_PsiAz} below). 
It can be checked that $\psi(\cdot)$ is twice continuously differentiable on $[0,\infty)$, and for any $x\geq 0$, 
\begin{equation}\label{derivatives}
    \psi'(x)=\begin{cases}
        1+\log{x} & \text{ if } x\geq 1 \\
        x & \text{ if } x\in [0,1),
    \end{cases}\qquad \mbox{and} \qquad  \psi''(x)=\frac{1}{\max\{x,1\}}.
\end{equation}
Moreover,
\begin{equation}\label{eq3.2}
    \sup_{x\geq 0}|\phi(x)-\psi(x)|=\sup_{x\in [0,1]}|\phi(x)-\psi(x)|\leq \frac{1}{2}. 
\end{equation}

\medskip

\noindent {\bf Step 1: Restricting the partition function and Stirling's approximation.}
For any $A\subseteq [K]$ with $|A|\geq 1$, we set
\begin{equation}\label{def_SnKA}
  \mathcal{S}_{n,K,A}:=\sum_{z\in A^n} e^{\Psi_{A}(z)},
\end{equation}
where
\begin{equation}\label{def_PsiAz}
    \Psi_{A}(z):=\sum_{\ell\in A}\sum_{r=1}^V N_{\ell,r}(z)\log \eta_{\ell,r} +\sum_{\ell\in A} \psi(N_{\ell}(z)),\qquad\mbox{ for all }z\in A^n\subseteq [K]^n.
\end{equation}
In this step, we will show that {\small \begin{eqnarray}\label{Eqq3.1}
     \hspace{-0.05in} \log\mathcal{S}_{n,K} &\leq& \hspace{-0.05in}\sup_{A\subseteq [K]:|A|\geq 1}\bigg\{\sum_{\ell\in [K]\backslash A}\log\Gamma(\alpha_{\ell})+\log\mathcal{S}_{n,K,A}\bigg\} \nonumber\\
     &&\hspace{0.2in}-n-\log \Gamma\Big(n+\sum_{\ell=1}^K \alpha_{\ell}\Big)+CK\log\Big(\frac{n}{K}+2\Big).
\end{eqnarray}} 
\begin{proof}
For any $z\in [K]^n$, we define $\mathcal{A}(z):=\{\ell\in [K]: N_{\ell}(z)\geq 1\}$. For any $A\subseteq [K]$ and $z\in A^n$, we define
\begin{equation}\label{def_Upsilon}
     \Upsilon_A(z) := \sum_{\ell\in A}\sum_{r=1}^V N_{\ell,r}(z)\log\eta_{\ell,r}+\sum_{\ell\in A} \log\Gamma(N_{\ell}(z)+\alpha_{\ell})-\log\Gamma\Big(n+\sum_{\ell=1}^K \alpha_{\ell}\Big).
\end{equation}
Note that for any $A\subseteq [K]$ and $z\in [K]^n$ such that $\mathcal{A}(z)=A$ (which implies $z\in A^n$), $\Upsilon(z)=\sum_{\ell\in [K]\backslash A}\log\Gamma(\alpha_{\ell})+\Upsilon_A(z)$. Hence, using~\eqref{eq:S_nK}, with $\mathfrak{Z}_A:=\{z\in A^n: N_{\ell}(z)\geq 1\text{ for all }\ell\in A\}$, we have 
\begin{equation}\label{eq3.1}
    \mathcal{S}_{n,K} =\sum_{A\subseteq [K]:|A|\geq 1}\sum_{z\in[K]^n: \mathcal{A}(z)=A} e^{\Upsilon(z)} =\sum_{A\subseteq [K]:|A|\geq 1}e^{\sum_{\ell\in [K]\backslash A}\log\Gamma(\alpha_{\ell})}\bigg(\sum_{z\in\mathfrak{Z}_A}e^{\Upsilon_A(z)}\bigg).
\end{equation}

For any $A\subseteq [K]$ with $|A|\geq 1$, we define $\widetilde{\mathcal{S}}_{n,K,A}:=\sum_{z\in A^n} e^{\Phi_A(z)}$, where
{\small
\begin{equation*}
     \Phi_{A}(z):=\sum_{\ell\in A}\sum_{r=1}^V N_{\ell,r}(z)\log\eta_{\ell,r}+\sum_{\ell\in A} \phi(N_{\ell}(z)),\quad\text{for all } z\in A^n. 
\end{equation*}
}Now we use Stirling's approximation to bound $\Upsilon_A(z)$ with explicit error terms (see Lemma \ref{L3.2} in Appendix \ref{Appendix_E}). For any $z\in A^n$ such that $N_{\ell}(z)\geq 1$ for all $\ell\in A$, by Lemma \ref{L3.2} we get
\begin{eqnarray*}
    &&\bigg|\Upsilon_A(z)-\Phi_A(z)+\log\Gamma\Big(n+\sum_{\ell=1}^K \alpha_{\ell}\Big)+n\bigg|\\
&=&\bigg|\sum_{\ell \in A}\Big[\big(\log \Gamma (N_\ell(z)+\alpha_\ell)-\phi(N_\ell(z)+\alpha_\ell)\big)+\big(\phi(N_\ell(z)+\alpha_\ell)-\phi(N_\ell(z))\big)\Big]+n\bigg|\\
&\le &\sum_{\ell\in A}\alpha_\ell+C\Big[\sum_{\ell\in A}\big(\log (N_\ell(z)+\alpha_\ell)+1\big)\Big]\\
   &\leq& CK+C\sum_{\ell=1}^K\log(N_{\ell}(z)+C_0)\leq CK\log\Big(\frac{n}{K}+2\Big),
\end{eqnarray*}
where we use the bound $\max_{\ell\in [K]}\alpha_\ell\le C_0$, and the last step uses Jensen's inequality. Hence by (\ref{eq3.1}), we have
\begin{equation*}\label{Eqn3.26}
    \mathcal{S}_{n,K}\leq \sum_{A\subseteq [K]:|A|\geq 1}e^{\sum_{\ell\in [K]\backslash A}\log\Gamma(\alpha_{\ell})-\log\Gamma\big(n+\sum_{\ell=1}^K \alpha_{\ell}\big)-n+CK\log(n\slash K+2)}\widetilde{\mathcal{S}}_{n,K,A},
\end{equation*}
where we note that $\sum\limits_{z\in \mathfrak{Z}_A}e^{\Phi_A(z)}\leq \widetilde{\mathcal{S}}_{n,K,A}$. By (\ref{eq3.2}), for any $A\subseteq [K]$ with $|A|\geq 1$ and any $z\in A^n$, we have
\begin{equation*}
    \big|\Phi_{A}(z)-\Psi_{A}(z)\big|\leq \sum_{\ell\in A}\big|\phi(N_{\ell}(z))-\psi(N_{\ell}(z))\big|\leq \frac{K}{2} \;\; \Rightarrow \;\;\widetilde{\mathcal{S}}_{n,K,A}\leq e^{ K\slash 2}\mathcal{S}_{n,K,A}.
\end{equation*}
Thus, combining the above two displays, and possibly adjusting the constant $C$ above, we get
\begin{eqnarray*}
   \mathcal{S}_{n,K}  & \leq& \sum_{A\subseteq [K]:|A|\geq 1}e^{\sum_{\ell\in [K]\backslash A}\log\Gamma(\alpha_{\ell})-\log\Gamma\big(n+\sum_{\ell=1}^K \alpha_{\ell}\big)-n+CK\log(n\slash K+2)}\mathcal{S}_{n,K,A}. 
\end{eqnarray*}
Taking $\log$ on both sides and noting that the number of non-empty subsets of $[K]$ is $2^K-1$, the desired conclusion follows.
\end{proof}

\noindent For \textbf{Steps 2}-\textbf{4} below, we fix any $A\subseteq [K]$ such that $|A|\geq 1$. We assume that $\sum_{\ell\in A}\eta_{\ell,X_i}>0$ for every $i\in [n]$ (otherwise $\mathcal{S}_{n,K,A}=0$; see~\eqref{def_SnKA}-\eqref{def_PsiAz}). In order to bound the restricted partition function $\mathcal{S}_{n,K,A}$ (see~\eqref{def_SnKA}), we specialize the general setup in Section \ref{Sect.2.1} as follows. We let $A$ play the role of $[K_i]$ for each $i\in[n]$ in Section \ref{Sect.2.1}, and let
\begin{equation}\label{def_YhatnA}
  \hat{\mathcal{Y}}_{n,A}:=\bigg\{y=(y_{i,\ell})_{i\in[n],\ell\in A}\in [0,1]^{n|A|}: \sum_{\ell\in A} y_{i,\ell}=1\text{ for every }i\in [n]\bigg\}
\end{equation}
play the role of $\hat{\mathcal{Y}}_{n,\mathbf{K}}$. For each $i\in [n]$, we define $\muA_{i}$ to be the probability measure on $A$ such that for each $\ell\in A$, $\muA_{i}(\ell)=\frac{\eta_{\ell,X_i}}{\sum_{s\in A} \eta_{s,X_i}}$. For any $z\in A^n$, we take 
\begin{equation}
    \fA(z):=\Psi_A(z)-\sum_{\ell\in A}\sum_{r=1}^V N_{\ell,r}(z)\log \eta_{\ell,r}=\sum_{\ell\in A} \psi(N_{\ell}(z)),
\end{equation}
where the second equality uses \eqref{def_PsiAz}. With $N_\ell(z), N_{\ell,r}(z)$ as in~\eqref{def_N}, and
\begin{equation}
    \muA:=\mu^A_{1}\otimes \cdots\otimes \mu^A_{n},
\end{equation}
we have
\begin{eqnarray}\label{Eq.A11}
   \sum_{z\in A^n} e^{\fA(z)}\mu^{A}(z)  & = &  
   \sum_{z\in A^n}e^{\sum_{\ell\in A}\psi(N_{\ell}(z))}\prod_{i=1}^n\frac{\eta_{z_i,X_i}}{\sum_{\ell\in A} \eta_{\ell,X_i}}\nonumber\\
   &=&\frac{\sum_{z\in A^n} e^{\sum_{\ell\in A}\psi(N_{\ell}(z))+\sum_{\ell\in A}\sum_{r=1}^V 
 N_{\ell,r}(z)\log \eta_{\ell,r}}}{\prod_{i=1}^n\big(\sum_{\ell\in A}\eta_{\ell,X_i}\big)} \nonumber \\ 
 & = &\frac{\sum_{z\in A^n}e^{\Psi_A(z)}}{\prod_{i=1}^n\big(\sum_{\ell\in A}\eta_{\ell,X_i}\big)}=
   \frac{\mathcal{S}_{n,K,A}}{\prod_{i=1}^n\big(\sum_{\ell\in A}\eta_{\ell,X_i}\big)}, 
\end{eqnarray}
where the equalities in the last line use~\eqref{def_SnKA}-\eqref{def_PsiAz}. For any $y=(y_{i,\ell})_{i\in [n],\ell\in A}\in [0,1]^{n|A|}$, we take
\begin{equation}\label{Eq.A13}
    \qquad \FA(y) :=\sum_{\ell\in A} \psi\big(\NellA(y)\big),\qquad \mbox{where} \;\; \NellA(y):=\sum_{i=1}^{n} y_{i,\ell}\mbox{ for any }\ell\in A.
\end{equation}
For any $y=(y_{i,\ell})_{i\in [n],\ell\in A}\in \hat{\mathcal{Y}}_{n,A}$, we define
\begin{eqnarray}\label{Eq.A14}
    \IA(y)& := & \sum_{i=1}^n\sum_{\ell\in A} y_{i,\ell}\log\Big(\frac{y_{i,\ell}}{\mu_i^A(\ell)}\Big)\nonumber\\
    &=& \sum_{i=1}^n\sum_{\ell\in A} y_{i,\ell}\log y_{i,\ell} -\sum_{i=1}^n\sum_{\ell\in A}y_{i,\ell}\log \eta_{\ell,X_i} +\sum_{i=1}^n\log\bigg(\sum_{\ell\in A}\eta_{\ell,X_i}\bigg).
\end{eqnarray}
In \textbf{Steps 2}-\textbf{4} below, $\muA,\fA,\FA,\IA$ as defined above will play the roles of $\mu,f,F,I$ in Section \ref{Sect.2.1}. We aim to apply Theorem \ref{Theorem2.1} to bound $\sum_{z\in A^n} e^{\fA(z)}\muA(z)$ (cf.~\eqref{defS}), which via~\eqref{Eq.A11} yields a bound on the restricted partition function $\mathcal{S}_{n,K,A}$. The upper bounds on the smoothness term $E_1$ (see Definition \ref{DefSmoothnessTerm}) and the complexity term $E_2(\epsilon)$ (see Definition \ref{DefComplexityTerm})---as required by Theorem \ref{Theorem2.1}---are obtained in \textbf{Steps 2} and \textbf{3}, and the resulting upper bound on $\mathcal{S}_{n,K,A}$ is given in \textbf{Step~4}. 

\medskip

\noindent{\bf{Step 2: Bounding the smoothness term.}}
Let $E_1$ and $\Lambda(\cdot)$ be as defined in (\ref{E2.1}) and (\ref{def_L}), respectively, but with $\hat{\mathcal{Y}}_{n,\mathbf{K}},\mu,f,F,I$ replaced by $\hat{\mathcal{Y}}_{n,A},\mu^A,\fA,\FA,\IA$ (as in~\eqref{def_YhatnA}-\eqref{Eq.A14}). In this step, we will show that there exist positive absolute constants $C_1,C_2,C_2'$, such that 
\begin{equation}\label{Eq.A33}
    E_1 \leq C_1 K, \qquad \mbox{and} \qquad \Lambda(C_1K) \leq C_2 \log\Big(\frac{n\log{n}}{K}+2\Big) \leq C_2' K\log\Big(\frac{n}{K}+2\Big).
\end{equation}
\begin{proof}
Using the definition of $\FA$ in~\eqref{Eq.A13}, for any $y\in [0,1]^{n|A|}$, $i,j\in [n]$, and $\ell,\ell'\in A$, 
\begin{equation}\label{F_def}
    \FA_{i,\ell}(y):=\frac{\partial \FA(y)}{\partial y_{i,\ell}}= \psi'\big(\widetilde{N}^A_{\ell}(y)\big), \quad \mbox{and} \quad  \FA_{i,\ell;j,\ell'}(y):=\frac{\partial^2 \FA(y)}{\partial y_{i,\ell} \partial y_{j,\ell'}}=\mathbbm{1}_{\ell=\ell'}\psi''\big(\widetilde{N}^A_{\ell}(y)\big). 
\end{equation}
Recall $b_{i,\ell}$ and $c_{i,\ell;j,\ell'}$ from Definition \ref{Global_GH}. By (\ref{derivatives}) and (\ref{F_def}), for any $i,j\in[n]$ and $\ell,\ell'\in A$, 
\begin{equation}\label{UniformBound}
    b_{i,\ell}\leq 1+\log{n}, \qquad \mbox{and} \qquad   c_{i,\ell;j,\ell'}\leq \|\psi''\|_{\infty}\leq 1.
\end{equation}
Now recall $\NI{y}$ and $\HI{i}{\ell}{j}{\ell'}{y}$ from Definition \ref{Local_Hess}. For any $y\in [0,1]^{n|A|}$, $y'\in \NI{y}$, and $\ell\in A$, we have  
\begin{equation*}
    \NellA(y')=\sum_{i=1}^n y_{i,\ell}'\geq \sum_{i\in[n]\backslash\{i_0,i_0'\}} y_{i,\ell}\geq \NellA(y)-2,
\end{equation*}
where $i_0,i_0'$ are as in Definition \ref{Local_Hess}. Hence by (\ref{derivatives}) and (\ref{F_def}), 
\begin{equation*}
     \FA_{i,\ell;j,\ell'}(y')=\frac{\mathbbm{1}_{\ell=\ell'}}{\max\big\{\widetilde{N}^A_{\ell}(y'),1\big\}}\leq \frac{\mathbbm{1}_{\ell=\ell'}}{\max\big\{\widetilde{N}^A_{\ell}(y)-2,1\big\}}\leq \frac{3\cdot \mathbbm{1}_{\ell=\ell'}}{\max\big\{\widetilde{N}^A_{\ell}(y),1\big\}}.
\end{equation*}
Therefore,
\begin{equation}\label{E3.3}
    \HI{i}{\ell}{j}{\ell'}{y}\leq \frac{3\cdot \mathbbm{1}_{\ell=\ell'}}{\max\big\{\widetilde{N}^A_{\ell}(y),1\big\}}.
\end{equation}

Now recall $\NII{y}{M}$ and $\HII{i}{\ell}{j}{\ell'}{y}{M}$ from Definition \ref{Local_Hess_new}. For any $y\in [0,1]^{n|A|}$, $M\geq 1$, $y'\in\NII{y}{M}$, and $\ell\in A$,  
\begin{equation*}
   \NellA(y')=\sum_{i=1}^n y_{i,\ell}' \geq \sum_{i\in [n]\backslash\{i_0\}} y_{i,\ell}'\geq M^{-1}\sum_{i\in[n]\backslash\{i_0\}} y_{i,\ell} \geq \frac{\NellA(y)-1}{M},
\end{equation*}
where $i_0$ is as in Definition \ref{Local_Hess_new}. Hence
\begin{equation*}
    \max\{\NellA(y'),1\}\geq \frac{M}{M+1}\cdot \NellA(y')+\frac{1}{M+1}\geq \frac{\NellA(y)}{M+1}.
\end{equation*}
Thus by (\ref{derivatives}) and (\ref{F_def}), for any $i,j\in [n]$ and $\ell,\ell'\in A$, 
\begin{equation*}
    \FA_{i,\ell;j,\ell'}(y')=\frac{\mathbbm{1}_{\ell=\ell'}}{\max\big\{\NellA(y'),1\big\}}\leq \frac{(M+1)\mathbbm{1}_{\ell=\ell'}}{\max\big\{\NellA(y),1\big\}}.
\end{equation*}
Therefore,
\begin{equation}\label{E3.1}
    \HII{i}{\ell}{j}{\ell'}{y}{M} \leq \frac{(M+1)\mathbbm{1}_{\ell=\ell'}}{\max\big\{\NellA(y),1\big\}}.
\end{equation}
With $M_i$ as in Definition \ref{DefSmoothnessTerm}, using \eqref{UniformBound} we get $M_i\leq e^2$ for every $i\in [n]$. Fix $y\in\hat{\mathcal{Y}}_{n,A}$ arbitrary, and combine (\ref{E3.5}) and (\ref{UniformBound}) to get
\begin{equation*}
    \sum_{\ell'\in A}\max_{\ell\in A}\big\{\HI{i}{\ell}{j}{\ell'}{y}\big\}y_{j,\ell'}\leq 1. 
\end{equation*}
Thus, with $\Phi_{j,i}(y)$ as in Definition \ref{DefSmoothnessTerm}, using the inequality that $\exp(x)\leq 1+e^2x\slash 2$ for all $x\in [0,2]$, we obtain for any $i,j\in [n]$, 
\begin{equation*}
   \Phi_{j,i}(y)\leq e^2\sum_{\ell'\in A}\max_{\ell\in A}\big\{\HI{j}{\ell}{i}{\ell'}{y}\big\}y_{i,\ell'}\leq 3e^2\sum_{\ell'\in A}\frac{y_{i,\ell'}}{\max\big\{\widetilde{N}^A_{\ell'}(y),1\big\}},
\end{equation*}
where the second inequality uses
 (\ref{E3.3}). 
Hence
\begin{eqnarray}\label{E3.6}
\sum_{i=1}^n\max_{j\in[n]}\big\{\Phi_{j,i}(y)\big\}&\leq &3e^2\sum_{i=1}^n\sum_{\ell'\in  A}\frac{y_{i,\ell'}}{\max\big\{\widetilde{N}^A_{\ell'}(y),1\big\}}\nonumber\\
    &=& 3e^2\sum_{\ell' \in A}\frac{\widetilde{N}^A_{\ell'}(y)}{\max\big\{\widetilde{N}^A_{\ell'}(y),1\big\}}\leq 3e^2|A| \leq 3e^2  K. 
\end{eqnarray}
By (\ref{E3.1}) with $M=M_i\leq e^2$, for any $i\in [n]$ and $\ell\in A$ we have
\begin{eqnarray}\label{E3.8}
   \sum_{j=1}^n\sum_{\ell'\in A} \HII{i}{\ell}{j}{\ell'}{y}{M_i} y_{j,\ell'}&\leq& (e^2+1)\sum_{j=1}^n\sum_{\ell'\in A}\frac{y_{j,\ell'}\mathbbm{1}_{\ell=\ell'}}{\max\big\{\widetilde{N}_{\ell}^A(y),1\big\}} \nonumber\\
   &=& \frac{(e^2+1)\sum_{j=1}^n y_{j,\ell}}{\max\big\{\widetilde{N}^A_{\ell}(y),1\big\}}=\frac{(e^2+1)\widetilde{N}^A_{\ell}(y)}{\max\big\{\widetilde{N}^A_{\ell}(y),1\big\}}\leq 9,
\end{eqnarray}
and
\begin{eqnarray}\label{E3.9}
   \sum_{i=1}^n\max_{\ell\in A}\bigg\{\sum_{\ell'\in A} \HII{i}{\ell}{i}{\ell'}{y}{M_{i}}y_{i,\ell'}\bigg\}&\leq &\sum_{i=1}^n\sum_{\ell\in A}\sum_{\ell'\in A} \HII{i}{\ell}{i}{\ell'}{y}{M_{i}} y_{i,\ell'} \nonumber\\
   \qquad \leq (e^2+1)\sum_{i=1}^n \sum_{\ell\in A} \frac{y_{i,\ell}}{\max\big\{\widetilde{N}^A_{\ell}(y),1\big\}} \; &=& \; (e^2+1)\sum_{\ell\in A}\frac{\widetilde{N}^A_{\ell}(y)}{\max\big\{\widetilde{N}^A_{\ell}(y),1\big\}}\leq 9K.
\end{eqnarray}
Combining (\ref{E3.6})-(\ref{E3.9}) we obtain the existence of an absolute positive constant $C_1$ such that $E_1\leq  C_1 K$, where $E_1$ is as defined in (\ref{E2.1}). With $\Lambda(\cdot)$ as in (\ref{def_L}), (\ref{UniformBound}) gives
\begin{equation*}
    \Lambda(C_1 K)\leq \log\left(2+\frac{n(1+\log n)}{C_1K}\right)\le C_2\log\Big(\frac{n\log{n}}{K}+2\Big),
\end{equation*}
where $C_2>0$ is an absolute constant. Now note that there exist absolute constants $C',C''>0$, such that if $K\geq \sqrt{n}$, then $\log\big(\frac{n\log{n}}{K}+2\big)\leq \log(n\log n+2)\leq C'\sqrt{n}\leq C'K$; if $K<\sqrt{n}$, then $\log n\leq C'\sqrt{n}\leq \frac{C'n}{K}$, and thus $\log\big(\frac{n\log{n}}{K}+2\big)\leq \log\big(C'\big(\frac{n}{K}\big)^2+2\big)\leq C''\log\big(\frac{n}{K}+2\big)$. Hence $\Lambda(C_1K) \leq C_2 \log\big(\frac{n\log{n}}{K}+2\big) \leq C_2' K\log\big(\frac{n}{K}+2\big)$, where $C_2'>0$ is an absolute constant. 
\end{proof}

\noindent{\bf{Step 3: Bounding the complexity term.}}
For any $\epsilon>0$, let $E_2(\epsilon)$ be as defined in (\ref{def_E}), but with $\hat{\mathcal{Y}}_{n,\mathbf{K}},\mu,f,F,I$ replaced by $\hat{\mathcal{Y}}_{n,A},\mu^A,f^A,F^A,I^A$ (as in~\eqref{def_YhatnA}-\eqref{Eq.A14}). In this step, we will show that there exists $\epsilon>0$, such that 
\begin{equation}\label{Eq.A8}
    E_2(\epsilon) \leq CK\log\Big(\frac{n}{K}+2\Big).
\end{equation}
\begin{proof}

For any $\delta\in (0,1]$, we define 
\begin{eqnarray*}
     \mathcal{T}(\delta)& :=&\Big\{(x_{\ell})_{\ell\in A}\in (\delta\mathbb{N})^{A}:\Big|\sum_{\ell\in A}x_{\ell}-n\Big|\leq K\Big\}, \nonumber \\
     \Dtilde(\delta)&:=&\big\{(d_{i,\ell})_{i\in [n],\ell\in A}: d_{i,\ell} = \psi(x_{\ell}+1)-\psi(x_{\ell})\text{ for }i\in [n],\ell\in A,\nonumber\\
&&\hspace{1.5in}  \text{ where }(x_{\ell})_{\ell\in A}\in\mathcal{T}(\delta)\big\},
\end{eqnarray*}
where $\delta\mathbb{N}:=\{0,\delta,2\delta,\cdots\}$ (and $\mathbb{N}:=\{0,1,2,\cdots\})$. Note that by Lemma \ref{L3.3} (in Appendix \ref{Appendix_E}), for any $m\in\mathbb{N}$ and $\delta\in (0,1]$, the number of $(x_{\ell})_{\ell\in A}\in (\delta\mathbb{N})^{A}$ such that $\sum_{\ell\in A}x_{\ell}=\delta m$ is
\begin{equation}\label{combinatorial_bdd}
     \binom{m+|A|-1}{|A|-1}\leq \binom{m+K}{K}\leq \Big(\frac{e(m+K)}{K}\Big)^K.
\end{equation}
Hence for any $\delta\in (0,1]$,
\begin{eqnarray*}
     |\Dtilde(\delta)| \leq|\mathcal{T}(\delta)|&\leq &\sum_{m\in \big[\frac{n-K}{\delta},\frac{n+K}{\delta}\big]\cap\mathbb{N}}\Big(\frac{e(m+K)}{K}\Big)^K  \nonumber  \\
     &\leq&\Big(1+\frac{2K}{\delta}\Big)\Big[\frac{e(\frac{n+K}{\delta}+K)}{K}\Big]^K\leq \frac{3K}{\delta}\Big(\frac{2e(n+K)}{\delta K}\Big)^K. 
\end{eqnarray*}
Thus
\begin{equation}\label{Eqn3.14}
   \log(|\Dtilde(\delta)|)\leq   \log\Big(\frac{3K}{\delta}\Big)+K\log\Big(\frac{2e(n+K)}{\delta K}\Big)\leq CK\log\Big(\frac{n+2K}{\delta K}\Big). 
\end{equation}

Below we consider any $y\in\hat{\mathcal{Y}}_{n,A}$, and take $\delta=\min\{K\slash n,1\}\in(0,1]$. For each $\ell\in A$, there exists $x_{\ell}  \in  \delta \mathbb{N}$ such that $\big|\NellA(y)-x_{\ell}\big|\leq \delta$. As $\sum_{\ell\in A}\NellA(y)=n$, we have
\begin{equation*}
    \Big|\sum_{\ell\in A} x_{\ell}-n\Big|\leq \sum_{\ell\in A}\big|x_{\ell}-\NellA(y)\big|\leq \delta|A|\leq K.
\end{equation*}
Hence $(x_{\ell})_{\ell\in A}\in\mathcal{T}(\delta)$. Now we take $d=(d_{i,\ell})_{i\in [n],\ell\in A}\in\Dtilde(\delta)$ such that
\begin{equation*}
    d_{i,\ell}=\psi(x_{\ell}+1) -  \psi(x_{\ell})
\end{equation*}
for all $i\in [n],\ell\in A$. For any $x\geq 0$, we define $h(x) := \psi(x+1)-\psi(x)$. By (\ref{derivatives}), 
\begin{equation}\label{derivative.h}
     |h'(x)|=|\psi'(x+1)-\psi'(x)|\leq \frac{1}{\max\{x,1\}}, \quad \text{for any } x\geq 0.
\end{equation}
For each $i\in [n]$ and $\ell\in A$,
\begin{eqnarray*}
     \FA\big(\UII{y}{i}{\ell}\big)-\FA\big(\UI{y}{i}\big)&=&\psi\big(\widetilde{N}^A_{\ell}\big(\UI{y}{i}\big)+1\big)-\psi\big(\widetilde{N}^A_{\ell}\big(\UI{y}{i}\big)\big)\nonumber\\
     &=& h\big(\widetilde{N}^A_{\ell}\big(\UI{y}{i}\big)\big). 
\end{eqnarray*}
Hence for any $i\in [n]$ and $\ell\in A$,
\begin{eqnarray*}
  && \big|\FA\big(\UII{y}{i}{\ell}\big)-\FA\big(\UI{y}{i}\big)-h\big(\widetilde{N}^A_{\ell}(y)\big)\big|=\big|h\big(\widetilde{N}^A_{\ell}\big(\UI{y}{i}\big)\big)-h\big(\widetilde{N}^A_{\ell}(y)\big)\big|\nonumber\\
   &\leq&\frac{y_{i,\ell}}{\max\big\{\widetilde{N}^A_{\ell}\big(\UI{y}{i}\big),1\big\}}\leq \frac{y_{i,\ell}}{\max\big\{\widetilde{N}^A_{\ell}(y)-1,1\big\}}\leq \frac{2 y_{i,\ell}}{\max\big\{\widetilde{N}^A_{\ell}(y),1\big\}},
\end{eqnarray*}
\begin{equation*}
    \big|h\big(\widetilde{N}^A_{\ell}(y)\big)-d_{i,\ell}\big|=\big|h\big(\widetilde{N}^A_{\ell}(y)\big)-h(x_{\ell})\big|\leq \frac{\big|\widetilde{N}^A_{\ell}(y)-x_{\ell}\big|}{\max\big\{\widetilde{N}^A_{\ell}(y)-1,1\big\}}\leq \frac{2\delta}{\max\big\{\widetilde{N}^A_{\ell}(y),1\big\}}.
\end{equation*}
Therefore, for any $i\in [n]$ and $\ell\in A$,
\begin{equation*}
    \big|\FA\big(\UII{y}{i}{\ell}\big)-\FA\big(\UI{y}{i}\big)-d_{i,\ell}\big|\leq \frac{2(y_{i,\ell}+\delta)}{\max\big\{\widetilde{N}^A_{\ell}(y),1\big\}}\leq \frac{2 y_{i,\ell}}{\max\big\{\widetilde{N}^A_{\ell}(y),1\big\}}+2\delta.
\end{equation*}
Hence
\begin{eqnarray*}
  &&  \sum_{i=1}^n\max_{\ell\in A}\big\{\big|\FA\big(\UII{y}{i}{\ell}\big)-\FA\big(\UI{y}{i}\big)-d_{i,\ell}\big|\big\}\leq \sum_{i=1}^n\sum_{\ell\in A}\frac{2 y_{i,\ell}}{\max\big\{\widetilde{N}^A_{\ell}(y),1\big\}}+2\delta n \nonumber\\
  &=& \sum_{\ell\in A}\frac{2\widetilde{N}^A_{\ell}(y)}{\max\big\{\widetilde{N}^A_{\ell}(y),1\big\}}+2\delta n\leq 2K+2\delta n\leq 4 K.
\end{eqnarray*}
Thus with $\mathcal{D}(\epsilon)$ and $E_2(\epsilon)$ as in
Definition \ref{DefComplexityTerm}, choosing $\epsilon=4K$ and $\mathcal{D}(\epsilon)=\Dtilde(\delta)$, we get that  
\begin{equation*}
    E_2(4 K)= 8K+\log(|\mathcal{D}(4K)|)=8K+\log(|\Dtilde(\delta)|)\leq CK\log\Big(\frac{n}{K}+2\Big),
\end{equation*}
where the last inequality uses (\ref{Eqn3.14}).\end{proof}

\noindent{\bf{Step 4: Bounding $\mathcal{S}_{n,K,A}$.}} In this step, 
using Theorem \ref{Theorem2.1} and \textbf{Steps 2} and \textbf{3}, we will show that 
\begin{eqnarray}\label{Eq.A21}
    \log \mathcal{S}_{n,K,A}&\leq& \sup_{y\in\hat{\mathcal{Y}}_{n,A}}\bigg\{\sum_{\ell\in A} \log\Gamma\big(\NellA(y)+\alpha_{\ell}\big)+\sum_{i=1}^n\sum_{\ell\in A}y_{i,\ell}\log\eta_{\ell,X_i}-\sum_{i=1}^n\sum_{\ell\in A} y_{i,\ell}\log y_{i,\ell}\bigg\}\nonumber\\
    &&\quad\quad\quad+n+CK\log\Big(\frac{n}{K}+2\Big).
\end{eqnarray}
Here $\mathcal{S}_{n,K,A}$ is  as defined in {\bf Step 1}.
\begin{proof}
To begin, using (\ref{Eq.A11}) we have 
{\small 
\begin{align}\label{Eq.A19}
    \log \mathcal{S}_{n,K,A} & = \sum_{i=1}^n\log\Big(\sum_{\ell\in A} \eta_{\ell,X_i}\Big) +\log\bigg(\sum_{z\in A^n} e^{\fA(z)}\mu^A(z)\bigg) \nonumber\\
    &\le \sum_{i=1}^n\log\Big(\sum_{\ell\in A} \eta_{\ell,X_i}\Big)+\sup_{y\in\hat{\mathcal{Y}}_{n,A}}\Big\{\FA(y)-\IA(y)\Big\} +CK\log\Big(\frac{n}{K}+2\Big)\nonumber\\
    &= \sup_{y\in\hat{\mathcal{Y}}_{n,A}}\bigg\{\FA(y)+\sum_{i=1}^n\sum_{\ell\in A}y_{i,\ell}(\log\eta_{\ell,X_i}- \log y_{i,\ell})\bigg\}  +CK\log\Big(\frac{n}{K}+2\Big), 
\end{align}
}where the inequality in the second line uses Theorem \ref{Theorem2.1}, along with \textbf{Steps 2} and \textbf{3} (namely, (\ref{Eq.A33}) and (\ref{Eq.A8})), which bound the error terms in Theorem \ref{Theorem2.1}.
Here $F^A(\cdot)$ and $I^A(\cdot)$ are as defined in (\ref{Eq.A13}) and (\ref{Eq.A14}) respectively.

Proceeding to bound the first term in the RHS of \eqref{Eq.A19},  for any $y\in\hat{\mathcal{Y}}_{n,A}$ we have
\begin{eqnarray}\label{Eq.A15}
  && \sum_{\ell\in A}\log\big(\NellA(y)+\alpha_{\ell}\big)= \sum_{\ell\in A}\log\big(\NellA(y)+\alpha_{\ell}\big)+\sum_{\ell\in [K]\backslash A}\log(1) \nonumber\\
  &\leq& 
 K \log\bigg(\frac{\sum_{\ell\in A}\NellA(y)+\sum_{\ell\in A}\alpha_{\ell}+K-|A|}{K}\bigg)\leq C K\log\Big(\frac{n}{K}+2\Big),
\end{eqnarray}
where the last two inequalities use Jensen's inequality, and the bound $\max_{\ell\in [K]}\alpha_\ell\le C_0$, respectively.
Moreover, for any $y\in \hat{\mathcal{Y}}_{n,A}$ and $\ell\in A$ we have
 \begin{equation*}
     \big(\NellA(y)+\alpha_{\ell}\big)\log\big(\NellA(y)+\alpha_{\ell}\big)\geq
     \begin{cases}
         \inf_{x\in [0,C_0+1]}\{x\log{x}\}\geq -C & \text{ if }\NellA(y)\leq 1\\
        \NellA(y)\log\big(\NellA(y)\big) & \text{ if }\NellA(y)>1,
     \end{cases}
 \end{equation*}
and so
\begin{equation}\label{Eq.A17}
  \NellA(y)\log\big(\NellA(y)\big)\leq 
 \big(\NellA(y)+\alpha_{\ell}\big)\log\big(\NellA(y)+\alpha_{\ell}\big)+C,
\end{equation}
By applying  (\ref{eq3.2}), (\ref{Eq.A17}), Lemma \ref{L3.2} (in Appendix \ref{Appendix_E}), (\ref{Eq.A15}) chronologically, for any $y\in\hat{\mathcal{Y}}_{n,A}$ we get
{\small\begin{eqnarray}\label{Hdelta}
   \FA(y) = \sum_{\ell\in A}\psi\big(\NellA(y)\big)  &\leq&  \frac{K}{2} + \sum_{\ell\in A}\phi\big(\NellA(y)\big)=\frac{K}{2}+\sum_{\ell\in A} \NellA(y)\log\big(\NellA(y)\big)\nonumber\\
    &\leq& CK+\sum_{\ell\in A} \big(\NellA(y)+\alpha_{\ell}\big)\log\big(\NellA(y)+\alpha_{\ell}\big)\nonumber\\
    &\leq& CK + \sum_{\ell\in A}\Big(\log\Gamma\big(\NellA(y)+\alpha_{\ell}\big)+\NellA(y)+\alpha_{\ell}+\frac{1}{2}\log\big(\NellA(y)+\alpha_{\ell}\big)\Big)\nonumber\\
    &\leq& \sum_{\ell\in A}\log\Gamma\big(\NellA(y)+\alpha_{\ell}\big)+n+CK\log\Big(\frac{n}{K}+2\Big).
\end{eqnarray}}Now the desired conclusion (\ref{Eq.A21}) follows from (\ref{Eq.A19}) and (\ref{Hdelta}). 
\end{proof}

\noindent{\bf{Step 5: Bounding $\mathcal{S}_{n,K}$ and concluding.}} 
Recall the setup in~\eqref{defrR}-\eqref{partition}. Using the results from \textbf{Steps 1} and \textbf{4} (namely, (\ref{Eqq3.1}) and (\ref{Eq.A21})), we get
{\small
\begin{eqnarray*}
    \log \mathcal{S}_{n,K} &\leq& \sup_{y\in\hat{\mathcal{Y}}_{n,K}}\bigg\{\sum_{\ell=1}^K \log\Gamma\big(\widetilde{N}_{\ell}(y)+\alpha_{\ell}\big)+\sum_{i=1}^n\sum_{\ell= 1}^K y_{i,\ell}\log\eta_{\ell,X_i}-\sum_{i=1}^n\sum_{\ell=1}^K y_{i,\ell}\log y_{i,\ell}\bigg\}\nonumber\\
    &&\hspace{0.2in} -\log \Gamma\Big(n+\sum_{\ell=1}^K \alpha_{\ell}\Big)+CK\log\Big(\frac{n}{K}+2\Big)\nonumber\\
    &=& \sup_{y\in\hat{\mathcal{Y}}_{n,K}}\{F(y)-I(y)\}+\sum_{i=1}^n\log\bigg(\sum_{\ell=1}^K \eta_{\ell,X_i}\bigg)+CK\log\Big(\frac{n}{K}+2\Big).
\end{eqnarray*}
}In the above display, we use the fact that
{\small
\begin{eqnarray*}
   &&\sup_{\substack{A\subseteq [K]:|A|\geq 1,\\ y\in \hat{\mathcal{Y}}_{n,A}}}\bigg\{\sum_{\ell\in [K]\backslash A}\log\Gamma(\alpha_{\ell})+\sum_{\ell\in A} \log\Gamma\big(\NellA(y)+\alpha_{\ell}\big) +\sum_{i=1}^n\sum_{\ell\in A} y_{i,\ell}(\log\eta_{\ell,X_i}-\log y_{i,\ell})\nonumber\\
   &=& \sup_{y\in \hat{\mathcal{Y}}_{n,K}}\bigg\{\sum_{\ell=1}^K\log\Gamma\big(\widetilde{N}_{\ell}(y)+\alpha_{\ell}\big)+\sum_{i=1}^n\sum_{\ell=1}^K y_{i,\ell}\log\eta_{\ell,X_i}-\sum_{i=1}^n\sum_{\ell=1}^K y_{i,\ell}\log y_{i,\ell}\bigg\}
\end{eqnarray*}
}in the first line, and the definitions of $F(\cdot)$ and $I(\cdot)$ as in~\eqref{deffF2} and~\eqref{defI} in the second line. Hence by~\eqref{partition}, we have  
{\small \begin{equation}\label{Eq.A24}
  \frac{\log \Big(\sum_{z\in [K]^n} e^{f(z)}\mu(z)\Big) -\sup_{y\in\hat{\mathcal{Y}}_{n,K}}\{F(y)-I(y)\}}{n}\leq\frac{CK}{n}\log\Big(\frac{n}{K}+2\Big).
\end{equation}
}Part (b) of Theorem \ref{Theorem_LDA_UBD} then follows from \eqref{Eq.A24} along with parts (a) and (c) of Lemma \ref{Lemma2.1}, on noting that $F(\cdot)$ is convex (using the convexity of $\log\Gamma(\cdot)$ on $(0,\infty)$).

Now we turn to the proof of part (a) of Theorem \ref{Theorem_LDA_UBD}. With $R(\cdot,\cdot), R_{i,\ell;j,\ell'}(\cdot,\cdot), \ctilde_{i,\ell;j,\ell'}$ as defined in~\eqref{defrR2}, \eqref{def_Rde}, and Definition \ref{Global_GH}, respectively, we obtain that for any $\pmb{\pi}=(\pi_1,\cdots,\pi_K)\in\mathbb{R}^K$ such that $\sum_{\ell=1}^K\pi_{\ell}=1$ and $\pi_{\ell}\geq 0$ for every $\ell\in[K]$, and any $y\in [0,1]^{nK}, i,j\in [n], \ell,\ell'\in [K]$, we have $R_{i,\ell;j,\ell'}(\pmb{\pi},y)=0$ (since $R(\pmb{\pi},\cdot)$ is linear), hence $\ctilde_{i,\ell;j,\ell'}=0$. Hence by Theorem \ref{Theorem2.2} and~\eqref{Eq.A24}, we conclude that $\frac{1}{n} \KL\big(\hat{P} \,\big\| \; \mathbb{P}(\mathbf{Z},\pmb{\pi}\mid\mathbf{X}) \big)\leq \frac{CK}{n}\log\big(\frac{n}{K}+2\big)$.

\section{Discussion}\label{Sect.5}

In this paper, we make theoretical advancements towards a general framework for analyzing the statistical accuracy of MFVI for posterior approximation in Bayesian latent variable models with categorical local latent variables. Our framework uses (local and global) bounds on the gradient and Hessian of the Hamiltonian corresponding to the unnormalized collapsed posterior and provides explicit non-asymptotic bounds on the KL divergence between the variational and true posteriors. When applied to two widely used latent variable models, LDA and MMSB, our results characterize the exact parameter regimes for the validity of MFVI and yield optimal rates of convergence for the variational posteriors to the true posteriors. Our analysis opens the door for future work on the validity of MFVI for other latent variable models, including obtaining tight bounds for models where previously only asymptotic results were known \cite{mukherjee2022variational, mukherjee2024naive}, and other latent variable models with general types of local latent variables. Further, our tight bounds on the log-partition function (error in the mean-field approximation) can be used to develop inferential methods in the framework of empirical Bayes where the hyperparameters in the model are estimated from data~\cite{mukherjee2023mean,fan2023gradient} (e.g., for LDA this would involve the estimation of the hyperparameters $\pmb{\alpha}$ and $\pmb{\eta}$), and have implications for model selection. 


\bibliographystyle{imsart-number} 
{\footnotesize \bibliography{VI.bib} }      


\newpage
\begin{appendix}

\section{Numerical studies}\label{Sect.4}

Recall the setup in Section \ref{Sect.1.3}. In this section, we present numerical studies to further demonstrate the substantial improvement of partially grouped VI over fully factorized VI for MMSB.

In our simulation studies, we take $K=2$, $n\in\{50, 100,200,400\}$, $\pmb{\alpha}=(1,1)$, and $\mathbf{B}=\begin{bmatrix}
    0.9 & 0.3 \\
    0.3 & 0.9
\end{bmatrix}$. 
The observations $\mathbf{X}$ are generated as described in Section \ref{Sect.1.3}. We apply both partially grouped VI and fully factorized VI to the generated data. The partially grouped VI for MMSB can be efficiently implemented using CAVI, with details presented in Appendix \ref{Appendix_F}. The details of the implementation of CAVI for fully factorized VI can be found in \cite{airoldi2008mixed}. We run both algorithms until convergence to obtain the partially grouped variational posterior $\hat{P}$ and the fully factorized variational posterior $\hat{P}^{\mathrm{ff}}$. From~\eqref{eq:VI-ELBO}, we can deduce that 
{\small
\begin{eqnarray*}
    &&\frac{1}{n^2}\KL\big(\hat{P}^{\mathrm{ff}}\, \big\|\; \mathbb{P}(\mathbf{Z}_{\rightarrow},\mathbf{Z}_{\leftarrow},\{\pmb{\pi}_{i}\}_{i\in [n]}\mid\mathbf{X}) \big)=\frac{\log\mathbb{p}(\mathbf{X})-\mathrm{ELBO}(\hat{P}^{\mathrm{ff}})}{n^2}\geq
     \frac{\mathrm{ELBO}(\hat{P})-\mathrm{ELBO}(\hat{P}^{\mathrm{ff}})}{n^2},
\end{eqnarray*}
}where $\log\mathbb{p}(\mathbf{X})$ is the log-marginal density of the data $\mathbf{X}$; see~\eqref{ELBO_def} for the definition of $\mathrm{ELBO}$. In Figure~\ref{Figure}, we display the ELBO and scaled ELBO (i.e., ELBO divided by $n^2$) of $\hat{P}$ and $\hat{P}^{\mathrm{ff}}$. As can be seen from the figure, as $n$ grows, the difference in scaled ELBO for $\hat P$ and $\hat{P}^{\mathrm{ff}}$ is non-vanishing and approaches a constant value, thereby implying that the right side of the above display converges to a strictly positive number.  Thus Figure~\ref{Figure} indicates that the fully factorized variational posterior $\hat{P}^{\mathrm{ff}}$ does not give an accurate approximation of the true posterior (cf.~Theorem \ref{Theorem_MMSB_UBD} for partially grouped VI).

To further illustrate the above phenomenon we examine the correlation between $\mathbbm{1}_{Z_{i\rightarrow j}=1}$ and $\mathbbm{1}_{Z_{i\leftarrow j}=1}$ for all distinct $i,j\in [n]$ given the observed data $\mathbf{X}$ (we consider $n=400$), under the partially grouped variational posterior $\hat{P}$ (which we use as a surrogate to the true posterior as we have proved its consistency). From the simulation results, among all pairs $(i,j)\in [n]^2$ such that $i\neq j$, $40\%$ of the pairs exhibit a correlation of $-0.75$, and the remaining $60\%$ show a correlation of $0.5$. This indicates that for all distinct $i,j\in [n]$, $Z_{i\rightarrow j}$ and $Z_{i\leftarrow j}$ are \textit{not approximately independent}. This explains why the fully factorized variational posterior $\hat{P}^{\mathrm{ff}}$---which assumes the independence between $Z_{i\rightarrow j}$ and $Z_{i\leftarrow j}$ for all distinct $i,j\in [n]$---does not give a close approximation to the true posterior. 

\begin{figure}[ht]
  \centering
  \begin{subfigure}[b]{0.57\textwidth}
    \centering
    \includegraphics[width=1\textwidth]{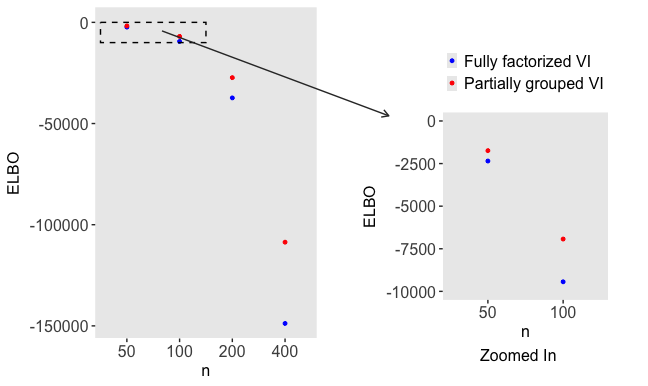}
    \caption{}
\label{Figure1.1}
  \end{subfigure}
  \hfill
  \begin{subfigure}[b]{0.4\textwidth}
    \centering
    \includegraphics[width=0.85\textwidth]{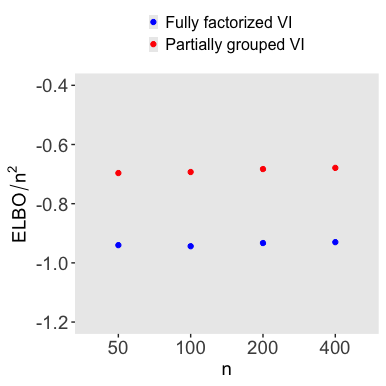}
    \caption{}
\label{Figure1.2}
  \end{subfigure}
  \caption{Comparison of ELBO and scaled ELBO for fully factorized VI and partially grouped VI. Panel (a) shows the ELBO values, with an inset providing a zoomed-in view (rescaled vertical axis) for $n=50,100$. Panel (b) presents the scaled ELBO values.}
  \label{Figure}
\end{figure}

\section{Proofs of Lemma \ref{Lemma2.1} and Theorems \ref{Theorem2.1} and \ref{Theorem2.2}}\label{Appendix_A}

In this section, we present the proofs of Lemma \ref{Lemma2.1} and Theorems \ref{Theorem2.1} and \ref{Theorem2.2}. We assume the notation and definitions presented in Sections \ref{Sect.1.1} and \ref{Sect.2}. 

Throughout this section, we use $C$ and $c$ to denote absolute positive constants. The specific values of these constants may change from line to line. 

\subsection{Preparatory results}

In this subsection, we establish preparatory results that will be used in the proofs of Proposition \ref{P2.1main} and Theorem \ref{Theorem2.1}.

\begin{lemma}\label{Lem2.1}
For any random variable $W$ satisfying $\mathbb{E}[W]\leq A$ and $\mathbb{E}[W^2]\leq B$ (where $A>0$ and $B\geq 0$), we have 
\begin{equation*}
    \mathbb{P}(W\leq 2A)\geq \frac{A^2}{4(A^2+B)}. 
\end{equation*}
\end{lemma}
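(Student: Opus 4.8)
The plan is to recognize the claim as a one-sided Chebyshev (Cantelli) bound in disguise. I set $V := 2A - W$, so that the target event becomes $\{W \le 2A\} = \{V \ge 0\}$. From the hypotheses I immediately read off a lower bound on the mean, $\mu := \mathbb{E}[V] = 2A - \mathbb{E}[W] \ge 2A - A = A > 0$, and an upper bound on the variance, $\sigma^2 := \mathrm{Var}(V) = \mathrm{Var}(W) = \mathbb{E}[W^2] - (\mathbb{E}[W])^2 \le \mathbb{E}[W^2] \le B$. These two facts are all I will use.

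Next I would establish the one-sided inequality $\mathbb{P}(V \ge 0) \ge \mu^2/(\mu^2 + \sigma^2)$, which I can keep self-contained as follows. For any $u \ge 0$, on the event $\{V \le 0\}$ one has $\mu - V + u \ge \mu + u > 0$, so squaring (both sides nonnegative) and applying Markov's inequality gives $\mathbb{P}(V \le 0) \le \mathbb{E}[(\mu - V + u)^2]/(\mu+u)^2 = (\sigma^2 + u^2)/(\mu+u)^2$, where I used that $\mu - V$ is centered with variance $\sigma^2$. Optimizing over $u$ by taking $u = \sigma^2/\mu$ collapses the right-hand side to $\sigma^2/(\mu^2+\sigma^2)$, whence $\mathbb{P}(V \ge 0) \ge 1 - \mathbb{P}(V \le 0) \ge \mu^2/(\mu^2 + \sigma^2)$ (using $\{V<0\}\subseteq\{V\le 0\}$).

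Finally I would substitute the bounds from the first step. Since $g(\mu, \sigma^2) = \mu^2/(\mu^2 + \sigma^2) = (1 + \sigma^2/\mu^2)^{-1}$ is nondecreasing in $\mu > 0$ and nonincreasing in $\sigma^2 \ge 0$, the decoupled substitution $\mu \ge A$ and $\sigma^2 \le B$ yields $\mathbb{P}(W \le 2A) \ge A^2/(A^2 + B)$, which is in fact stronger than the stated bound $A^2/(4(A^2+B))$ by a factor of four; the weaker constant in the statement simply leaves harmless slack.

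The one genuine subtlety — and the reason a crude argument fails — is that only an \emph{upper} bound on $\mathbb{E}[W]$ is assumed, so $\mathbb{E}[W]$ may be arbitrarily negative and hence $\mu = \mathbb{E}[V]$ arbitrarily large. A naive Paley–Zygmund/Cauchy–Schwarz estimate $A \le \mathbb{E}[V\mathbbm{1}_{V \ge 0}] \le \sqrt{\mathbb{E}[V^2]}\,\sqrt{\mathbb{P}(V\ge 0)}$ would force me to control the raw second moment $\mathbb{E}[V^2] = \mu^2 + \sigma^2$, which blows up with $\mu$ and destroys the bound. The crux is therefore to use a one-sided inequality whose lower bound \emph{improves} as $\mu$ grows and that depends on the variance $\sigma^2$ rather than the second moment; the monotonicity step is precisely what lets me replace the coupled pair $(\mu, \sigma^2)$ by the extreme admissible corner $(A, B)$.
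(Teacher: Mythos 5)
Your proof is correct, and it takes a genuinely different route from the paper's. The paper argues by truncation: it splits $\mathbb{E}[W]$ over the events $\{W\le -M\}$, $\{-M<W\le 2A\}$, $\{W>2A\}$, bounds the lower-tail contribution by $\mathbb{E}[W\mathbbm{1}_{W\le -M}]\ge -B/M$ (since $W\ge -W^2/M$ there), solves the resulting inequality for $\mathbb{P}(W\le 2A)\ge (A-B/M)/(2A+M)$, and optimizes at $M=2B/A$, which is exactly where the constant $4$ comes from. You instead pass to $V=2A-W$ and run Cantelli's one-sided inequality, which you re-derive correctly (the shift-by-$u$ Markov argument and the optimization $u=\sigma^2/\mu$ are both fine, including the degenerate case $\sigma^2=0$), and your monotonicity step legitimately decouples the substitution $\mu\ge A$, $\sigma^2\le B$ to reach the corner $(A,B)$. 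Your route buys a strictly sharper conclusion, $\mathbb{P}(W\le 2A)\ge A^2/(A^2+B)$, beating the paper by a factor of $4$; the key structural difference is that Cantelli depends on the variance $\mathrm{Var}(W)\le \mathbb{E}[W^2]\le B$ rather than the raw second moment of $V$, which — as you correctly diagnose — is what survives when $\mathbb{E}[W]$ is very negative and $\mu$ is large. The paper's truncation argument, by contrast, never introduces the variance and works directly with the two stated moment bounds, at the cost of the weaker constant; since the lemma is only used to show that $\Delta_1(\mathbf{Y})+\Delta_2(\mathbf{Y})$ is small with probability bounded below by a constant times $t^2/(t^2+(\sum_i\max_\ell b_{i,\ell})^2)$, the factor of $4$ is immaterial downstream, but your version is the tighter statement.
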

\begin{proof}

If $B=0$, then $\mathbb{P}(W=0)=1$ and $\mathbb{P}(W\leq 2A)=1\geq 1\slash 4$. Below we assume that $B>0$. 

Consider any $M> 0$. When $W\leq -M$, we have $W\geq -W^2\slash M$. Hence
\begin{equation*}
    \mathbb{E}[W\mathbbm{1}_{W\leq -M}]\geq -\frac{1}{M} \mathbb{E}[W^2\mathbbm{1}_{W\leq -M}]\geq -\frac{1}{M}\mathbb{E}[W^2]\geq -\frac{B}{M}.
\end{equation*}
Therefore, we have 
\begin{eqnarray*}
   A & \geq & \mathbb{E}[W]=\mathbb{E}[W\mathbbm{1}_{W\leq -M}]+\mathbb{E}[W\mathbbm{1}_{-M<W\leq 2A}]+\mathbb{E}[W\mathbbm{1}_{W> 2A}]\nonumber\\
   &\geq& -\frac{B}{M}-M\,\mathbb{P}(-M<W\leq 2A)+2A\,\mathbb{P}(W>2A)\nonumber\\
   &=&-\frac{B}{M}-M(\mathbb{P}(W\leq 2A)-\mathbb{P}(W\leq -M))+2A(1-\mathbb{P}(W\leq 2A)) \nonumber\\
   &\geq& -\frac{B}{M}+2A-(2A+M)\,\mathbb{P}(W\leq 2A),
\end{eqnarray*}
which leads to
\begin{equation*}
    \mathbb{P}(W\leq 2A)\geq \frac{A-B\slash M}{2A+M}. 
\end{equation*}
Taking $M=2B\slash A$, we arrive at the conclusion of the lemma. 
\end{proof}

\begin{lemma}\label{Lem2.2}
With $T_{i,\ell}(y)$ as in Definition~\ref{DefTy} and $\Phi_{i,j}(y)$ as in Definition~\ref{DefSmoothnessTerm}, for any $y\in \prod_{i=1}^n[0,1]^{K_i}$, $i,j\in [n]$, and $\ell\in [K_i]$, we have 
\begin{equation*}
   \big|T_{i,\ell}\big(\UI{y}{j}\big)-T_{i,\ell}(y)\big|\leq\Phi_{i,j}(y)\min\big\{T_{i,\ell}(y),T_{i,\ell}\big(\UI{y}{j}\big)\big\}.
\end{equation*}
\end{lemma}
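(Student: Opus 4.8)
The plan is to convert the additive estimate into a multiplicative (ratio) bound and then control that ratio through the oscillation of a ``mixed difference'' of $F$. First I would dispose of the trivial case $j=i$: since $\UII{\UI{y}{i}}{i}{\ell}=\UII{y}{i}{\ell}$, we have $T_{i,\ell}(\UI{y}{i})=T_{i,\ell}(y)$, so the left-hand side vanishes and the inequality holds. Assume henceforth $j\neq i$. Writing $a_s:=F(\UII{y}{i}{s})$ and $\delta_s:=F(\UII{\UI{y}{j}}{i}{s})-a_s$, the definition of $T_{i,\ell}$ gives
\[
\frac{T_{i,\ell}(\UI{y}{j})}{T_{i,\ell}(y)}=\frac{e^{\delta_\ell}}{\sum_{s}T_{i,s}(y)\,e^{\delta_s}},
\]
and an identical computation for the reciprocal. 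Since $\sum_s T_{i,s}(y)=1$, the denominator is a convex combination of the $e^{\delta_s}$, so bounding the numerator by $e^{\max_s\delta_s}$ and the denominator below by $e^{\min_s\delta_s}$ (and symmetrically) shows that both $T_{i,\ell}(\UI{y}{j})/T_{i,\ell}(y)$ and its reciprocal are at most $e^{\,\mathrm{osc}(\delta)}$, where $\mathrm{osc}(\delta):=\max_s\delta_s-\min_s\delta_s$. The claimed inequality is exactly the statement that $\max\{T_{i,\ell}(\UI{y}{j})/T_{i,\ell}(y),\,T_{i,\ell}(y)/T_{i,\ell}(\UI{y}{j})\}\le 1+\Phi_{i,j}(y)$ (split into the two cases according to which of the two $T$ values is smaller), and $1+\Phi_{i,j}(y)=\exp\big(2\sum_{\ell'}\max_\ell\{\HI{i}{\ell}{j}{\ell'}{y}\}y_{j,\ell'}\big)$. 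Hence it suffices to prove $\mathrm{osc}(\delta)\le 2\sum_{\ell'=1}^{K_j}\max_{\ell\in[K_i]}\{\HI{i}{\ell}{j}{\ell'}{y}\}\,y_{j,\ell'}$. Degenerate entries $\mu_i(s)=0$ force the corresponding $T$ values to be $0$ and are harmless, since both sides of the claim then vanish.

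The heart of the argument is this oscillation estimate, and it will be the main obstacle. For fixed $s,s'$ I would introduce the two-parameter interpolation $P(u,v)$ that equals $y$ in every row other than $i,j$, whose $i$th row is $(1-u)\,e_{s'}+u\,e_{s}$ and whose $j$th row is $v\,(y_{j,\ell'})_{\ell'}$; thus $P(1,1)=\UII{y}{i}{s}$, $P(0,1)=\UII{y}{i}{s'}$, $P(1,0)=\UII{\UI{y}{j}}{i}{s}$, and $P(0,0)=\UII{\UI{y}{j}}{i}{s'}$, so that $\delta_s-\delta_{s'}$ is precisely the mixed second difference of $F$ over the unit square in $(u,v)$. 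Applying the fundamental theorem of calculus in each variable gives
\[
\delta_s-\delta_{s'}=-\int_0^1\!\!\int_0^1\sum_{\ell'=1}^{K_j}y_{j,\ell'}\big[F_{i,s;j,\ell'}(P(u,v))-F_{i,s';j,\ell'}(P(u,v))\big]\,du\,dv,
\]
where I used the symmetry of the Hessian of $F$ to rewrite $\partial_u\partial_v$. Every point $P(u,v)$ differs from $y$ in at most the two rows $i$ and $j$, hence lies in $\NI{y}$, so $|F_{i,s;j,\ell'}(P(u,v))|\le \HI{i}{s}{j}{\ell'}{y}\le \max_\ell\{\HI{i}{\ell}{j}{\ell'}{y}\}$ and likewise for $s'$. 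Taking absolute values yields $|\delta_s-\delta_{s'}|\le 2\sum_{\ell'}\max_\ell\{\HI{i}{\ell}{j}{\ell'}{y}\}y_{j,\ell'}$, which bounds $\mathrm{osc}(\delta)$ and closes the proof.

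I expect the only genuinely delicate point to be the justification of the mixed-difference integral representation when the interpolating points lie on the boundary of $\prod_i[0,1]^{K_i}$ (which they typically do, since $e_s$, $e_{s'}$ and the zeroed $j$th row have vanishing coordinates). This is handled by the continuous extension of the first and second derivatives of $F$ to the boundary guaranteed by Assumption~\ref{Assump:F}, which makes each one-dimensional restriction $C^2$ up to its endpoints and thereby validates the successive applications of the fundamental theorem of calculus. The algebraic reduction to the oscillation bound is routine softmax manipulation, so the mixed-difference bookkeeping and the verification that the whole interpolation stays inside the neighborhood $\NI{y}$ are where the real content lies.
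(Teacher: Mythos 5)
Your proof is correct, but it takes a genuinely different route from the paper's. The paper proves the two-sided ratio bound $T_{i,\ell}(y)/(1+\Phi_{i,j}(y))\le T_{i,\ell}(\UI{y}{j})\le (1+\Phi_{i,j}(y))\,T_{i,\ell}(y)$ dynamically: it sets $g(t):=T_{i,\ell}\big(ty+(1-t)\UI{y}{j}\big)$, computes $\partial T_{i,\ell}/\partial y_{j,\ell'}$ from the softmax form, represents the first-derivative differences $F_{j,\ell'}(\UII{\cdot}{i}{\ell})-F_{j,\ell'}(\UII{\cdot}{i}{s})$ as integrals of second derivatives over points of $\NI{y}$, and derives the differential inequality $|g'(t)|\le 2\big(\sum_{\ell'}\max_{s}\{\HI{i}{s}{j}{\ell'}{y}\}y_{j,\ell'}\big)g(t)$, which Gr\"onwall's inequality integrates to the ratio bound. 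You instead exploit the exact algebraic identity $T_{i,\ell}(\UI{y}{j})/T_{i,\ell}(y)=e^{\delta_\ell}\big/\sum_s T_{i,s}(y)e^{\delta_s}$ with $\delta_s=F(\UII{\UI{y}{j}}{i}{s})-F(\UII{y}{i}{s})$, reduce everything to the oscillation $\max_s\delta_s-\min_s\delta_s$, and control that oscillation by writing $\delta_s-\delta_{s'}$ as a mixed second difference of $F$ over a two-parameter interpolation $P(u,v)\in\NI{y}$ and applying the fundamental theorem of calculus twice; after Hessian symmetry this gives exactly the bound $2\sum_{\ell'}\max_{\ell}\{\HI{i}{\ell}{j}{\ell'}{y}\}y_{j,\ell'}$, whose exponential is $1+\Phi_{i,j}(y)$. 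Both proofs hinge on the same two ingredients---the interpolating points staying inside $\NI{y}$ and the Hessian bounds there---but your argument replaces the ODE-comparison machinery by an elementary convexity bound on the softmax denominator plus a double-FTC bookkeeping, and it yields an exact ratio formula rather than a differential inequality; the Gr\"onwall route, by contrast, controls $g(t)$ along the whole path (not just at the endpoints), though the paper only ever uses $t=1$. Your edge cases are handled properly: the $j=i$ case is indeed trivial since $T_{i,\ell}$ does not depend on the $i$th row of its argument; zero entries $\mu_i(\ell)=0$ force both $T$ values to vanish; and the boundary issue you flag (the interpolation $P(u,v)$ lies on faces of the cube, since the $i$th row is one-hot mixtures) is resolved exactly as you say by the continuous extension of first and second derivatives in Assumption~\ref{Assump:F}---the paper's own proof needs the same justification for its integral representation~\eqref{Eq2.2} along segments between one-hot points, so you are at the same level of rigor.
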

\begin{proof}

Consider any $y\in\prod_{i=1}^n[0,1]^{K_i}$, $i,j\in [n]$, and $\ell\in[K_i],\ell'\in [K_j]$. By (\ref{T.exp}),
\begin{equation}\label{Eq2.1}
    \frac{\partial T_{i,\ell}(y)}{\partial y_{j,\ell'}} = \begin{cases}
        \frac{e^{F(\UII{y}{i}{\ell})}\mu_i(\ell)\sum_{s=1}^{K_i} e^{F(\UII{y}{i}{s})}\mu_i(s)(F_{j,\ell'}(\UII{y}{i}{\ell})-F_{j,\ell'}(\UII{y}{i}{s}))}{\big(\sum_{s=1}^{K_i} e^{F(\UII{y}{i}{s})}\mu_i(s)\big)^2} & \text{ if }i\neq j\\
        0 & \text{ if } i=j.
    \end{cases}
\end{equation}
Note that for any $s\in [K_i]$,
\begin{eqnarray}\label{Eq2.2}
  F_{j,\ell'}\big(\UII{y}{i}{\ell}\big)-F_{j,\ell'}\big(\UII{y}{i}{s}\big)
  &=& \int_0^1\big(F_{j,\ell';i,\ell}\big(t\UII{y}{i}{\ell}+(1-t)\UII{y}{i}{s}\big)\nonumber\\
  && \quad-F_{j,\ell';i,s}\big(t\UII{y}{i}{\ell}+(1-t)\UII{y}{i}{s}\big)\big)dt.\nonumber\\
  &&
\end{eqnarray}

For any $t\in [0,1]$, we define $g(t):=T_{i,\ell}\big(t y + (1-t)\UI{y}{j}\big)$. Using (\ref{Eq2.2}) with $y$ replaced by $ty+(1-t)\UI{y}{j}$, recalling Definition \ref{Local_Hess}, we get
\begin{eqnarray*}
   && \big|F_{j,\ell'}\big(U^{(1)}\big(t y + (1-t)\UI{y}{j};i,\ell\big)\big)-F_{j,\ell'}\big(U^{(1)}\big(t y + (1-t)\UI{y}{j};i,s\big)\big)\big|\nonumber\\
   &\leq& 2\max_{s\in [K_i]}\big\{\HI{j}{\ell'}{i}{s}{y}\big\}
\end{eqnarray*}
for any $t\in[0,1]$ and $s\in [K_i]$. Plugging this into~\eqref{Eq2.1} and noting~\eqref{T.exp}, we get 
\begin{eqnarray*}
     \bigg|\frac{\partial T_{i,\ell}\big(ty+(1-t)\UI{y}{j}\big)}{\partial y_{j,\ell'}}\bigg|\leq 2\max_{s\in [K_i]}\big\{\HI{j}{\ell'}{i}{s}{y}\big\} \, T_{i,\ell}\big(ty+(1-t)\UI{y}{j}\big) 
\end{eqnarray*}
for any $t\in[0,1]$. Hence 
\begin{equation*}
  \big|g'(t)\big|=\bigg|\sum_{\ell'=1}^{K_j}\frac{\partial T_{i,\ell}\big(ty+(1-t)\UI{y}{j}\big)}{\partial y_{j,\ell'}} y_{j,\ell'}\bigg|\leq 2\bigg(\sum_{\ell'=1}^{K_j} \max_{s\in [K_i]}\big\{\HI{j}{\ell'}{i}{s}{y}\big\}y_{j,\ell'}\bigg)g(t). 
\end{equation*} 
By Gr\"onwall's inequality (\cite[Theorem 1.1 in Chapter III]{PH}) and noting that $\HI{j}{\ell'}{i}{s}{y}=\HI{i}{s}{j}{\ell'}{y}$, for any $t\in[0,1]$, we get
\begin{equation*}
   \frac{g(0)}{1+\Phi_{i,j}(y)} \leq g(t)\leq (1+\Phi_{i,j}(y))g(0). 
\end{equation*}
Taking $t=1$ in the above display, we obtain that
\begin{equation*}
     \frac{T_{i,\ell}(y)}{1+\Phi_{i,j}(y)}\leq T_{i,\ell}\big(\UI{y}{j}\big)\leq (1+\Phi_{i,j}(y))T_{i,\ell}(y). 
\end{equation*}
Hence we have
\begin{equation*}
   \big|T_{i,\ell}\big(\UI{y}{j}\big)-T_{i,\ell}(y)\big| 
   \leq  \Phi_{i,j}(y)\min\big\{T_{i,\ell}(y),T_{i,\ell}\big(\UI{y}{j}\big)\big\},
\end{equation*}
which completes the proof.
\end{proof}

\subsection{Proofs of Propositions \ref{P2.1main} and \ref{P2.2main}}

In this subsection, we establish the proofs of Propositions \ref{P2.1main} and \ref{P2.2main}.

\begin{proof}[Proof of Proposition \ref{P2.1main}]

For any $y\in\hat{\mathcal{Y}}_{n,\mathbf{K}}$, with $T_{i,\ell}(y)$ as in Definition \ref{DefTy}, we have
\begin{align}\label{Eq2.5}
    \Delta_1(y) &=\int_0^1\sum_{i=1}^n\sum_{\ell=1}^{K_i} F_{i,\ell}(ty+(1-t)T(y))(y_{i,\ell}-T_{i,\ell}(y))dt-\sum_{i=1}^n\sum_{\ell=1}^{K_i} F_{i,\ell}(y)(y_{i,\ell}-T_{i,\ell}(y))\nonumber\\
    \qquad & =\int_0^1\sum_{i=1}^n\sum_{\ell=1}^{K_i} (F_{i,\ell}(ty+(1-t)T(y))-F_{i,\ell}(y))(y_{i,\ell}-T_{i,\ell}(y))dt.
\end{align}
Hence for any $y\in\hat{\mathcal{Y}}_{n,\mathbf{K}}$, 
\begin{equation*}
    |\Delta_1(y)|\leq 2\sum_{i=1}^n\sum_{\ell=1}^{K_i} b_{i,\ell}(y_{i,\ell}+T_{i,\ell}(y))\leq 2\sum_{i=1}^n\max_{\ell\in [K_i]}\{b_{i,\ell}\}\sum_{\ell=1}^{K_i}(y_{i,\ell}+T_{i,\ell}(y))\leq 4\sum_{i=1}^n\max_{\ell\in [K_i]}\{b_{i,\ell}\}.
\end{equation*}
Therefore, we have
\begin{equation*}
    \mathbb{E}\big[\Delta_1(\mathbf{Y})^2\mid\mathbf{X}\big]\leq 16\bigg(\sum_{i=1}^n\max_{\ell\in[K_i]}\{b_{i,\ell}\}\bigg)^2.
\end{equation*}

Let $\UI{\mathbf{Y}}{i}$ be as defined in~\eqref{def_Utilde}. For any $t\in [0,1]$ and $i\in [n], \ell\in [K_i]$, as $F_{i,\ell}\big(t\UI{\mathbf{Y}}{i}+(1-t)T\big(\UI{\mathbf{Y}}{i}\big)\big)-F_{i,\ell}\big(\UI{\mathbf{Y}}{i}\big)$ does not depend on $Z_i$, and $T_{i,\ell}(\mathbf{Y})=\mathbb{E}\big[Y_{i,\ell}\mid (Z_j)_{j\in[n]\backslash\{i\}},\mathbf{X}\big]$ (see Definition \ref{DefTy}), taking expectations using the tower property gives
\begin{equation*}
\mathbb{E}\big[\big(F_{i,\ell}\big(t\UI{\mathbf{Y}}{i}+(1-t)T\big(\UI{\mathbf{Y}}{i}\big)\big)-F_{i,\ell}\big(\UI{\mathbf{Y}}{i}\big)\big)(Y_{i,\ell}-T_{i,\ell}(\mathbf{Y}))\mid\mathbf{X}\big]=0.
\end{equation*}
Hence by (\ref{Eq2.5}), 
{\small 
\begin{eqnarray}\label{Eq2.14}
   &&\mathbb{E}[\Delta_1(\mathbf{Y})\mid\mathbf{X}]\nonumber\\
   &=& \int_0^1\sum_{i=1}^n\sum_{\ell=1}^{K_i} \mathbb{E}\Big[\Big(\big(F_{i,\ell}(t\mathbf{Y}+(1-t)T(\mathbf{Y}))-F_{i,\ell}\big(t\UI{\mathbf{Y}}{i}+(1-t)T\big(\UI{\mathbf{Y}}{i}\big)\big)\big)\nonumber\\
    &&\quad\quad\quad\quad\quad\quad\quad  -\big(F_{i,\ell}(\mathbf{Y})-F_{i,\ell}\big(\UI{\mathbf{Y}}{i}\big)\big)\Big)\big(Y_{i,\ell}-T_{i,\ell}(\mathbf{Y})\big)\,\Big|\,\mathbf{X}\Big]dt\nonumber\\
    &=& \int_{0}^1\mathbb{E}\bigg[\sum_{i=1}^n\sum_{\ell=1}^{K_i} \big(F_{i,\ell}(t\mathbf{Y}+(1-t)T(\mathbf{Y}))-F_{i,\ell}\big(t\UI{\mathbf{Y}}{i}+(1-t)T\big(\UI{\mathbf{Y}}{i}\big)\big)\big)\nonumber\\
    &&\hspace{3.55in}\times\big(Y_{i,\ell}-T_{i,\ell}(\mathbf{Y})\big)\,\bigg|\,\mathbf{X}\bigg]dt\nonumber\\
    &&\hspace{0.6in}-\int_{0}^1\mathbb{E}\bigg[\sum_{i=1}^n\sum_{\ell=1}^{K_i}\big(F_{i,\ell}(\mathbf{Y})-F_{i,\ell}\big(\UI{\mathbf{Y}}{i}\big)\big)\big(Y_{i,\ell}-T_{i,\ell}(\mathbf{Y})\big)\,\bigg|\,\mathbf{X}\bigg]dt.
\end{eqnarray}
}For bounding the second term in the RHS of \eqref{Eq2.14}, fixing $i\in [n], \ell\in [K_i]$, by Definition \ref{Local_Hess_new} we have
\begin{equation*}
    \big|F_{i,\ell}(\mathbf{Y})-F_{i,\ell}\big(\UI{\mathbf{Y}}{i}\big)\big|\leq \sum_{\ell'=1}^{K_i} \HII{i}{\ell}{i}{\ell'}{\mathbf{Y}}{1} Y_{i,\ell'}. 
\end{equation*}
Hence the second term in the RHS of \eqref{Eq2.14} can be bounded as follows:
\begin{eqnarray}\label{Eq2.12}
  &&   \sum_{i=1}^n\sum_{\ell=1}^{K_i} \big|\big(F_{i,\ell}(\mathbf{Y})-F_{i,\ell}\big(\UI{\mathbf{Y}}{i}\big)\big)(Y_{i,\ell}-T_{i,\ell}(\mathbf{Y}))\big|\nonumber\\
  &\leq& \sum_{i=1}^n
\sum_{\ell=1}^{K_i}\sum_{\ell'=1}^{K_i} \HII{i}{\ell}{i}{\ell'}{\mathbf{Y}}{1}Y_{i,\ell'}(Y_{i,\ell}+T_{i,\ell}(\mathbf{Y}))\nonumber\\
&\leq& \sum_{i=1}^n\max_{\ell\in [K_i]}\bigg\{\sum_{\ell'=1}^{K_i} \HII{i}{\ell}{i}{\ell'}{\mathbf{Y}}{1} Y_{i,\ell'}\bigg\}\sum_{\ell=1}^{K_i}(Y_{i,\ell}+T_{i,\ell}(\mathbf{Y}))\nonumber\\
&=& 2\sum_{i=1}^n\max_{\ell\in [K_i]}\bigg\{\sum_{\ell'=1}^{K_i} \HII{i}{\ell}{i}{\ell'}{\mathbf{Y}}{1} Y_{i,\ell'}\bigg\}\nonumber\\
&\leq& 2\sup_{y\in\hat{\mathcal{Y}}_{n,\mathbf{K}}}\bigg\{\sum_{i=1}^n\max_{\ell\in [K_i]}\bigg\{\sum_{\ell'=1}^{K_i} \HII{i}{\ell}{i}{\ell'}{y}{1} y_{i,\ell'}\bigg\}\bigg\}. 
\end{eqnarray}

Proceeding to bound the first term in the RHS of \eqref{Eq2.14}, fix $i,j\in [n],\ell\in[K_i], \ell'\in [K_j], t\in [0,1]$, and use Lemma \ref{Lem2.2} to get 
\begin{equation*}
   \big|T_{j,\ell'}(\mathbf{Y})-T_{j,\ell'}\big(\UI{\mathbf{Y}}{i}\big)\big|\leq \Phi_{j,i}(\mathbf{Y})\min\big\{T_{j,\ell'}(\mathbf{Y}),T_{j,\ell'}\big(\UI{\mathbf{Y}}{i}\big)\big\},
\end{equation*}
where $\Phi_{j,i}(Y)$ is as defined in Definition~\ref{DefSmoothnessTerm}. Consequently, we have 
\begin{eqnarray}\label{Eq2.7}
   && \big|\big(tY_{j,\ell'}+(1-t)T_{j,\ell'}(\mathbf{Y})\big)-\big(t\UIsubscr{j}{\ell'}{\mathbf{Y}}{i}+(1-t)T_{j,\ell'}\big(\UI{\mathbf{Y}}{i}\big)\big)\big|\nonumber\\
   &\leq& (1-t)\Phi_{j,i}(\mathbf{Y})\min\big\{T_{j,\ell'}(\mathbf{Y}),T_{j,\ell'}\big(\UI{\mathbf{Y}}{i}\big)\big\}+tY_{i,\ell'}\mathbbm{1}_{j=i}.
\end{eqnarray}
Using (\ref{Eq2.7}), for any $j\in [n]\backslash \{i\}$ we have, 
\begin{eqnarray*}
    && \big|\big(tY_{j,\ell'}+(1-t)T_{j,\ell'}(\mathbf{Y})\big)-\big(t\UIsubscr{j}{\ell'}{\mathbf{Y}}{i}+(1-t)T_{j,\ell'}\big(\UI{\mathbf{Y}}{i}\big)\big)\big|\nonumber\\
    &\leq&   \Phi_{j,i}(\mathbf{Y})\min\big\{(1-t)T_{j,\ell'}(\mathbf{Y}),(1-t)T_{j,\ell'}\big(\UI{\mathbf{Y}}{i}\big)\big\}\\
    &\leq&\Phi_{j,i}(\mathbf{Y})\min\big\{tY_{j,\ell'}+(1-t)T_{j,\ell'}(\mathbf{Y}),t\UIsubscr{j}{\ell'}{\mathbf{Y}}{i}+(1-t)T_{j,\ell'}\big(\UI{\mathbf{Y}}{i}\big)\big\}\\
    &\le &(M_i-1)\min\big\{tY_{j,\ell'}+(1-t)T_{j,\ell'}(\mathbf{Y}),t\UIsubscr{j}{\ell'}{\mathbf{Y}}{i}+(1-t)T_{j,\ell'}\big(\UI{\mathbf{Y}}{i}\big)\big\},
\end{eqnarray*}
where the last inequality uses~\eqref{Bounds.Phi}, which in turn gives
$$\frac{1}{M_i}\leq\frac{tY_{j,\ell'}+(1-t)T_{j,\ell'}(\mathbf{Y})}{t\UIsubscr{j}{\ell'}{\mathbf{Y}}{i}+(1-t)T_{j,\ell'}\big(\UI{\mathbf{Y}}{i}\big)}\le M_i.$$ Using the above conclusion, and recalling $H^{(\mathrm{II})}_{i,\ell,j,\ell'}$ from Definition \ref{Local_Hess_new}, we have
\begin{eqnarray*}
  &&  \big|F_{i,\ell}(t\mathbf{Y}+(1-t)T(\mathbf{Y}))-F_{i,\ell}\big(t\UI{\mathbf{Y}}{i}+(1-t)T\big(\UI{\mathbf{Y}}{i}\big)\big)\big|\nonumber\\
  &\leq& \sum_{j=1}^n\sum_{\ell'=1}^{K_j} \HII{i}{\ell}{j}{\ell'}{t\mathbf{Y}+(1-t)T(\mathbf{Y})}{M_i}\nonumber\\
  &&\quad\quad\times \big|\big(tY_{j,\ell'}+(1-t)T_{j,\ell'}(\mathbf{Y})\big)-\big(t\UIsubscr{j}{\ell'}{\mathbf{Y}}{i}+(1-t)T_{j,\ell'}\big(\UI{\mathbf{Y}}{i}\big)\big)\big|\nonumber\\
  &\leq& \sum_{j=1}^n\sum_{\ell'=1}^{K_j} \HII{i}{\ell}{j}{\ell'}{t\mathbf{Y}+(1-t)T(\mathbf{Y})}{M_i}\times \big((1-t)\Phi_{j,i}(\mathbf{Y})T_{j,\ell'}(\mathbf{Y})+tY_{i,\ell'}\mathbbm{1}_{j=i}\big)\nonumber\\
  &\leq& \max_{j\in[n]}\big\{\Phi_{j,i}(\mathbf{Y})\big\}\sum_{j=1}^n\sum_{\ell'=1}^{K_j}(1-t)T_{j,\ell'}(\mathbf{Y})\HII{i}{\ell}{j}{\ell'}{t\mathbf{Y}+(1-t)T(\mathbf{Y})}{M_i}\nonumber\\
  && \quad\quad+ \sum_{\ell'=1}^{K_i} tY_{i,\ell'}\HII{i}{\ell}{i}{\ell'}{t\mathbf{Y}+(1-t)T(\mathbf{Y})}{M_i},
\end{eqnarray*}
where the second inequality uses
(\ref{Eq2.7}). Using the above bound and summing over $i,\ell$ we have
{\small \begin{eqnarray}\label{Eq2.15}
    && \sum_{i=1}^n\sum_{\ell=1}^{K_i} \big|\big(F_{i,\ell}(t\mathbf{Y}+(1-t)T(\mathbf{Y}))-F_{i,\ell}\big(t\UI{\mathbf{Y}}{i}+(1-t)T\big(\UI{\mathbf{Y}}{i}\big)\big)\big) \big(Y_{i,\ell}-T_{i,\ell}(\mathbf{Y})\big)\big|\nonumber\\
    &\leq& \sum_{i=1}^n\max_{\ell\in [K_i]}\big\{\big|F_{i,\ell}(t\mathbf{Y}+(1-t)T(\mathbf{Y}))-F_{i,\ell}\big(t\UI{\mathbf{Y}}{i}+(1-t)T\big(\UI{\mathbf{Y}}{i}\big)\big)\big|\big\} \sum_{\ell=1}^{K_i}\big(Y_{i,\ell}+T_{i,\ell}(\mathbf{Y})\big)\nonumber\\
    &=& 2\sum_{i=1}^n\max_{\ell\in [K_i]}\big\{\big|F_{i,\ell}(t\mathbf{Y}+(1-t)T(\mathbf{Y}))-F_{i,\ell}\big(t\UI{\mathbf{Y}}{i}+(1-t)T\big(\UI{\mathbf{Y}}{i}\big)\big)\big|\big\}\nonumber\\
    &\leq& 2\sup_{y\in\hat{\mathcal{Y}}_{n,\mathbf{K}}}\bigg\{\sum_{i=1}^n\max_{j\in[n]}\big\{\Phi_{j,i}(y)\big\}\bigg\}\sup_{\substack{y\in\hat{\mathcal{Y}}_{n,\mathbf{K}},\\i\in[n],\ell\in[K_i]}}\bigg\{\sum_{j=1}^n\sum_{\ell'=1}^{K_j} \HII{i}{\ell}{j}{\ell'}{y}{M_i} y_{j,\ell'}\bigg\}\nonumber\\
    &&\hspace{1.2in} +2\sup_{y\in\hat{\mathcal{Y}}_{n,\mathbf{K}}}\bigg\{\sum_{i=1}^n\max_{\ell\in[K_i]}\bigg\{\sum_{\ell'=1}^{K_i} \HII{i}{\ell}{i}{\ell'}{y}{M_{i}} y_{i,\ell'}\bigg\}\bigg\}.
\end{eqnarray}
}By (\ref{Eq2.14}), (\ref{Eq2.12}), and (\ref{Eq2.15}), noting that $M_i\geq 1$ for every $i\in[n]$, we have 
\begin{eqnarray*}
    |\mathbb{E}[\Delta_1(\mathbf{Y})\mid\mathbf{X}]|
   &\leq& 2\sup_{y\in\hat{\mathcal{Y}}_{n, \mathbf{K}}}\bigg\{\sum_{i=1}^n\max_{j\in[n]}\big\{\Phi_{j,i}(y)\big\}\bigg\}\cdot\sup_{\substack{y\in\hat{\mathcal{Y}}_{n,\mathbf{K}},\\i\in[n],\ell\in[K_i]}}\bigg\{\sum_{j=1}^n\sum_{\ell'=1}^{K_j} \HII{i}{\ell}{j}{\ell'}{y}{M_i} y_{j,\ell'}\bigg\}\nonumber\\
    &&\hspace{1in} +4\sup_{y\in\hat{\mathcal{Y}}_{n,\mathbf{K}}}\bigg\{\sum_{i=1}^n\max_{\ell\in[K_i]}\bigg\{\sum_{\ell'=1}^{K_i} \HII{i}{\ell}{i}{\ell'}{y}{M_i} y_{i,\ell'}\bigg\}\bigg\}\leq 4E_1,
\end{eqnarray*}
which is our desired bound.
\end{proof}

\begin{proof}[Proof of Proposition \ref{P2.2main}]

For any $y\in\hat{\mathcal{Y}}_{n,\mathbf{K}}$, using the expressions of $\Delta_2(\cdot),I(\cdot),J(\cdot,\cdot)$ in \eqref{eq:error_term2}, \eqref{def_I}, and~\eqref{def_J}, respectively, we have 
\begin{equation*}
    \Delta_2(y)
    =\sum_{i=1}^n\sum_{\ell=1}^{K_i} \log\bigg(\frac{T_{i,\ell}(y)}{\mu_{i}(\ell)}\bigg)(T_{i,\ell}(y)-y_{i,\ell})+\sum_{i=1}^n\sum_{\ell=1}^{K_i} F_{i,\ell}(y)(y_{i,\ell}-T_{i,\ell}(y)).
\end{equation*}
For any $i\in[n]$, using the definition of $T_{i,\ell}(y)$ in Definition \ref{DefTy}, we have
\begin{eqnarray*}
    &&\sum_{\ell=1}^{K_i} \log\bigg(\frac{T_{i,\ell}(y)}{\mu_{i}(\ell)}\bigg)(T_{i,\ell}(y)-y_{i,\ell})\nonumber\\
    &=& \sum_{\ell=1}^{K_i} F\big(\UII{y}{i}{\ell}\big)(T_{i,\ell}(y)-y_{i,\ell}) -\log\bigg(\sum_{s=1}^{K_i} e^{F(\UII{y}{i}{s})}\mu_i(s)\bigg)\sum_{\ell=1}^{K_i} (T_{i,\ell}(y)-y_{i,\ell})\nonumber\\
    &=& \sum_{\ell=1}^{K_i} F\big(\UII{y}{i}{\ell}\big)(T_{i,\ell}(y)-y_{i,\ell}),
\end{eqnarray*}
where we use $\sum_{\ell=1}^{K_i}T_{i,\ell}(y)=\sum_{\ell=1}^{K_i}y_{i,\ell}=1$ for all $i\in[n]$ in the last line. Hence,
\begin{eqnarray}\label{Eq2.11}
    && \hspace{-0.55in} \Delta_2(y)=\sum_{i=1}^n\sum_{\ell=1}^{K_i}\big(F\big(\UII{y}{i}{\ell}\big)-F_{i,\ell}(y)\big)(T_{i,\ell}(y)-y_{i,\ell})\nonumber\\
     &=& \sum_{i=1}^n\sum_{\ell=1}^{K_i}\big(F\big(\UII{y}{i}{\ell}\big)-F\big(\UI{y}{i}\big)-F_{i,\ell}(y)\big)(T_{i,\ell}(y)-y_{i,\ell}). 
\end{eqnarray}
By (\ref{Eq2.11}) and the mean-value theorem, we have
\begin{eqnarray*}
    |\Delta_2(y)| &\leq&2\sum_{i=1}^n\sum_{\ell=1}^{K_i} b_{i,\ell}(T_{i,\ell}(y)+y_{i,\ell})  
 \nonumber\\
    &\leq& 2\sum_{i=1}^n\max_{\ell\in[K_i]}\{b_{i,\ell}\}\sum_{\ell=1}^{K_i}(T_{i,\ell}(y)+y_{i,\ell})= 4\sum_{i=1}^n\max_{\ell\in[K_i]}\{b_{i,\ell}\}. 
\end{eqnarray*}
Therefore, we have
\begin{equation*}
    \mathbb{E}\big[\Delta_2(\mathbf{Y})^2\mid\mathbf{X}\big]\leq 16\bigg(\sum_{i=1}^n\max_{\ell\in[K_i]}\{b_{i,\ell}\}\bigg)^2.
\end{equation*}

For any $i\in[n]$ and $\ell\in [K_i]$, as $F\big(\UII{\mathbf{Y}}{i}{\ell}\big)-F\big(\UI{\mathbf{Y}}{i}\big)-F_{i,\ell}\big(\UI{\mathbf{Y}}{i}\big)$ only depends on $(Z_j)_{j\in[n]\backslash\{i\}}$, taking expectations using the tower property gives
\begin{equation*}
    \mathbb{E}\big[\big(F\big(\UII{\mathbf{Y}}{i}{\ell}\big)-F\big(\UI{\mathbf{Y}}{i}\big)-F_{i,\ell}\big(\UI{\mathbf{Y}}{i}\big)\big)\big(T_{i,\ell}(\mathbf{Y})-Y_{i,\ell}\big)\mid\mathbf{X}\big]=0, 
\end{equation*}
where we use the fact that $T_{i,\ell}(\mathbf{Y})=\mathbb{E}\big[Y_{i,\ell}\mid (Z_j)_{j\in[n]\backslash\{i\}},\mathbf{X}\big]$ (see Definition \ref{DefTy}). Hence by (\ref{Eq2.11}), 
\begin{equation*}
    \mathbb{E}[\Delta_2(\mathbf{Y})\mid\mathbf{X}]=\sum_{i=1}^n\sum_{\ell=1}^{K_i}\mathbb{E}\big[\big(F_{i,\ell}\big(\UI{\mathbf{Y}}{i}\big)-F_{i,\ell}(\mathbf{Y})\big)\big(T_{i,\ell}(\mathbf{Y})-Y_{i,\ell}\big)\mid\mathbf{X}\big],
\end{equation*}
which on invoking~\eqref{Eq2.12} gives
\begin{equation*}
    |\mathbb{E}[\Delta_2(\mathbf{Y})\mid\mathbf{X}]|\leq 2\sup_{y\in\hat{\mathcal{Y}}_{n,\mathbf{K}}}\bigg\{\sum_{i=1}^n\max_{\ell\in [K_i]}\bigg\{\sum_{\ell'=1}^{K_i} \HII{i}{\ell}{i}{\ell'}{y}{1} y_{i,\ell'}\bigg\}\bigg\}\leq 2E_1,
\end{equation*}
where the last inequality uses the fact that the quantity $M_i$ in the definition of $E_1$ (see Definition~\ref{DefSmoothnessTerm}) is at least $1$.
\end{proof}

\subsection{Proof of Lemma \ref{Lemma2.1}}\label{Proof:Lemma2.1}

For any $y\in\hat{\mathcal{Y}}_{n,\mathbf{K}}$, with the collapsed posterior $\mathbb{P}(\mathbf{Z}\mid\mathbf{X})$ as in~\eqref{def_pro} (with normalizing constant $S_{n,\mathbf{K}}$ as in~\eqref{defS}), we have 
\begin{eqnarray*}
  \KL\big(Q_y \,\big\| \; \mathbb{P}(\mathbf{Z}\mid\mathbf{X}) \big)&=&\sum_{z\in \prod_{i=1}^n [K_i]}Q_y(z)\log\bigg(\frac{Q_y(z)}{S_{n,\mathbf{K}}^{-1}e^{f(z)}\mu(z)}\bigg) \nonumber\\
  &=& \sum_{i=1}^n\sum_{\ell=1}^{K_i} y_{i,\ell}\log\Big(\frac{y_{i,\ell}}{\mu_i(\ell)}\Big)-\mathbb{E}_{Q_y}[f(\mathbf{Z})]+\log S_{n,\mathbf{K}} \nonumber\\
  &=& \log S_{n,\mathbf{K}} -\mathbb{E}_{Q_y}[f(\mathbf{Z})]+I(y),
\end{eqnarray*}
where $I$ is as defined in~\eqref{def_I}. Hence noting that $\mathcal{Q}=\{Q_y:y\in\hat{\mathcal{Y}}_{n,\mathbf{K}}\}$, using~\eqref{vari3.2}, we get
\begin{eqnarray*}
     \KL\big(\hat{Q}\,\big\| \; \mathbb{P}(\mathbf{Z}\mid\mathbf{X})\big)&=&\inf_{y\in\hat{\mathcal{Y}}_{n,\mathbf{K}}} \KL\big(Q_y \,\big\| \; \mathbb{P}(\mathbf{Z}\mid\mathbf{X}) \big) \nonumber\\
     &=&\log S_{n,\mathbf{K}} -\sup_{y\in\hat{\mathcal{Y}}_{n,\mathbf{K}}}\{\mathbb{E}_{Q_y}[f(\mathbf{Z})]-I(y)\},
\end{eqnarray*}
which proves the first conclusion of the lemma.
\\

For the second conclusion, fixing any $y\in\hat{\mathcal{Y}}_{n,\mathbf{K}},i\in[n],\ell\in [K_i],t\in [0,1]$, and with $G(\mathbf{Z})$ defined as in~\eqref{eq:G}, we have
\begin{eqnarray}\label{fact1.1}
    && \mathbb{E}_{Q_y}\big[F_{i,\ell}\big(t\UI{G(\mathbf{Z})}{i}+(1-t)y\big)\cdot(G_{i,\ell}(\mathbf{Z})-y_{i,\ell})\big]\nonumber\\
    &=& \mathbb{E}_{Q_y}\big[F_{i,\ell}\big(t\UI{G(\mathbf{Z})}{i}+(1-t)y\big)\big]\cdot \left[\mathbb{E}_{Q_y}[G_{i,\ell}(\mathbf{Z})]-y_{i,\ell}\right]=0,
\end{eqnarray}
where we have used the facts that $Z_1,\cdots,Z_n$ are mutually independent under $Q_y$ and $F_{i,\ell}\big(t\UI{G(\mathbf{Z})}{i}+(1-t)y\big)$ does not depend on $Z_i$, and $\mathbb{E}_{Q_y}[G_{i,\ell}(\mathbf{Z})]=\mathbb{E}_{Q_y}[\mathbbm{1}_{Z_i=\ell}]=Q_y(Z_i=\ell)=y_{i,\ell}$. Hence for any $y\in\hat{\mathcal{Y}}_{n,\mathbf{K}}$, noting that $f(\mathbf{Z})=F(G(\mathbf{Z}))$ (see~\eqref{eq:FG}), and using a one-term Taylor expansion with the remainder form, we get
{\small \begin{align*}\label{E.A3}
    &|\mathbb{E}_{Q_y}[f(\mathbf{Z})]-F(y)|=|\mathbb{E}_{Q_y}[F(G(\mathbf{Z}))]-F(y)|\nonumber\\
    =&\bigg|\sum_{i=1}^n\sum_{\ell=1}^{K_i} \int_0^1 \mathbb{E}_{Q_y}[F_{i,\ell}(tG(\mathbf{Z})+(1-t)y)(G_{i,\ell}(\mathbf{Z})-y_{i,\ell})]dt\bigg|\nonumber\\
   = &  \bigg|\sum_{i=1}^n\sum_{\ell=1}^{K_i} \int_0^1 \mathbb{E}_{Q_y}\big[\big(F_{i,\ell}(tG(\mathbf{Z})+(1-t)y)-F_{i,\ell}\big(t\UI{G(\mathbf{Z})}{i}+(1-t)y\big)\big)\cdot(G_{i,\ell}(\mathbf{Z})-y_{i,\ell})\big]dt\bigg| \nonumber\\
    \leq & \sum_{i=1}^n\sum_{\ell=1}^{K_i}\sum_{\ell'=1}^{K_i} c_{i,\ell;i,\ell'} \mathbb{E}_{Q_y}[G_{i,\ell'}(\mathbf{Z})(G_{i,\ell}(\mathbf{Z})+y_{i,\ell})]\leq 2\sum_{i=1}^n\max_{\ell,\ell'\in[K_i]} \{c_{i,\ell;i,\ell'}\},
\end{align*}}where we use~\eqref{fact1.1} in the third line, the mean value theorem to obtain the first inequality in the last line, and the fact that $\sum_{\ell'=1}^{K_i}G_{i,\ell'}(\mathbf{Z}) = \sum_{\ell=1}^{K_i}G_{i,\ell}(\mathbf{Z})=\sum_{\ell=1}^{K_i}y_{i,\ell}=1$ to obtain the last inequality. This verifies the second conclusion.
\\

Finally, if $F(\cdot)$ is a convex function on $\prod_{i=1}^n[0,1]^{K_i}$, by Jensen's inequality, we have
\begin{equation*}
    \mathbb{E}_{Q_y}[f(\mathbf{Z})]=\mathbb{E}_{Q_y}[F(G(\mathbf{Z}))]\geq F(\mathbb{E}_{Q_y}[G(\mathbf{Z})])=F(y).
\end{equation*}
This verifies the third conclusion, and hence completes the proof.
\qed

\subsection{Proof of Theorem \ref{Theorem2.1}}\label{App:Proof-Upp-Bd}

We begin with the proof of part (a), which is the main content of the theorem. Fix $\epsilon,t>0$ such that $t\geq E_1$, where $E_1$ is as in Definition \ref{DefSmoothnessTerm}. Let $\mathbf{Z}\sim \mathbb{P}(\mathbf{Z}\mid\mathbf{X})$, and let $\mathbf{Y}:=G(\mathbf{Z})$, where $G(\mathbf{Z})$ is as in~\eqref{eq:G}. By Propositions \ref{P2.1main} and \ref{P2.2main}, we have 
\begin{align*}
    |\mathbb{E}[\Delta_1(\mathbf{Y})+\Delta_2(\mathbf{Y})\mid\mathbf{X}]|&\leq CE_1\leq  Ct,\\
    \mathbb{E}\big[(\Delta_1(\mathbf{Y})+\Delta_2(\mathbf{Y}))^2\mid\mathbf{X}\big]&\leq 2\big(\mathbb{E}\big[\Delta_1(\mathbf{Y})^2\mid\mathbf{X}\big]+\mathbb{E}\big[\Delta_2(\mathbf{Y})^2\mid\mathbf{X}\big]\big)\leq C\bigg(\sum_{i=1}^n\max_{\ell\in[K_i]}\{b_{i,\ell}\}\bigg)^2.
\end{align*}
Hence by Lemma \ref{Lem2.1} (with $A=Ct$ and $B=C\big(\sum_{i=1}^n\max_{\ell\in[K_i]}\{b_{i,\ell}\}\big)^2$), there exist absolute positive constants $C_0$ and $c_0$, such that 
\begin{equation*}
    \mathbb{P}(\Delta_1(\mathbf{Y})+\Delta_2(\mathbf{Y})\leq C_0 t\mid\mathbf{X})\geq \frac{c_0 t^2}{t^2+\Big(\sum_{i=1}^n\max_{\ell\in[K_i]}\{b_{i,\ell}\}\Big)^2}.
\end{equation*}
Thus denoting 
\begin{equation*}
    \mathcal{A}(t):=\bigg\{z\in\prod_{i=1}^n [K_i]:\Delta_1(G(z))+\Delta_2(G(z))\leq C_0 t\bigg\},
\end{equation*}
and noting~\eqref{def_pro}, we have
\begin{equation*}
    \frac{\sum_{z\in\mathcal{A}(t)}e^{f(z)}\mu(z)}{\sum_{z\in\prod_{i=1}^n [K_i]}e^{f(z)}\mu(z)}\geq \frac{c_0  t^2}{t^2+\Big(\sum_{i=1}^n\max_{\ell\in[K_i]}\{b_{i,\ell}\}\Big)^2}.
\end{equation*}
Hence, with $S_{n,\mathbf{K}}$ as defined in~\eqref{defS}, we get 
\begin{eqnarray}\label{Eq.A4}
    \log S_{n,\mathbf{K}} &=&\log\bigg(\sum_{z\in\prod_{i=1}^n [K_i]}e^{f(z)}\mu(z)\bigg)\nonumber\\
    &\leq& \log\bigg(\sum_{z\in\mathcal{A}(t)}e^{f(z)}\mu(z)\bigg)+\log\Bigg(1+\frac{\Big(\sum_{i=1}^n\max_{\ell\in[K_i]}\{b_{i,\ell}\}\Big)^2}{t^2}\Bigg)+C \nonumber\\
     &\leq& \log\bigg(\sum_{z\in\mathcal{A}(t)}e^{f(z)}\mu(z)\bigg)+\log\Bigg(2\cdot\max\bigg\{1,\frac{\sum_{i=1}^n\max_{\ell\in[K_i]}\{b_{i,\ell}\}}{t}\bigg\}^2\Bigg)+C \nonumber\\
      &\leq& \log\bigg(\sum_{z\in\mathcal{A}(t)}e^{f(z)}\mu(z)\bigg)+2\log\bigg(\max\bigg\{1,\frac{\sum_{i=1}^n\max_{\ell\in[K_i]}\{b_{i,\ell}\}}{t}\bigg\}\bigg)+C \nonumber\\
      &\leq& \log\bigg(\sum_{z\in\mathcal{A}(t)}e^{f(z)}\mu(z)\bigg)+C\log\bigg(2+\frac{\sum_{i=1}^n\max_{\ell\in[K_i]}\{b_{i,\ell}\}}{t}\bigg)\nonumber\\
    \hspace{0.7in} &=& \log\bigg(\sum_{z\in\mathcal{A}(t)}e^{f(z)}\mu(z)\bigg)+ C \Lambda(t), 
\end{eqnarray}
where $\Lambda(t)$ is as defined in~\eqref{def_L}. Proceeding to bound the first term in the RHS of~\eqref{Eq.A4}, using the expressions of $\Delta_1(\cdot)$ and $\Delta_2(\cdot)$ in Definition~\ref{Def.A3}, for any $y\in\hat{\mathcal{Y}}_{n,\mathbf{K}}$, we get
\begin{equation*}
    \Delta_1(y)+\Delta_2(y)=F(y)-\big(F(T(y))-I(T(y))+J(y,T(y))\big).
\end{equation*}
Hence noting that $f(z)=F(G(z))$ for all $z\in\prod_{i=1}^n[K_i]$ (see~\eqref{eq:FG}), we get
\begin{eqnarray}\label{Eq.A1}
    && \log\bigg(\sum_{z\in\mathcal{A}(t)}e^{f(z)}\mu(z)\bigg)=\log\bigg(\sum_{z\in\mathcal{A}(t)}e^{F(G(z))}\mu(z)\bigg)\nonumber\\
    &\leq& \log\bigg(\sum_{z\in\mathcal{A}(t)}e^{F(T(G(z)))-I(T(G(z)))+J(G(z),T(G(z)))}\mu(z)\bigg)+Ct\nonumber\\
    &\leq& \log\bigg(\sum_{z\in\mathcal{A}(t)}e^{\sup_{w\in\hat{\mathcal{Y}}_{n,\mathbf{K}}}\{F(w)-I(w)\}+J(G(z),T(G(z)))}\mu(z)\bigg)+Ct\nonumber\\
    &=& \sup_{w\in\hat{\mathcal{Y}}_{n,\mathbf{K}}}\{F(w)-I(w)\}+\log\bigg(\sum_{z\in\mathcal{A}(t)}e^{J(G(z),T(G(z)))}\mu(z)\bigg)+Ct.
\end{eqnarray}

For any $i\in [n]$ and $x=(x_{s})_{s\in [K_i]}\in \mathbb{R}^{K_i}$, we have
\begin{equation}\label{sumto1}
    \sum_{\ell\in [K_i]}\rho_{i,\ell}(x)=1,\quad\mbox{ where }\quad\rho_{i,\ell}(x):=\frac{ e^{x_{\ell}}\mu_i(\ell)}{\sum_{s=1}^{K_i} e^{x_s}\mu_i(s)}\mbox{ for every }\ell\in [K_i].
\end{equation}
For any $i\in [n]$, $\ell,\ell'\in [K_i]$, and $x=(x_{s})_{s\in [K_i]}\in \mathbb{R}^{K_i}$, we have
\begin{equation}\label{gradlogrho}
    \frac{\partial \rho_{i,\ell}(x)}{\partial x_{\ell'}}=\rho_{i,\ell}(x)(\mathbbm{1}_{\ell=\ell'}-\rho_{i,\ell'}(x)) \quad\Rightarrow\quad  \frac{\partial \log(\rho_{i,\ell}(x))}{\partial x_{\ell'}} = \mathbbm{1}_{\ell=\ell'}-\rho_{i,\ell'}(x).
\end{equation}
For any $\epsilon>0$, with $\mathcal{D}(\epsilon)$ defined as in Definition~\ref{DefComplexityTerm}, we define
\begin{align*}
    \mathcal{D}'(\epsilon):=\Big\{(p_{i,\ell})_{i\in [n],\ell\in[K_i]}:&\, p_{i,\ell}=\rho_{i,\ell}((d_{i,s})_{s\in [K_i]})\text{ for all }i\in[n],\ell\in[K_i],\nonumber\\
    & \text{ where }(d_{i,\ell})_{i\in [n],\ell\in[K_i]}\in \mathcal{D}(\epsilon)\Big\},
\end{align*}
and note that $|\mathcal{D}'(\epsilon)|\leq |\mathcal{D}(\epsilon)|$. Fix $y\in\hat{\mathcal{Y}}_{n,\mathbf{K}}$, and define $\kappa(i,y):=(\kappa_{\ell}(i,y))_{\ell\in[K_i]}$ for every $i\in [n]$, where 
\begin{equation}\label{defkappa}
     \kappa_{\ell}(i, y):= F\big(\UII{y}{i}{\ell}\big)-F\big(\UI{y}{i}\big),\mbox{ for every }\ell\in[K_i].
\end{equation}
Then, using the definition of $\mathcal{D}(\epsilon)$ (see~\eqref{Eq2.17}), there exists $d=(d_{i,\ell})_{i\in[n],\ell\in[K_i]}\in\mathcal{D}(\epsilon)$ such that
\begin{equation}\label{kappaineq}
   \sum_{i=1}^n \max_{\ell\in[K_i]}|\kappa_{\ell}(i,y)-d_{i,\ell}|\leq \epsilon.
\end{equation}
For any $i\in [n]$ and $\ell\in [K_i]$, using the definitions of $T_{i,\ell}(y)$ and $\rho_{i,\ell}(\cdot)$ from Definition \ref{DefTy} and~\eqref{sumto1}, we have $T_{i,\ell}(y)=\rho_{i,\ell}(\kappa(i,y))$. We set $p_{i,\ell}:=\rho_{i,\ell}(d_i)$ for any $i\in [n]$ and $\ell\in [K_i]$, where $d_i:=(d_{i,\ell})_{\ell\in [K_i]}\in\mathbb{R}^{K_i}$. Setting $p:=(p_{i,\ell})_{i\in [n],\ell\in [K_i]}\in\hat{\mathcal{Y}}_{n,\mathbf{K}}$, for any $i\in[n]$ and $\ell\in[K_i]$, using~\eqref{defkappa} we get 
{\small \begin{eqnarray*}
   && |\log(T_{i,\ell}(y))-\log p_{i,\ell}|=|\log(\rho_{i,\ell}(\kappa(i,y)))-\log(\rho_{i,\ell}(d_i))| \nonumber\\
   &\leq& |\kappa_{\ell}(i,y)-d_{i,\ell}|  +
   \sum_{\ell'\in [K_i]\backslash\{\ell\}}\bigg(\int_0^1\rho_{i,\ell'}(t\kappa(i,y)+(1-t)d_i)dt\bigg)|\kappa_{\ell'}(i,y)-d_{i,\ell'}|\nonumber\\
   &\leq& \max_{s\in[K_i]}\{|\kappa_s(i,y)-d_{i,s}|\}\cdot\bigg(1+\int_0^1\bigg(\sum_{\ell'\in[K_i]\backslash\{\ell\}}\rho_{i,\ell'}(t\kappa(i,y)+(1-t)d_i)\bigg)dt\bigg)\nonumber\\
   &\leq& 2\max_{s\in[K_i]}\big\{\big|\kappa_s(i,y)-d_{i,s}\big|\big\},
\end{eqnarray*}
}where we use~\eqref{gradlogrho} in the second line and~\eqref{sumto1} in the last line. Hence using the expression of $J(\cdot,\cdot)$ in~\eqref{def_J}, we get
\begin{eqnarray}\label{Eq.A2}
    && |J(y,T(y))-J(y,p)|\leq\sum_{i=1}^n\sum_{\ell=1}^{K_i} y_{i,\ell}|\log(T_{i,\ell}(y))-\log p_{i,\ell}|\nonumber\\
    &\leq& 2\sum_{i=1}^n\max_{s\in[K_i]}\{|\kappa_s(i,y)-d_{i,s}|\}\sum_{\ell=1}^{K_i} y_{i,\ell}= 2\sum_{i=1}^n\max_{s\in[K_i]}\{|\kappa_s(i,y)-d_{i,s}|\}\leq  2\epsilon, 
\end{eqnarray}
where we use $\sum_{\ell=1}^{K_i}y_{i,\ell}=1$ for all $i\in[n]$ and~\eqref{kappaineq} in the last line. Combining, for any $z\in \prod_{i=1}^n[K_i]$ (note that $G(z)\in\hat{\mathcal{Y}}_{n,\mathbf{K}}$), there exists $p\in\mathcal{D}'(\epsilon)$, such that
\begin{equation}\label{new.eq}
    |J(G(z),T(G(z)))-J(G(z),p)|\leq 2\epsilon.
\end{equation}

For any $p\in\mathcal{D}'(\epsilon)$, we denote by $\mathcal{C}(p)$ the set of $z\in\mathcal{A}(t)$ such that~\eqref{new.eq} holds. Sequentially applying (\ref{Eq.A1}) and (\ref{Eq.A2}), we get
\begin{eqnarray}\label{Eq.A7}
    && \log\bigg(\sum_{z\in\mathcal{A}(t)}e^{f(z)}\mu(z)\bigg)\nonumber\\
    &\leq& \sup_{w\in\hat{\mathcal{Y}}_{n,\mathbf{K}}}\{F(w)-I(w)\}+\log\bigg(\sum_{z\in\mathcal{A}(t)}e^{J(G(z),T(G(z)))}\mu(z)\bigg)+Ct\nonumber\\
    &\leq& \sup_{w\in\hat{\mathcal{Y}}_{n,\mathbf{K}}}\{F(w)-I(w)\}+\log\bigg(\sum_{p\in\mathcal{D}'(\epsilon)}\sum_{z\in\mathcal{C}(p)}e^{J(G(z),T(G(z)))}\mu(z)\bigg)+Ct\nonumber\\
    &\leq& \sup_{w\in\hat{\mathcal{Y}}_{n,\mathbf{K}}}\{F(w)-I(w)\}+\log\bigg(\sum_{p\in\mathcal{D}'(\epsilon)}\sum_{z\in\mathcal{C}(p)}e^{J(G(z),p)}\mu(z)\bigg)+Ct+2\epsilon\nonumber\\
    &\leq& \sup_{w\in\hat{\mathcal{Y}}_{n,\mathbf{K}}}\{F(w)-I(w)\}+\log\bigg(\sum_{p\in\mathcal{D}'(\epsilon)}\sum_{z\in \prod_{i=1}^n [K_i]}\prod_{i=1}^n\prod_{\ell=1}^{K_i} p_{i,\ell}^{\mathbbm{1}_{z_i=\ell}}\bigg)+Ct+2\epsilon\nonumber\\
    &\leq& \sup_{w\in\hat{\mathcal{Y}}_{n,\mathbf{K}}}\{F(w)-I(w)\}+\log(|\mathcal{D}'(\epsilon)|)+Ct+2\epsilon\nonumber\\
    &\leq& \sup_{w\in\hat{\mathcal{Y}}_{n,\mathbf{K}}}\{F(w)-I(w)\}+\log(|\mathcal{D}(\epsilon)|)+Ct+2\epsilon\nonumber\\
    &=& \sup_{w\in\hat{\mathcal{Y}}_{n,\mathbf{K}}}\{F(w)-I(w)\}+Ct+E_2(\epsilon), 
\end{eqnarray}
where we recall the definition of $J(\cdot,\cdot)$ from Definition \ref{Def.A3} in the fifth line and the definition of $E_2(\epsilon)$ from Definition \ref{DefComplexityTerm} in the last line. By (\ref{Eq.A4}) and (\ref{Eq.A7}), we conclude that 
\begin{equation}\label{E.A5}
    \log S_{n,\mathbf{K}} \leq \sup_{y\in\hat{\mathcal{Y}}_{n,\mathbf{K}}}\{F(y)-I(y)\}+C(t+\Lambda(t))+E_2(\epsilon),
\end{equation}
which proves the first conclusion in part (a).

For the second conclusion in part (a), sequentially applying parts (a) and (b) of Lemma \ref{Lemma2.1}, along with the observation $\KL\big(\hat{Q}\,\big\| \; \mathbb{P}(\mathbf{Z}\mid\mathbf{X})\big)\geq 0$, we have 
\begin{equation*}
    \log S_{n,\mathbf{K}} \geq \sup_{y\in\hat{\mathcal{Y}}_{n,\mathbf{K}}}\{\mathbb{E}_{Q_y}[f(\mathbf{Z})]-I(y)\}\geq \sup_{y\in\hat{\mathcal{Y}}_{n,\mathbf{K}}}\{F(y)-I(y)\}-2\sum_{i=1}^n\max_{\ell,\ell'\in[K_i]} \{c_{i,\ell;i,\ell'}\},
\end{equation*}
which verifies the second conclusion in part (a). 

Proceeding to the proof of part (b), by parts (a) and (b) of Lemma \ref{Lemma2.1} and~\eqref{E.A5}, we get
\begin{eqnarray*}
    &&\frac{1}{n}\KL\big(\hat{Q}\,\big\| \; \mathbb{P}(\mathbf{Z}\mid\mathbf{X}) \big)=\frac{\log S_{n,\mathbf{K}}-\sup_{y\in\hat{\mathcal{Y}}_{n,\mathbf{K}}}\{\mathbb{E}_{Q_y}[f(\mathbf{Z})]-I(y)\}}{n}\nonumber\\
    &\leq& 
    \frac{\log S_{n,\mathbf{K}}-\sup_{y\in\hat{\mathcal{Y}}_{n,\mathbf{K}}}\{F(y)-I(y)\}+2\sum_{i=1}^n\max_{\ell,\ell'\in[K_i]} \{c_{i,\ell;i,\ell'}\}}{n}\nonumber\\
    &\leq& \frac{C(t+\Lambda(t))+E_2(\epsilon)+2\sum_{i=1}^n\max_{\ell,\ell'\in[K_i]} \{c_{i,\ell;i,\ell'}\}}{n}.
\end{eqnarray*} 

Finally, if $F(\cdot)$ is a convex function on $\prod_{i=1}^n [0,1]^{K_i}$, sequentially applying parts (a) and (c) of Lemma \ref{Lemma2.1} and using $\KL\big(\hat{Q}\,\big\| \; \mathbb{P}(\mathbf{Z}\mid\mathbf{X})\big)\geq 0$, we get 
\begin{equation*}
    \log S_{n,\mathbf{K}} \geq \sup_{y\in\hat{\mathcal{Y}}_{n,\mathbf{K}}}\{\mathbb{E}_{Q_y}[f(\mathbf{Z})]-I(y)\}\geq \sup_{y\in\hat{\mathcal{Y}}_{n,\mathbf{K}}}\{F(y)-I(y)\},
\end{equation*}
which proves the first conclusion in part (c). Moreover, by parts (a) and (c) of Lemma \ref{Lemma2.1} and~\eqref{E.A5}, we have
\begin{eqnarray*}
     &&\frac{1}{n}\KL\big(\hat{Q}\,\big\| \; \mathbb{P}(\mathbf{Z}\mid\mathbf{X}) \big)=\frac{\log S_{n,\mathbf{K}}-\sup_{y\in\hat{\mathcal{Y}}_{n,\mathbf{K}}}\{\mathbb{E}_{Q_y}[f(\mathbf{Z})]-I(y)\}}{n}\nonumber\\
     &\leq&  \frac{\log S_{n,\mathbf{K}}-\sup_{y\in\hat{\mathcal{Y}}_{n,\mathbf{K}}}\{F(y)-I(y)\}}{n} \leq\frac{C(t+\Lambda(t))+E_2(\epsilon)}{n},
\end{eqnarray*}
which establishes the second conclusion in part (c). 

\subsection{Proof of Theorem \ref{Theorem2.2}}\label{App:Proof-Upp-Bd-Full}

In the following, we fix an arbitrary $y\in\hat{\mathcal{Y}}_{n,\mathbf{K}}$, and let $Q_y$ be defined as in~\eqref{defQy}. Let $\mathbf{Z}\sim Q_{y}$. For any $\pmb{\theta}\in\prod_{j=1}^m\mathbb{R}^{d_j}$, $t\in [0,1]$, and $i\in [n],\ell\in[K_i]$, with $G(\mathbf{Z})$ defined as in~\eqref{eq:G}, as $Z_1,\cdots,Z_n$ are mutually independent under $Q_y$ and $R_{i,\ell}\big(\pmb{\theta},t\UI{G(\mathbf{Z})}{i}+(1-t)y\big)$ only depends on $(Z_j)_{j\in[n]\backslash [i]}$, we have
\begin{eqnarray}\label{fact1.2}
    && \mathbb{E}_{Q_y}\big[R_{i,\ell}\big(\pmb{\theta},t\UI{G(\mathbf{Z})}{i}+(1-t)y\big)(G_{i,\ell}(\mathbf{Z})-y_{i,\ell})\big]\nonumber\\
    &=& \mathbb{E}_{Q_y}\big[R_{i,\ell}\big(\pmb{\theta},t\UI{G(\mathbf{Z})}{i}+(1-t)y\big)\big]\cdot (\mathbb{E}_{Q_y}[G_{i,\ell}(\mathbf{Z})]-y_{i,\ell})=0,
\end{eqnarray}
where we use $\mathbb{E}_{Q_y}[G_{i,\ell}(\mathbf{Z})]=\mathbb{E}_{Q_y}[\mathbbm{1}_{Z_i=\ell}]=Q_y(Z_i=\ell)=y_{i,\ell}$. Thus by (\ref{def_R}), for any $\pmb{\theta}\in\mathrm{supp}(\nu)$, we have 
\begin{align}\label{def_ne}
    &\big|\mathbb{E}_{Q_y}[r(\pmb{\theta},\mathbf{Z})] - R(\pmb{\theta},y)\big| = \big|\mathbb{E}_{Q_y}[R(\pmb{\theta},G(\mathbf{Z}))]-R(\pmb{\theta},y)\big|\nonumber\\
    =& \bigg|\sum_{i=1}^n\sum_{\ell=1}^{K_i} \int_{0}^1 \mathbb{E}_{Q_y}[R_{i,\ell}(\pmb{\theta},tG(\mathbf{Z})+(1-t)y)(G_{i,\ell}(\mathbf{Z})-y_{i,\ell})]dt\bigg|\nonumber\\
    \leq& \sum_{i=1}^n\sum_{\ell=1}^{K_i} \bigg|\int_{0}^1 \mathbb{E}_{Q_y}\big[\big(R_{i,\ell}(\pmb{\theta},tG(\mathbf{Z})+(1-t)y)-R_{i,\ell}\big(\pmb{\theta},t\UI{G(\mathbf{Z})}{i}+(1-t)y\big)\big)\nonumber\\
    &\hspace{1.1in}\cdot(G_{i,\ell}(\mathbf{Z})-y_{i,\ell})\big] dt\bigg|\nonumber\\
    \leq& \sum_{i=1}^n\sum_{\ell=1}^{K_i}\sum_{\ell'=1}^{K_i} \ctilde_{i,\ell;i,\ell'} \mathbb{E}_{Q_y}[G_{i,\ell'}(\mathbf{Z})(G_{i,\ell}(\mathbf{Z})+y_{i,\ell})]\leq 2\sum_{i=1}^n\max_{\ell,\ell'\in [K_i]}\{\ctilde_{i,\ell;i,\ell'}\},
\end{align}
where $\ctilde_{i,\ell;i;\ell'}$ is as in Definition \ref{Global_GH}, and we use~\eqref{fact1.2} in the third line and the fact that $\sum_{\ell=1}^{K_i} G_{i,\ell}(\mathbf{Z})=\sum_{\ell=1}^{K_i} y_{i,\ell}=1$ for all $i\in[n]$ in the last line. 

Denote by $\mathcal{R}$ the family of mean-field distributions on $\pmb{\theta}\equiv(\pmb{\theta}_1,\cdots,\pmb{\theta}_m)$, i.e., probability distributions on $\prod_{j=1}^m\mathbb{R}^{d_j}$ whose density w.r.t. $\nu$ has form $\prod_{j=1}^m \xi_j(\pmb{\theta}_j)$ (see~\eqref{defmu}). For each $j\in [m]$, denote by $\mathscr{P}(\mathbb{R}^{d_j})$ the family of probability distributions on $\mathbb{R}^{d_j}$. By~\eqref{def_form.R}, we get
\begin{eqnarray*}
&&\sup_{\nu'\in\mathcal{R}}\big\{\mathbb{E}_{\nu'}[R(\pmb{\theta},y)]-\KL(\nu'\|\nu)\big\}\nonumber\\
&=& \sup_{\nu_j'\in \mathscr{P}(\mathbb{R}^{d_j}),\forall j\in[m]}\bigg\{R_0(y)+\sum_{j=1}^m \mathbb{E}_{\nu_j'}[R_j(\pmb{\theta}_j,y)]-\sum_{j=1}^m\KL(\nu_j'\|\nu_j)\bigg\}\nonumber\\
&=& R_0(y)+\sum_{j=1}^m \sup_{\nu_j'\in \mathscr{P}(\mathbb{R}^{d_j})}\big\{\mathbb{E}_{\nu_j'}[R_j(\pmb{\theta}_j,y)]-\KL(\nu_j'\|\nu_j)\big\}\nonumber\\
&=& R_0(y)+\sum_{j=1}^m\log\bigg(\int_{\mathbb{R}^{d_j}}e^{R_j(\pmb{\theta}_j,y)}d\nu_j(\pmb{\theta}_j)\bigg)=\log\bigg(\int_{\prod_{j=1}^m\mathbb{R}^{d_j}}e^{R_0(y)+\sum_{j=1}^m R_j(\pmb{\theta}_j,y)}d\nu(\pmb{\theta})\bigg)\nonumber\\
&=& \log\bigg(\int_{\prod_{j=1}^m\mathbb{R}^{d_j}}e^{R(\pmb{\theta},y)}d\nu(\pmb{\theta})\bigg)=F(y),
\end{eqnarray*}
where we use the Gibbs variational principle (see, for example, \cite[Lemma 4.10]{van2014probability}) in the fourth line, and use~\eqref{eq:F} in the last line. Hence
\begin{eqnarray}\label{neweq}
   && \bigg|\sup_{\nu'\in\mathcal{R}}\big\{\mathbb{E}_{\nu'\otimes Q_y}[r(\pmb{\theta},\mathbf{Z})]-\KL(\nu'\|\nu)\big\}-F(y)\bigg|\nonumber\\
   &\leq& \sup_{\nu'\in\mathcal{R}}\big\{\big|\mathbb{E}_{\nu'}[\mathbb{E}_{Q_y}[r(\pmb{\theta},\mathbf{Z})]-R(\pmb{\theta},y)]\big|\big\}\nonumber\\
   &&+\bigg|\sup_{\nu'\in\mathcal{R}}\big\{\mathbb{E}_{\nu'}[R(\pmb{\theta},y)]-\KL(\nu'\|\nu)\big\}-F(y)\bigg|\nonumber\\
   &=& \sup_{\nu'\in\mathcal{R}}\big\{\big|\mathbb{E}_{\nu'}[\mathbb{E}_{Q_y}[r(\pmb{\theta},\mathbf{Z})]-R(\pmb{\theta},y)]\big|\big\}
   \leq 2\sum_{i=1}^n\max_{\ell,\ell'\in [K_i]}\{\ctilde_{i,\ell;i,\ell'}\},
\end{eqnarray}
where $\pmb{\theta}$ and $\mathbf{Z}$ are independent in the second line, and we use~\eqref{def_ne} in the last inequality.

For any $\nu'\in\mathcal{R}$ and $y\in\hat{\mathcal{Y}}_{n,\mathbf{K}}$, using the density~\eqref{eq:Post-Form} of the posterior distribution $\mathbb{P}(\pmb{\theta},\mathbf{Z}\mid\mathbf{X})$ (note that $\gamma=\nu\otimes \mu$; see~\eqref{base}-\eqref{defmu}), we get 
\begin{eqnarray}\label{DKL_general}
 \KL\big(\nu'\otimes Q_y \,\big\| \; \mathbb{P}(\pmb{\theta},\mathbf{Z}\mid\mathbf{X}) \big)&=&\int_{\prod_{j=1}^m\mathbb{R}^{d_j}\times\prod_{i=1}^n[K_i]}\log\Bigg(\frac{\frac{d \nu'\otimes Q_y}{d\gamma}(\pmb{\theta},z)}{S_{n,\mathbf{K}}^{-1}e^{r(\pmb{\theta},z)}}\Bigg)d\nu'\otimes Q_y(\pmb{\theta},z)\nonumber\\   
  &=&  \log S_{n,\mathbf{K}}-\mathbb{E}_{\nu'\otimes Q_y}[r(\pmb{\theta},\mathbf{Z})]\nonumber\\
  &&+\int_{\prod_{j=1}^m\mathbb{R}^{d_j}\times\prod_{i=1}^n[K_i]}\log\bigg(\frac{d\nu'\otimes Q_y}{d\gamma}(\pmb{\theta},z)\bigg)d\nu'\otimes Q_y(\pmb{\theta},z)\nonumber\\
  &=& \log S_{n,\mathbf{K}}-\big(\mathbb{E}_{\nu'\otimes Q_y}[r(\pmb{\theta},\mathbf{Z})]-\KL(\nu'\otimes Q_y\|\nu\otimes \mu)\big).
\end{eqnarray}
Hence noting that $\mathcal{P}=\{\nu'\otimes \mathcal{Q}_y:\nu'\in\mathcal{R},y\in\hat{\mathcal{Y}}_{n,\mathbf{K}}\}$, using~\eqref{vari3.1}, \eqref{DKL_general}, and $I(y)=\KL(Q_y\|\mu)$ (see
the definition of $I(\cdot)$ in~\eqref{def_I}), we get for any $\epsilon,t>0$ such that $t\geq E_1$, 
\begin{eqnarray*}
   && \KL\big(\hat{P} \,\big\| \; \mathbb{P}(\pmb{\theta},\mathbf{Z}\mid\mathbf{X}) \big)=\inf_{\nu'\in\mathcal{R},y\in\hat{\mathcal{Y}}_{n,\mathbf{K}}}\KL\big(\nu'\otimes Q_y\,\big\| \; \mathbb{P}(\pmb{\theta},\mathbf{Z}\mid\mathbf{X})\big)\nonumber\\
   &=& \log S_{n,\mathbf{K}}-\sup_{\nu'\in\mathcal{R},y\in\hat{\mathcal{Y}}_{n,\mathbf{K}}}\big\{\mathbb{E}_{\nu'\otimes Q_y}[r(\pmb{\theta},\mathbf{Z})]-\KL(\nu'\otimes Q_y\|\nu\otimes \mu)\big\}\nonumber\\
   &=& \log S_{n,\mathbf{K}}-\sup_{\nu'\in\mathcal{R},y\in\hat{\mathcal{Y}}_{n,\mathbf{K}}}\big\{\mathbb{E}_{\nu'\otimes Q_y}[r(\pmb{\theta},\mathbf{Z})]-\KL(\nu'\|\nu)-I(y)\big\}\nonumber\\
   &=& \log S_{n,\mathbf{K}}-\sup_{y\in\hat{\mathcal{Y}}_{n,\mathbf{K}}}\bigg\{\sup_{\nu'\in\mathcal{R}}\big\{\mathbb{E}_{\nu'\otimes Q_y}[r(\pmb{\theta},\mathbf{Z})]-\KL(\nu'\|\nu)\big\}-I(y)\bigg\}\nonumber\\
   &\leq& \log S_{n,\mathbf{K}}-\sup_{y\in\hat{\mathcal{Y}}_{n,\mathbf{K}}}\big\{F(y)-I(y)\big\}+2\sum_{i=1}^n\max_{\ell,\ell'\in [K_i]}\{\ctilde_{i,\ell;i,\ell'}\}\nonumber\\
   &\leq& C(t + \Lambda(t)) +E_2(\epsilon)+2\sum_{i=1}^n\max_{\ell,\ell'\in [K_i]}\{\ctilde_{i,\ell;i,\ell'}\},
\end{eqnarray*}
where we use~\eqref{neweq} in the fifth line and the first conclusion in part (a) of Theorem \ref{Theorem2.1} in the last line.
This completes the proof of Theorem \ref{Theorem2.2}. 

\section{Variational inference for MMSB: Proof of Theorem \ref{Theorem_MMSB_UBD}}\label{Appendix_B}

In this section, we present the proof of Theorem \ref{Theorem_MMSB_UBD} based on the general meta-theorems (Theorems \ref{Theorem2.1} and \ref{Theorem2.2}), thus verifying the validity of partially grouped VI for MMSB.

Recall the model setup and notation for MMSB presented in Section \ref{Sect.1.3}. We denote by $\mathcal{S}_{n,K}$ the normalizing constant of the posterior (\ref{Eq4.1}). Note that $\mathcal{S}_{n,K}$ is also the normalizing constant of the collapsed posterior (\ref{mmsb}). Throughout this section, we use $C,c$ to denote positive constants that depend only on $C_0$ (as specified in Theorem \ref{Theorem_MMSB_UBD}). The values of these constants may change from line to line.

For any $z=(z_{i,j})_{i,j\in[n]:i\neq j}\in [K]^{2n(n-1)}$ (where for each $(i,j)\in [n]^2$ such that $i\neq j$, $z_{i,j}=(z_{i\rightarrow j},z_{i\leftarrow j})$ with $z_{i\rightarrow j},z_{i\leftarrow j}\in [K]$) and any $i\in[n],\ell\in[K]$, by a slight abuse of notation (see~\eqref{defNil}), we define
\begin{equation}\label{Nildef}
    N_{\rightarrow,i,\ell}(z):=|\{j\in [n]\backslash \{i\}: z_{i\rightarrow j}=\ell\}|,\quad N_{\leftarrow,i,\ell}(z):=|\{j\in [n]\backslash\{i\}: z_{j\leftarrow i}=\ell\}|,
\end{equation}
and $N_{i,\ell}(z):=N_{\rightarrow,i,\ell}(z)+N_{\leftarrow,i,\ell}(z)$. For any $z\in [K]^{2n(n-1)}$ and any $\ell,\ell'\in[K]$, we define
\begin{equation}\label{def_Al}
    A_{\ell,\ell'}(z):=|\{(i,j)\in [n]^2:i\neq j, z_{i \rightarrow j}=\ell, z_{i\leftarrow j}=\ell'\}|,
\end{equation}
\begin{equation}\label{def_Ml}
    M_{\ell,\ell'}(z):=|\{(i,j)\in [n]^2:i\neq j, z_{i \rightarrow j}=\ell, z_{i\leftarrow j}=\ell', X_{i,j}=1\}|.
\end{equation}
Recall that the normalizing constant\slash partition function of the posterior\slash collapsed posterior is (see~\eqref{mmsb})
\begin{equation}\label{eq:SnK_MMSB}
    \mathcal{S}_{n,K}=\sum_{z\in [K]^{2n(n-1)}} e^{\Upsilon(z)},
\end{equation}
where for any $z\in [K]^{2n(n-1)}$,
{\small
\begin{eqnarray}\label{defUps:MMSB} \Upsilon(z)&:=&\sum_{\ell=1}^K\sum_{\ell'=1}^K(M_{\ell,\ell'}(z)\log B_{\ell,\ell'}+(A_{\ell,\ell'}(z)-M_{\ell,\ell'}(z))\log(1-B_{\ell,\ell'}))\nonumber\\
    && \hspace{0.4in}+\sum_{i=1}^n\sum_{\ell=1}^K\log\Gamma(N_{i,\ell}(z)+\alpha_{\ell})-n\log\Gamma\Big(2n-2+\sum_{\ell=1}^K \alpha_{\ell}\Big).
\end{eqnarray}
}

To apply the general framework in Sections \ref{Sect.1.1} and \ref{Sect.2.1} to MMSB, we set the global latent variables to be $\pmb{\theta}=\pmb{\pi}:= (\pmb{\pi}_1,\cdots,\pmb{\pi}_n)$ and the local latent variables to be $\mathbf{Z}=(Z_{i,j})_{i,j\in[n]:i\neq j}$, where $Z_{i,j}:=(Z_{i\rightarrow j},Z_{i\leftarrow j})\in [K]^2$. Thus, $\{(i,j)\in [n]^2:i\neq j\}$ plays the role of $[n]$ in Sections \ref{Sect.1.1} and \ref{Sect.2.1} (for indexing the local latent variables), and $n,K_i$ in relevant bounds in Section \ref{Sect.2.1} will be replaced by $n(n-1),K^2$, respectively (since there are $n(n-1)$ local latent variables, and the number of categories of each local latent variable is $K^2$). We define
{\small\begin{eqnarray}\label{YnK:MMSB}
    \hat{\mathcal{Y}}_{n,K}  &:=&\bigg\{y=(y_{i,j;\ell,\ell'})_{i,j\in [n],i\neq j;\ell,\ell'\in[K]}\in [0,1]^{n(n-1)K^2}:\nonumber\\
    &&\quad\quad   \sum_{\ell=1}^K\sum_{\ell'=1}^K
    y_{i,j;\ell,\ell'}=1
    \text{ for every } i,j\in [n]\text{ with }i\neq j\bigg\}.
\end{eqnarray}
}For each $i\in [n]$, we define $\nu_i$ to be the Dirichlet distribution of order $K$ with parameters $(1,\cdots,1)$. For each $i,j\in [n]$ with $i\neq j$, we define $\mu_{i,j}$ to be the probability measure on $[K]^2$ such that for each $(\ell,\ell')\in [K]^2$,
\begin{equation}\label{def_muij}
    \mu_{i,j}(\ell,\ell') = \begin{cases}
        \frac{B_{\ell,\ell'}}{\sum_{s=1}^K\sum_{s'=1}^K B_{s,s'}} & \text{ if } X_{i,j}=1\\
        \frac{1-B_{\ell,\ell'}}{\sum_{s=1}^K\sum_{s'=1}^K (1-B_{s,s'})} & \text{ if } X_{i,j}=0.
    \end{cases}
\end{equation} 
We set $\nu:=\nu_1\otimes\cdots\otimes\nu_n$ and $\mu:=\bigotimes\limits_{i,j\in [n]:i\neq j}\mu_{i,j}$ (recall~\eqref{base} and~\eqref{defmu}). Note that for any $z\in[K]^{2n(n-1)}$,
\begin{eqnarray}\label{defmu:MMSB}
    \mu(z)&=&\prod_{i,j\in [n]:i\neq j}\mu_{i,j}(z_{i,j})\nonumber\\
    &=&\prod_{\substack{i,j\in [n]:\\ i\neq j,X_{i,j}=1}}\frac{B_{z_{i\rightarrow j},z_{i\leftarrow j}}}{\sum_{\ell=1}^K\sum_{\ell'=1}^K B_{\ell,\ell'}}\cdot \prod_{\substack{i,j\in [n]:\\ i\neq j,X_{i,j}=0}}\frac{1-B_{z_{i\rightarrow j},z_{i\leftarrow j}}}{\sum_{\ell=1}^K\sum_{\ell'=1}^K (1-B_{\ell,\ell'})}\nonumber\\
    &=& \frac{\prod_{\ell=1}^K\prod_{\ell'=1}^K B_{\ell,\ell'}^{M_{\ell,\ell'}(z)}(1-B_{\ell,\ell'})^{A_{\ell,\ell'}(z)-M_{\ell,\ell'}(z)}}{\big(\sum_{\ell=1}^K\sum_{\ell'=1}^K B_{\ell,\ell'}\big)^{\mathscr{M}_1}\big(\sum_{\ell=1}^K\sum_{\ell'=1}^K(1-B_{\ell,\ell'})\big)^{\mathscr{M}_0}},
\end{eqnarray}
where 
\begin{equation}\label{MathscrMde}
    \mathscr{M}_1:=|\{(i,j)\in [n]^2:i\neq j, X_{i,j}=1\}|,\quad \mathscr{M}_0:=|\{(i,j)\in [n]^2:i\neq j, X_{i,j}=0\}|. 
\end{equation}
For any $z\in[K]^{2n(n-1)}$ and $y\in [0,1]^{n(n-1)K^2}$, we define
\begin{equation}\label{defrR_MMSB}
    r(\pmb{\pi},z):=\sum_{i=1}^n\sum_{\ell=1}^K(N_{i,\ell}(z)+\alpha_{\ell}-1)\log\pi_{i,\ell}-n\log\Gamma(K),
\end{equation}
\begin{equation}\label{defrR_MMSB2}
    R(\pmb{\pi},y):=\sum_{i=1}^n\sum_{\ell=1}^K\big(\widetilde{N}_{i,\ell}(y)+\alpha_{\ell}-1\big)\log\pi_{i,\ell}-n\log\Gamma(K),
\end{equation}
where for every $i\in[n]$ and $\ell\in[K]$, 
\begin{equation}\label{Nil_y_def}
    \widetilde{N}_{i,\ell}(y):=\sum_{j\in [n]  \backslash \{i\}}\sum_{\ell'=1}^K y_{i,j;\ell,\ell'} + \sum_{j\in [n]\backslash \{i\}}\sum_{\ell'=1}^K y_{j,i;\ell',\ell}.
\end{equation}
Note that Assumptions \ref{Assump:Cond-ind}-\ref{Assump:R} hold under these specifications. Moreover, \eqref{defrR_MMSB} implies that (recall~\eqref{def_f}),
\begin{equation}\label{deffF_MMSB}
    f(z)=\sum_{i=1}^n\sum_{\ell=1}^K \log\Gamma(N_{i,\ell}(z)+\alpha_{\ell})-n\log\Gamma\Big(2n-2+\sum_{\ell=1}^K\alpha_{\ell}\Big),\quad\mbox{for any }z\in[K]^{2n(n-1)}.
\end{equation}
We define 
\begin{equation}\label{deffF_MMSB2}
    F(y):=\sum_{i=1}^n\sum_{\ell=1}^K\log\Gamma\big(\widetilde{N}_{i,\ell}(y)+\alpha_{\ell}\big)-n\log\Gamma\Big(2n-2+\sum_{\ell=1}^K\alpha_{\ell}\Big),\quad\mbox{for any }y\in [0,1]^{n(n-1)K^2};
\end{equation}
note that Assumption \ref{Assump:F} also holds. For any $y\in\hat{\mathcal{Y}}_{n,K}$, we have (recall~\eqref{def_I})
\begin{eqnarray}\label{defI_MMSB}
    I(y)&=&\sum_{i,j\in [n]: i\neq j} \sum_{\ell=1}^K\sum_{\ell'=1}^K y_{i,j;\ell,\ell'}\log\bigg(\frac{y_{i,j;\ell,\ell'}}{\mu_{i,j}(\ell,\ell')}\bigg)\nonumber\\
    &=&-\sum_{i,j\in [n]:i\neq j}\sum_{\ell=1}^K\sum_{\ell'=1}^Ky_{i,j;\ell,\ell'}(X_{i,j}\log B_{\ell,\ell'} +(1-X_{i,j})\log(1-B_{\ell,\ell'}))\nonumber\\
    &&\hspace{0.4in}+\mathscr{M}_1\log\bigg(\sum_{\ell=1}^K\sum_{\ell'=1}^K B_{\ell,\ell'}\bigg)+\mathscr{M}_0\log\bigg(\sum_{\ell=1}^K\sum_{\ell'=1}^K(1-B_{\ell,\ell'})\bigg)\nonumber\\
    &&\hspace{0.4in}+\sum_{i,j\in [n]:i\neq j}\sum_{\ell=1}^K\sum_{\ell'=1}^K y_{i,j;\ell,\ell'}\log y_{i,j;\ell,\ell'},
\end{eqnarray}
where $\mathscr{M}_1,\mathscr{M}_0$ are as in~\eqref{MathscrMde}. Moreover, with $f$ and $\mu$ as in~\eqref{deffF_MMSB} and~\eqref{defmu:MMSB}, we have
{\small
\begin{eqnarray}\label{partition_MMSB}
    \sum_{z\in [K]^{2n(n-1)}} e^{f(z)}\mu(z)&=& \sum_{z\in [K]^{2n(n-1)}}e^{\sum_{i=1}^n\sum_{\ell=1}^K \log\Gamma(N_{i,\ell}(z)+\alpha_{\ell})-n\log\Gamma\big(2n-2+\sum_{\ell=1}^K\alpha_{\ell}\big)}\nonumber\\
    &&\hspace{0.2in}\cdot \frac{\prod_{\ell=1}^K\prod_{\ell'=1}^K B_{\ell,\ell'}^{M_{\ell,\ell'}(z)}(1-B_{\ell,\ell'})^{A_{\ell,\ell'}(z)-M_{\ell,\ell'}(z)}}{\big(\sum_{\ell=1}^K\sum_{\ell'=1}^K B_{\ell,\ell'}\big)^{\mathscr{M}_1}\big(\sum_{\ell=1}^K\sum_{\ell'=1}^K(1-B_{\ell,\ell'})\big)^{\mathscr{M}_0}}\nonumber\\
    &=& \frac{\sum_{z\in [K]^{2n(n-1)}}e^{\Upsilon(z)}}{\big(\sum_{\ell=1}^K\sum_{\ell'=1}^K  B_{\ell,\ell'}\big)^{\mathscr{M}_1}\big(\sum_{\ell=1}^K\sum_{\ell'=1}^K(1-B_{\ell,\ell'})\big)^{\mathscr{M}_0}}\nonumber\\
    &=& 
    \frac{\mathcal{S}_{n,K}}{\big(\sum_{\ell=1}^K\sum_{\ell'=1}^K  B_{\ell,\ell'}\big)^{\mathscr{M}_1}\big(\sum_{\ell=1}^K\sum_{\ell'=1}^K(1-B_{\ell,\ell'})\big)^{\mathscr{M}_0}},
\end{eqnarray}
}where we use~\eqref{defUps:MMSB} and~\eqref{eq:SnK_MMSB} in the second and third equalities, respectively.

\medskip

\noindent{\bf{Step 1: Restricting the partition function and Stirling's approximation.}} Let $\mathscr{C}_{n,K}$ be the collection of $\pmb{\mathscr{A}}=(\mathscr{A}_1,\mathscr{A}_2,\cdots,\mathscr{A}_n)$ such that $\mathscr{A}_i\subseteq [K]$ and $|\mathscr{A}_i|\geq 1$ for all $i\in[n]$. For any $\pmb{\mathscr{A}}=(\mathscr{A}_1,\mathscr{A}_2,\cdots,\mathscr{A}_n)\in\mathscr{C}_{n,K}$, we define $\mathcal{Z}_{n,K,\pmb{\mathscr{A}}}$ to be the set of $z\in [K]^{2n(n-1)}$ such that $z_{i\rightarrow j}\in\mathscr{A}_i$ and $z_{i\leftarrow j}\in\mathscr{A}_j$ for all $i,j\in[n]$ with $i\neq j$, and set 
\begin{equation}\label{def_snk}
\mathcal{S}_{n,K,\pmb{\mathscr{A}}}:=\sum\limits_{z\in\mathcal{Z}_{n,K,\pmb{\mathscr{A}}}}e^{\Psi_{\pmb{\mathscr{A}}}(z)},
\end{equation}
where for any $z\in \mathcal{Z}_{n,K,\pmb{\mathscr{A}}}\subseteq [K]^{2n(n-1)}$,
{\small
\begin{equation}\label{defPsiM}
      \Psi_{\pmb{\mathscr{A}}}(z):=\sum_{\ell=1}^K\sum_{\ell'=1}^K(M_{\ell,\ell'}(z)\log B_{\ell,\ell'}+(A_{\ell,\ell'}(z)-M_{\ell,\ell'}(z))\log(1-B_{\ell,\ell'}))+\sum_{i=1}^n\sum_{\ell\in\mathscr{A}_i} \psi(N_{i,\ell}(z)),
\end{equation}
}where $\psi(\cdot)$ is as defined in~\eqref{phioriginal}. In this step, we will show that 
\begin{eqnarray}\label{Eq.A32}
    \log \mathcal{S}_{n,K} &\leq&  \sup_{\pmb{\mathscr{A}}\in \mathscr{C}_{n,K}}\bigg\{\sum_{i=1}^n\sum_{\ell\in [K]\backslash \mathscr{A}_i}\log\Gamma(\alpha_{\ell})+\log \mathcal{S}_{n,K,\pmb{\mathscr{A}}} \bigg\}\nonumber\\
    &&\hspace{0.2in} -2n(n-1)-n\log \Gamma\Big(2n-2+\sum_{\ell=1}^K \alpha_{\ell}\Big)+CnK\log\Big(\frac{n}{K}+2\Big).
\end{eqnarray}
\begin{proof}
For any $z\in [K]^{2n(n-1)}$, we define $\mathcal{A}_{i}(z):=\{\ell\in [K]: N_{i,\ell}(z) \geq 1\}$ for every $i\in [n]$, and set $\pmb{\mathcal{A}}(z):=(\mathcal{A}_1(z),\cdots,\mathcal{A}_n(z))$. For any $\pmb{\mathscr{A}}=(\mathscr{A}_1,\cdots,\mathscr{A}_n)\in\mathscr{C}_{n,K}$ and $z\in [K]^{2n(n-1)}$, we define
{\small
\begin{eqnarray*}
    \Upsilon_{\pmb{\mathscr{A}}}(z)&:=& \sum_{\ell=1}^K\sum_{\ell'=1}^K(M_{\ell,\ell'}(z)\log B_{\ell,\ell'}+(A_{\ell,\ell'}(z)-M_{\ell,\ell'}(z))\log(1-B_{\ell,\ell'}))\nonumber\\
    && \hspace{0.4in}+\sum_{i=1}^n\sum_{\ell\in\mathscr{A}_i}\log\Gamma(N_{i,\ell}(z)+\alpha_{\ell})-n\log\Gamma\Big(2n-2+\sum_{\ell=1}^K \alpha_{\ell}\Big).
\end{eqnarray*}
}Note that for any $\pmb{\mathscr{A}}\in\mathscr{C}_{n,K}$ and $z\in [K]^{2n(n-1)}$ such that $\pmb{\mathcal{A}}(z)=\pmb{\mathscr{A}}$ (which implies $z\in\mathcal{Z}_{n,K,\pmb{\mathscr{A}}}$), we have $\Upsilon(z) = \Upsilon_{\pmb{\mathscr{A}}}(z)+\sum_{i=1}^n\sum_{\ell\in [K]\backslash \mathscr{A}_i}\log\Gamma(\alpha_{\ell})$. Hence, using~\eqref{eq:SnK_MMSB}, we have 
{\small
\begin{eqnarray}\label{EEq4.1}
 \mathcal{S}_{n,K} &=& \sum_{\pmb{\mathscr{A}}\in\mathscr{C}_{n,K}}\sum_{\substack{z\in[K]^{2n(n-1)}:\\  \pmb{\mathcal{A}}(z)=\pmb{\mathscr{A}}}} e^{\Upsilon(z)}\nonumber\\
    &=& \sum_{\pmb{\mathscr{A}}\in\mathscr{C}_{n,K}}e^{\sum_{i=1}^n\sum_{\ell\in [K]\backslash \mathscr{A}_i}\log\Gamma(\alpha_{\ell})}\bigg(\sum_{\substack{z\in\mathcal{Z}_{n,K,\pmb{\mathscr{A}}}:\\ N_{i,\ell}(z)\geq 1\text{ for all }i\in [n],\ell\in\mathscr{A}_i}} e^{\Upsilon_{\pmb{\mathscr{A}}}(z)}\bigg).
\end{eqnarray}
}

For any $\pmb{\mathscr{A}}\in\mathscr{C}_{n,K}$, we define $\widetilde{\mathcal{S}}_{n,K,\pmb{\mathscr{A}}}:=\sum_{z\in\mathcal{Z}_{n,K,\pmb{\mathscr{A}}}} e^{\Phi_{\pmb{\mathscr{A}}}(z)}$, where for any $z\in\mathcal{Z}_{n,K,\pmb{\mathscr{A}}}$,
{\small
\begin{eqnarray*}
      \Phi_{\pmb{\mathscr{A}}}(z)&:=&\sum_{\ell=1}^K\sum_{\ell'=1}^K(M_{\ell,\ell'}(z)\log B_{\ell,\ell'} +(A_{\ell,\ell'}(z)-M_{\ell,\ell'}(z))\log(1-B_{\ell,\ell'}))+\sum_{i=1}^n\sum_{\ell\in\mathscr{A}_i}\phi(N_{i,\ell}(z)),
\end{eqnarray*}
}where $\phi(\cdot)$ is as in~\eqref{phioriginal}. Now we use Stirling's approximation to bound $\Upsilon_{\pmb{\mathscr{A}}}(z)$ with explicit error terms (see Lemma \ref{L3.2} in Appendix \ref{Appendix_E}). For any $z\in\mathcal{Z}_{n,K,\pmb{\mathscr{A}}}$ such that $N_{i,\ell}(z)\geq 1$ for all $i\in [n]$ and $\ell\in\mathscr{A}_i$, by Lemma \ref{L3.2} we get
{\small
\begin{eqnarray*}
    && \bigg|\Upsilon_{\pmb{\mathscr{A}}}(z)-\Phi_{\pmb{\mathscr{A}}}(z)+n\log\Gamma\Big(2n-2+\sum_{\ell=1}^K \alpha_{\ell}\Big)+2n(n-1)\bigg|\nonumber\\
    &=& \bigg|\sum_{i=1}^n\sum_{\ell\in\mathscr{A}_i}\Big[\big(\log\Gamma(N_{i,\ell}(z)+\alpha_{\ell})-\phi(N_{i,\ell}(z)+\alpha_{\ell})\big)+\big(\phi(N_{i,\ell}(z)+\alpha_{\ell})-\phi(N_{i,\ell}(z))\big)\Big]\nonumber\\
    &&\hspace{0.6in}+2n(n-1)\bigg| \nonumber\\
    &\leq&  C\sum_{i=1}^n\sum_{\ell\in\mathscr{A}_i}\big(\log(N_{i,\ell}(z)+\alpha_{\ell})+1\big)+\sum_{i=1}^n\sum_{\ell\in\mathscr{A}_i}\alpha_{\ell}\leq C\sum_{i=1}^n\sum_{\ell=1}^K \log(N_{i,\ell}(z)+C_0)+CnK\nonumber\\
    &\leq& CnK\log\bigg(\frac{\sum_{i=1}^n\sum_{\ell=1}^K(N_{i,\ell}(z)+C_0)}{nK}\bigg)+CnK\leq CnK\log\Big(\frac{n}{K}+2\Big), 
\end{eqnarray*}
}where we use the bound $\max_{\ell\in [K]}\alpha_{\ell}\leq C_0$, and the last line uses Jensen's inequality. Hence using the above bound along with (\ref{EEq4.1}), we get
{\small
\begin{eqnarray*}
 \mathcal{S}_{n,K} &\leq& e^{-n\log\Gamma\big(2n-2+\sum_{\ell=1}^K \alpha_{\ell}\big)-2n(n-1)+CnK\log(n \slash K + 2 )} \nonumber\\
    &&\hspace{0.4in}\times \sum_{\pmb{\mathscr{A}}\in\mathscr{C}_{n,K}} e^{\sum_{i=1}^n\sum_{\ell\in [K]\backslash \mathscr{A}_i}\log\Gamma(\alpha_{\ell})}\bigg(\sum_{\substack{z\in\mathcal{Z}_{n,K,\pmb{\mathscr{A}}}:\\ N_{i,\ell}(z)\geq 1 \text{ for all }i\in [n],\ell\in\mathscr{A}_i}} e^{\Phi_{\pmb{\mathscr{A}}}(z)}\bigg)\nonumber\\
    &\leq& e^{-n\log\Gamma\big(2n-2+\sum_{\ell=1}^K \alpha_{\ell}\big)-2n(n-1)+CnK\log(n \slash K + 2 )} \nonumber\\
    &&\hspace{0.4in}\times \sum_{\pmb{\mathscr{A}}\in\mathscr{C}_{n,K}} e^{\sum_{i=1}^n\sum_{\ell\in [K]\backslash \mathscr{A}_i}\log\Gamma(\alpha_{\ell})}\widetilde{\mathcal{S}}_{n,K,\pmb{\mathscr{A}}}.
\end{eqnarray*}
}Finally, using (\ref{eq3.2}), for any $\pmb{\mathscr{A}}\in\mathscr{C}_{n,K}$ and $z\in\mathcal{Z}_{n,K,\pmb{\mathscr{A}}}$, we have 
\begin{equation*}
    |\Phi_{\pmb{\mathscr{A}}}(z)-\Psi_{\pmb{\mathscr{A}}}(z)|\leq \sum_{i=1}^n\sum_{\ell\in\mathscr{A}_i}\big|\phi(N_{i,\ell}(z))-\psi(N_{i,\ell}(z))\big|\leq\frac{nK}{2}\;\; \Rightarrow \;\; \widetilde{\mathcal{S}}_{n,K,\pmb{\mathscr{A}}}\leq e^{nK\slash 2}\mathcal{S}_{n,K,\pmb{\mathscr{A}}}.
\end{equation*}
Thus, combining the above two displays, and possibly adjusting the constant $C$ above, we get
{\small
\begin{eqnarray*}
 \mathcal{S}_{n,K} 
    &\leq& e^{-n\log\Gamma\big(2n-2+\sum_{\ell=1}^K \alpha_{\ell}\big)-2n(n-1)+CnK\log(n\slash K+2)} \nonumber\\
    &&\hspace{0.4in}\times \sum_{\pmb{\mathscr{A}}\in\mathscr{C}_{n,K}} e^{\sum_{i=1}^n\sum_{\ell\in [K]\backslash \mathscr{A}_i}\log\Gamma(\alpha_{\ell})}\mathcal{S}_{n,K,\pmb{\mathscr{A}}}.
\end{eqnarray*}
}Taking $\log$ on both sides and noting that $|\mathscr{C}_{n,K}|\leq 2^{nK}$, the desired conclusion follows.
\end{proof}

\noindent For \textbf{Steps 2}-\textbf{4} below, we fix any $\pmb{\mathscr{A}}=(\mathscr{A}_1,\mathscr{A}_2,\cdots,\mathscr{A}_n)\in\mathscr{C}_{n,K}$. We assume that for every distinct $i,j\in [n]$, $\sum_{\ell\in\mathscr{A}_i}\sum_{\ell'\in\mathscr{A}_j}B_{\ell,\ell'}>0$ if $X_{i,j}=1$, and $\sum_{\ell\in\mathscr{A}_i}\sum_{\ell'\in\mathscr{A}_j}(1-B_{\ell,\ell'})>0$ if $X_{i,j}=0$ (otherwise $\mathcal{S}_{n,K,\pmb{\mathscr{A}}}=0$; see~\eqref{def_snk}-\eqref{defPsiM}). In order to bound the restricted partition function $\mathcal{S}_{n,K,\pmb{\mathscr{A}}}$ (see~\eqref{def_snk}), we specialize the general setup in Section \ref{Sect.2.1} as follows. We let $\{(i,j)\in [n]^2:i\neq j\}$ play the role of $[n]$ in Section \ref{Sect.2.1} (for indexing the local latent variables). For every $(i,j)\in [n]^2$ such that $i\neq j$, the local latent variable $Z_{i,j}\equiv (Z_{i\rightarrow j},Z_{i\leftarrow j})$ takes values in $\mathscr{A}_i\times\mathscr{A}_j$. We also let
{\small
\begin{eqnarray}\label{def_YnKAM}
    \hat{\mathcal{Y}}_{n,\pmb{\mathscr{A}}}&:=&\bigg\{y=(y_{i,j;\ell,\ell'})_{i,j\in [n],i\neq j;\ell\in\mathscr{A}_i,\ell'\in \mathscr{A}_j}\in \prod_{i,j\in [n]: i\neq j}[0,1]^{|\mathscr{A}_i||\mathscr{A}_j|} :\nonumber\\
    &&\quad\quad\quad\quad\quad   \sum_{\ell\in\mathscr{A}_i}\sum_{\ell'\in\mathscr{A}_j}
    y_{i,j;\ell,\ell'}=1
    \text{ for every } i,j\in [n]\text{ with }i\neq j\bigg\}
\end{eqnarray}
}play the role of $\hat{\mathcal{Y}}_{n,\mathbf{K}}$ in Section \ref{Sect.2.1}. For every $i,j\in [n]$ such that $i\neq j$, we define $\muAM_{i,j}$ to be the probability measure on $\mathscr{A}_i\times\mathscr{A}_j$ such that for every $(\ell,\ell')\in\mathscr{A}_i\times\mathscr{A}_j$,
\begin{equation}\label{Def_Mu}
    \muAM_{i,j}(\ell,\ell') = \begin{cases}
        \frac{B_{\ell,\ell'}}{\sum_{s\in\mathscr{A}_i}\sum_{s'\in\mathscr{A}_j} B_{s,s'}} & \text{ if } X_{i,j}=1\\
        \frac{1-B_{\ell,\ell'}}{\sum_{s\in\mathscr{A}_i}\sum_{s'\in\mathscr{A}_j} (1-B_{s,s'})} & \text{ if } X_{i,j}=0.
    \end{cases}
\end{equation} 
We define $\muAM$ to be the probability measure on $\mathcal{Z}_{n,K,\pmb{\mathscr{A}}}$ such that
\begin{equation}
     \muAM(z) = \prod_{i,j\in [n]:i\neq j} \muAM_{i,j}(z_{i\rightarrow j},z_{i\leftarrow j}), \quad \text{for all } z\in\mathcal{Z}_{n,K,\pmb{\mathscr{A}}}. 
\end{equation}
For any $z\in\mathcal{Z}_{n,K,\pmb{\mathscr{A}}}$, we take
\begin{eqnarray}
    \fAM(z)&:=&\Psi_{\pmb{\mathscr{A}}}(z)-\sum_{\ell=1}^K\sum_{\ell'=1}^K(M_{\ell,\ell'}(z)\log B_{\ell,\ell'}+(A_{\ell,\ell'}(z)-M_{\ell,\ell'}(z))\log(1-B_{\ell,\ell'}))\nonumber\\
    &=& \sum_{i=1}^n\sum_{\ell\in\mathscr{A}_i} \psi(N_{i,\ell}(z)),
\end{eqnarray}
where the second equality uses~\eqref{defPsiM}. Note that
{\small
\begin{eqnarray}\label{EEEq4.10}
  &&\sum_{z\in\mathcal{Z}_{n,K,\pmb{\mathscr{A}}}}  e^{\fAM(z)}\muAM(z)\nonumber\\
&=&\sum_{z\in\mathcal{Z}_{n,K,\pmb{\mathscr{A}}}}  e^{\sum_{i=1}^n\sum_{\ell\in\mathscr{A}_i} \psi(N_{i,\ell}(z))}\cdot\prod_{\substack{i,j\in[n]:\\ i\neq j,X_{i,j}=1}}\frac{B_{z_{i\rightarrow j},z_{i\leftarrow j}}}{\sum_{\ell\in\mathscr{A}_i}\sum_{\ell'\in\mathscr{A}_j} B_{\ell,\ell'}}\nonumber\\
&&\hspace{0.2in}\cdot\prod_{\substack{i,j\in[n]:\\ i\neq j,X_{i,j}=0}}\frac{1-B_{z_{i\rightarrow j},z_{i\leftarrow j}}}{\sum_{\ell\in\mathscr{A}_i}\sum_{\ell'\in\mathscr{A}_j} (1-B_{\ell,\ell'})}\nonumber\\
&=& \frac{\sum_{z\in\mathcal{Z}_{n,K,\pmb{\mathscr{A}}}}e^{\Psi_{\pmb{\mathscr{A}}}(z)}}{\prod\limits_{\substack{i,j\in [n]:\\i\neq j, X_{i,j}=1}}\big(\sum_{\ell\in\mathscr{A}_i}\sum_{\ell'\in\mathscr{A}_j}  B_{\ell,\ell'}\big)\prod\limits_{\substack{i,j\in [n]:\\i\neq j, X_{i,j}=0}}\big(\sum_{\ell\in\mathscr{A}_i}\sum_{\ell'\in\mathscr{A}_j}  (1-B_{\ell,\ell'})\big)}\nonumber\\
&=&\frac{\mathcal{S}_{n,K,\pmb{\mathscr{A}}}}{\prod\limits_{\substack{i,j\in [n]:\\i\neq j, X_{i,j}=1}}\big(\sum_{\ell\in\mathscr{A}_i}\sum_{\ell'\in\mathscr{A}_j}  B_{\ell,\ell'}\big)\prod\limits_{\substack{i,j\in [n]:\\i\neq j, X_{i,j}=0}}\big(\sum_{\ell\in\mathscr{A}_i}\sum_{\ell'\in\mathscr{A}_j}  (1-B_{\ell,\ell'})\big)},
\end{eqnarray}
}where we use~\eqref{defPsiM} and~\eqref{def_snk} in the last two equalities. For any $y=(y_{i,j;\ell,\ell'})_{i,j\in [n],i\neq j;\ell\in\mathscr{A}_i,\ell'\in \mathscr{A}_j}\in \prod_{i,j\in [n]:i\neq j} [0,1]^{|\mathscr{A}_i||\mathscr{A}_j|}$, we take
\begin{equation}\label{defHd}
    \FAM(y):=\sum_{i=1}^n\sum_{\ell\in\mathscr{A}_i} \psi\big(\NAM_{i,\ell}(y)\big),
\end{equation}
where for any $i\in [n]$ and $\ell\in\mathscr{A}_i$, 
\begin{equation}\label{Ntilde_d}
    \NAM_{i,\ell}(y):=\sum_{j\in [n]  \backslash \{i\}}\sum_{\ell'\in\mathscr{A}_j} y_{i,j;\ell,\ell'}+\sum_{j\in [n]\backslash \{i\}}\sum_{\ell'\in\mathscr{A}_j}y_{j,i;\ell',\ell}.
\end{equation}
Note that for any $y\in\hat{\mathcal{Y}}_{n,\pmb{\mathscr{A}}}$ and $i\in [n]$,
\begin{equation}\label{NAilsum}
    \sum_{\ell\in\mathscr{A}_i}\NAM_{i,\ell}(y)=\sum_{j\in [n]  \backslash \{i\}}\sum_{\ell\in\mathscr{A}_i}\sum_{\ell'\in\mathscr{A}_j} y_{i,j;\ell,\ell'}+\sum_{j\in [n]\backslash \{i\}}\sum_{\ell\in\mathscr{A}_i}\sum_{\ell'\in\mathscr{A}_j}y_{j,i;\ell',\ell}=2(n-1).
\end{equation}
For any $y=(y_{i,j;\ell,\ell'})_{i,j\in [n],i\neq j;\ell\in\mathscr{A}_i,\ell'\in \mathscr{A}_j}\in\hat{\mathcal{Y}}_{n,\pmb{\mathscr{A}}}$, we define
\small{
\begin{eqnarray}\label{Idefy}
     \IAM(y) &:=& \sum_{i,j\in [n]: i\neq j} \sum_{\ell\in\mathscr{A}_i}\sum_{\ell'\in\mathscr{A}_j} y_{i,j;\ell,\ell'}\log\bigg(\frac{y_{i,j;\ell,\ell'}}{\muAM_{i,j}(\ell,\ell')}\bigg)\nonumber\\
    &=&-\sum_{i,j\in [n]:i\neq j}\sum_{\ell\in\mathscr{A}_i}\sum_{\ell'\in\mathscr{A}_j}y_{i,j;\ell,\ell'}(X_{i,j}\log B_{\ell,\ell'} +(1-X_{i,j})\log(1-B_{\ell,\ell'}))\nonumber\\
    &&\hspace{0.4in}+\sum_{\substack{i,j\in [n]:\\i\neq j,X_{i,j}=1}}\log\bigg(\sum_{\ell\in\mathscr{A}_i}\sum_{\ell'\in\mathscr{A}_j}B_{\ell,\ell'}\bigg)+\sum_{\substack{i,j\in [n]:\\i\neq j,X_{i,j}=0}}\log\bigg(\sum_{\ell\in\mathscr{A}_i}\sum_{\ell'\in\mathscr{A}_j}(1-B_{\ell,\ell'})\bigg)\nonumber\\
    &&\hspace{0.4in}+\sum_{i,j\in [n]:i\neq j}\sum_{\ell\in\mathscr{A}_i}\sum_{\ell'\in\mathscr{A}_j} y_{i,j;\ell,\ell'}\log y_{i,j;\ell,\ell'}.
\end{eqnarray}
}

In \textbf{Steps 2}-\textbf{4} below, $\muAM,\fAM,\FAM,\IAM$ as defined above will play the roles of $\mu,f,F,I$ in Section \ref{Sect.2.1}. We aim to apply Theorem \ref{Theorem2.1} to bound $\sum_{z\in \mathcal{Z}_{n,K,\pmb{\mathscr{A}}}} e^{\fAM(z)}\muAM(z)$ (cf.~\eqref{defS}), which via~\eqref{EEEq4.10} yields a bound on the restricted partition function $\mathcal{S}_{n,K,\pmb{\mathscr{A}}}$. The upper bounds on the smoothness term $E_1$ (see Definition \ref{DefSmoothnessTerm}) and the complexity term $E_2(\epsilon)$ (see Definition \ref{DefComplexityTerm})---as required by Theorem \ref{Theorem2.1}---are obtained in \textbf{Steps 2} and \textbf{3}, and the resulting upper bound on $\mathcal{S}_{n,K,\pmb{\mathscr{A}}}$ is given in \textbf{Step 4}. 

\medskip

\noindent{\bf{Step 2: Bounding the smoothness term.}}
Let $E_1$ and $\Lambda(\cdot)$ be defined as in (\ref{E2.1}) and (\ref{def_L}), respectively, but with $\hat{\mathcal{Y}}_{n,\mathbf{K}},\mu,f,F,I$ replaced by $\hat{\mathcal{Y}}_{n,\pmb{\mathscr{A}}},\muAM,\fAM,\FAM,\IAM$ (as in~\eqref{def_YnKAM}-\eqref{Idefy}). In this step, we will show that there exist positive absolute constants $C_1,C_2,C_2'$, such that 
\begin{equation}\label{Step2_result}
    E_1 \leq C_1 n K,\qquad \mbox{and} \qquad \Lambda(C_1 n K) \leq C_2\log\left(\frac{n\log(nK)}{K}+2\right)\leq C_2' n.
\end{equation}
\begin{proof}

Consider any $y\in \prod_{i,j\in [n]:i\neq j} [0,1]^{|\mathscr{A}_i||\mathscr{A}_j|}$. Using the definition of $\FAM$ in~\eqref{defHd}, for any $i,j\in[n]$ with $i\neq j$ and any $(\ell,\ell')\in \mathscr{A}_i\times\mathscr{A}_j$, 
\begin{eqnarray}\label{defH1.2}
    && \FAM_{i,j,\ell,\ell'}(y) := \frac{\partial \FAM(y)}{\partial y_{i,j;\ell,\ell'}}=\psi'\big(\NAM_{i,\ell}(y)\big)+\psi'\big(\NAM_{j,\ell'}(y)\big).
\end{eqnarray}
For any $i_1,j_1,i_2,j_2\in [n]$ with $i_1\neq j_1, i_2\neq j_2$ and any $(\ell_1,\ell_1')\in\mathscr{A}_{i_1}\times \mathscr{A}_{j_1},(\ell_2,\ell_2')\in\mathscr{A}_{i_2}\times\mathscr{A}_{j_2}$,
\begin{eqnarray}\label{defH2.2}
    && \FAM_{i_1,j_1,\ell_1,\ell_1';i_2,j_2,\ell_2,\ell_2'}(y):=\frac{\partial^2 \FAM(y)}{\partial y_{i_1,j_1;\ell_1,\ell_1'}\partial y_{i_2,j_2;\ell_2,\ell_2'}}\nonumber\\
    & = & \psi''\big(\NAM_{i_1,\ell_1}(y)\big)(\mathbbm{1}_{i_1=i_2,\ell_1=\ell_2}+\mathbbm{1}_{i_1=j_2,\ell_1=\ell_2'})\nonumber\\
    &&\hspace{0.2in}+\psi''\big(\NAM_{j_1,\ell_1'}(y)\big)(\mathbbm{1}_{j_1=i_2,\ell_1'=\ell_2}+\mathbbm{1}_{j_1=j_2,\ell_1'=\ell_2'}).
\end{eqnarray}
With $b_{i_1,j_1,\ell_1,\ell_1'}$ and $c_{i_1,j_1,\ell_1,\ell_1';i_2,j_2,\ell_2,\ell_2'}$ defined analogously as in Definition \ref{Global_GH}, by (\ref{derivatives}) and (\ref{defH1.2})-(\ref{defH2.2}), for any $i_1,j_1,i_2,j_2\in [n]$ with $i_1\neq j_1, i_2\neq j_2$ and any $(\ell_1,\ell_1')\in\mathscr{A}_{i_1}\times \mathscr{A}_{j_1},(\ell_2,\ell_2')\in\mathscr{A}_{i_2}\times\mathscr{A}_{j_2}$, 
\begin{equation}\label{UnifB}
    b_{i_1,j_1,\ell_1,\ell_1'}\leq 2+2\log(2nK), \quad c_{i_1,j_1,\ell_1,\ell_1';i_2,j_2,\ell_2,\ell_2'}\leq 4\|\psi''\|_{\infty}\leq 4.
\end{equation}

With $\NIM(y)$ defined as in Definition \ref{Local_Hess}, for any $y\in \hat{\mathcal{Y}}_{n,\pmb{\mathscr{A}}}$ and $y'\in\NIM(y)$, 
\begin{equation*}
    \NAM_{i,\ell}(y')\geq \NAM_{i,\ell}(y) -2,\quad\text{for all }i\in[n],\ell\in\mathscr{A}_i.
\end{equation*}
Hence by (\ref{derivatives}) and (\ref{defH2.2}), 
\begin{eqnarray*}
    &&\FAM_{i_1,j_1,\ell_1,\ell_1';i_2,j_2,\ell_2,\ell_2'}(y')\nonumber\\
    &=&\frac{\mathbbm{1}_{i_1=i_2,\ell_1=\ell_2}+\mathbbm{1}_{i_1=j_2,\ell_1=\ell_2'}}{\max\big\{\NAM_{i_1,\ell_1}(y'),1\big\}}+\frac{\mathbbm{1}_{j_1=i_2,\ell_1'=\ell_2}+\mathbbm{1}_{j_1=j_2,\ell_1'=\ell_2'}}{\max\big\{\NAM_{j_1,\ell_1'}(y'),1\big\}}\nonumber\\
    &\leq& \frac{\mathbbm{1}_{i_1=i_2,\ell_1=\ell_2}+\mathbbm{1}_{i_1=j_2,\ell_1=\ell_2'}}{\max\big\{\NAM_{i_1,\ell_1}(y)-2,1\big\}}+\frac{\mathbbm{1}_{j_1=i_2,\ell_1'=\ell_2}+\mathbbm{1}_{j_1=j_2,\ell_1'=\ell_2'}}{\max\big\{\NAM_{j_1,\ell_1'}(y)-2,1\big\}}\nonumber\\
    &\leq& \frac{3(\mathbbm{1}_{i_1=i_2,\ell_1=\ell_2}+\mathbbm{1}_{i_1=j_2,\ell_1=\ell_2'})}{\max\big\{\NAM_{i_1,\ell_1}(y),1\big\}}+\frac{3(\mathbbm{1}_{j_1=i_2,\ell_1'=\ell_2}+\mathbbm{1}_{j_1=j_2,\ell_1'=\ell_2'})}{\max\big\{\NAM_{j_1,\ell_1'}(y),1\big\}}.
\end{eqnarray*}
Therefore, with $\HIM_{i_1,j_1,\ell_1,\ell_1';i_2,j_2,\ell_2,\ell_2'}(y)$ defined analogously as in Definition \ref{Local_Hess}, we have
\begin{equation}\label{Eq.B1}
    \HIM_{i_1,j_1,\ell_1,\ell_1';i_2,j_2,\ell_2,\ell_2'}(y)\leq\frac{3(\mathbbm{1}_{i_1=i_2,\ell_1=\ell_2}+\mathbbm{1}_{i_1=j_2,\ell_1=\ell_2'})}{\max\big\{\NAM_{i_1,\ell_1}(y),1\big\}}+\frac{3(\mathbbm{1}_{j_1=i_2,\ell_1'=\ell_2}+\mathbbm{1}_{j_1=j_2,\ell_1'=\ell_2'})}{\max\big\{\NAM_{j_1,\ell_1'}(y),1\big\}}.
\end{equation}

With $\NIIM(y,M)$ defined as in Definition \ref{Local_Hess_new}, for any $y\in \hat{\mathcal{Y}}_{n,\pmb{\mathscr{A}}}$, $M\geq 1$, $y'\in\NIIM(y,M)$, and $i\in[n],\ell\in\mathscr{A}_i$, we have
\begin{equation*}
    \NAM_{i,\ell}(y') \geq \frac{\NAM_{i,\ell}(y)-2}{M},
\end{equation*}
hence
\begin{equation*}
    \max\big\{\NAM_{i,\ell}(y'),1\big\}\geq\frac{M}{M+2}\NAM_{i,\ell}(y')+\frac{2}{M+2}
    \geq \frac{\NAM_{i,\ell}(y)}{M+2}\Rightarrow \max\big\{\NAM_{i,\ell}(y'),1\big\}\geq \frac{\max\big\{\NAM_{i,\ell}(y),1\big\}}{M+2}.
\end{equation*}
Thus by (\ref{derivatives}) and (\ref{defH2.2}),
\begin{eqnarray*}
    &&\FAM_{i_1,j_1,\ell_1,\ell_1';i_2,j_2,\ell_2,\ell_2'}(y')\nonumber\\
    &\leq& \frac{(M+2)(\mathbbm{1}_{i_1=i_2,\ell_1=\ell_2}+\mathbbm{1}_{i_1=j_2,\ell_1=\ell_2'})}{\max\big\{\NAM_{i_1,\ell_1}(y),1\big\}}+\frac{(M+2)(\mathbbm{1}_{j_1=i_2,\ell_1'=\ell_2}+\mathbbm{1}_{j_1=j_2,\ell_1'=\ell_2'})}{\max\big\{\NAM_{j_1,\ell_1'}(y),1\big\}}. 
\end{eqnarray*}
Therefore, with $\HIIM_{i_1,j_1,\ell_1,\ell_1';i_2,j_2,\ell_2,\ell_2'}(y,M)$ defined analogously as in Definition \ref{Local_Hess_new}, we have 
\begin{eqnarray}\label{Eq.B2}
    &&\HIIM_{i_1,j_1,\ell_1,\ell_1';i_2,j_2,\ell_2,\ell_2'}(y,M)\nonumber\\
    &\leq& \frac{(M+2)(\mathbbm{1}_{i_1=i_2,\ell_1=\ell_2}+\mathbbm{1}_{i_1=j_2,\ell_1=\ell_2'})}{\max\big\{\NAM_{i_1,\ell_1}(y),1\big\}}+\frac{(M+2)(\mathbbm{1}_{j_1=i_2,\ell_1'=\ell_2}+\mathbbm{1}_{j_1=j_2,\ell_1'=\ell_2'})}{\max\big\{\NAM_{j_1,\ell_1'}(y),1\big\}}.
\end{eqnarray}

With $M_{i,j}$ defined analogously as in Definition \ref{DefSmoothnessTerm}, using (\ref{UnifB}), we get $M_{i,j}\leq e^8$ for every $i,j\in[n]$ with $i\neq j$. Fix $y\in\hat{\mathcal{Y}}_{n,\pmb{\mathscr{A}}}$ arbitrary, and combine~\eqref{E3.5} and~\eqref{UnifB} to get 
\begin{equation}
    \sum_{(\ell_2,\ell_2')\in\mathscr{A}_{i_2}\times\mathscr{A}_{j_2}}\max_{(\ell_1,\ell_1')\in\mathscr{A}_{i_1}\times\mathscr{A}_{j_1}}\big\{\HIM_{i_1,j_1,\ell_1,\ell_1';i_2,j_2,\ell_2,\ell_2'}(y)\big\}y_{i_2,j_2;\ell_2,\ell_2'}\leq 4. 
\end{equation}
Thus, with $\Phi_{i_1,j_1,i_2,j_2}(y)$ defined analogously as in Definition \ref{DefSmoothnessTerm}, using the inequality that $\exp(x)\leq 1+e^8x$ for all $x\in [0,8]$, we obtain for any $i_1,j_1,i_2,j_2\in [n]$ with $i_1\neq j_1,i_2\neq j_2$,   
\begin{eqnarray*}
   &&\Phi_{i_1,j_1,i_2,j_2}(y)\nonumber\\
   &\leq& 2e^8\sum_{(\ell_2,\ell_2')\in\mathscr{A}_{i_2}\times\mathscr{A}_{j_2}}\max_{(\ell_1,\ell_1')\in\mathscr{A}_{i_1}\times\mathscr{A}_{j_1}}\big\{\HIM_{i_1,j_1,\ell_1,\ell_1';i_2,j_2,\ell_2,\ell_2'}(y)\big\}y_{i_2,j_2;\ell_2,\ell_2'}\nonumber\\
    &\leq& C\sum_{(\ell_2,\ell_2')\in \mathscr{A}_{i_2}\times\mathscr{A}_{j_2}}\Bigg(\frac{(\mathbbm{1}_{i_1=i_2}+\mathbbm{1}_{j_1=i_2})y_{i_2,j_2;\ell_2,\ell_2'}}{\max\big\{\NAM_{i_2,\ell_2}(y),1\big\}}+\frac{(\mathbbm{1}_{i_1=j_2}+\mathbbm{1}_{j_1=j_2})y_{i_2,j_2;\ell_2,\ell_2'}}{\max\big\{\NAM_{j_2,\ell_2'}(y),1\big\}}\Bigg),
\end{eqnarray*}
where the last line uses~\eqref{Eq.B1}. Hence we have
\begin{eqnarray}\label{Eq.B4}
  && \sum_{i_2,j_2\in[n]:i_2\neq j_2}\max_{i_1,j_1\in[n]:i_1\neq j_1}\{\Phi_{i_1,j_1,i_2,j_2}(y)\}\nonumber\\
  &\leq& C\sum_{i_2,j_2\in [n]:i_2\neq j_2}\sum_{(\ell_2,\ell_2')\in \mathscr{A}_{i_2}\times\mathscr{A}_{j_2}}\Bigg(\frac{y_{i_2,j_2;\ell_2,\ell_2'}}{\max\big\{\NAM_{i_2,\ell_2}(y),1\big\}}+\frac{y_{i_2,j_2;\ell_2,\ell_2'}}{\max\big\{\NAM_{j_2,\ell_2'}(y),1\big\}}\Bigg)\nonumber\\
  &=& C\sum_{i_2\in [n]}\sum_{\ell_2\in \mathscr{A}_{i_2}}\frac{\sum_{j_2\in[n]\backslash\{i_2\}}\sum_{\ell_2'\in\mathscr{A}_{j_2}} y_{i_2,j_2;\ell_2,\ell_2'}}{\max\big\{\NAM_{i_2,\ell_2}(y),1\big\}}\nonumber\\
  &&\hspace{0.4in}+C\sum_{j_2\in [n]}\sum_{\ell_2'\in\mathscr{A}_{j_2}}\frac{\sum_{i_2\in [n]\backslash\{j_2\}}\sum_{\ell_2\in\mathscr{A}_{i_2}} y_{i_2,j_2;\ell_2,\ell_2'}}{\max\big\{\NAM_{j_2,\ell_2'}(y),1\big\}}\nonumber\\
  &\leq& C\sum_{i_2\in [n]}\sum_{\ell_2\in \mathscr{A}_{i_2}}\frac{\NAM_{i_2,\ell_2}(y)}{\max\big\{\NAM_{i_2,\ell_2}(y),1\big\}}+C\sum_{j_2\in [n]}\sum_{\ell_2'\in\mathscr{A}_{j_2}}\frac{\NAM_{j_2,\ell_2'}(y)}{\max\big\{\NAM_{j_2,\ell_2'}(y),1\big\}}\leq CnK, 
\end{eqnarray}
where the last line uses the definition of $\NAM_{i,\ell}(y)$ in~\eqref{Ntilde_d}. 

Using~\eqref{Eq.B2} with $M=M_{i_1,j_1}\leq e^8$, for any $i_1,j_1\in[n]$ such that $i_1\neq j_1$ and $\ell_1\in\mathscr{A}_{i_1},\ell_1'\in\mathscr{A}_{j_1}$, we get 
\begin{eqnarray}\label{Eq.B3}
   && \sum_{i_2,j_2\in[n]:i_2\neq j_2}\sum_{(\ell_2,\ell_2')\in \mathscr{A}_{i_2}\times\mathscr{A}_{j_2}} \HIIM_{i_1,j_1,\ell_1,\ell_1';i_2,j_2,\ell_2,\ell_2'}(y,M_{i_1,j_1})y_{i_2,j_2;\ell_2,\ell_2'}\nonumber\\
   &\leq& C\sum_{i_2,j_2\in[n]:i_2\neq j_2}\sum_{(\ell_2,\ell_2')\in \mathscr{A}_{i_2}\times\mathscr{A}_{j_2}} \frac{(\mathbbm{1}_{i_1=i_2,\ell_1=\ell_2}+\mathbbm{1}_{i_1=j_2,\ell_1=\ell_2'})y_{i_2,j_2;\ell_2,\ell_2'}}{\max\big\{\NAM_{i_1,\ell_1}(y),1\big\}}\nonumber\\
   &&\hspace{0.2in}+C\sum_{i_2,j_2\in[n]:i_2\neq j_2}\sum_{(\ell_2,\ell_2')\in \mathscr{A}_{i_2}\times\mathscr{A}_{j_2}}\frac{(\mathbbm{1}_{j_1=i_2,\ell_1'=\ell_2}+\mathbbm{1}_{j_1=j_2,\ell_1'=\ell_2'})y_{i_2,j_2;\ell_2,\ell_2'}}{\max\big\{\NAM_{j_1,\ell_1'}(y),1\big\}}\nonumber\\
   &\leq&  \frac{C\big(\sum_{j_2\in[n]\backslash\{i_1\}}\sum_{\ell_2'\in\mathscr{A}_{j_2}} y_{i_1,j_2;\ell_1,\ell_2'}+\sum_{i_2\in[n]\backslash\{i_1\}}\sum_{\ell_2\in\mathscr{A}_{i_2}}y_{i_2,i_1;\ell_2,\ell_1}\big)}{\max\big\{\NAM_{i_1,\ell_1}(y),1\big\}}\nonumber\\
   &&\hspace{0.2in}+\frac{C\big(\sum_{j_2\in[n]\backslash\{j_1\}}\sum_{\ell_2'\in\mathscr{A}_{j_2}}y_{j_1,j_2;\ell_1',\ell_2'}+\sum_{i_2\in[n]\backslash\{j_1\}}\sum_{\ell_2\in\mathscr{A}_{i_2}}y_{i_2,j_1;\ell_2,\ell_1'}\big)}{\max\big\{\NAM_{j_1,\ell_1'}(y),1\big\}}\nonumber\\
   &=& \frac{C\NAM_{i_1,\ell_1}(y)}{\max\big\{\NAM_{i_1,\ell_1}(y),1\big\}}+\frac{C\NAM_{j_1,\ell_1'}(y)}{\max\big\{\NAM_{j_1,\ell_1'}(y),1\big\}}\leq C, 
\end{eqnarray}
where the last line uses the definition of $\NAM_{i,\ell}(y)$ in~\eqref{Ntilde_d}. 

Using~\eqref{Eq.B2} again with $M=M_{i_1,j_1}\leq e^8$, we get
\begin{eqnarray}\label{Eq.B6}
&&\sum_{\substack{i_1,j_1\in[n]:\\i_1\neq j_1}}\max_{\substack{\ell_1\in\mathscr{A}_{i_1},\\\ell_1'\in\mathscr{A}_{j_1}}}\bigg\{\sum_{\substack{\ell_2\in\mathscr{A}_{i_1},\\\ell_2'\in\mathscr{A}_{j_1}}} \HIIM_{i_1,j_1,\ell_1,\ell_1';i_1,j_1,\ell_2,\ell_2'}(y,M_{i_1,j_1})y_{i_1,j_1;\ell_2,\ell_2'}\bigg\}\nonumber\\
&\leq& C\sum_{\substack{i_1,j_1\in[n]:\\i_1\neq j_1}}\max_{\substack{\ell_1\in\mathscr{A}_{i_1},\\\ell_1'\in\mathscr{A}_{j_1}}}\Bigg\{\sum_{\substack{\ell_2\in\mathscr{A}_{i_1},\\\ell_2'\in\mathscr{A}_{j_1}}}\Bigg(\frac{y_{i_1,j_1;\ell_2,\ell_2'}\mathbbm{1}_{\ell_1=\ell_2}}{\max\big\{\NAM_{i_1,\ell_1}(y),1\big\}}+\frac{ y_{i_1,j_1;\ell_2,\ell_2'}\mathbbm{1}_{\ell_1'=\ell_2'}}{\max\big\{\NAM_{j_1,\ell_1'}(y),1\big\}}\Bigg)\Bigg\}\nonumber\\
&\leq& C\sum_{\substack{i_1,j_1\in[n]:\\i_1\neq j_1}}\sum_{\ell_1\in\mathscr{A}_{i_1}}\sum_{\substack{\ell_2\in\mathscr{A}_{i_1},\\\ell_2'\in\mathscr{A}_{j_1}}}\frac{y_{i_1,j_1;\ell_2,\ell_2'}\mathbbm{1}_{\ell_1=\ell_2}}{\max\big\{\NAM_{i_1,\ell_1}(y),1\big\}}+C\sum_{\substack{i_1,j_1\in[n]:\\i_1\neq j_1}}\sum_{\ell_1'\in\mathscr{A}_{j_1}}\sum_{\substack{\ell_2\in\mathscr{A}_{i_1},\\\ell_2'\in\mathscr{A}_{j_1}}}\frac{ y_{i_1,j_1;\ell_2,\ell_2'}\mathbbm{1}_{\ell_1'=\ell_2'}}{\max\big\{\NAM_{j_1,\ell_1'}(y),1\big\}}\nonumber\\
&=& C\sum_{i_1\in[n]}\sum_{\ell_1\in\mathscr{A}_{i_1}}\frac{\sum_{j_1\in[n]\backslash\{i_1\}}\sum_{\ell_2'\in\mathscr{A}_{j_1}}y_{i_1,j_1;\ell_1,\ell_2'}}{\max\big\{\NAM_{i_1,\ell_1}(y),1\big\}}+C\sum_{j_1\in[n]}\sum_{\ell_1'\in\mathscr{A}_{j_1}}\frac{\sum_{i_1\in[n]\backslash\{j_1\}}\sum_{\ell_2\in\mathscr{A}_{i_1}}y_{i_1,j_1;\ell_2,\ell_1'}}{\max\big\{\NAM_{j_1,\ell_1'}(y),1\big\}}\nonumber\\
&&\hspace{0.6in}\leq C\sum_{i_1\in[n]}\sum_{\ell_1\in\mathscr{A}_{i_1}}\frac{\NAM_{i_1,\ell_1}(y)}{\max\big\{\NAM_{i_1,\ell_1}(y),1\big\}}+C\sum_{j_1\in[n]}\sum_{\ell_1'\in\mathscr{A}_{j_1}}\frac{\NAM_{j_1,\ell_1'}(y)}{\max\big\{\NAM_{j_1,\ell_1'}(y),1\}} \leq CnK.
\end{eqnarray}
Combining (\ref{Eq.B4})-(\ref{Eq.B6}), we obtain the existence of an absolute positive constant $C_1$ such that $E_1\leq C_1nK$, where $E_1$ is defined as in~\eqref{E2.1}. With $\Lambda(\cdot)$ as in (\ref{def_L}), (\ref{UnifB}) gives
\begin{equation*}
    \Lambda(C_1 n K)\leq \log\left(2+\frac{n(n-1)\cdot(2+2\log(2nK))}{C_1nK}\right)
    \leq
    C_2\log\left(\frac{n\log(nK)}{K}+2\right),
\end{equation*}
where $C_2>0$ is an absolute constant. Now note that there exist absolute constants $C',C''>0$, such that $\log\left(\frac{n\log(nK)}{K}+2\right)=\log\left(\frac{n(\log n+\log K)}{K}+2\right)\leq \log\left(n\log n+C'n+2\right)\leq C'' n$. Hence $\Lambda(C_1 n K) \leq C_2\log\left(\frac{n\log(nK)}{K}+2\right)\leq C_2' n$, where $C_2'>0$ is an absolute constant.
\end{proof}

\noindent{\bf{Step 3: Bounding the complexity term.}}
For any $\epsilon>0$, let $E_2(\epsilon)$ be defined as in (\ref{def_E}), but with $\hat{\mathcal{Y}}_{n,\mathbf{K}},\mu,f,F,I$ replaced by $\hat{\mathcal{Y}}_{n,\pmb{\mathscr{A}}},\muAM,\fAM,\FAM,\IAM$ (as in~\eqref{def_YnKAM}-\eqref{Idefy}). In this step, we will show that there exists $\epsilon>0$, such that  
\begin{equation}\label{Step3_result}
    E_2(\epsilon) \leq CnK\log\Big(\frac{n}{K}+2\Big).
\end{equation}
\begin{proof}

For any $x\geq 0$, we define $h(x) := \psi(x+1)-\psi(x)$, where $\psi(\cdot)$ is as in~\eqref{phioriginal}. For any $\delta\in(0,1]$, we define
\begin{equation*}
    \mathcal{T}(\delta):=\bigg\{(x_{i,\ell})_{i\in [n],\ell\in \mathscr{A}_i}\in(\delta\mathbb{N})^{\sum_{i=1}^n|\mathscr{A}_i|}:\text{for every }i\in [n],\Big|\sum_{\ell\in\mathscr{A}_i} x_{i,\ell}-2(n-1)\Big|\leq K\bigg\},
\end{equation*}
\begin{eqnarray*}
    \Dtilde(\delta)&:=&\Big\{(d_{i,j;\ell,\ell'})_{i,j\in [n]:i\neq j;\ell\in \mathscr{A}_i,\ell'\in\mathscr{A}_j}: \text{ for every }i,j\in [n]\text{ with }i\neq j\text{ and }(\ell,\ell')\in \mathscr{A}_i\times \mathscr{A}_j,\nonumber\\
    &&\hspace{1.45in} d_{i,j;\ell,\ell'} = h(x_{i,\ell})+h(x_{j,\ell'}), \text{ where }(x_{i,\ell})_{i\in [n],\ell\in \mathscr{A}_i}\in\mathcal{T}(\delta)\Big\},
\end{eqnarray*}
where $\delta\mathbb{N}:=\{0,\delta,2\delta,\cdots\}$. Note that by arguments similar to (\ref{combinatorial_bdd}), for any $\delta\in (0,1]$, we have
\begin{eqnarray*}
  && |\Dtilde(\delta)| \leq |\mathcal{T}(\delta)|\leq \bigg(\sum_{m\in\big[\frac{2(n-1)-K}{\delta},\frac{2(n-1)+K}{\delta}\big]\cap\mathbb{N}} \Big(\frac{e(m+K)}{K}\Big)^K \bigg)^{n}\nonumber\\
  &\leq& \bigg(\Big(1+\frac{2K}{\delta}\Big)\Big[\frac{e\big(\frac{2n+K}{\delta}+K\big)}{K}\Big]^K \bigg)^{n}\leq\bigg( \frac{3K}{\delta}\Big[\frac{2e(n+K)}{\delta K}\Big]^K \bigg)^{n}.
\end{eqnarray*}
Hence
\begin{equation}\label{covering.set}
    \log(|\Dtilde(\delta)|)\leq n\log\Big(\frac{3K}{\delta}\Big)+nK\log\Big(\frac{2e(n+K)}{\delta K}\Big)
    \leq CnK\log\Big(\frac{n+2K}{\delta K}\Big). 
\end{equation}

Fix $y\in\hat{\mathcal{Y}}_{n,\pmb{\mathscr{A}}}$, and take $\delta =\min\{K\slash n,1\} \in (0,1]$. For every $i\in [n]$ and $\ell\in\mathscr{A}_i$, there exists $x_{i,\ell}\in\delta\mathbb{N}$ such that $\big|\NAM_{i,\ell}(y)-x_{i,\ell}\big|\leq \delta$. As $\sum_{\ell\in\mathscr{A}_i}\NAM_{i,\ell}(y)=2(n-1)$, we have 
\begin{equation*}
    \Big|\sum_{\ell\in\mathscr{A}_i}x_{i,\ell}-2(n-1)\Big|\leq \sum_{\ell\in\mathscr{A}_i} \big|x_{i,\ell}-\NAM_{i,\ell}(y)\big|\leq \delta|\mathscr{A}_i|\leq K.
\end{equation*}
Hence $x:=(x_{i,\ell})_{i\in [n],\ell\in\mathscr{A}_i}\in\mathcal{T}(\delta)$. With $x$ as above, we take $d=(d_{i,j;\ell,\ell'})_{i,j\in [n]:i\neq j;\ell\in\mathscr{A}_i,\ell'\in\mathscr{A}_j}\in\mathscr{D}(\delta)$ such that $d_{i,j;\ell,\ell'}=h(x_{i,\ell})+h(x_{j,\ell'})$ for all $i,j\in [n]$ with $i\neq j$ and $\ell\in \mathscr{A}_i,\ell'\in\mathscr{A}_j$. For any $i_0,j_0\in [n]$ with $i_0\neq j_0$ and $\ell_0\in\mathscr{A}_{i_0},\ell_0'\in\mathscr{A}_{j_0}$, with $\UIIM(y;i_0,j_0,\ell_0,\ell'_0)$ and $\UIM(y;i_0,j_0)$ defined analogously as in~\eqref{def_U} and~\eqref{def_Utilde}, respectively, we have 
\begin{eqnarray*}
   && \FAM\big(\UIIM(y;i_0,j_0,\ell_0,\ell'_0)\big)-\FAM\big(\UIM(y;i_0,j_0)\big)\nonumber\\
   &=& \sum_{i=1}^n\sum_{\ell\in\mathscr{A}_i}\Big(\psi\big(\NAM_{i,\ell}\big(\UIIM(y;i_0,j_0,\ell_0,\ell'_0)\big)\big)-\psi\big(\NAM_{i,\ell}\big(\UIM(y;i_0,j_0)\big)\big)\Big)\nonumber\\
   &=& \psi\big(\NAM_{i_0,\ell_0}\big(\UIM(y;i_0,j_0)\big)+1\big)-\psi\big(\NAM_{i_0,\ell_0}\big(\UIM(y;i_0,j_0)\big)\big)\nonumber\\
   &&\hspace{0.4in}+\psi\big(\NAM_{j_0,\ell_0'}\big(\UIM(y;i_0,j_0)\big)+1\big)-\psi\big(\NAM_{j_0,\ell_0'}\big(\UIM(y;i_0,j_0)\big)\big)\nonumber\\
    &=& h\big(\NAM_{i_0,\ell_0}\big(\UIM(y;i_0,j_0)\big)\big)+h\big(\NAM_{j_0,\ell_0'}\big(\UIM(y;i_0,j_0)\big)\big).
\end{eqnarray*}
Hence by (\ref{derivative.h}), we get
\begin{eqnarray*}
   && \Big|\FAM\big(\UIIM(y;i_0,j_0,\ell_0,\ell'_0)\big)-\FAM\big(\UIM(y;i_0,j_0)\big)-h\big(\NAM_{i_0,\ell_0}(y)\big)-h\big(\NAM_{j_0,\ell_0'}(y)\big)\Big|\nonumber\\
   &\leq& \frac{\sum_{\ell'\in\mathscr{A}_{j_0}}y_{i_0,j_0;\ell_0,\ell'}}{\max\big\{\NAM_{i_0,\ell_0}\big(\UIM(y;i_0,j_0)\big)
   ,1\big\}}+ \frac{\sum_{\ell\in \mathscr{A}_{i_0}}y_{i_0,j_0;\ell,\ell_0'}}{\max\big\{\NAM_{j_0,\ell_0'}\big(\UIM(y;i_0,j_0)\big),1\big\}}\nonumber\\
   &\leq& \frac{\sum_{\ell'\in\mathscr{A}_{j_0}}y_{i_0,j_0;\ell_0,\ell'}}{\max\big\{\NAM_{i_0,\ell_0}(y)-1,1\big\}}+\frac{\sum_{\ell\in \mathscr{A}_{i_0}}y_{i_0,j_0;\ell,\ell_0'}}{\max\big\{\NAM_{j_0,\ell_0'}(y)-1,1\big\}}\nonumber\\
   &\leq& \frac{2\sum_{\ell'\in\mathscr{A}_{j_0}}y_{i_0,j_0;\ell_0,\ell'}}{\max\big\{\NAM_{i_0,\ell_0}(y),1\big\}}+\frac{2\sum_{\ell\in \mathscr{A}_{i_0}}y_{i_0,j_0;\ell,\ell_0'}}{\max\big\{\NAM_{j_0,\ell_0'}(y),1\big\}}.
\end{eqnarray*}
Moreover, with $d_{i_0,j_0;\ell_0,\ell_0'}=h(x_{i_0,\ell_0})+h(x_{j_0,\ell_0'})$, we have
\begin{eqnarray*}
    && \Big|h\big(\NAM_{i_0,\ell_0}(y)\big)+h\big(\NAM_{j_0,\ell_0'}(y)\big)-d_{i_0,j_0;\ell_0,\ell_0'}\Big| \nonumber\\
    &\leq& \Big|h\big(\NAM_{i_0,\ell_0}(y)\big)-h(x_{i_0,\ell_0})\Big|+ \Big|h\big(\NAM_{j_0,\ell_0'}(y)\big)-h(x_{j_0,\ell_0'})\Big|\nonumber\\
    &\leq& \frac{\delta}{\max\big\{\NAM_{i_0,\ell_0}(y)-\delta,1\big\}}+\frac{\delta}{\max\big\{\NAM_{j_0,\ell_0'}(y)-\delta,1\big\}}\leq\frac{2\delta}{\max\big\{\NAM_{i_0,\ell_0}(y),1\big\}}+\frac{2\delta}{\max\big\{\NAM_{j_0,\ell_0'}(y),1\big\}}.
\end{eqnarray*}
Therefore,
\begin{eqnarray*}
    &&\Big|\FAM\big(\UIIM(y;i_0,j_0,\ell_0,\ell'_0)\big)-\FAM\big(\UIM(y;i_0,j_0)\big)-d_{i_0,j_0;\ell_0,\ell_0'}\Big|\nonumber\\
    &\leq& \frac{2\sum_{\ell'\in\mathscr{A}_{j_0}}y_{i_0,j_0;\ell_0,\ell'}+2\delta}{\max\big\{\NAM_{i_0,\ell_0}(y),1\big\}}+\frac{2\sum_{\ell\in \mathscr{A}_{i_0}}y_{i_0,j_0;\ell,\ell_0'}+2\delta}{\max\big\{\NAM_{j_0,\ell_0'}(y),1\big\}}\nonumber\\
    &\leq& \frac{2\sum_{\ell'\in\mathscr{A}_{j_0}}y_{i_0,j_0;\ell_0,\ell'}}{\max\big\{\NAM_{i_0,\ell_0}(y),1\big\}}+\frac{2\sum_{\ell\in \mathscr{A}_{i_0}}y_{i_0,j_0;\ell,\ell_0'}}{\max\big\{\NAM_{j_0,\ell_0'}(y),1\big\}}+4\delta.
\end{eqnarray*}
Hence
\begin{eqnarray*}
    && \sum_{i_0,j_0\in[n]:i_0\neq j_0}\max_{\ell_0\in\mathscr{A}_{i_0},\ell_0'\in\mathscr{A}_{j_0}}\Big\{\Big|\FAM\big(\UIIM(y;i_0,j_0,\ell_0,\ell_0')\big)-\FAM\big(\UIM(y;i_0,j_0)\big)-d_{i_0,j_0;\ell_0,\ell_0'}\Big|\Big\}\nonumber\\
    &\leq& \sum_{i_0\in [n]}\sum_{\ell_0\in\mathscr{A}_{i_0}}\frac{2\sum_{j_0\in[n]\backslash\{i_0\}}\sum_{\ell'\in\mathscr{A}_{j_0}}y_{i_0,j_0;\ell_0,\ell'}}{\max\big\{\NAM_{i_0,\ell_0}(y),1\big\}}\nonumber\\
&&\hspace{0.4in}+\sum_{j_0\in[n]}\sum_{\ell_0'\in\mathscr{A}_{j_0}}\frac{2\sum_{i_0\in[n]\backslash\{j_0\}}\sum_{\ell\in \mathscr{A}_{i_0}}y_{i_0,j_0;\ell,\ell_0'}}{\max\big\{\NAM_{j_0,\ell_0'}(y),1\big\}}+4\delta n^2\nonumber\\
    &\leq& \sum_{i_0\in [n]}\sum_{\ell_0\in\mathscr{A}_{i_0}}\frac{2\NAM_{i_0,\ell_0}(y)}{\max\big\{\NAM_{i_0,\ell_0}(y),1\big\}}+\sum_{j_0\in[n]}\sum_{\ell_0'\in\mathscr{A}_{j_0}}\frac{2\NAM_{j_0,\ell_0'}(y)}{\max\big\{\NAM_{j_0,\ell_0'}(y),1\big\}}+4\delta n^2\nonumber\\
    &\leq& 4nK+4\delta n^2\leq 8 nK,
\end{eqnarray*}
where the fourth line uses the definition of $\NAM_{i,\ell}(y)$ in~\eqref{Ntilde_d}. Thus with $\mathcal{D}(\epsilon)$ and $E_2(\epsilon)$ as in
Definition \ref{DefComplexityTerm}, choosing $\epsilon=8nK$ and $\mathcal{D}(\epsilon)=\Dtilde(\delta)$, we get that  
\begin{equation*}
    E_2(8nK)\leq 16 nK+\log(|\mathcal{D}(8 nK)|)=16 nK+\log(|\mathscr{D}(\delta)|)\leq CnK\log\Big(\frac{n}{K}+2\Big), 
\end{equation*}
where the last inequality uses~\eqref{covering.set}.
\end{proof}

\noindent{\bf{Step 4: Bounding $\log \mathcal{S}_{n,K,\pmb{\mathscr{A}}}$.}} In this step, using Theorem \ref{Theorem2.1} and \textbf{Steps 2} and \textbf{3}, we will show that 
\begin{eqnarray}\label{SnK_A}
    &&\log\mathcal{S}_{n,K,\pmb{\mathscr{A}}}\nonumber\\
    &\leq& \sup_{y\in\hat{\mathcal{Y}}_{n,\pmb{\mathscr{A}}}}\bigg\{\sum_{\substack{i,j\in [n]: \\ i\neq j}}\sum_{\ell\in\mathscr{A}_i}\sum_{\ell'\in\mathscr{A}_j}y_{i,j;\ell,\ell'}(X_{i,j}\log B_{\ell,\ell'}+(1-X_{i,j})\log(1-B_{\ell,\ell'}))\nonumber\\
    &&\hspace{0.6in}+\sum_{i=1}^n\sum_{\ell\in\mathscr{A}_i}\log\Gamma\big(\NAM_{i,\ell}(y)+\alpha_{\ell}\big)-\sum_{\substack{i,j\in [n]:\\i\neq j}}\sum_{\ell\in\mathscr{A}_i}\sum_{\ell'\in\mathscr{A}_j} y_{i,j;\ell,\ell'}\log y_{i,j;\ell,\ell'}\bigg\}\nonumber\\
    &&\hspace{0.4in}+2n(n-1)+CnK\log\Big(\frac{n}{K}+2\Big).
\end{eqnarray}
Here $\mathcal{S}_{n,K,\pmb{\mathscr{A}}}$ is as defined in {\bf Step 1}.
\begin{proof}
To begin, using (\ref{EEEq4.10}) we have 
\begin{eqnarray}\label{Eq.B10}
    \log \mathcal{S}_{n,K,\pmb{\mathscr{A}}}&=& \sum_{\substack{i,j\in [n]:\\i\neq j,X_{i,j}=1}}\log\bigg(\sum_{\ell\in\mathscr{A}_i}\sum_{\ell'\in\mathscr{A}_j}B_{\ell,\ell'}\bigg)+\sum_{\substack{i,j\in [n]:\\i\neq j,X_{i,j}=0}}\log\bigg(\sum_{\ell\in\mathscr{A}_i}\sum_{\ell'\in\mathscr{A}_j}(1-B_{\ell,\ell'})\bigg)\nonumber\\
    &&\hspace{0.4in}+\log\bigg(\sum_{z\in\mathcal{Z}_{n,K,\pmb{\mathscr{A}}}}  e^{\fAM(z)}\muAM(z)\bigg)\nonumber\\
    &\leq&\sum_{\substack{i,j\in [n]:\\i\neq j,X_{i,j}=1}}\log\bigg(\sum_{\ell\in\mathscr{A}_i}\sum_{\ell'\in\mathscr{A}_j}B_{\ell,\ell'}\bigg)+\sum_{\substack{i,j\in [n]:\\i\neq j,X_{i,j}=0}}\log\bigg(\sum_{\ell\in\mathscr{A}_i}\sum_{\ell'\in\mathscr{A}_j}(1-B_{\ell,\ell'})\bigg)\nonumber\\
    &&\hspace{0.4in}+\sup_{y\in\hat{\mathcal{Y}}_{n,\pmb{\mathscr{A}}}}\Big\{\FAM(y)-\IAM(y)\Big\}+CnK\log\Big(\frac{n}{K}+2\Big)\nonumber\\
    &=& \sup_{y\in\hat{\mathcal{Y}}_{n,\pmb{\mathscr{A}}}}\bigg\{\sum_{\substack{i,j\in [n]: \\ i\neq j}}\sum_{\ell\in\mathscr{A}_i}\sum_{\ell'\in\mathscr{A}_j}y_{i,j;\ell,\ell'}(X_{i,j}\log B_{\ell,\ell'}+(1-X_{i,j})\log(1-B_{\ell,\ell'}))\nonumber\\
    &&\hspace{0.6in}+\sum_{i=1}^n\sum_{\ell\in\mathscr{A}_i} \psi\big(\NAM_{i,\ell}(y)\big)-\sum_{\substack{i,j\in [n]:\\i\neq j}}\sum_{\ell\in\mathscr{A}_i}\sum_{\ell'\in\mathscr{A}_j} y_{i,j;\ell,\ell'}\log y_{i,j;\ell,\ell'}\bigg\}\nonumber\\
    &&\hspace{0.4in}+CnK\log\Big(\frac{n}{K}+2\Big),
\end{eqnarray}
where the inequality in the third line uses Theorem \ref{Theorem2.1}, along with \textbf{Steps 2} and \textbf{3} (namely, (\ref{Step2_result}) and (\ref{Step3_result})), which bound the error terms in Theorem \ref{Theorem2.1}. Here, $\FAM$ and $\IAM$ are defined as in~\eqref{defHd} and~\eqref{Idefy}, respectively. 

Proceeding to bound $\sum_{i=1}^n\sum_{\ell\in\mathscr{A}_i} \psi\big(\NAM_{i,\ell}(y)\big)$ in the RHS of~\eqref{Eq.B10}, for any $y\in\hat{\mathcal{Y}}_{n,\pmb{\mathscr{A}}}$ we have 
\begin{eqnarray}\label{Eq3.11n}
  && \sum_{i=1}^n\sum_{\ell\in\mathscr{A}_i} \log\big(\NAM_{i,\ell}(y) +\alpha_{\ell}\big)  = \sum_{i=1}^n\bigg(\sum_{\ell\in \mathscr{A}_i}\log\big(\NAM_{i,\ell}(y)+\alpha_{\ell}\big)+\sum_{\ell\in [K]\backslash \mathscr{A}_i}\log(1)\bigg) \nonumber\\
  &\leq& 
\sum_{i=1}^n K \log\bigg(\frac{\sum_{\ell\in \mathscr{A}_i}\NAM_{i,\ell}(y)+\sum_{\ell\in\mathscr{A}_i}\alpha_{\ell}+K-|\mathscr{A}_i|}{K}\bigg)\leq CnK\log\Big(\frac{n}{K}+2\Big),
\end{eqnarray}
where the last two inequalities use Jensen's inequality, \eqref{NAilsum}, and the bound $\max_{\ell\in[K]}\alpha_{\ell}\leq C_0$. Moreover, for any $y\in \hat{\mathcal{Y}}_{n,\pmb{\mathscr{A}}}$ and $i\in [n],\ell\in \mathscr{A}_i$, we have
\begin{equation*}
    \big(\NAM_{i,\ell}(y) +\alpha_{\ell}\big)\log\big(\NAM_{i,\ell}(y)+\alpha_{\ell}\big)\geq
    \begin{cases}
        \inf_{x\in [0,C_0+1]}\{x\log{x}\}\geq -C & \text{ if }\NAM_{i,\ell}(y)\leq 1\\
        \NAM_{i,\ell}(y)\log\big(\NAM_{i,\ell}(y)\big) &\text{ if }\NAM_{i,\ell}(y)>1,
    \end{cases}
\end{equation*}
hence
\begin{equation}\label{Eq3.10n}
    \NAM_{i,\ell}(y) \log\big(\NAM_{i,\ell}(y)\big)\leq \big(\NAM_{i,\ell}(y)+\alpha_{\ell}\big)\log\big(\NAM_{i,\ell}(y)+\alpha_{\ell}\big)+C.
\end{equation}
By applying (\ref{eq3.2}), (\ref{Eq3.10n}), Lemma \ref{L3.2} (in Appendix \ref{Appendix_E}), \eqref{NAilsum}, \eqref{Eq3.11n} chronologically, for any $y\in\hat{\mathcal{Y}}_{n,\pmb{\mathscr{A}}}$ we get
\begin{eqnarray}\label{EEEq4.5}
  &&  \sum_{i=1}^n\sum_{\ell\in\mathscr{A}_i} \psi\big(\NAM_{i,\ell}(y)\big)\leq\frac{nK}{2}+\sum_{i=1}^n\sum_{\ell\in\mathscr{A}_i} \phi\big(\NAM_{i,\ell}(y)\big)\nonumber\\
  &\leq& CnK+\sum_{i=1}^n\sum_{\ell\in\mathscr{A}_i} \big(\NAM_{i,\ell}(y) +\alpha_{\ell}\big)\log\big(\NAM_{i,\ell}(y)+\alpha_{\ell}\big)\nonumber\\
  &\leq&
  CnK+\sum_{i=1}^n\sum_{\ell\in\mathscr{A}_i}\bigg(\log \Gamma\big(\NAM_{i,\ell}(y)+\alpha_{\ell}\big)+\NAM_{i,\ell}(y)+\alpha_{\ell}+\frac{1}{2}\log\big(\NAM_{i,\ell}(y)+\alpha_{\ell}\big)\bigg)\nonumber\\
  &\leq& \sum_{i=1}^n\sum_{\ell\in\mathscr{A}_i}\log\Gamma\big(\NAM_{i,\ell}(y)+\alpha_{\ell}\big)+2n(n-1)+CnK\log\Big(\frac{n}{K}+2\Big).
\end{eqnarray}
Now the desired conclusion (\ref{SnK_A}) follows from (\ref{Eq.B10}) and (\ref{EEEq4.5}).  
\end{proof} 

\noindent{\bf{Step 5: Bounding $\log\mathcal{S}_{n,K}$ and concluding.}} Recall the setup in~\eqref{def_muij}-\eqref{partition_MMSB}. Using the results from \textbf{Steps 1} and \textbf{4} (namely, (\ref{Eq.A32}) and (\ref{SnK_A})), we get
\begin{eqnarray*}
    \log \mathcal{S}_{n,K} &\leq& \sup_{y\in\hat{\mathcal{Y}}_{n,K}}\bigg\{\sum_{i=1}^n\sum_{\ell=1}^K\log\Gamma\big(\widetilde{N}_{i,\ell}(y)+\alpha_{\ell}\big)-\sum_{\substack{i,j\in [n]:\\i\neq j}}\sum_{\ell=1}^K\sum_{\ell'=1}^K y_{i,j;\ell,\ell'}\log y_{i,j;\ell,\ell'}\nonumber\\
    &&\hspace{0.6in}+\sum_{\substack{i,j\in [n]: \\ i\neq j}}\sum_{\ell=1}^K\sum_{\ell'=1}^K y_{i,j;\ell,\ell'}(X_{i,j}\log B_{\ell,\ell'}+(1-X_{i,j})\log(1-B_{\ell,\ell'}))\bigg\}\nonumber\\
    &&\hspace{0.2in}-n\log \Gamma\Big(2n-2+\sum_{\ell=1}^K \alpha_{\ell}\Big)+CnK\log\Big(\frac{n}{K}+2\Big)\nonumber\\
    &=& \sup_{y\in\hat{\mathcal{Y}}_{n,K}}\{F(y)-I(y)\}+\mathscr{M}_1\log\Big(\sum_{\ell=1}^K\sum_{\ell'=1}^K  B_{\ell,\ell'}\Big)+\mathscr{M}_0\log\Big(\sum_{\ell=1}^K\sum_{\ell'=1}^K (1-B_{\ell,\ell'})\Big)\nonumber\\
    &&\hspace{0.2in}+CnK\log\Big(\frac{n}{K}+2\Big).
\end{eqnarray*}
In the above display, we use the fact that 
\begin{eqnarray*}
    &&\sup_{\substack{\pmb{\mathscr{A}}\in\mathscr{C}_{n,K}:\\ y\in\hat{\mathcal{Y}}_{n,\pmb{\mathscr{A}}}}}\bigg\{\sum_{i=1}^n\sum_{\ell\in [K]\backslash \mathscr{A}_i}\log\Gamma(\alpha_{\ell})+\sum_{i=1}^n\sum_{\ell\in\mathscr{A}_i}\log\Gamma\big(\NAM_{i,\ell}(y)+\alpha_{\ell}\big)\nonumber\\
    &&\hspace{0.65in}-\sum_{\substack{i,j\in [n]:\\i\neq j}}\sum_{\ell\in\mathscr{A}_i}\sum_{\ell'\in\mathscr{A}_j} y_{i,j;\ell,\ell'}\log y_{i,j;\ell,\ell'}\nonumber\\&&\hspace{0.65in}+\sum_{\substack{i,j\in [n]: \\ i\neq j}}\sum_{\ell\in\mathscr{A}_i}\sum_{\ell'\in\mathscr{A}_j}y_{i,j;\ell,\ell'}(X_{i,j}\log B_{\ell,\ell'}+(1-X_{i,j})\log(1-B_{\ell,\ell'}))\bigg\}\nonumber\\
    &=& \sup_{y\in\hat{\mathcal{Y}}_{n,K}}\bigg\{\sum_{i=1}^n\sum_{\ell=1}^K\log\Gamma\big(\widetilde{N}_{i,\ell}(y)+\alpha_{\ell}\big)-\sum_{\substack{i,j\in [n]:\\i\neq j}}\sum_{\ell=1}^K\sum_{\ell'=1}^K y_{i,j;\ell,\ell'}\log y_{i,j;\ell,\ell'}\nonumber\\
    &&\hspace{0.6in}+\sum_{\substack{i,j\in [n]: \\ i\neq j}}\sum_{\ell=1}^K\sum_{\ell'=1}^K y_{i,j;\ell,\ell'}(X_{i,j}\log B_{\ell,\ell'}+(1-X_{i,j})\log(1-B_{\ell,\ell'}))\bigg\}
\end{eqnarray*}
in the first line, and the definitions of $F(\cdot)$ and $I(\cdot)$ as in~\eqref{deffF_MMSB2} and~\eqref{defI_MMSB} in the second line. Hence by~\eqref{partition_MMSB}, we have
\begin{equation}\label{resu}
    \frac{\log\Big(\sum_{z\in [K]^{2n(n-1)}} e^{f(z)}\mu(z)\Big)-\sup_{y\in\hat{\mathcal{Y}}_{n,K}}\{F(y)-I(y)\}}{n^2}\leq \frac{CK}{n}\log\Big(\frac{n}{K}+2\Big).
\end{equation}
Part (b) of Theorem \ref{Theorem_MMSB_UBD} then follows on using \eqref{resu} along with parts (a) and (c) of Lemma 2.1, on noting that $F(\cdot)$ is convex (which follows from the convexity of $\log\Gamma(\cdot)$ on $(0,\infty)$).

Now we turn to the proof of part (a) of Theorem \ref{Theorem_MMSB_UBD}. Let
\begin{equation*}
    \Pi_K:=\bigg\{(u_{\ell})_{\ell\in [K]}: u_{\ell}\geq 0 \text{ for every }\ell\in [K], \sum_{\ell=1}^K u_{\ell}=1\bigg\}.
\end{equation*}
With $R(\cdot,\cdot)$ as in~\eqref{defrR_MMSB2}, and with $R_{i_1,j_1,\ell_1,\ell_1';i_2,j_2,\ell_2,\ell_2'}$ and  $\ctilde_{i_1,j_1,\ell_1,\ell_1';i_2,j_2,\ell_2,\ell_2'}$ defined analogously as in~\eqref{def_Rde} and Definition \ref{Global_GH}, we obtain that for any $\pmb{\pi}=(\pmb{\pi}_1,\cdots,\pmb{\pi}_n)$ such that $\pmb{\pi}_i\in \Pi_K$ for every $i\in [n]$, $y\in [0,1]^{n(n-1)K^2}$, and $i_1,j_1,i_2,j_2\in [n], \ell_1,\ell_1',\ell_2,\ell_2'\in [K]$ such that $i_1\neq j_1,i_2\neq j_2$, we have $R_{i_1,j_1,\ell_1,\ell_1';i_2,j_2,\ell_2,\ell_2'}(\pmb{\pi},y)=0$ (using linearity of $R(\pmb{\pi},\cdot)$), hence $\ctilde_{i_1,j_1,\ell_1,\ell_1';i_2,j_2,\ell_2,\ell_2'}=0$. Hence by Theorem \ref{Theorem2.2} and~\eqref{resu}, we conclude that $\frac{1}{n^2} \KL\big(\hat{P} \,\big\| \; \mathbb{P}(\mathbf{Z},\pmb{\pi}\mid\mathbf{X}) \big)\leq \frac{CK}{n}\log\big(\frac{n}{K}+2\big)$.

\section{Lower bounds for LDA and MMSB: Proofs of Theorems \ref{Theorem_LDA_LBD} and \ref{Theorem_MMSB_LBD}}\label{Appendix_C}

In this section, we present the proofs of Theorems \ref{Theorem_LDA_LBD} and \ref{Theorem_MMSB_LBD}. Throughout this section, we use $C$ to denote an absolute positive constant. The specific value of $C$ may vary from line to line.

\subsection{Proof of Theorem \ref{Theorem_LDA_LBD}}

We recall the model setup and notation for LDA in Section \ref{Sect.1.2}. For any $z=(z_{d,i})_{d\in[D],i\in[n_d]}\in[K]^n$ and any $d\in[D],\ell\in[K],r\in[V]$, similar to~\eqref{def_N}, we define
\begin{equation*}
    N_{d,\ell}(z):=|\{i\in[n_d]:z_{d,i}=\ell\}|,\qquad \mbox{and} \qquad  N_{d,\ell,r}(z):=|\{i\in[n_d]: z_{d,i}=\ell, X_{d,i}=r\}|.
\end{equation*}
For any $z\in [K]^n$, similar to~\eqref{Eq.A22} and~\eqref{deffF}, and noting that $\alpha_{\ell}=1\slash 2$ and $\eta_{\ell,r}=V^{-1}$ for every $\ell\in[K],r\in[V]$, we define
{\small
\begin{equation}\label{def_Up}
    \Upsilon(z):=-n\log V +\sum_{d=1}^D\sum_{\ell=1}^K \log \Gamma(N_{d,\ell}(z)+1\slash 2) -\sum_{d=1}^D\log\Gamma(n_d+K\slash 2),
\end{equation}
\begin{equation}\label{def_f_new}
    f(z):=\sum_{d=1}^D\sum_{\ell=1}^K \log \Gamma(N_{d,\ell}(z)+1\slash 2) -\sum_{d=1}^D\log\Gamma(n_d+K\slash 2).
\end{equation}
}We denote by $\mathcal{S}_{n,K}$ the normalizing constant of the posterior\slash collapsed posterior, as in~\eqref{eq:S_nK}:
\begin{equation}\label{S_nK:LDA}
  \mathcal{S}_{n,K}=\sum_{z\in [K]^n} e^{\Upsilon(z)}.
\end{equation}

\medskip

\noindent{\bf{Step 1.}} We first derive a lower bound on $\log \mathcal{S}_{n,K}$. Note that by~\eqref{def_Up} and~\eqref{S_nK:LDA}, we have 
\begin{eqnarray}\label{E5.14}
    \mathcal{S}_{n,K}&=&
     \frac{1}{V^n\prod_{d=1}^D\Gamma(n_d+K\slash 2)}\sum_{z\in [K]^n}\prod_{d=1}^D\prod_{\ell=1}^K \Gamma(N_{d,\ell}(z)+1 \slash 2) \nonumber\\
     &=&   \frac{1}{V^n\prod_{d=1}^D\Gamma(n_d+K\slash 2)}\sum_{\substack{(n_{d,\ell})_{d\in[D],\ell\in[K]}\in\mathbb{N}^{DK}:\\\sum_{\ell=1}^K n_{d,\ell}=n_d,\forall d\in[D]}} \sum_{\substack{z\in [K]^n:\\ N_{d,\ell}(z)=n_{d,\ell},\\\forall d\in[D],\ell\in[K]}}  \prod_{d=1}^D\prod_{\ell=1}^K \Gamma(n_{d,\ell}+1 \slash 2)  \nonumber\\
     &=&   \frac{1}{V^n\prod_{d=1}^D\Gamma(n_d+K\slash 2)}\sum_{\substack{(n_{d,\ell})_{d\in[D],\ell\in[K]}\in\mathbb{N}^{DK}:\\\sum_{\ell=1}^K n_{d,\ell}=n_d,\forall d\in[D]}}  \prod_{d=1}^D \frac{ n_d!\prod_{\ell=1}^K\Gamma(n_{d,\ell}+1 \slash 2)}{\prod_{\ell=1}^K n_{d,\ell}!} \nonumber\\
    &=& \frac{\prod_{d=1}^D n_d!}{V^n\prod_{d=1}^D\Gamma(n_d+K\slash 2)}\prod_{d=1}^D\bigg(\sum_{\substack{(n_{d,\ell})_{\ell\in[K]}\in\mathbb{N}^K:\\\sum_{\ell=1}^K n_{d,\ell}=n_d}}\prod_{\ell=1}^K\frac{\Gamma(n_{d,\ell}+1 \slash 2)}{n_{d,\ell}!}\bigg).
\end{eqnarray}

By Stirling's approximation in Lemma \ref{L3.2} (in Appendix \ref{Appendix_E}), for any $d\in [D]$, $\ell\in[K]$, and $n_{d,\ell}\in\mathbb{N}$, 
\begin{eqnarray}\label{stirlingapprox}
    \Big|\log\Gamma(n_{d,\ell}+1\slash 2)-n_{d,\ell}\log n_{d,\ell}+n_{d,\ell}\Big|\leq C,
\end{eqnarray}
\begin{equation*}
    \bigg|\log(n_{d,\ell}!)-n_{d,\ell}\log n_{d,\ell}+n_{d,\ell}-\frac{1}{2}\log(\max\{n_{d,\ell},1\})\bigg|\leq C.
\end{equation*}
Hence for any $d\in [D]$, 
\begin{eqnarray}\label{E5.10}
&& \sum_{\substack{(n_{d,\ell})_{\ell\in[K]}\in\mathbb{N}^K:\\\sum_{\ell=1}^K n_{d,\ell}=n_d}}\prod_{\ell=1}^K\frac{\Gamma(n_{d,\ell}+1 \slash 2)}{n_{d,\ell}!}\nonumber\\
&\geq&  \exp(-CK)\sum_{\substack{(n_{d,\ell})_{\ell\in[K]}\in\mathbb{N}^K:\\\sum_{\ell=1}^K n_{d,\ell}=n_d}}\exp\bigg(-\frac{1}{2}\sum_{\ell=1}^K\log(\max\{n_{d,\ell},1\})\bigg). 
\end{eqnarray}
Throughout the rest of the proof, we assume that $DK\leq n$, which implies $n_d=n\slash D\geq K$. By Jensen's inequality, for any $(n_{d,\ell})_{\ell\in[K]}\in\mathbb{N}^K$ such that $\sum_{\ell=1}^K n_{d,\ell}=n_d$, we have  
\begin{eqnarray}\label{E5.11}
    && -\frac{1}{K}\sum_{\ell=1}^K\log(\max\{n_{d,\ell},1\})\geq -\log\bigg(\frac{1}{K}\sum_{\ell=1}^K \max\{n_{d,\ell},1\}  \bigg)
    \nonumber\\
    &\geq&-\log\Big(\frac{n_d}{K}+1\Big)\geq -\log\Big(\frac{n_d}{K}\Big)-\log{2}. 
\end{eqnarray}
Moreover,
\begin{equation}\label{E5.12}
    \binom{n_d+K-1}{K-1}=\frac{(n_d+K-1)(n_d+K-2)\cdots (n_d+1)}{(K-1)!}\geq \frac{n_d^{K-1}}{(K-1)!}\geq \Big(\frac{n_d}{K}\Big)^{K-1}. 
\end{equation}
By (\ref{E5.10})-(\ref{E5.12}),
\begin{eqnarray}\label{E5.13}
&&\sum_{\substack{(n_{d,\ell})_{\ell\in[K]}\in\mathbb{N}^K:\\\sum_{\ell=1}^K n_{d,\ell}=n_d}}\prod_{\ell=1}^K\frac{\Gamma(n_{d,\ell}+1 \slash 2)}{n_{d,\ell}!}\nonumber\\
&\geq&\exp\bigg(-CK-\frac{K}{2}\log(n_d\slash K)\bigg)\bigg|\bigg\{(n_{d,\ell})_{\ell\in[K]}\in\mathbb{N}^K:\sum_{\ell=1}^K n_{d,\ell}=n_d\bigg\}\bigg|\nonumber\\
&=& \exp\bigg(-CK-\frac{K}{2}\log(n_d\slash K)\bigg)\binom{n_d+K-1}{K-1}\nonumber\\
&\geq& \exp\bigg(\bigg(\frac{K}{2}-1\bigg)\log(n_d\slash K)-CK\bigg).
\end{eqnarray}

By (\ref{E5.14}) and (\ref{E5.13}), we get 
\begin{eqnarray}\label{E5.15}
   \log \mathcal{S}_{n,K} &\geq& \sum_{d=1}^D\log(n_d!)-\sum_{d=1}^D\log\Gamma(n_d+K\slash 2)-n\log V+\bigg(\frac{K}{2}-1\bigg)\sum_{d=1}^D\log\Big(\frac{n_d}{K}\Big)-CDK\nonumber\\
   &\geq& \sum_{d=1}^D n_d\log n_d -n+\frac{1}{2}\sum_{d=1}^D \log n_d-\sum_{d=1}^D\log\Gamma(n_d+K\slash 2)\nonumber\\
   &&\hspace{0.4in}-n\log V+\bigg(\frac{K}{2}-1\bigg)\sum_{d=1}^D\log\Big(\frac{n_d}{K}\Big)-CDK\nonumber\\
   &\geq& \sum_{d=1}^D n_d\log n_d-n-\sum_{d=1}^D\log\Gamma(n_d+K\slash 2)-n\log V+\frac{(K-1)}{2}\sum_{d=1}^D\log\Big(\frac{n_d}{K}\Big)-CDK,
\end{eqnarray}
where in the second inequality we use Stirling's approximation in Lemma \ref{L3.2}. 

\medskip

\noindent{\bf{Step 2.}} We define
\begin{equation*}
  \hat{\mathcal{Y}}_{n,K}:=\bigg\{y=(y_{d,i,\ell})_{d\in[D],i\in[n_d],\ell\in[K]}\in [0,1]^{nK}: \sum_{\ell=1}^{K} y_{d,i,\ell}=1\text{ for every }d\in[D],i\in [n_d]\bigg\}.
\end{equation*}
For any $y=(y_{d,i,\ell})_{d\in[D],i\in[n_d],\ell\in[K]}\in\hat{\mathcal{Y}}_{n,K}$, similar to~\eqref{defQy}, we denote by $Q_y$ the product probability distribution on $[K]^n$
given by $Q_y(z)=\prod_{d=1}^D\prod_{i=1}^{n_d}y_{d,i, z_{d,i}}$ for all $z=(z_{d,i})_{d\in[D],i\in[n_d]}\in[K]^n$. For any $y\in\hat{\mathcal{Y}}_{n,K}$ and $d\in[D],\ell\in [K]$, we define $\widetilde{N}_{d,\ell}(y):=\sum_{i=1}^{n_d}y_{d,i,\ell}$. Similar to~\eqref{def_mu_form}, we set $\mu:=\bigotimes\limits_{d\in[D],i\in[n_d]}\mu_{d,i}$, where $\mu_{d,i}(\ell)=\frac{\eta_{\ell,X_{d,i}}}{\sum_{s=1}^K\eta_{s,X_{d,i}}}=\frac{1}{K}$ for every $d\in [D],i\in[n_d],\ell\in[K]$ (note that $\eta_{\ell,r}=V^{-1}$ for every $\ell\in[K],r\in[V]$). For any $y\in\hat{\mathcal{Y}}_{n,K}$, analogously as in~\eqref{def_I} and~\eqref{defI}, we define
\begin{equation*}
    I(y):=\sum_{d=1}^D\sum_{i=1}^{n_d}\sum_{\ell=1}^K y_{d,i,\ell}\log\bigg( \frac{y_{d,i,\ell}}{\mu_{d,i}(\ell)}\bigg) = \sum_{d=1}^D\sum_{i=1}^{n_d}\sum_{\ell=1}^K y_{d,i,\ell}\log y_{d,i,\ell}+n\log K.
\end{equation*}

In this step, with $f(\cdot)$ as in~\eqref{def_f_new}, we derive an upper bound on $\mathbb{E}_{Q_y}[f(\mathbf{Z})]-I(y)$ for any $y\in  \hat{\mathcal{Y}}_{n,K}$. Note that
\begin{eqnarray}\label{E5.4}
    \mathbb{E}_{Q_y}[f(\mathbf{Z})]-I(y)
   &=& \sum_{d=1}^D\sum_{\ell=1}^K\mathbb{E}_{Q_y}[\log\Gamma(N_{d,\ell}(\mathbf{Z})+1\slash 2)]-\sum_{d=1}^D\log\Gamma(n_d+K\slash 2)\nonumber\\
   &&\hspace{0.4in}-\sum_{d=1}^D\sum_{i=1}^{n_d}\sum_{\ell=1}^K y_{d,i,\ell}\log y_{d,i,\ell}-n\log K.
\end{eqnarray}

By Stirling's approximation in~\eqref{stirlingapprox}, we have
\begin{eqnarray}\label{E5.1}
    &&\sum_{d=1}^D\sum_{\ell=1}^K\mathbb{E}_{Q_y}[\log\Gamma(N_{d,\ell}(\mathbf{Z})+1\slash 2)]\nonumber\\
    &\leq& \sum_{d=1}^D\sum_{\ell=1}^K \mathbb{E}_{Q_y}[N_{d,\ell}(\mathbf{Z})\log(N_{d,\ell}(\mathbf{Z}))]-\sum_{d=1}^D\sum_{\ell=1}^K \mathbb{E}_{Q_y}[N_{d,\ell}(\mathbf{Z})]+CDK\nonumber\\
    &=& \sum_{d=1}^D\sum_{\ell=1}^K \mathbb{E}_{Q_y}[N_{d,\ell}(\mathbf{Z})\log(N_{d,\ell}(\mathbf{Z}))]-n+CDK.
\end{eqnarray}
Below we consider any $d\in[D]$ and $\ell\in[K]$. If $\widetilde{N}_{d,\ell}(y)=0$, then $y_{d,i,\ell}=0$ for all $i\in [n_d]$, and so $N_{d,\ell}(\mathbf{Z})=0$ under $Q_y$. Consequently, we have  
\begin{equation}\label{E5.2}
    \mathbb{E}_{Q_y}[N_{d,\ell}(\mathbf{Z})\log(N_{d,\ell}(\mathbf{Z}))]=0=\widetilde{N}_{d,\ell}(y)\log\big(\widetilde{N}_{d,\ell}(y)\big).
\end{equation}
Below we assume that $\widetilde{N}_{d,\ell}(y)>0$. Using the inequality $\log x\leq x-1$ for all $x>0$, we obtain that 
\begin{equation}\label{boundNlog}
     \mathbb{E}_{Q_y}\bigg[N_{d,\ell}(\mathbf{Z})\log\bigg(\frac{N_{d,\ell}(\mathbf{Z})}{\widetilde{N}_{d,\ell}(y)}\bigg)\bigg]
    \leq \mathbb{E}_{Q_y}\bigg[N_{d,\ell}(\mathbf{Z})\bigg(\frac{N_{d,\ell}(\mathbf{Z})}{\widetilde{N}_{d,\ell}(y)}-1\bigg)\bigg]=\frac{\mathbb{E}_{Q_y}\big[N_{d,\ell}(\mathbf{Z})^2\big]}{\widetilde{N}_{d,\ell}(y)}-\widetilde{N}_{d,\ell}(y), 
\end{equation}
where we use the fact that
\begin{equation}\label{ENdl}
    \mathbb{E}_{Q_y}[N_{d,\ell}(\mathbf{Z})]=
    \widetilde{N}_{d,\ell}(y)=\sum_{i=1}^{n_d} Q_y(Z_{d,i}=\ell).
\end{equation}
Proceeding to bound the RHS in~\eqref{boundNlog}, we have 
\begin{eqnarray*}
    &&\mathbb{E}_{Q_y}\big[N_{d,\ell}(\mathbf{Z})^2\big]=\sum_{i=1}^{n_d}Q_y(Z_{d,i}=\ell)+\sum_{i,j\in[n_d]:i\neq j}Q_y(Z_{d,i}=\ell)Q_y(Z_{d,j}=\ell)\nonumber\\
    &=&\sum_{i=1}^{n_d}Q_y(Z_{d,i}=\ell)(1-Q_y(Z_{d,i}=\ell))+\widetilde{N}_{d,\ell}(y)^2\leq \widetilde{N}_{d,\ell}(y)+\widetilde{N}_{d,\ell}(y)^2,
\end{eqnarray*}
where we use~\eqref{ENdl} in the last inequality. Using~\eqref{boundNlog}, this gives
\begin{equation*}
    \mathbb{E}_{Q_y}\bigg[N_{d,\ell}(\mathbf{Z})\log\bigg(\frac{N_{d,\ell}(\mathbf{Z})}{\widetilde{N}_{d,\ell}(y)}\bigg)\bigg]\leq 1,
\end{equation*}
which in turn gives
\begin{eqnarray}\label{E5.3}
    \mathbb{E}_{Q_y}[N_{d,\ell}(\mathbf{Z})\log(N_{d,\ell}(\mathbf{Z}))]&\leq& \mathbb{E}_{Q_y}[N_{d,\ell}(\mathbf{Z})]\log\big(\widetilde{N}_{d,\ell}(y)\big)+1\nonumber\\
    &=& \widetilde{N}_{d,\ell}(y)\log\big(\widetilde{N}_{d,\ell}(y)\big)+1,
\end{eqnarray}
where we use~\eqref{ENdl} in the last equality. Noting that the conclusion of~\eqref{E5.3} also holds if $\widetilde{N}_{d,\ell}(y)=0$ (using~\eqref{E5.2}), combining~\eqref{E5.1} and~\eqref{E5.3} we get 
\begin{equation}\label{E5.6}
    \sum_{d=1}^D\sum_{\ell=1}^K\mathbb{E}_{Q_y}[\log\Gamma(N_{d,\ell}(\mathbf{Z})+1\slash 2)]\leq\sum_{d=1}^D\sum_{\ell=1}^K \widetilde{N}_{d,\ell}(y)\log\big(\widetilde{N}_{d,\ell}(y)\big)-n+CDK.
\end{equation}

Combining (\ref{E5.4}) and (\ref{E5.6}), we obtain  
\begin{eqnarray}\label{E5.5}
    \mathbb{E}_{Q_y}[f(\mathbf{Z})]-I(y) 
   &\leq& -\sum_{d=1}^D\log\Gamma(n_d+K\slash 2)-n\log K -n+CDK\nonumber\\
   &&\hspace{0.4in}+\sum_{d=1}^D\sum_{\ell=1}^K \widetilde{N}_{d,\ell}(y)\log\big(\widetilde{N}_{d,\ell}(y)\big)-\sum_{d=1}^D\sum_{i=1}^{n_d}\sum_{\ell=1}^K y_{d,i,\ell}\log y_{d,i,\ell}.
\end{eqnarray}
By Jensen's inequality, for any $d\in[D]$ and $\ell\in[K]$, we have
\begin{eqnarray*}
    \frac{1}{n_d}\sum_{i=1}^{n_d}y_{d,i,\ell}\log y_{d,i,\ell} &\geq& \bigg(\frac{\sum_{i=1}^{n_d}y_{d,i,\ell}}{n_d}\bigg)\log\bigg(\frac{\sum_{i=1}^{n_d}y_{d,i,\ell}}{n_d}\bigg)=\frac{\widetilde{N}_{d,\ell}(y)}{n_d}\log\bigg(\frac{\widetilde{N}_{d,\ell}(y)}{n_d}\bigg),
\end{eqnarray*}
and so
\begin{eqnarray}\label{E5.9}
\sum_{d=1}^D\sum_{i=1}^{n_d}\sum_{\ell=1}^K y_{d,i,\ell}\log y_{d,i,\ell}
     &\geq&\sum_{d=1}^D \sum_{\ell=1}^K\widetilde{N}_{d,\ell}(y)\log\big(\widetilde{N}_{d,\ell}(y)\big)-\sum_{d=1}^D\log n_d \bigg(\sum_{\ell=1}^K\widetilde{N}_{d,\ell}(y)\bigg)\nonumber\\
     &=& \sum_{d=1}^D \sum_{\ell=1}^K\widetilde{N}_{d,\ell}(y)\log\big(\widetilde{N}_{d,\ell}(y)\big)-\sum_{d=1}^D n_d\log n_d. 
\end{eqnarray}
By (\ref{E5.5}) and (\ref{E5.9}), for any $y\in\hat{\mathcal{Y}}_{n,K}$, we have
\begin{equation}\label{E5.16}
      \mathbb{E}_{Q_y}[f(\mathbf{Z})]-I(y)\leq -\sum_{d=1}^D\log\Gamma(n_d+K\slash 2)+\sum_{d=1}^Dn_d\log n_d -n\log K -n+CDK.
\end{equation}

\noindent{\bf{Step 3.}} 
By the data processing inequality (see, for example, \cite[Theorem 2.8.1]{cover2012elements}), denoting by $Q'$ the marginal of $\hat{P}$ on $\mathbf{Z}$, we have 
\begin{equation}\label{eqnew.2}
    \KL\big(\hat{P} \,\big\| \; \mathbb{P}(\mathbf{Z},\{\pmb{\pi}_d\}_{d\in [D]}|\mathbf{X}) \big)\geq \KL\big(Q'\,\big\|\;\mathbb{P}(\mathbf{Z}|\mathbf{X})\big) \geq \KL\big(\hat{Q}\,\big\|\;\mathbb{P}(\mathbf{Z}|\mathbf{X})\big),
\end{equation}
where in the second inequality we use the definition of $\hat{Q}$ in~\eqref{vari3.2} and note that $Q'\in\mathcal{Q}$. Similar to~\eqref{partition}, noting that $\eta_{\ell,r}=V^{-1}$ for every $\ell\in[K],r\in[V]$, we have
\begin{equation}\label{eqnew.1}
    \sum_{z\in [K]^n}e^{f(z)}\mu(z)=\frac{\mathcal{S}_{n,K}}{\prod_{d=1}^D\prod_{i=1}^{n_d}\big(\sum_{\ell=1}^K\eta_{\ell,X_{d,i}}\big)}=\mathcal{S}_{n,K}\cdot \Big(\frac{V}{K}\Big)^n.
\end{equation}
Sequentially applying~\eqref{eqnew.2}, Lemma \ref{Lemma2.1}, and~\eqref{eqnew.1}, we get for any $\mathbf{X}\in[V]^n$, 
\begin{eqnarray*}
  &&  \frac{1}{n}\KL\big(\hat{P} \,\big\| \; \mathbb{P}(\mathbf{Z},\{\pmb{\pi}_d\}_{d\in [D]}|\mathbf{X}) \big)\geq \frac{1}{n}\KL\big(\hat{Q}\,\big\|\;\mathbb{P}(\mathbf{Z}|\mathbf{X})\big)\nonumber\\
  &=& \frac{\log\big(\sum_{z\in [K]^n}e^{f(z)}\mu(z)\big)-\sup_{y\in\hat{\mathcal{Y}}_{n,K}}\{\mathbb{E}_{Q_y}[f(\mathbf{Z})]-I(y)\}}{n} \nonumber\\
  &=&\frac{\log \mathcal{S}_{n,K}+n\log V - n \log K-\sup_{y\in\hat{\mathcal{Y}}_{n,K}}\{\mathbb{E}_{Q_y}[f(\mathbf{Z})]-I(y)\}}{n}\nonumber\\
  &\geq& \frac{(K-1)D}{2 n}\log\Big(\frac{n}{DK}\Big)-\frac{CDK}{n} \geq \frac{DK}{4n}\log\Big(\frac{n}{DK}\Big)-\frac{CDK}{n},
\end{eqnarray*}
where we use~\eqref{E5.15} and \eqref{E5.16} in the fourth line, along with the fact that $n_d=n\slash D$ for all $d\in [D]$. Hence there exists an absolute positive constant $c_0$, such that when $DK\leq c_0 n$, for any $\mathbf{X}\in[V]^n$, 
\begin{equation*}
    \frac{1}{n}\KL\big(\hat{P} \,\big\| \; \mathbb{P}(\mathbf{Z},\{\pmb{\pi}_d\}_{d\in [D]}|\mathbf{X}) \big)\geq \frac{1}{n}\KL\big(\hat{Q}\,\big\|\;\mathbb{P}(\mathbf{Z}|\mathbf{X})\big)\geq \frac{DK}{5n}\log\Big(\frac{n}{DK}+2\Big). 
\end{equation*}

\subsection{Proof of Theorem \ref{Theorem_MMSB_LBD}} 

We recall the model setup and notation for MMSB in Section \ref{Sect.1.3}, and assume the definitions given at the beginning of Appendix \ref{Appendix_B}. We denote by $\mathcal{S}_{n,K}$ the normalizing constant of the posterior\slash collapsed posterior (see~\eqref{eq:SnK_MMSB}). 

\medskip

\noindent{\bf{Step 1.}} We first derive a lower bound on $\log \mathcal{S}_{n,K}$. Under the assumptions of Theorem \ref{Theorem_MMSB_LBD}, the functions $\Upsilon(\cdot)$ and $f(\cdot)$ defined in~\eqref{defUps:MMSB} and~\eqref{deffF_MMSB} reduce to
\begin{equation}\label{def_Upsilon_new}
    \Upsilon(z)=\mathscr{M}_1\log B+\mathscr{M}_0\log(1-B)+\sum_{i=1}^n\sum_{\ell=1}^K\log\Gamma(N_{i,\ell}(z)+1\slash 2)-n\log\Gamma(2n-2+K\slash 2),
\end{equation}
\begin{equation}\label{def_f_new.MMSB}
    f(z)=\sum_{i=1}^n\sum_{\ell=1}^K\log\Gamma(N_{i,\ell}(z)+1\slash 2)-n\log\Gamma(2n-2+K\slash 2),
\end{equation}
where $\mathscr{M}_1,\mathscr{M}_0$ are as in~\eqref{MathscrMde}. Consequently, using~\eqref{eq:SnK_MMSB} we have 
\begin{eqnarray}\label{E6.30}
\mathcal{S}_{n,K} &=& \frac{B^{\mathscr{M}_1}(1-B)^{\mathscr{M}_0}}{\Gamma(2n-2+K\slash 2)^n}\cdot\sum_{z\in[K]^{2n(n-1)}}\prod_{i=1}^n \prod_{\ell=1}^K\Gamma(N_{i,\ell}(z)+1\slash 2).
\end{eqnarray}

In the following, denote
\begin{equation*}
    \mathfrak{S}_{n,K}:=\bigg\{(u_{\ell})_{\ell\in [K]}\in\mathbb{N}^K:\sum_{\ell=1}^K  u_{\ell}=n-1\bigg\}.
\end{equation*}
Also, for any $u,v\geq 0$, set
\begin{equation*}
     \rho(u,v):=(u+v)\log(u+v)-u\log u-v\log v. 
\end{equation*}
Recalling that
\begin{equation*}
    N_{i,\ell}(z)=N_{\rightarrow,i,\ell}(z)+N_{\leftarrow,i,\ell}(z),\quad\mbox{where}\quad\sum_{\ell=1}^K N_{\rightarrow,i,\ell}(z)=\sum_{\ell=1}^K N_{\leftarrow,i,\ell}(z)=n-1
\end{equation*}
(see~\eqref{Nildef}), we have 
\begin{eqnarray}\label{E6.29}
   && \sum_{z\in[K]^{2n(n-1)}}\prod_{i=1}^n \prod_{\ell=1}^K\Gamma(N_{i,\ell}(z)+1\slash 2)\nonumber\\
     &=& \sum_{\substack{(u_{i,\ell})_{i\in[n],\ell\in[K]},\\(v_{i,\ell})_{i\in[n],\ell\in [K]}\in(\mathfrak{S}_{n,K})^n}} \sum_{\substack{z\in[K]^{2n(n-1)}:N_{\rightarrow,i,\ell}(z)=u_{i,\ell},\\N_{\leftarrow,i,\ell}(z)=v_{i,\ell},\forall i\in[n],\ell\in[K]}}\prod_{i=1}^n \prod_{\ell=1}^K\Gamma(u_{i,\ell}+v_{i,\ell}+1\slash 2)\nonumber\\
&=&\sum_{\substack{(u_{i,\ell})_{i\in[n],\ell\in[K]},\\(v_{i,\ell})_{i\in[n],\ell\in [K]}\in(\mathfrak{S}_{n,K})^n}}\prod_{i=1}^n \frac{((n-1)!)^2\prod_{\ell=1}^K\Gamma(u_{i,\ell}+v_{i,\ell}+1\slash 2)}{\prod_{\ell=1}^K u_{i,\ell}!\prod_{\ell=1}^K v_{i,\ell}!}\nonumber\\
   &=& ((n-1)!)^{2n} \bigg(\sum_{\substack{(u_{\ell})_{\ell\in[K]},\\(v_{\ell})_{\ell\in[K]}\in\mathfrak{S}_{n,K}}} \prod_{\ell=1}^K\frac{\Gamma(u_{\ell}+v_{\ell}+1\slash 2)}{\Gamma(u_{\ell}+1)\Gamma(v_{\ell}+1)}\bigg)^n.
\end{eqnarray}
By Stirling's approximation in Lemma \ref{L3.2} (in Appendix \ref{Appendix_E}), for any $\ell\in[K]$ and $u_{\ell},v_{\ell}\in\mathbb{N}$, we have  
\begin{eqnarray}\label{stirling_new}
    \Big|\log\Gamma(u_{\ell}+v_{\ell}+1\slash 2)-(u_{\ell}+v_{\ell})\log(u_{\ell}+v_{\ell})+(u_{\ell}+v_{\ell})\Big|\leq C,
\end{eqnarray}
\begin{equation*}
    \Big|\log\Gamma(u_{\ell}+1)-u_{\ell}\log u_{\ell}+u_{\ell}-\frac{1}{2}\log(\max\{u_{\ell},1\})\Big|\leq C,
\end{equation*}
\begin{equation*}
    \Big|\log\Gamma(v_{\ell}+1)-v_{\ell}\log v_{\ell}+v_{\ell}-\frac{1}{2}\log(\max\{v_{\ell},1\})\Big|\leq C.
\end{equation*}
Hence
\begin{align}\label{E6.27}
& \sum_{\substack{(u_{\ell})_{\ell\in[K]},\\(v_{\ell})_{\ell\in[K]}\in\mathfrak{S}_{n,K}}} \prod_{\ell=1}^K\frac{\Gamma(u_{\ell}+v_{\ell}+1\slash 2)}{\Gamma(u_{\ell}+1)\Gamma(v_{\ell}+1)} \nonumber\\
\geq& \exp(-CK) \sum_{\substack{(u_{\ell})_{\ell\in[K]},\\(v_{\ell})_{\ell\in[K]}\in\mathfrak{S}_{n,K}}} \exp\bigg(\sum_{\ell=1}^K\bigg(\rho(u_{\ell},v_{\ell})-\frac{1}{2}\log(\max\{u_{\ell},1\})-\frac{1}{2}\log(\max\{v_{\ell},1\})\bigg)\bigg).
\end{align}

Now for any $\mathbf{u}=(u_{\ell})_{\ell\in[K]}\in\mathfrak{S}_{n,K}$, we define  
\begin{align*}
    \mathcal{D}(\mathbf{u}):=\bigg\{&\mathbf{v}=(v_{\ell})_{\ell\in [K]}\in\mathfrak{S}_{n,K}:\sum_{\ell=1}^K\rho(u_{\ell},v_{\ell})\geq 2\log(2)(n-1)-\frac{K}{4},\nonumber\\
    &\hspace{1.4in}\sum_{\ell=1}^K\log(\max\{v_{\ell},1\})\leq K\log{2}+\sum_{\ell=1}^K\log(\max\{u_{\ell},1\})\bigg\}.
\end{align*}
The following lemma provides a lower bound on the cardinality of $\mathcal{D}(\mathbf{u})$. The proof of this lemma will be given at the end of this subsection.

\begin{lemma}\label{Lem6.1}
For any $\mathbf{u}=(u_{\ell})_{\ell\in[K]}\in\mathfrak{S}_{n,K}$, we have
\begin{equation*}
    |\mathcal{D}(\mathbf{u})|\geq \exp\bigg(\frac{1}{2}\bigg(\sum_{\ell=1}^K\log(\max\{u_{\ell},1\})-\max_{\ell\in[K]}\big\{\log(\max\{u_{\ell},1\})\big\}\bigg)-CK\bigg),
\end{equation*}
where $C$ is a positive absolute constant. 
\end{lemma}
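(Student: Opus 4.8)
The plan is to produce a large explicit family of admissible $\mathbf v$ by perturbing $\mathbf u$ within a coordinatewise box of the right scale. The starting point is the entropy identity
\[
\rho(u,v)=(u+v)\,H\!\Big(\tfrac{u}{u+v}\Big),\qquad H(p):=-p\log p-(1-p)\log(1-p),
\]
(with $\rho(0,0):=0$), which shows $\rho(u,v)\le(u+v)\log2$ with equality iff $u=v$. Since $\sum_\ell u_\ell=\sum_\ell v_\ell=n-1$ for $\mathbf v\in\mathfrak S_{n,K}$, summing gives the ``deficit'' identity
\[
2\log2\,(n-1)-\sum_{\ell=1}^K\rho(u_\ell,v_\ell)=\sum_{\ell=1}^K(u_\ell+v_\ell)\,D\!\big(p_\ell\,\big\|\,\tfrac12\big),\qquad p_\ell:=\tfrac{u_\ell}{u_\ell+v_\ell},
\]
where $D(p\|\tfrac12)=\log2-H(p)\ge0$. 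Thus the first defining inequality of $\mathcal D(\mathbf u)$ is precisely a bound on a weighted $\chi^2$-type distance of $\mathbf v$ from $\mathbf u$, while the second is a mild one-sided growth condition. I would fix a small absolute constant $c\in(0,\tfrac12]$ and consider the box $\mathcal B$ of integer vectors $\mathbf v$ with $|v_\ell-u_\ell|\le r_\ell:=\lfloor c\sqrt{\max\{u_\ell,1\}}\rfloor$ for every $\ell$.

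First I would verify that every $\mathbf v\in\mathcal B$ with $\sum_\ell v_\ell=n-1$ automatically lies in $\mathcal D(\mathbf u)$. Nonnegativity and integrality are immediate ($r_\ell=0$ when $u_\ell=0$, and $c\sqrt{u_\ell}\le u_\ell$ when $u_\ell\ge1$). For the second inequality, $v_\ell\le(1+c)u_\ell\le2\max\{u_\ell,1\}$ gives $\log\max\{v_\ell,1\}\le\log2+\log\max\{u_\ell,1\}$ termwise. For the first inequality, on the box each $p_\ell$ stays in a fixed neighbourhood of $\tfrac12$, so a second-order Taylor bound $D(p_\ell\|\tfrac12)\le C_0(p_\ell-\tfrac12)^2$ combined with $|p_\ell-\tfrac12|\le c/(C\sqrt{u_\ell})$ yields $(u_\ell+v_\ell)D(p_\ell\|\tfrac12)\le C_1c^2$ per coordinate, hence a total deficit $\le C_1c^2K\le K/4$ once $c$ is chosen small. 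Thus $\mathcal D(\mathbf u)\supseteq\mathcal B\cap\{\sum_\ell v_\ell=n-1\}$, and it remains to lower bound the number of lattice points of this box on the central slice.

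For the count, write $v_\ell=u_\ell+\delta_\ell$ with $\delta_\ell\in\{-r_\ell,\dots,r_\ell\}$ and $\sum_\ell\delta_\ell=0$. Viewing the $\delta_\ell$ as independent uniform variables, the desired count equals $\prod_\ell(2r_\ell+1)\cdot\mathbb P(\Delta=0)$ with $\Delta:=\sum_\ell\delta_\ell$. I would lower bound $\mathbb P(\Delta=0)$ by the \emph{log-concavity / unimodality} route: each $\delta_\ell$ has a log-concave, symmetric law, so (by Hoggar's theorem, convolutions of log-concave integer sequences are log-concave) $\Delta$ has a symmetric log-concave, hence unimodal with mode $0$, law supported on $\{-R,\dots,R\}$, $R:=\sum_\ell r_\ell$; consequently $\mathbb P(\Delta=0)=\max_k\mathbb P(\Delta=k)\ge(2R+1)^{-1}$. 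Using $2r_\ell+1\ge c\sqrt{\max\{u_\ell,1\}}$ and $2R+1\le C\sqrt{K(n+K)}$ (Cauchy--Schwarz), this gives
\[
|\mathcal D(\mathbf u)|\ \ge\ \frac{\prod_\ell(2r_\ell+1)}{2R+1}\ \ge\ \frac{c^{K}\,\sqrt{\max\{u_{\ell^\ast},1\}}}{C\sqrt{K(n+K)}}\prod_{\ell\neq\ell^\ast}\sqrt{\max\{u_\ell,1\}},
\]
where $\ell^\ast$ attains $\max_\ell u_\ell$. Finally, the elementary bound $\max_\ell u_\ell\ge(n-1)/K$ shows $\sqrt{\max\{u_{\ell^\ast},1\}}\big/\sqrt{K(n+K)}\ge e^{-CK}$ in both regimes $n\ge K$ and $n<K$, absorbing all remaining constant and $c^K$ factors into $e^{-CK}$ and yielding exactly the claimed bound.

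The main obstacle is the lattice-point count on the slice, i.e.\ the lower bound on $\mathbb P(\Delta=0)$: naive concentration (Chebyshev) only controls tails from above, whereas here one genuinely needs anti-concentration \emph{from below}. My plan sidesteps a full local central limit theorem by exploiting that convolutions of symmetric log-concave integer laws stay symmetric and unimodal, so the modal mass at $0$ is at least the reciprocal of the support size; the extra $\tfrac1{\sqrt K}$-type loss this incurs relative to the sharp $\tfrac1{\sqrt n}$ is harmless because of the $e^{-CK}$ tolerance in the statement. A secondary point needing care is verifying that a single absolute constant $c$ (independent of $\mathbf u,n,K$) can be chosen so that the box simultaneously enforces both defining inequalities of $\mathcal D(\mathbf u)$ together with nonnegativity.
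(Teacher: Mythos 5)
Your proposal is correct, and it resolves the crux of the lemma --- enforcing the sum constraint $\sum_\ell v_\ell=n-1$ while keeping each $v_\ell$ within $O(\sqrt{u_\ell})$ of $u_\ell$ --- by a genuinely different mechanism than the paper. The paper's proof is a deterministic sequential construction: it sorts the coordinates so that $u_{\sigma(1)}\le\cdots\le u_{\sigma(K)}$, chooses each $v_{\sigma(\ell)}$ for $\ell\le K-1$ from a one-sided window of width $\sqrt{u_{\sigma(\ell)}}/2$ on whichever side pushes the running discrepancy $\sum_{j\le\ell}(v_{\sigma(j)}-u_{\sigma(j)})$ back toward zero, and sets the last (largest) coordinate to fix the total; an induction exploiting the monotonicity of $\ell\mapsto u_{\sigma(\ell)}$ shows that every coordinate, including the last, deviates by at most $\sqrt{u_{\sigma(\ell)}}/2$, after which membership in $\mathcal{D}(\mathbf{u})$ follows from a per-coordinate second-order Taylor bound $\rho(u,v)\ge\log(2)(u+v)-(v-u)^2/(2v)$ together with $\max\{v_\ell,1\}\le 2\max\{u_\ell,1\}$ --- essentially the same membership verification as yours, for which your entropy-deficit identity $2\log(2)(n-1)-\sum_\ell\rho(u_\ell,v_\ell)=\sum_\ell(u_\ell+v_\ell)D(p_\ell\,\|\,\tfrac12)$ is a cleaner formulation. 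The count $\prod_{\ell\le K-1}\max\{\sqrt{u_{\sigma(\ell)}}/2,1\}$ then yields the stated bound directly, the excluded largest coordinate accounting exactly for the subtracted $\max_\ell$ term. You instead count the central slice of the full symmetric box via anti-concentration: the factorization $\prod_\ell(2r_\ell+1)\,\mathbb{P}(\Delta=0)$, the bound $\mathbb{P}(\Delta=0)\ge(2R+1)^{-1}$ from symmetry plus log-concavity (Hoggar, or equivalently closure of symmetric unimodal lattice laws under convolution), and the absorption of the loss $2R+1\le C\sqrt{K(n+K)}$ using $\max_\ell u_\ell\ge (n-1)/K$ --- which plays precisely the role of the paper's dropped largest coordinate --- all check out, including the edge cases $r_\ell=0$, $u_\ell=0$ (where $2r_\ell+1=1\ge c\sqrt{\max\{u_\ell,1\}}$ still holds) and both regimes $n-1\ge K$ and $n-1<K$. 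The trade-off: the paper's argument is entirely self-contained and elementary, whereas yours imports one combinatorial fact but avoids the sorted induction and is more robust (it would apply to any target set containing a comparable box), at the cost of a $\sqrt{K(n+K)}$ loss that, as you correctly observe, is harmless given the $e^{-CK}$ slack in the statement.
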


By (\ref{E6.27}) and Lemma \ref{Lem6.1},
\begin{eqnarray*}
    && \sum_{\substack{(u_{\ell})_{\ell\in[K]},\\(v_{\ell})_{\ell\in[K]}\in\mathfrak{S}_{n,K}}} \prod_{\ell=1}^K\frac{\Gamma(u_{\ell}+v_{\ell}+1\slash 2)}{\Gamma(u_{\ell}+1)\Gamma(v_{\ell}+1)} \nonumber\\
    &\geq& \exp(2\log(2)(n-1)-CK)\sum_{(u_{\ell})_{\ell\in[K]}\in\mathfrak{S}_{n,K}}\sum_{(v_{\ell})_{\ell\in[K]}\in\mathcal{D}(\mathbf{u})} \exp\bigg(-\sum_{\ell=1}^K\log(\max\{u_{\ell},1\})\bigg)\nonumber\\
    &\geq& \exp(2\log(2)(n-1)-CK) 
 \nonumber\\
    &&\hspace{0.4in}\times  \sum_{(u_{\ell})_{\ell\in[K]}\in\mathfrak{S}_{n,K}}\exp\bigg(-\frac{1}{2}\max_{\ell\in[K]}\big\{\log(\max\{u_{\ell},1\})\big\}-\frac{1}{2}\sum_{\ell=1}^K\log(\max\{u_{\ell},1\})\bigg).
\end{eqnarray*}
Throughout the rest of the proof, we assume that $n\geq K$. Note that
\begin{equation*}
    |\mathfrak{S}_{n,K}|=\binom{n+K-2}{K-1}=\frac{(n+K-2)\cdots (n+1) n}{(K-1)!}\geq \Big(\frac{n}{K}\Big)^{K-1}.
\end{equation*}
Moreover, for any $(u_{\ell})_{\ell\in[K]}\in\mathfrak{S}_{n,K}$, 
by Jensen's inequality,
\begin{eqnarray*}
    \sum_{\ell=1}^K\log(\max\{u_{\ell},1\})&\leq& K\log\bigg(\frac{\sum_{\ell=1}^K \max\{u_{\ell},1\}}{K}\bigg)\leq K\log\Big(\frac{n}{K}+1\Big)\leq K\log\Big(\frac{n}{K}\Big)+K\log{2},
\end{eqnarray*}
\begin{equation*}
    \max_{\ell\in[K]}\big\{\log(\max\{u_{\ell},1\})\big\}\leq \log{n}.
\end{equation*}
Combining the four displays as above, we have
\begin{eqnarray}\label{E6.31}
 &&\sum_{\substack{(u_{\ell})_{\ell\in[K]},\\(v_{\ell})_{\ell\in[K]}\in\mathfrak{S}_{n,K}}} \prod_{\ell=1}^K\frac{\Gamma(u_{\ell}+v_{\ell}+1\slash 2)}{\Gamma(u_{\ell}+1)\Gamma(v_{\ell}+1)}\nonumber\\
 &\geq&\Big(\frac{n}{K}\Big)^{K-1}\cdot \exp\bigg(2\log(2)(n-1)-CK-\frac{1}{2}\log n-\frac{K}{2}\log\Big(\frac{n}{K}\Big)-\frac{\log 2}{2} K\bigg)  \nonumber\\
 &\geq&\exp\bigg(2\log(2)(n-1)-\frac{1}{2}\log n+\Big(\frac{K}{2}-1\Big)\log\Big(\frac{n}{K}\Big)-CK\bigg).
\end{eqnarray}

By (\ref{E6.30}), (\ref{E6.29}), and (\ref{E6.31}), 
\begin{eqnarray}\label{E6.10}
    \log\mathcal{S}_{n,K}&\geq&\mathscr{M}_1\log B+\mathscr{M}_0\log(1-B)-n\log\Gamma(2n-2+K\slash 2)+2n\log((n-1)!)\nonumber\\
   &&\hspace{0.4in}+2\log(2)n(n-1)-\frac{1}{2}n\log n+n\Big(\frac{K}{2}-1\Big)\log\Big(\frac{n}{K}\Big)-CnK.
\end{eqnarray}

\medskip

\noindent{\bf{Step 2.}} Let $\hat{\mathcal{Y}}_{n,K}$ be as defined in~\eqref{YnK:MMSB}. For any $y\in\hat{\mathcal{Y}}_{n,K}$, we denote by $Q_y$ the product probability distribution on $[K]^{2n(n-1)}$ given by
\begin{equation}\label{def_Qy_MMSB}
    Q_y(z)=\prod_{i,j\in [n]:i\neq j}y_{i,j;z_{i\rightarrow j}, z_{i\leftarrow j}},\qquad \mbox{for all }z\in[K]^{2n(n-1)}.
\end{equation}
Under the assumptions of Theorem \ref{Theorem_MMSB_LBD}, for any $y\in\hat{\mathcal{Y}}_{n,K}$,  $I(y)$ defined in~\eqref{defI_MMSB} reduces to
\begin{equation*}
    I(y)=\sum_{i,j\in [n]:i\neq j}\sum_{\ell=1}^K\sum_{\ell'=1}^K y_{i,j;\ell,\ell'}\log y_{i,j;\ell,\ell'}+2n(n-1)\log K.
\end{equation*}

In this step, with $f(\cdot)$ as in~\eqref{def_f_new.MMSB}, we derive an upper bound on $\mathbb{E}_{Q_y}[f(\mathbf{Z})]-I(y)$ for any $y\in \hat{\mathcal{Y}}_{n,K}$. Note that  
\begin{eqnarray}\label{E6.4}
    \mathbb{E}_{Q_y}[f(\mathbf{Z})]-I(y)
   &=& \sum_{i=1}^n\sum_{\ell=1}^K\mathbb{E}_{Q_y}\big[\log\Gamma(N_{i,\ell}(\mathbf{Z})+1\slash 2)\big]-n\log\Gamma(2n-2+K\slash 2)\nonumber\\
   &&\hspace{0.2in}-\sum_{i,j\in [n]:i\neq j}\sum_{\ell=1}^K\sum_{\ell'=1}^K y_{i,j;\ell,\ell'}\log y_{i,j;\ell,\ell'}-2n(n-1)  \log K.
\end{eqnarray}

Similar to~\eqref{stirling_new}, by Stirling's approximation we have 
\begin{eqnarray}\label{E6.3}
    &&\sum_{i=1}^n\sum_{\ell=1}^K\mathbb{E}_{Q_y}[\log\Gamma(N_{i,\ell}(\mathbf{Z})+1\slash 2)]\nonumber\\
    &\leq& \sum_{i=1}^n\sum_{\ell=1}^K \mathbb{E}_{Q_y}[N_{i,\ell}(\mathbf{Z})\log(N_{i,\ell}(\mathbf{Z}))]-\sum_{i=1}^n\sum_{\ell=1}^K\mathbb{E}_{Q_y}[N_{i,\ell}(\mathbf{Z})]+CnK\nonumber\\
    &=&\sum_{i=1}^n\sum_{\ell=1}^K \mathbb{E}_{Q_y}[N_{i,\ell}(\mathbf{Z})\log(N_{i,\ell}(\mathbf{Z}))]-2n(n-1)+CnK.
\end{eqnarray}
Fix $i\in [n]$ and $\ell\in [K]$. With $\widetilde{N}_{i,\ell}(y)$ as in~\eqref{Nil_y_def}, if $\widetilde{N}_{i,\ell}(y)=0$, then $y_{i,j;\ell,\ell'}=y_{j,i;\ell',\ell}=0$ for all $j\in[n]\backslash\{i\},\ell'\in[K]$, and so $N_{i,\ell}(\mathbf{Z})=0$ under $Q_y$. Consequently, we have 
\begin{eqnarray}\label{E6.12}
    &&\mathbb{E}_{Q_y}[N_{i,\ell}(\mathbf{Z})\log(N_{i,\ell}(\mathbf{Z}))]=0=\widetilde{N}_{i,\ell}(y)\log\big(\widetilde{N}_{i,\ell}(y)\big).
\end{eqnarray}
Below we assume that $\widetilde{N}_{i,\ell}(y)>0$. Using the inequality $\log x\leq x-1$ for all $x>0$, we obtain that 
\begin{eqnarray}\label{E6.1}
 \mathbb{E}_{Q_y}\bigg[N_{i,\ell}(\mathbf{Z}) \log\bigg(\frac{N_{i,\ell}(\mathbf{Z})}{\widetilde{N}_{i,\ell}(y)}\bigg)\bigg]&\leq&  \mathbb{E}_{Q_y}\bigg[N_{i,\ell}(\mathbf{Z})\bigg(\frac{N_{i,\ell}(\mathbf{Z})}{\widetilde{N}_{i,\ell}(y)}-1\bigg)\bigg]\nonumber\\
 &=&\frac{\mathbb{E}_{Q_y}\big[N_{i,\ell}(\mathbf{Z})^2\big]}{\widetilde{N}_{i,\ell}(y)}-\widetilde{N}_{i,\ell}(y),
\end{eqnarray}
where we use the fact that
\begin{equation}\label{EQY}
    \mathbb{E}_{Q_y}[N_{i,\ell}(\mathbf{Z})]=\widetilde{N}_{i,\ell}(y)=\sum_{j\in[n]\backslash\{i\}}Q_y(Z_{i\rightarrow j}=\ell)+\sum_{j\in[n]\backslash\{i\}}Q_y(Z_{j\leftarrow i}=\ell).
\end{equation}
Now note that (recall~\eqref{Nildef}) $N_{i,\ell}(z) =\sum_{j\in[n]\backslash\{i\}}\mathbbm{1}_{z_{i\rightarrow j}=\ell}+\sum_{j\in[n]\backslash\{i\}}\mathbbm{1}_{z_{j\leftarrow i}=\ell}$ for any $z\in [K]^{2n(n-1)}$. Hence
\begin{eqnarray}\label{E6.2}
\mathbb{E}_{Q_y}\big[N_{i,\ell}(\mathbf{Z})^2\big] 
    &=& \sum_{j,j'\in[n]\backslash \{i\}}\mathbb{E}_{Q_y}[\mathbbm{1}_{Z_{i\rightarrow j}=\ell}\mathbbm{1}_{Z_{i\rightarrow j'}=\ell}]+\sum_{j,j'\in[n]\backslash \{i\}}\mathbb{E}_{Q_y}[\mathbbm{1}_{Z_{j\leftarrow i}=\ell}\mathbbm{1}_{Z_{j'\leftarrow i}=\ell}]\nonumber\\
    &&\hspace{0.4in}+2\sum_{j,j'\in[n]\backslash\{i\}}\mathbb{E}_{Q_y}[\mathbbm{1}_{Z_{i\rightarrow j}=\ell}\mathbbm{1}_{Z_{j'\leftarrow i}=\ell}]\nonumber\\
    &=& \sum_{j\in[n]\backslash\{i\}}Q_y(Z_{i\rightarrow j}=\ell)+\sum_{j\in[n]\backslash\{i\}}Q_y(Z_{j\leftarrow i}=\ell)\nonumber\\
    &&\hspace{0.4in}+\sum_{j,j'\in[n]\backslash\{i\}:j\neq j'}Q_y(Z_{i\rightarrow j}=\ell) Q_y(Z_{i\rightarrow j'}=\ell)\nonumber\\
    &&\hspace{0.4in}+\sum_{j,j'\in[n]\backslash\{i\}:j\neq j'}Q_y(Z_{j\leftarrow i}=\ell) Q_y(Z_{j'\leftarrow i}=\ell)\nonumber\\
    &&\hspace{0.4in} +2\sum_{j,j'\in[n]\backslash\{i\}} Q_y(Z_{i\rightarrow j}=\ell) Q_y(Z_{j'\leftarrow i}=\ell)\nonumber\\
    &=& \widetilde{N}_{i,\ell}(y)^2+\sum_{j\in[n]\backslash\{i\}}Q_y(Z_{i\rightarrow j}=\ell)(1-Q_y(Z_{i\rightarrow j}=\ell))\nonumber\\
    &&\hspace{0.4in}+\sum_{j\in[n]\backslash\{i\}}Q_y(Z_{j\leftarrow i}=\ell)(1-Q_y(Z_{j\leftarrow i}=\ell))\nonumber\\
    &\leq& \widetilde{N}_{i,\ell}(y)^2+\widetilde{N}_{i,\ell}(y),
\end{eqnarray}
where we use~\eqref{EQY} in the last line. Combining (\ref{E6.1}) and (\ref{E6.2}), we obtain that
\begin{equation*} \mathbb{E}_{Q_y}\bigg[N_{i,\ell}(\mathbf{Z}) \log\bigg(\frac{N_{i,\ell}(\mathbf{Z})}{\widetilde{N}_{i,\ell}(y)}\bigg)\bigg]\leq 1, 
\end{equation*}
which in turn gives
\begin{equation}\label{E6.16}
   \mathbb{E}_{Q_y}[N_{i,\ell}(\mathbf{Z})\log(N_{i,\ell}(\mathbf{Z}))]\leq \mathbb{E}_{Q_y}[N_{i,\ell}(\mathbf{Z})] \log\big(\widetilde{N}_{i,\ell}(y)\big) +1 = \widetilde{N}_{i,\ell}(y)\log\big(\widetilde{N}_{i,\ell}(y)\big)+1,
\end{equation}
where we use~\eqref{EQY} in the last equality. Noting that the conclusion of~\eqref{E6.16} also holds if $\widetilde{N}_{i,\ell}(y)=0$ (using~\eqref{E6.12}), combining~\eqref{E6.3} and~\eqref{E6.16} we get
\begin{eqnarray}\label{E6.5}
    &&\sum_{i=1}^n\sum_{\ell=1}^K\mathbb{E}_{Q_y}[\log\Gamma(N_{i,\ell}(\mathbf{Z})+1\slash 2)]\nonumber\\
    &\leq& \sum_{i=1}^n\sum_{\ell=1}^K \widetilde{N}_{i,\ell}(y)\log\big(\widetilde{N}_{i,\ell}(y)\big) -2n(n-1)+CnK. 
\end{eqnarray}

By~\eqref{E6.4} and~\eqref{E6.5}, we have 
\begin{eqnarray}\label{E6.7}
    \mathbb{E}_{Q_y}[f(\mathbf{Z})]-I(y)
&\leq&\sum_{i=1}^n\sum_{\ell=1}^K \widetilde{N}_{i,\ell}(y)\log\big(\widetilde{N}_{i,\ell}(y)\big)-n\log\Gamma(2n-2+K\slash 2)\nonumber\\
&&\hspace{0.2in}-2n(n-1)-2n(n-1)\log K+CnK\nonumber\\
&&\hspace{0.2in}-\sum_{i,j\in [n]:i\neq j}\sum_{\ell=1}^K\sum_{\ell'=1}^K y_{i,j;\ell,\ell'}\log y_{i,j;\ell,\ell'}. 
\end{eqnarray}
For any $i,j\in [n]$ such that $i\neq j$, using the subadditivity of Shannon entropy, we have
{\small
\begin{eqnarray*}
    -\sum_{\ell=1}^K\sum_{\ell'=1}^Ky_{i,j;\ell,\ell'}\log y_{i,j;\ell,\ell'}
  &\leq& -\sum_{\ell=1}^K\bigg(\sum_{\ell'=1}^K y_{i,j;\ell,\ell'}\bigg)\log\bigg(\sum_{\ell'=1}^K y_{i,j;\ell,\ell'}\bigg)\nonumber\\
  &&\hspace{0.2in} -\sum_{\ell'=1}^K\bigg(\sum_{\ell=1}^K y_{i,j;\ell,\ell'}\bigg)\log\bigg(\sum_{\ell=1}^K y_{i,j;\ell,\ell'}\bigg).
\end{eqnarray*}
}Hence
{\small
\begin{eqnarray}
    -\sum_{i,j\in [n]:i\neq j}\sum_{\ell=1}^K\sum_{\ell'=1}^K y_{i,j;\ell,\ell'}\log y_{i,j;\ell,\ell'}
   &\leq& -\sum_{i=1}^n\sum_{\ell=1}^K\sum_{j\in[n]\backslash\{i\}}\bigg(\sum_{\ell'=1}^K y_{i,j;\ell,\ell'}\bigg)\log\bigg(\sum_{\ell'=1}^K y_{i,j;\ell,\ell'}\bigg)\nonumber\\
   && \hspace{0.2in} -\sum_{i=1}^n\sum_{\ell=1}^K\sum_{j\in[n]\backslash\{i\}}\bigg(\sum_{\ell'=1}^K y_{j,i;\ell',\ell}\bigg)\log\bigg(\sum_{\ell'=1}^K y_{j,i;\ell',\ell}\bigg).
\end{eqnarray}
}Let $\phi(x)=x\log x$ (where $x\geq 0$) be defined as in~\eqref{phioriginal}. Note that $\phi$ is convex, and so by Jensen's inequality, for any $i\in [n]$ and $\ell\in [K]$,
\begin{eqnarray*}
   && \frac{1}{2(n-1)}\sum_{j\in[n]\backslash\{i\}}\phi\bigg(\sum_{\ell'=1}^K y_{i,j;\ell,\ell'}\bigg)+\frac{1}{2(n-1)}\sum_{j\in[n]\backslash\{i\}}\phi\bigg(\sum_{\ell'=1}^K y_{j,i;\ell',\ell}\bigg)\nonumber\\
   &\geq& \phi\bigg(\frac{1}{2(n-1)}\sum_{j\in[n]\backslash\{i\}}\sum_{\ell'=1}^K y_{i,j;\ell,\ell'}+\frac{1}{2(n-1)}\sum_{j\in[n]\backslash\{i\}}\sum_{\ell'=1}^K y_{j,i;\ell',\ell}\bigg)= \phi\bigg(\frac{\widetilde{N}_{i,\ell}(y)}{2(n-1)}\bigg).
\end{eqnarray*}
Hence
\begin{eqnarray}\label{E6.8}
   && -\sum_{i=1}^n\sum_{\ell=1}^K\sum_{j\in[n]\backslash\{i\}}\bigg(\sum_{\ell'=1}^K y_{i,j;\ell,\ell'}\bigg)\log\bigg(\sum_{\ell'=1}^K y_{i,j;\ell,\ell'}\bigg)\nonumber\\
   && \hspace{0.4in} -\sum_{i=1}^n\sum_{\ell=1}^K\sum_{j\in[n]\backslash\{i\}}\bigg(\sum_{\ell'=1}^K y_{j,i;\ell',\ell}\bigg)\log\bigg(\sum_{\ell'=1}^K y_{j,i;\ell',\ell}\bigg)\nonumber\\
   &\leq& -\sum_{i=1}^n\sum_{\ell=1}^K \widetilde{N}_{i,\ell}(y)\log\big(\widetilde{N}_{i,\ell}(y)\big)+\log(2(n-1))\sum_{i=1}^n\sum_{\ell=1}^K\widetilde{N}_{i,\ell}(y)\nonumber\\
   &=& -\sum_{i=1}^n\sum_{\ell=1}^K \widetilde{N}_{i,\ell}(y)\log\big(\widetilde{N}_{i,\ell}(y)\big)+2n(n-1)\log(2(n-1)). 
\end{eqnarray}
Combining (\ref{E6.7})-(\ref{E6.8}), we obtain that for any $y\in\hat{\mathcal{Y}}_{n,K}$,
\begin{eqnarray}\label{E6.11}
     \mathbb{E}_{Q_y}[f(\mathbf{Z})]-I(y)
   &\leq& -n\log\Gamma(2n-2+K\slash 2)-2n(n-1)-2n(n-1)\log K\nonumber\\
   &&\hspace{0.2in}+CnK+2n(n-1)\log(2(n-1)).
\end{eqnarray}

\medskip

\noindent{\bf{Step 3.}} Similar to~\eqref{eqnew.2}, using the data processing inequality we get
\begin{equation}\label{eq.new.2.2}
\KL\big(\hat{P}\,\big\|\;\mathbb{P}(\mathbf{Z}_{\rightarrow},\mathbf{Z}_{\leftarrow},\{\pmb{\pi}_{i}\}_{i\in [n]}|\mathbf{X})\big)\geq \KL\big(\hat{Q}\,\big\|\;\mathbb{P}(\mathbf{Z}_{\rightarrow},\mathbf{Z}_{\leftarrow}|\mathbf{X})\big).
\end{equation}
Moreover, under the assumptions of Theorem \ref{Theorem_MMSB_LBD}, \eqref{partition_MMSB} reduces to
\begin{equation}\label{eq.new.2.1}
    \sum_{z\in [K]^{2n(n-1)}} e^{f(z)}\mu(z)=\frac{\mathcal{S}_{n,K}}{K^{2n(n-1)}
    B^{\mathscr{M}_1}(1-B)^{\mathscr{M}_0}},
\end{equation}
where $\mathscr{M}_1,\mathscr{M}_0$ are as in~\eqref{MathscrMde}. Sequentially applying~\eqref{eq.new.2.2}, Lemma \ref{Lemma2.1}, and~\eqref{eq.new.2.1}, we get for any $\mathbf{X}\in\{0,1\}^{n(n-1)}$, 
\begin{eqnarray}
  &&  \frac{1}{n^2}\KL\big(\hat{P}\,\big\|\;\mathbb{P}(\mathbf{Z}_{\rightarrow},\mathbf{Z}_{\leftarrow},\{\pmb{\pi}_{i}\}_{i\in [n]}|\mathbf{X})\big)
  \geq \frac{1}{n^2} \KL\big(\hat{Q}\,\big\|\;\mathbb{P}(\mathbf{Z}_{\rightarrow},\mathbf{Z}_{\leftarrow}|\mathbf{X})\big)\nonumber\\
  &=&\frac{\log\big(\sum_{z\in [K]^{2n(n-1)}}e^{f(z)}\mu(z)\big)-\sup_{y\in\hat{\mathcal{Y}}_{n,K}}\{\mathbb{E}_{Q_y}[f(\mathbf{Z})]-I(y)\}}{n^2}\nonumber\\
  &=&\frac{\log \mathcal{S}_{n,K} - 2n(n-1)\log K-\mathscr{M}_1\log B-\mathscr{M}_0\log(1-B)-\sup_{y\in\hat{\mathcal{Y}}_{n,K}}\{\mathbb{E}_{Q_y}[f(\mathbf{Z})]-I(y)\}}{n^2}\nonumber\\
  &\geq& \frac{2n\log((n-1)!)+2\log(2)n(n-1)-\frac{1}{2}n\log n+n\big(\frac{K}{2}-1\big)\log\big(\frac{n}{K}\big)-CnK}{n^2} \nonumber\\
  && \hspace{0.2in}-\frac{-2n(n-1)+2n(n-1)\log(2(n-1))}{n^2}\nonumber\\
  &=& \frac{2\log((n-1)!)-2(n-1)\log(n-1)+2(n-1)-\frac{1}{2}\log n +\big(\frac{K}{2}-1\big)\log\big(\frac{n}{K}\big)-CK}{n}\nonumber\\
  &\geq&\frac{\frac{1}{2}\log n +\big(\frac{K}{2}-1\big)\log\big(\frac{n}{K}\big)-CK}{n}\nonumber\\
  &&\hspace{0.2in}\geq\frac{K-1}{2n}\log\Big(\frac{n}{K}\Big)-\frac{CK}{n}\geq \frac{K}{4n}\log\Big(\frac{n}{K}\Big)-\frac{CK}{n},
\end{eqnarray}
where we use (\ref{E6.10}) and (\ref{E6.11}) in the fourth line, and Stirling's approximation as in Lemma \ref{L3.2} in the second to last line. Hence there exists a positive absolute constant $c_0$, such that when $K\leq c_0 n$, for any $\mathbf{X}\in\{0,1\}^{n(n-1)}$, we have
\begin{equation*}
     \frac{1}{n^2}\KL\big(\hat{P}\,\big\|\;\mathbb{P}(\mathbf{Z}_{\rightarrow},\mathbf{Z}_{\leftarrow},\{\pmb{\pi}_{i}\}_{i\in [n]}|\mathbf{X})\big)
    \geq\frac{1}{n^2} \KL\big(\hat{Q}\,\big\|\;\mathbb{P}(\mathbf{Z}_{\rightarrow},\mathbf{Z}_{\leftarrow}|\mathbf{X})\big)\geq\frac{K}{5n}\log\Big(\frac{n}{K}+2\Big).
\end{equation*}

\bigskip

Now we give the proof of Lemma \ref{Lem6.1}.

\begin{proof}[Proof of Lemma \ref{Lem6.1}]
For any $u,v>0$, we define
\begin{equation*}
    h_u(v):=\rho(u,v)=(u+v)\log(u+v)-u\log u-v\log v.
\end{equation*}
Then we have 
\begin{equation*}
    h'_u(v)=\log(u+v)-\log v,\qquad h''_u(v)=\frac{1}{u+v}-\frac{1}{v}=-\frac{u}{v(v+u)}.
\end{equation*}
By second-order Taylor expansion, there exists $\xi$ between $u$ and $v$, such that
\begin{eqnarray}\label{E6.17}
    \rho(u,v)&=&h_u(v)=h_u(u)+h_u'(u)(v-u)+\frac{1}{2}h_u''(\xi)(v-u)^2\nonumber\\
    &=& \log(2)(u+v)-\frac{u(v-u)^2}{2\xi(\xi+u)}\geq \log(2)(u+v)-\frac{(v-u)^2}{2\min\{u,v\}}.
\end{eqnarray}

Let $\sigma\in S_K$ be a permutation of $[K]$ such that $u_{\sigma(1)}\leq \cdots\leq u_{\sigma(K)}$. Define $\mathcal{D}'(\mathbf{u})$ to be the set of $\mathbf{v}=(v_{\ell})_{\ell\in[K]}\in\mathbb{Z}^K$ that can be obtained by the following sequential procedure. For each $\ell=1,2,\cdots,K-1$, assume that $v_{\sigma(1)},\cdots,v_{\sigma(\ell-1)}$ have been determined. If $\sum_{j=1}^{\ell-1}v_{\sigma(j)}\geq \sum_{j=1}^{\ell-1}u_{\sigma(j)}$, we take $v_{\sigma(\ell)}\in\mathbb{N}$ such that $u_{\sigma(\ell)}-\sqrt{u_{\sigma(\ell)}}\slash 2\leq v_{\sigma(\ell)}\leq u_{\sigma(\ell)}$; otherwise, we take $v_{\sigma(\ell)}\in\mathbb{N}$ such that $u_{\sigma(\ell)}\leq v_{\sigma(\ell)}\leq u_{\sigma(\ell)}+\sqrt{u_{\sigma(\ell)}}\slash 2$. Finally, we take $v_{\sigma(K)}=n-1-\sum_{\ell=1}^{K-1}v_{\sigma(\ell)}$. By the definition of $\mathcal{D}'(\mathbf{u})$, 
\begin{eqnarray}\label{E6.24}
   &&|\mathcal{D}'(\mathbf{u})|\geq \prod_{\ell=1}^{K-1}\max\bigg\{\frac{\sqrt{u_{\sigma(\ell)}}}{2},1\bigg\}\nonumber\\
   &\geq&\exp\bigg(\frac{1}{2}\bigg(\sum_{\ell=1}^K\log(\max\{u_{\ell},1\})-\max_{\ell\in[K]}\Big\{\log(\max\{u_{\ell},1\})\Big\}\bigg)-CK\bigg).
\end{eqnarray}

Now we show by induction that for any $\mathbf{v}=(v_{\ell})_{\ell\in[K]}\in\mathcal{D}'(\mathbf{u})$,
\begin{equation}\label{E6.20}
    \bigg|\sum_{j=1}^{\ell} v_{\sigma(j)}- \sum_{j=1}^{\ell} u_{\sigma(j)}\bigg|\leq \frac{\sqrt{u_{\sigma(\ell)}}}{2}
\end{equation}
for all $\ell\in[K-1]$. When $\ell=1$, (\ref{E6.20}) holds by the definition of $\mathcal{D}'(\mathbf{u})$. Now consider any $\ell\in [2,K-1]\cap\mathbb{N}^{*}$, and assume that (\ref{E6.20}) holds with $\ell$ replaced by $\ell-1$. If $\sum_{j=1}^{\ell-1}v_{\sigma(j)}\geq \sum_{j=1}^{\ell-1}u_{\sigma(j)}$, we have
\begin{equation}\label{neweq1}
    -\sqrt{u_{\sigma(\ell)}}\slash 2\leq v_{\sigma(\ell)}-u_{\sigma(\ell)}\leq 0.
\end{equation}
Hence 
\begin{equation*}
    \sum_{j=1}^{\ell} v_{\sigma(j)}- \sum_{j=1}^{\ell} u_{\sigma(j)}\leq \sum_{j=1}^{\ell-1} v_{\sigma(j)}- \sum_{j=1}^{\ell-1} u_{\sigma(j)}\leq \frac{\sqrt{u_{\sigma(\ell-1)}}}{2}\leq \frac{\sqrt{u_{\sigma(\ell)}}}{2},
\end{equation*}
where the three inequalities use~\eqref{neweq1}, the induction hypothesis, and the fact that $\ell\mapsto u_{\sigma(\ell)}$ is non-decreasing. Also, using the inequality $\sum_{j=1}^{\ell-1}v_{\sigma(j)}\geq \sum_{j=1}^{\ell-1}u_{\sigma(j)}$ and~\eqref{neweq1}, 
we have
\begin{equation*}
    \sum_{j=1}^{\ell} v_{\sigma(j)}- \sum_{j=1}^{\ell} u_{\sigma(j)}\geq v_{\sigma(\ell)}-u_{\sigma(\ell)}\geq   -\sqrt{u_{\sigma(\ell)}}\slash 2.
\end{equation*}
A similar proof works for the case where $\sum_{j=1}^{\ell-1}v_{\sigma(j)}<\sum_{j=1}^{\ell-1}u_{\sigma(j)}$. Hence we have verified (\ref{E6.20}) for all $\ell\in [K-1]$ by induction.

By (\ref{E6.20}) (taking $\ell=K-1$), for any $\mathbf{v}=(v_{\ell})_{\ell\in[K]}\in\mathcal{D}'(\mathbf{u})$,
\begin{equation*}
    |v_{\sigma(K)}-u_{\sigma(K)}|=\bigg|\sum_{j=1}^{K-1} v_{\sigma(j)}- \sum_{j=1}^{K-1} u_{\sigma(j)}\bigg|\leq \frac{\sqrt{u_{\sigma(K-1)}}}{2}\leq \frac{\sqrt{u_{\sigma(K)}}}{2}.
\end{equation*}
From this and the definition of $\mathcal{D}'(\mathbf{u})$, we conclude that 
\begin{equation}\label{E6.22}
    |v_{\sigma(\ell)}-u_{\sigma(\ell)}|\leq \frac{\sqrt{u_{\sigma(\ell)}}}{2}
\end{equation}
for all $\ell\in [K]$. Note that (\ref{E6.22}) implies $v_{\sigma(K)}\in\mathbb{N}$. Since $v_{\sigma(\ell)}\in\mathbb{N}$ for all $\ell\in[K-1]$ by construction, we have $\mathbf{v}\in \mathfrak{S}_{n,K}$. Hence $\mathcal{D}'(\mathbf{u})\subseteq \mathfrak{S}_{n,K}$. 

By (\ref{E6.17}) and (\ref{E6.22}), for any $\mathbf{v}=(v_{\ell})_{\ell\in[K]}\in\mathcal{D}'(\mathbf{u})$ and $\ell\in [K]$, if $u_{\ell}>0$, then $v_{\ell}\geq u_{\ell}-\sqrt{u_{\ell}}\slash 2\geq u_{\ell}\slash 2$, and so 
\begin{equation}\label{E6.25}
    \rho(u_{\ell},v_{\ell})\geq \log(2)(u_{\ell}+v_{\ell})-\frac{(v_{\ell}-u_{\ell})^2}{2\min\{u_{\ell},v_{\ell}\}}\geq \log(2)(u_{\ell}+v_{\ell})-\frac{1}{4}.
\end{equation}
If $u_{\ell}=0$, then $v_{\ell}=0$ and $\rho(u_{\ell},v_{\ell})=0$, hence (\ref{E6.25}) still holds. Hence for any $\mathbf{v}=(v_{\ell})_{\ell\in[K]}\in\mathcal{D}'(\mathbf{u})$, we have
\begin{equation}\label{E6.26}
    \sum_{\ell=1}^K \rho(u_{\ell},v_{\ell})\geq \log(2)\sum_{\ell=1}^K(u_{\ell}+v_{\ell})-\frac{K}{4}=2\log(2)(n-1)-\frac{K}{4}. 
\end{equation}
By (\ref{E6.22}), for any $\mathbf{v}=(v_{\ell})_{\ell\in[K]} \in \mathcal{D}'(\mathbf{u})$ and $\ell\in [K]$, $\max\{v_{\ell},1\}\leq 2\max\{u_{\ell},1\}$. Hence for any $\mathbf{v}=(v_{\ell})_{\ell\in[K]} \in \mathcal{D}'(\mathbf{u})$,
\begin{equation}\label{E6.28}
    \sum_{\ell=1}^K\log(\max\{v_{\ell},1\})\leq K\log 2+\sum_{\ell=1}^K\log(\max\{u_{\ell},1\}). 
\end{equation}
By (\ref{E6.26}) and (\ref{E6.28}), $\mathcal{D}'(\mathbf{u})\subseteq\mathcal{D}(\mathbf{u})$, and so $|\mathcal{D}(\mathbf{u})|\geq |\mathcal{D}'(\mathbf{u})|$. The conclusion of the lemma follows from this and~\eqref{E6.24}. 
\end{proof}

\section{Proof of Theorem \ref{Theorem_MMSB_S}}\label{Appendix_D}

In this section, we present the proof of Theorem \ref{Theorem_MMSB_S}. We assume the notation in Section \ref{Sect.1.3} and the definitions given in~\eqref{Nildef}-\eqref{YnK:MMSB} and~\eqref{Nil_y_def} (in Appendix \ref{Appendix_B}). Throughout this section, we use $C$ to denote an absolute positive constant. The specific value of $C$ may vary from line to line.

We define $\mathcal{Q}^{\mathrm{ff}}$ to be the family of fully factorized mean-field distributions on $(\mathbf{Z}_{\rightarrow},\mathbf{Z}_{\leftarrow})$; i.e., $Q\in\mathcal{Q}^{\mathrm{ff}}$ if and only if the density of $Q$ w.r.t. a product base measure has the form
\begin{equation*}
    \prod_{i,j\in [n]:i\neq j}\zeta_{i \rightarrow j}(z_{i\rightarrow j})\cdot \prod_{i,j\in [n]: i\neq j}\zeta_{i \leftarrow j}(z_{i\leftarrow j}),\qquad \text{for all }z\in[K]^{2n(n-1)}.
\end{equation*} 
We let
\begin{equation*}
    \hat{Q}^{\mathrm{ff}}\in\arg\min_{Q\in \mathcal{Q}^{\mathrm{ff}}} \KL\big(Q\,\big\|\;\mathbb{P}(\mathbf{Z}_{\rightarrow},\mathbf{Z}_{\leftarrow}\mid\mathbf{X})\big).
\end{equation*}
With $\hat{\mathcal{Y}}_{n,K}$ as in~\eqref{YnK:MMSB}, we define
{\small
\begin{eqnarray}\label{Ydefff}
    \hat{\mathcal{Y}}^{\mathrm{ff}}_{n,K}&:=&\bigg\{y=(y_{i,j;\ell,\ell'})_{i,j\in[n],i\neq j;\ell,\ell'\in[K]}\in\hat{\mathcal{Y}}_{n,K}:\text{ there exist }\nonumber\\
    &&\hspace{0.1in} y_{\rightarrow}=(y_{i\rightarrow j,\ell})_{i,j\in[n],i\neq j;\ell\in [K]}\in \hat{\mathcal{Y}}^{\mathrm{ff},\rightarrow}_{n,K}\text{ and }y_{\leftarrow}=(y_{i\leftarrow j,\ell})_{i,j\in[n],i\neq j;\ell\in [K]}\in \hat{\mathcal{Y}}^{\mathrm{ff},\leftarrow}_{n,K},\nonumber\\
    &&\hspace{0.1in}\text{such that for every }i,j\in[n]\text{ with }i\neq j\text{ and every }\ell,\ell'\in[K],y_{i,j;\ell,\ell'}=y_{i\rightarrow j,\ell}y_{i\leftarrow j,\ell'}\bigg\},
\end{eqnarray}
}where
{\small
\begin{equation*}
    \hat{\mathcal{Y}}^{\mathrm{ff},\rightarrow}_{n,K}:=
    \bigg\{y_{\rightarrow}=(y_{i\rightarrow j,\ell})_{i,j\in[n],i\neq j;\ell\in [K]}\in [0,1]^{n(n-1)K}: \sum_{\ell =1}^K y_{i\rightarrow j,\ell}=1\text{ for all distinct }i,j\in [n]\bigg\},
\end{equation*}
\begin{equation*}
    \hat{\mathcal{Y}}^{\mathrm{ff},\leftarrow}_{n,K}:=
    \bigg\{y_{\leftarrow}=(y_{i\leftarrow j,\ell})_{i,j\in[n],i\neq j;\ell\in [K]}\in [0,1]^{n(n-1)K}: \sum_{\ell =1}^K y_{i\leftarrow j,\ell}=1\text{ for all distinct }i,j\in [n]\bigg\}.
\end{equation*}
}For every $i,j\in [n]$ such that $i\neq j$, we set $\mu_{i,j}$ to be the probability measure on $[K]^2$ such that $\mu_{i,j}(\ell,\ell')=K^{-2}$ for every $(\ell,\ell')\in [K]^2$. We also set $\mu:=\bigotimes\limits_{i,j\in [n]:i\neq j}\mu_{i,j}$ (recall~\eqref{defmu}). For any $y\in\hat{\mathcal{Y}}_{n,K}$, we let $Q_y$ be defined as in~\eqref{def_Qy_MMSB}, and set (note~\eqref{defI_MMSB})
\begin{eqnarray}\label{def_Iy}
    I(y)&:=&\sum_{i,j\in [n]: i\neq j} \sum_{\ell=1}^K\sum_{\ell'=1}^K y_{i,j;\ell,\ell'}\log\bigg(\frac{y_{i,j;\ell,\ell'}}{\mu_{i,j}(\ell,\ell')}\bigg)\nonumber\\
    &=& \sum_{i,j\in [n]: i\neq j} \sum_{\ell=1}^K\sum_{\ell'=1}^K y_{i,j;\ell,\ell'}\log y_{i,j;\ell,\ell'} +2n(n-1)\log K.
\end{eqnarray} 
Note that for any $y\in \hat{\mathcal{Y}}_{n,K}^{\mathrm{ff}}\subseteq \hat{\mathcal{Y}}_{n,K}$, with $y_{\rightarrow}\in\hat{\mathcal{Y}}^{\mathrm{ff},\rightarrow}_{n,K}$ and $y_{\leftarrow}\in\hat{\mathcal{Y}}^{\mathrm{ff},\leftarrow}_{n,K}$ satisfying the condition in~\eqref{Ydefff}, $Q_y$ is the product distribution on $[K]^{2n(n-1)}$ given by
\begin{equation*}
    Q_y(z)=\prod_{i,j\in [n]:i\neq j}y_{i\rightarrow j,z_{i\rightarrow j}}y_{i\leftarrow j, z_{i\leftarrow j}},\qquad \mbox{for all }z\in[K]^{2n(n-1)},
\end{equation*} 
and 
\begin{equation}\label{def_Iyn}
    I(y)=\sum_{i,j\in [n]:i\neq j}\sum_{\ell=1}^K y_{i\rightarrow j,\ell} \log y_{i\rightarrow j,\ell} +   \sum_{i,j\in [n]:i\neq j}\sum_{\ell=1}^K y_{i\leftarrow j,\ell} \log y_{i\leftarrow j,\ell}+2n(n-1)\log K.
\end{equation}

With $\Upsilon(\cdot)$ as in~\eqref{defUps:MMSB}, we set $f(z):=\Upsilon(z)$ for every $z\in [K]^{2n(n-1)}$. By the data processing inequality and Lemma~\ref{Lemma2.1}, we have 
\begin{eqnarray}\label{Thm1.5.eqq1}
   && \KL\big(\hat{P}^{\mathrm{ff}}\, \big\|\; \mathbb{P}(\mathbf{Z}_{\rightarrow},\mathbf{Z}_{\leftarrow},\{\pmb{\pi}_{i}\}_{i\in [n]}\mid\mathbf{X})\big)\geq \KL\big(\hat{Q}^{\mathrm{ff}}\, \big\|\; \mathbb{P}(\mathbf{Z}_{\rightarrow},\mathbf{Z}_{\leftarrow}\mid\mathbf{X})\big)\nonumber\\
   &=&\log\Big(\sum_{z\in[K]^{2n(n-1)}}e^{f(z)}\mu(z)\Big)-\sup_{y \in \hat{\mathcal{Y}}_{n,K}^{\mathrm{ff}} }\big\{\mathbb{E}_{Q_y}[\Upsilon(\mathbf{Z})]-I(y)\big\}.
\end{eqnarray}
Moreover, by Lemma~\ref{Lemma2.1}, we have
\begin{eqnarray}\label{Thm1.5.eqq2}
&&\log\Big(\sum_{z\in[K]^{2n(n-1)}}e^{f(z)}\mu(z)\Big)-\sup_{y \in \hat{\mathcal{Y}}_{n,K}}\big\{\mathbb{E}_{Q_y}[\Upsilon(\mathbf{Z})]-I(y)\big\} \nonumber\\
   &=& \KL\big(\hat{Q}\, \big\|\; \mathbb{P}(\mathbf{Z}_{\rightarrow},\mathbf{Z}_{\leftarrow}\mid\mathbf{X})\big)\geq 0.
\end{eqnarray}
Combining~\eqref{Thm1.5.eqq1} and~\eqref{Thm1.5.eqq2}, we get
\begin{eqnarray}\label{res1}
   && \KL\big(\hat{P}^{\mathrm{ff}}\, \big\|\; \mathbb{P}(\mathbf{Z}_{\rightarrow},\mathbf{Z}_{\leftarrow},\{\pmb{\pi}_{i}\}_{i\in [n]}\mid\mathbf{X})\big)\nonumber\\
   &\geq& \sup_{y\in\hat{\mathcal{Y}}_{n,K}}\big\{\mathbb{E}_{Q_y}[\Upsilon(\mathbf{Z})]-I(y)\big\}-\sup_{y \in \hat{\mathcal{Y}}_{n,K}^{\mathrm{ff}} }\big\{\mathbb{E}_{Q_y}[\Upsilon(\mathbf{Z})]-I(y)\big\}.
\end{eqnarray}

Now note that under the assumptions of Theorem \ref{Theorem_MMSB_S}, $\Upsilon(z)$ as defined in~\eqref{defUps:MMSB} reduces to
\begin{eqnarray}\label{def_Upsilon_newn}
    \Upsilon(z)&=&\sum_{\ell=1}^2\sum_{\ell'=1}^2 |\{(i,j)\in [n]^2:i\neq j, z_{i \rightarrow j}=\ell, z_{i\leftarrow j}=\ell'\}|\log B_{\ell,\ell'}\nonumber\\
    && \hspace{0.2in}+\sum_{i=1}^n\sum_{\ell=1}^2\log\Gamma(N_{i,\ell}(z)+1\slash 2)-n\log\Gamma(2n-1).
\end{eqnarray}
Hence for any $y\in\hat{\mathcal{Y}}_{n,K}$, with $\widetilde{N}_{i,\ell}(y)$ as in~\eqref{Nil_y_def}, using Jensen's inequality (noting that $\log\Gamma(\cdot)$ is convex on $(0,\infty)$) we get
\begin{eqnarray*}
   && \mathbb{E}_{Q_y}[\Upsilon(\mathbf{Z})]\nonumber\\
   &=&\sum_{i,j\in[n]:i\neq j}\sum_{\ell=1}^2\sum_{\ell'=1}^2 y_{i,j;\ell,\ell'}\log B_{\ell,\ell'}+\sum_{i=1}^n\sum_{\ell=1}^2\mathbb{E}_{Q_y}[\log\Gamma(N_{i,\ell}(\mathbf{Z})+1\slash 2)]-n\log\Gamma(2n-1)\nonumber\\
    &\geq& \sum_{i,j\in[n]:i\neq j}\sum_{\ell=1}^2\sum_{\ell'=1}^2 y_{i,j;\ell,\ell'}\log B_{\ell,\ell'}+\sum_{i=1}^n\sum_{\ell=1}^2\log\Gamma\big(\widetilde{N}_{i,\ell}(y)+1\slash 2\big)-n\log\Gamma(2n-1)\nonumber\\
    &\geq& \sum_{i,j\in[n]:i\neq j}\sum_{\ell=1}^2\sum_{\ell'=1}^2 y_{i,j;\ell,\ell'}\log B_{\ell,\ell'}+\sum_{i=1}^n\sum_{\ell=1}^2\phi\big(\widetilde{N}_{i,\ell}(y)\big)-2n(n-1)-Cn-n\log\Gamma(2n-1),
\end{eqnarray*}
where the last line uses Stirling's approximation as in Lemma \ref{L3.2}. Combining the above display with~\eqref{def_Iy} and noting that $K=2$ for this example, we get
\begin{eqnarray}\label{conclu1}
  && \sup_{y\in\hat{\mathcal{Y}}_{n,K}} \big\{\mathbb{E}_{Q_y}[\Upsilon(\mathbf{Z})]-I(y)\big\}\nonumber\\
  &\geq& \sup_{y\in\hat{\mathcal{Y}}_{n,K}}\{\Omega(y)\}-n\log\Gamma(2n-1) -2n(n-1)\log(2e)-Cn,
\end{eqnarray}
where for any $y\in\hat{\mathcal{Y}}_{n,K}$,
\begin{eqnarray}\label{def_Omega}
    \Omega(y)&:=&\sum_{i,j\in[n]:i\neq j}\sum_{\ell=1}^2\sum_{\ell'=1}^2 y_{i,j;\ell,\ell'}\log B_{\ell,\ell'}+\sum_{i=1}^n\sum_{\ell=1}^2\phi\big(\widetilde{N}_{i,\ell}(y)\big)\nonumber\\
    &&\hspace{0.2in}-\sum_{i,j\in [n]: i\neq j} \sum_{\ell=1}^2\sum_{\ell'=1}^2 \phi(y_{i,j;\ell,\ell'}).
\end{eqnarray}

Now fix $y\in\hat{\mathcal{Y}}^{\mathrm{ff}}_{n,K}$, with $y_{\rightarrow}\in\hat{\mathcal{Y}}^{\mathrm{ff},\rightarrow}_{n,K}$ and $y_{\leftarrow}\in\hat{\mathcal{Y}}^{\mathrm{ff},\leftarrow}_{n,K}$ satisfying the condition in~\eqref{Ydefff}. Note that for any $i,j\in [n]$ such that $i\neq j$, we have
\begin{eqnarray*}
    \sum_{\ell=1}^2\sum_{\ell'=1}^2 \phi(y_{i,j;\ell,\ell'})&=&\sum_{\ell=1}^2\sum_{\ell'=1}^2 \phi(y_{i\rightarrow j,\ell}y_{i\leftarrow j,\ell'})=\sum_{\ell=1}^2\sum_{\ell'=1}^2 y_{i\rightarrow j,\ell}y_{i\leftarrow j,\ell'}(\log y_{i\rightarrow j,\ell}+\log y_{i\leftarrow j,\ell'})\nonumber\\
    &=&\sum_{\ell=1}^2\phi(y_{i\rightarrow j,\ell})+\sum_{\ell'=1}^2\phi(y_{i\leftarrow j,\ell'})=\sum_{\ell=1}^2\phi(y_{i\rightarrow j,\ell})+\sum_{\ell=1}^2\phi(y_{i\leftarrow j,\ell}).
\end{eqnarray*}
Hence by~\eqref{def_Omega}, we have
\begin{equation}\label{OmegaEq}
    \Omega(y)=\omega(y_{\rightarrow},y_{\leftarrow}),
\end{equation}
where 
\begin{eqnarray}\label{def_omega}
    \omega(y_{\rightarrow},y_{\leftarrow})&:=&\sum_{i,j\in[n]:i\neq j}\sum_{\ell=1}^2\sum_{\ell'=1}^2 y_{i \rightarrow j,\ell}y_{i\leftarrow j,\ell'}\log B_{\ell,\ell'}\nonumber\\
    &&\hspace{0.2in}+\sum_{i=1}^n\sum_{\ell=1}^2\phi\bigg(\sum_{j\in[n]\backslash\{i\}}y_{i\rightarrow j,\ell}+\sum_{j\in[n]\backslash\{i\}}y_{j\leftarrow i,\ell}\bigg)\nonumber\\
    &&\hspace{0.2in}-\sum_{i,j\in [n]:i\neq j}\sum_{\ell=1}^2 \phi(y_{i\rightarrow j,\ell})-   \sum_{i,j\in [n]:i\neq j}\sum_{\ell=1}^2 \phi(y_{i\leftarrow j,\ell}).
\end{eqnarray}
By~\eqref{def_Upsilon_newn}, we have 
\begin{eqnarray}\label{EUpsilon}
    \mathbb{E}_{Q_y}[\Upsilon(\mathbf{Z})]&=&\sum_{i,j\in[n]:i\neq j}\sum_{\ell=1}^2\sum_{\ell'=1}^2 y_{i \rightarrow j,\ell}y_{i\leftarrow j,\ell'}\log B_{\ell,\ell'}+\sum_{i=1}^n\sum_{\ell=1}^2\mathbb{E}_{Q_y}[\log\Gamma(N_{i,\ell}(\mathbf{Z})+1\slash 2)]\nonumber\\
    &&\hspace{0.2in}-n\log\Gamma(2n-1).
\end{eqnarray}
Arguing similarly as in~\eqref{E6.3}-\eqref{E6.5}, we get 
\begin{eqnarray}\label{Thm1.5.eqq3}
&&\sum_{i=1}^n\sum_{\ell=1}^2\mathbb{E}_{Q_y}[\log\Gamma(N_{i,\ell}(\mathbf{Z})+1\slash 2)]\nonumber\\
&\leq&\sum_{i=1}^n\sum_{\ell=1}^2 
\phi\bigg(\sum_{j\in[n]\backslash\{i\}}y_{i\rightarrow j,\ell}+\sum_{j\in[n]\backslash\{i\}}y_{j\leftarrow i,\ell}\bigg)
-2n(n-1)+Cn,
\end{eqnarray}
where $\phi(x)=x\log x$ as in~\eqref{phioriginal}. By~\eqref{EUpsilon}, \eqref{Thm1.5.eqq3}, and~\eqref{def_Iyn}, noting that $K=2$ in this example, we have 
\begin{eqnarray*}
 \mathbb{E}_{Q_y}[\Upsilon(\mathbf{Z})]-I(y)
 &\leq& \omega(y_{\rightarrow},y_{\leftarrow})-n\log\Gamma(2n-1)-2n(n-1)\log(2e)+Cn\nonumber\\
    &=& \Omega(y)-n\log\Gamma(2n-1)-2n(n-1)\log(2e)+Cn,
\end{eqnarray*}
where $w(y_{\rightarrow},y_{\leftarrow})$ is as in~\eqref{def_omega}. Hence
\begin{eqnarray}\label{conclusion_new}
    && \sup_{y\in\hat{\mathcal{Y}}_{n,K}^{\mathrm{ff}}}\big\{\mathbb{E}_{Q_y}[\Upsilon(\mathbf{Z})]-I(y)\big\}\nonumber\\
    &\leq& \sup_{y\in\hat{\mathcal{Y}}_{n,K}^{\mathrm{ff}}}\{\Omega(y)\}-n\log\Gamma(2n-1)-2n(n-1)\log(2e)+Cn.
\end{eqnarray}
Combining~\eqref{res1}, \eqref{conclu1}, and \eqref{conclusion_new}, we get
\begin{eqnarray*}
   && \frac{1}{n^2}\KL\big(\hat{P}^{\mathrm{ff}}\, \big\|\; \mathbb{P}(\mathbf{Z}_{\rightarrow},\mathbf{Z}_{\leftarrow},\{\pmb{\pi}_{i}\}_{i\in [n]}\mid\mathbf{X})\big)\nonumber\\
   &\geq&\frac{\sup_{y\in\hat{\mathcal{Y}}_{n,K}}\{\Omega(y)\}-\sup_{y\in\hat{\mathcal{Y}}_{n,K}^{\mathrm{ff}}}\{\Omega(y)\}-Cn}{n^2}.
\end{eqnarray*}
Denote $\Delta:=\log B_{1,2}-\log B_{1,1}>0$. Combining the above display with Lemma \ref{LE2} below yields the desired conclusion of Theorem \ref{Theorem_MMSB_S}. 

\begin{lemma}\label{LE2}
There exists a constant $\delta>0$ that only depends on $\mathbf{B} \equiv (B_{\ell,\ell'})_{\ell,\ell'\in [2]}$, such that 
\begin{equation*}
    \sup_{y\in\hat{\mathcal{Y}}_{n,K}}\{\Omega(y)\}-\sup_{y\in\hat{\mathcal{Y}}_{n,K}^{\mathrm{ff}}}\{\Omega(y)\}
    \geq \delta n(n-1)-\Delta n.
\end{equation*}
\end{lemma}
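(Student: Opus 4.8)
# Proof Proposal for Lemma D.1 (the final statement)

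\textbf{Overview of the strategy.} The goal is to show a gap of order $n^2$ between the unconstrained supremum $\sup_{y\in\hat{\mathcal{Y}}_{n,K}}\{\Omega(y)\}$ and the factorized supremum $\sup_{y\in\hat{\mathcal{Y}}_{n,K}^{\mathrm{ff}}}\{\Omega(y)\}$. The natural plan is to bound each supremum separately: produce an \emph{explicit} feasible point $y^{\ast}\in\hat{\mathcal{Y}}_{n,K}$ that exploits the correlation structure $B_{1,1}=B_{2,2}<B_{1,2}=B_{2,1}$ to give a good lower bound on the first supremum, and then prove an \emph{upper bound} on $\sup_{\hat{\mathcal{Y}}_{n,K}^{\mathrm{ff}}}\{\Omega\}$ valid for \emph{all} factorized $y$. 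Recalling the definition~\eqref{def_Omega}, the term $\sum_{i,j}\sum_{\ell,\ell'}y_{i,j;\ell,\ell'}\log B_{\ell,\ell'}$ rewards putting mass on the off-diagonal pairs $(\ell,\ell')$ with $\ell\neq\ell'$ (since $\log B_{1,2}>\log B_{1,1}$), while the entropy-type terms $\sum_i\sum_\ell\phi(\widetilde N_{i,\ell}(y))-\sum_{i,j}\sum_{\ell,\ell'}\phi(y_{i,j;\ell,\ell'})$ control how concentrated the solution can be. The key point is that a fully factorized $y=y_\rightarrow\otimes y_\leftarrow$ \emph{cannot} place mass predominantly on the off-diagonal without also distorting its marginals, whereas a general product measure $Q_y$ can.

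\textbf{Lower bound on the unconstrained supremum.} First I would choose the symmetric candidate $y^{\ast}_{i,j;\ell,\ell'}=\tfrac12\mathbbm{1}_{\ell\neq\ell'}$, i.e.\ put all mass equally on the two off-diagonal configurations $(1,2)$ and $(2,1)$. This lies in $\hat{\mathcal{Y}}_{n,K}$. For this point, the edge term contributes $n(n-1)\log B_{1,2}$, the marginals satisfy $\widetilde N_{i,\ell}(y^{\ast})=n-1$ for each $i,\ell$ so $\sum_i\sum_\ell\phi(\widetilde N_{i,\ell}(y^{\ast}))=2n(n-1)\log(n-1)$, and the last term is $\sum_{i,j}2\cdot\phi(1/2)=-2n(n-1)\log 2$. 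Collecting these gives an explicit value of $\Omega(y^{\ast})$, and hence a clean lower bound $\sup_{\hat{\mathcal{Y}}_{n,K}}\{\Omega\}\ge \Omega(y^{\ast})=n(n-1)\log B_{1,2}+2n(n-1)\log(n-1)-2n(n-1)\log 2$.

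\textbf{Upper bound on the factorized supremum — the main obstacle.} This is the crux. For any $y\in\hat{\mathcal{Y}}^{\mathrm{ff}}_{n,K}$ with factors $y_\rightarrow,y_\leftarrow$, I would show that $\omega(y_\rightarrow,y_\leftarrow)$ (which equals $\Omega(y)$ by the identity noted after~\eqref{def_omega}) cannot exceed the unconstrained value minus a term of order $n(n-1)$. The argument I anticipate is: the off-diagonal reward in the edge term is $\sum_{i,j}[y_{i\rightarrow j,1}y_{i\leftarrow j,2}+y_{i\rightarrow j,2}y_{i\leftarrow j,1}]\Delta$ above the all-$B_{1,1}$ baseline, where $\Delta=\log B_{1,2}-\log B_{1,1}>0$. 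Under the product constraint, for fixed marginals $p=y_{i\rightarrow j,1}$ and $q=y_{i\leftarrow j,1}$ this cross-term equals $p(1-q)+q(1-p)$, which is at most $1/2+$ (something controlled by how far $p,q$ are from being anti-correlated) — but crucially a single pair of scalar marginals $(p,q)$ must \emph{simultaneously} serve the off-diagonal reward and the entropy bookkeeping, and one cannot have $p(1-q)+(1-p)q$ close to $1$ while keeping the marginal-entropy/edge-entropy balance that the unconstrained optimum enjoys. I would make this quantitative by a pointwise (per-pair, then per-node) convexity/optimization comparison: fix the induced node marginals $\widetilde N_{i,\ell}$ and maximize $\omega$ over all \emph{factorized} assignments consistent with them, then show this constrained per-pair maximum falls short of the per-pair value achievable in the unconstrained problem by a strictly positive constant $c(\mathbf B)$, uniformly over pairs. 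Summing the deficit over the $n(n-1)$ ordered pairs yields the $\delta n(n-1)$ gap. The delicate part is handling the coupling between the two entropy layers (the node-level $\phi(\widetilde N_{i,\ell})$ and the pair-level $\phi(y_{i,j;\ell,\ell'})$) so that the per-pair comparison is legitimate; I expect to separate variables by first fixing the marginals, reducing to a finite-dimensional concave maximization in the pair variables, and using a strict-inequality (strict concavity / Jensen with equality characterization) argument to extract the constant $\delta$. Finally, subtracting the two bounds, the $2n(n-1)\log(n-1)$ and baseline terms cancel, leaving $\sup_{\hat{\mathcal{Y}}_{n,K}}\{\Omega\}-\sup_{\hat{\mathcal{Y}}^{\mathrm{ff}}_{n,K}}\{\Omega\}\ge \delta n(n-1)-\Delta n$, where the $-\Delta n$ slack absorbs the at most $n$ diagonal self-pair corrections; this matches the claimed bound.
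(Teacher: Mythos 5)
There is a genuine gap — in fact two concrete problems, one in each half of your plan. First, your lower-bound candidate $y^{\ast}_{i,j;\ell,\ell'}=\tfrac12\mathbbm{1}_{\ell\neq\ell'}$ is not good enough. Writing $h(t):=t\Delta-2\phi\big(\tfrac{1-t}{2}\big)-2\phi\big(\tfrac{t}{2}\big)$ for the per-pair value of the symmetric family $y_{i,j;1,1}=y_{i,j;2,2}=\tfrac{1-t}{2}$, $y_{i,j;1,2}=y_{i,j;2,1}=\tfrac{t}{2}$, your choice is $t=1$, giving $h(1)=\Delta+\log 2$, whereas the \emph{fully factorized} uniform point ($t=\tfrac12$, a product of uniform marginals) already achieves $h(\tfrac12)=\tfrac{\Delta}{2}+2\log 2$. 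Since $h(1)-h(\tfrac12)=\tfrac{\Delta}{2}-\log 2<0$ whenever $\Delta<2\log 2$, your candidate is \emph{worse} than a point of $\hat{\mathcal{Y}}_{n,K}^{\mathrm{ff}}$ for a wide range of $\mathbf{B}$, and the subtraction at the end yields a negative "gap." The entropy must temper the off-diagonal reward: the paper maximizes $h$ and uses $t^{\ast}=\frac{e^{\Delta}}{1+e^{\Delta}}$, for which $h(t^{\ast})>h(\tfrac12)$ strictly — this strict inequality is ultimately the source of $\delta$.

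Second, and more seriously, your upper-bound strategy — fix the node marginals $\widetilde N_{i,\ell}$ and show a \emph{uniform per-pair deficit} $c(\mathbf{B})$ of factorized versus unconstrained assignments — fails in the near-degenerate regime. Consider the deterministic bipartition: assign each node a fixed membership $\ell_i$, half the nodes to each group, i.e.\ $y^{*}_{i\rightarrow j,\ell},y^{*}_{i\leftarrow j,\ell}\in\{0,1\}$. Every pair distribution is then a point mass, which is trivially a product, so \emph{given these marginals} there is no per-pair deficit at all; yet this factorized point attains total value $n(n-1)\big(\log B_{1,1}+\tfrac{\Delta}{2}+2\log(2(n-1))\big)+o(n^2)$ — the same leading value as the uniform factorized point, and the deficit relative to the correlated optimum is a \emph{global} phenomenon (the degenerate marginals themselves are jointly suboptimal), not a per-pair one. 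This is exactly why the paper's proof proceeds through a dichotomy (its Lemma~\ref{LE1}), derived from the first-order stationarity conditions of the factorized maximizer: either at least $\tau n(\tau n-1)$ pairs have both factor values in $[\tau,1-\tau]$, in which case a marginal-preserving perturbation $y_{i,j;\ell,\ell'}\mapsto y^{*}_{i\rightarrow j,\ell}y^{*}_{i\leftarrow j,\ell'}+(-1)^{\mathbbm{1}_{\ell=\ell'}}t$ on those pairs leaves every $\widetilde N_{i,\ell}$ (hence the node-entropy layer) unchanged, gains $2t\Delta$ in edge reward per pair, and pays only $4t^2/\tau^2$ in pair-level entropy by a second-order Taylor bound on $\phi$; or else the maximizer is near-degenerate on most pairs and its \emph{global} value is capped at $\big(\log B_{1,1}+\tfrac{\Delta}{2}+2\log(2(n-1))+\varepsilon(\tau)\big)n(n-1)+\Delta n$, which loses to $h(t^{\ast})$ for small $\tau$. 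Your sketch contains the germ of the balanced-case perturbation, but without the dichotomy the key step is false as stated. (Minor: your closing remark that $-\Delta n$ "absorbs diagonal self-pair corrections" is off — the model has $i\neq j$ throughout; in the paper this slack comes from $|\mathfrak{B}|\geq\tau n(\tau n-1)$ versus $\tau^2n^2$ and from the $+\Delta n$ term in the degenerate-case cap.)
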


The rest of this section is devoted to the proof of Lemma \ref{LE2}. As $\hat{\mathcal{Y}}_{n,K}^{\mathrm{ff}}$ is a compact set and $\Omega(\cdot)$ is continuous on $\hat{\mathcal{Y}}_{n,K}^{\mathrm{ff}}$, there exists a maximizer of $\Omega(\cdot)$ on $\hat{\mathcal{Y}}_{n,K}^{\mathrm{ff}}$, which we denote by $y^{*}=(y^{*}_{i,j;\ell,\ell'})_{i,j\in[n],i\neq j;\ell,\ell'\in [K]}$. Suppose that
\begin{equation*}
    y_{\rightarrow}^{*}=(y^{*}_{i\rightarrow j,\ell})_{i,j\in[n],i\neq j;\ell\in [K]}\in \hat{\mathcal{Y}}^{\mathrm{ff},\rightarrow}_{n,K},\qquad y_{\leftarrow}^{*}=(y^{*}_{i\leftarrow j,\ell})_{i,j\in[n],i\neq j;\ell\in [K]}\in \hat{\mathcal{Y}}^{\mathrm{ff},\leftarrow}_{n,K}
\end{equation*}
satisfy the condition in~\eqref{Ydefff}. We have the following lemma. 

\begin{lemma}\label{LE1}
For any $\tau\in \Big(0,\frac{1}{1+e^{2\Delta}}\Big)$, there exists $\varepsilon(\tau)>0$ that only depends on $\tau,\Delta$, such that $\lim_{\tau\rightarrow 0^{+}}\varepsilon(\tau)=0$ and at least one of the following holds true:
\begin{itemize}
    \item[(i)] There exist at least $\tau n(\tau n-1)$ pairs $(i,j)\in [n]^2$ with $i\neq j$, such that
\begin{equation}\label{cond1}
    \tau\leq y^{*}_{i\rightarrow j,1}\leq 1-\tau, \qquad \tau\leq y^{*}_{i\leftarrow j,1}\leq 1-\tau. 
\end{equation}
    \item[(ii)] $\sup_{y\in \hat{\mathcal{Y}}_{n,K}^{\mathrm{ff}}}\{\Omega(y)\}\leq \Big(\log B_{1,1}+\frac{\Delta}{2}+2\log(2(n-1))+\varepsilon(\tau)\Big)n(n-1)+\Delta n$.
\end{itemize}
\end{lemma} 

\begin{proof}[Proof of Lemma \ref{LE1}]

For any $y_{\rightarrow}\in\hat{\mathcal{Y}}^{\mathrm{ff},\rightarrow}_{n,K},y_{\leftarrow}\in\hat{\mathcal{Y}}^{\mathrm{ff},\leftarrow}_{n,K}$ and $i\in [n],\ell\in [2]$, we define 
\begin{equation*}
    \widetilde{N}_{i,\ell}(y_{\rightarrow},y_{\leftarrow}):=\sum_{j\in[n]\backslash\{i\}}y_{i\rightarrow j,\ell}+\sum_{j\in[n]\backslash\{i\}}y_{j\leftarrow i,\ell}.
\end{equation*}
With $\omega(\cdot,\cdot)$ as defined in~\eqref{def_omega}, for any $y_{\rightarrow}\in\hat{\mathcal{Y}}^{\mathrm{ff},\rightarrow}_{n,K},y_{\leftarrow}\in\hat{\mathcal{Y}}^{\mathrm{ff},\leftarrow}_{n,K}$ and $i,j\in [n]$ with $i\neq j$, if $y_{i\rightarrow j,1}\in (0,1)$, then 
\begin{equation*}
    \frac{\partial \omega(y_{\rightarrow},y_{\leftarrow})}{\partial y_{i\rightarrow j,1}}=y_{i\leftarrow j,1}\log B_{1,1}+y_{i\leftarrow j,2}\log B_{1,2}+\log\big(\widetilde{N}_{i,1}(y_{\rightarrow},y_{\leftarrow})\big)-\log y_{i\rightarrow j,1},
\end{equation*}
\begin{equation*}
    \frac{\partial \omega(y_{\rightarrow},y_{\leftarrow})}{\partial y_{i\rightarrow j,2}}=y_{i\leftarrow j,1}\log B_{2,1}+y_{i\leftarrow j,2}\log B_{2,2}+\log\big(\widetilde{N}_{i,2}(y_{\rightarrow},y_{\leftarrow})\big)-\log y_{i\rightarrow j,2}.
\end{equation*}
Taking the difference of the above two displays, noting that $B_{1,1}=B_{2,2}<B_{1,2}=B_{2,1}$ and $\Delta=\log B_{1,2}-\log B_{1,1}$, we get
\begin{equation}\label{deriv1}
     \frac{\partial \omega(y_{\rightarrow},y_{\leftarrow})}{\partial y_{i\rightarrow j,1}}-\frac{\partial \omega(y_{\rightarrow},y_{\leftarrow})}{\partial y_{i\rightarrow j,2}}=
        -(y_{i\leftarrow j,1}-y_{i\leftarrow j,2})\Delta+ \log\bigg(\frac{\widetilde{N}_{i,1}(y_{\rightarrow},y_{\leftarrow})}{\widetilde{N}_{i,2}(y_{\rightarrow},y_{\leftarrow})}\bigg)-\log\bigg(\frac{y_{i\rightarrow j,1}}{y_{i\rightarrow j,2}}\bigg).
\end{equation}
Similarly, if $y_{i\leftarrow j,1}\in (0,1)$, then 
\begin{equation}\label{deriv2}
    \frac{\partial \omega(y_{\rightarrow},y_{\leftarrow})}{\partial y_{i\leftarrow j,1}}-\frac{\partial \omega(y_{\rightarrow},y_{\leftarrow})}{\partial y_{i\leftarrow j,2}}= -(y_{i\rightarrow j,1}-y_{i\rightarrow j,2})\Delta+ \log\bigg(\frac{\widetilde{N}_{j,1}(y_{\rightarrow},y_{\leftarrow})}{\widetilde{N}_{j,2}(y_{\rightarrow},y_{\leftarrow})}\bigg)-\log\bigg(\frac{y_{i\leftarrow j,1}}{y_{i\leftarrow j,2}}\bigg).
\end{equation}

Below we fix $i,j\in [n]$ with $i\neq j$. By~\eqref{deriv1}, if $y^{*}_{i\rightarrow j, 1}=0$ (which implies $y^{*}_{i\rightarrow j,2}=1$) and $\widetilde{N}_{i,1}(y^{*}_{\rightarrow},y^{*}_{\leftarrow})>0$, then we have $\frac{\partial \omega(y^{*}_{\rightarrow},y^{*}_{\leftarrow})}{\partial y_{i\rightarrow j,1}}-\frac{\partial \omega(y^{*}_{\rightarrow},y^{*}_{\leftarrow})}{\partial y_{i\rightarrow j,2}}=+\infty$, and we can increase the value of $y^{*}_{i\rightarrow j,1}$ by a sufficiently small amount (and decrease $y^{*}_{i\rightarrow j,2}=1-y^{*}_{i\rightarrow j,1}$ meanwhile) to strictly increase the value of $\omega(y^{*}_{\rightarrow},y^{*}_{\leftarrow})$. This contradicts with the definition of $(y^{*}_{\rightarrow},y^{*}_{\leftarrow})$. Hence if $y_{i\rightarrow j,1}^{*}=0$, then $\widetilde{N}_{i,1}(y^{*}_{\rightarrow},y^{*}_{\leftarrow})=0$. Similarly, if $y_{i\rightarrow j,1}^{*}=1$, then $\widetilde{N}_{i,2}(y^{*}_{\rightarrow},y^{*}_{\leftarrow})=0$; if $y_{i\leftarrow j,1}^{*}=0$, then $\widetilde{N}_{j,1}(y^{*}_{\rightarrow},y^{*}_{\leftarrow})=0$; if $y_{i\leftarrow j,1}^{*}=1$, then $\widetilde{N}_{j,2}(y^{*}_{\rightarrow},y^{*}_{\leftarrow})=0$. 

Now if $y_{i\rightarrow j,1}^{*}\in (0,1)$, we have $\frac{\partial \omega(y^{*}_{\rightarrow},y^{*}_{\leftarrow})}{\partial y_{i\rightarrow j,1}}-\frac{\partial \omega(y^{*}_{\rightarrow},y^{*}_{\leftarrow})}{\partial y_{i\rightarrow j,2}}=0$. Hence by~\eqref{deriv1}, 
\begin{eqnarray*}
   & -(y^{*}_{i\leftarrow j,1}-y^{*}_{i\leftarrow j,2})\Delta+ \log\bigg(\frac{\widetilde{N}_{i,1}(y^{*}_{\rightarrow},y^{*}_{\leftarrow})}{\widetilde{N}_{i,2}(y^{*}_{\rightarrow},y^{*}_{\leftarrow})}\bigg)-\log\bigg(\frac{y^{*}_{i\rightarrow j,1}}{y^{*}_{i\rightarrow j,2}}\bigg)=0,\nonumber\\
   & \Rightarrow \bigg|\log\bigg(\frac{\widetilde{N}_{i,1}(y^{*}_{\rightarrow},y^{*}_{\leftarrow})}{\widetilde{N}_{i,2}(y^{*}_{\rightarrow},y^{*}_{\leftarrow})}\frac{y^{*}_{i\rightarrow j,2}}{y^{*}_{i\rightarrow j,1}}\bigg)\bigg|=|(y^{*}_{i\leftarrow j,1}-y^{*}_{i\leftarrow j,2})\Delta|\leq \Delta,
\end{eqnarray*}
which gives
\begin{equation}\label{neq1.1}
  e^{-\Delta}\cdot\frac{\widetilde{N}_{i,1}(y^{*}_{\rightarrow},y^{*}_{\leftarrow})}{\widetilde{N}_{i,2}(y^{*}_{\rightarrow},y^{*}_{\leftarrow})}  \leq \frac{y^{*}_{i\rightarrow j,1}}{y^{*}_{i\rightarrow j,2}} \leq e^{\Delta}\cdot\frac{\widetilde{N}_{i,1}(y^{*}_{\rightarrow},y^{*}_{\leftarrow})}{\widetilde{N}_{i,2}(y^{*}_{\rightarrow},y^{*}_{\leftarrow})}.
\end{equation}
Similarly, if $y_{i\leftarrow j,1}^{*}\in (0,1)$, then $\frac{\partial \omega(y^*_{\rightarrow},y^*_{\leftarrow})}{\partial y_{i\leftarrow j,1}}-\frac{\partial \omega(y^*_{\rightarrow},y^*_{\leftarrow})}{\partial y_{i\leftarrow j,2}}=0$, and from~\eqref{deriv2} we can deduce that
\begin{equation}\label{neq1.2}
  e^{-\Delta}\cdot\frac{\widetilde{N}_{j,1}(y^{*}_{\rightarrow},y^{*}_{\leftarrow})}{\widetilde{N}_{j,2}(y^{*}_{\rightarrow},y^{*}_{\leftarrow})}  \leq \frac{y^{*}_{i\leftarrow j,1}}{y^{*}_{i\leftarrow j,2}} \leq e^{\Delta}\cdot\frac{\widetilde{N}_{j,1}(y^{*}_{\rightarrow},y^{*}_{\leftarrow})}{\widetilde{N}_{j,2}(y^{*}_{\rightarrow},y^{*}_{\leftarrow})}.
\end{equation}

In the following, we fix $\tau\in \Big(0,\frac{1}{1+e^{2\Delta}}\Big)$, and suppose that condition (i) in the statement of the lemma does not hold. Denote $M(\tau):=\log\big(\frac{1-\tau}{\tau}\big)-\Delta>0$, and let $\mathfrak{A}(\tau)$ be the set of $i\in [n]$ such that
\begin{equation}\label{good}
   -M(\tau)\leq \log\bigg(\frac{\widetilde{N}_{i,1}(y^*_{\rightarrow},y^*_{\leftarrow})}{\widetilde{N}_{i,2}(y^*_{\rightarrow},y^*_{\leftarrow})}\bigg)\leq M(\tau). 
\end{equation}
From the above analysis, for any $(i,j)\in \mathfrak{A}(\tau)^2$ such that $i\neq j$, we have
\begin{equation*}
     \frac{\tau}{1-\tau} \leq  \frac{y^{*}_{i\rightarrow j,1}}{y^{*}_{i\rightarrow j,2}}\leq \frac{1-\tau}{\tau} \Rightarrow \tau\leq y^{*}_{i\rightarrow j,1}\leq 1-\tau,
\end{equation*}
\begin{equation*}
     \frac{\tau}{1-\tau} \leq  \frac{y^{*}_{i\leftarrow j,1}}{y^{*}_{i\leftarrow j,2}}\leq \frac{1-\tau}{\tau} \Rightarrow \tau\leq y^{*}_{i\leftarrow j,1}\leq 1-\tau.
\end{equation*}
As condition (i) does not hold, we have $|\mathfrak{A}(\tau)|\leq \tau n$. 

Now we proceed to bound $\sup_{y\in \hat{\mathcal{Y}}_{n,K}^{\mathrm{ff}}}\{\Omega(y)\}=\omega(y^*_{\rightarrow},y^*_{\leftarrow})$ (see \eqref{OmegaEq}). As $\sum_{\ell=1}^2\widetilde{N}_{i,\ell}(y_{\rightarrow}^*,y_{\leftarrow}^*)=2(n-1)$ for every $i\in [n]$ and $\max_{x\in [0,2(n-1)]}\{\phi(x)+\phi(2(n-1)-x)\}=\phi(2(n-1))$, we have
\begin{equation}\label{resu1}
    \sum_{i=1}^n\sum_{\ell=1}^2\phi\big(\widetilde{N}_{i,\ell}(y^{*}_{\rightarrow},y^{*}_{\leftarrow})\big)\leq n\phi(2(n-1))=2n(n-1)\log(2(n-1)).
\end{equation}
We denote by $\mathfrak{A}_1'(\tau)$ the set of $i\in [n]$ such that $\log\Big(\frac{\widetilde{N}_{i,1}(y^*_{\rightarrow},y^*_{\leftarrow})}{\widetilde{N}_{i,2}(y^*_{\rightarrow},y^*_{\leftarrow})}\Big)> M(\tau)$, and by $\mathfrak{A}_2'(\tau)$ the set of $i\in [n]$ such that $\log\Big(\frac{\widetilde{N}_{i,1}(y^*_{\rightarrow},y^*_{\leftarrow})}{\widetilde{N}_{i,2}(y^*_{\rightarrow},y^*_{\leftarrow})}\Big)< -M(\tau)$. Note that by~\eqref{good}, $[n]=\mathfrak{A}(\tau)\cup\mathfrak{A}_1'(\tau)\cup\mathfrak{A}_2'(\tau)$. Below we denote $a(\tau):=\frac{(1-\tau)e^{-2\Delta}}{\tau+(1-\tau)e^{-2\Delta}}\in (1\slash 2,1)$ (since $\tau\in \Big(0,\frac{1}{1+e^{2\Delta}}\Big)$). By~\eqref{neq1.1} and~\eqref{neq1.2}, for any $i\in \mathfrak{A}_1'(\tau)$ and $j\in [n]\backslash\{i\}$, 
\begin{equation*}
   \min\bigg\{ \log\bigg(\frac{y^{*}_{i\rightarrow j,1}}{y^{*}_{i\rightarrow j,2}}\bigg),\log\bigg(\frac{y^{*}_{j\leftarrow i,1}}{y^{*}_{j\leftarrow i,2}}\bigg)\bigg\}\geq M(\tau)-\Delta=\log\bigg(\frac{1-\tau}{\tau}\bigg)-2\Delta>0, 
\end{equation*}
which implies 
\begin{equation}\label{minbdd}
    \min\{y^{*}_{i\rightarrow j,1},y^{*}_{j\leftarrow i,1}\}\geq \frac{(1-\tau)e^{-2\Delta}}{\tau+(1-\tau)e^{-2\Delta}}=a(\tau)>\frac{1}{2} \Rightarrow \max\{y^{*}_{i\rightarrow j,2},y^{*}_{j\leftarrow i,2}\}\leq 1-a(\tau).
\end{equation}
By~\eqref{minbdd}, for any $i\in \mathfrak{A}_1'(\tau)$ and $j\in [n]\backslash\{i\}$, 
\begin{eqnarray}\label{ent}
   && \phi(y^{*}_{i\rightarrow j,1})+\phi(y^{*}_{i\rightarrow j,2})=\phi(y^{*}_{i\rightarrow j,1})+\phi(1-y^{*}_{i\rightarrow j,1})\geq \phi(a(\tau))+\phi(1-a(\tau)),\nonumber\\
   &&\phi(y^{*}_{j\leftarrow i,1})+\phi(y^{*}_{j\leftarrow i,2})=\phi(y^{*}_{j\leftarrow i,1})+\phi(1-y^{*}_{j\leftarrow i,1})\geq \phi(a(\tau))+\phi(1-a(\tau)),
\end{eqnarray}
where we note that $\phi(x)+\phi(1-x)$ is monotonically decreasing on $[0,1\slash 2]$ and monotonically increasing on $[1\slash 2,1]$. By~\eqref{neq1.1} and~\eqref{neq1.2}, for any $i\in\mathfrak{A}_2'(\tau)$ and $j\in [n]\backslash\{i\}$,
\begin{equation*}
   \max\bigg\{ \log\bigg(\frac{y^{*}_{i\rightarrow j,1}}{y^{*}_{i\rightarrow j,2}}\bigg),\log\bigg(\frac{y^{*}_{j\leftarrow i,1}}{y^{*}_{j\leftarrow i,2}}\bigg)\bigg\}\leq -M(\tau)+\Delta=-\log\bigg(\frac{1-\tau}{\tau}\bigg)+2\Delta, 
\end{equation*}
which implies
\begin{equation}\label{minbdd.1}
    \max\{y^{*}_{i\rightarrow j,1},y^{*}_{j\leftarrow i,1}\}\leq \frac{\tau}{\tau+(1-\tau)e^{-2\Delta}}=1-a(\tau)<\frac{1}{2}.
\end{equation}
Hence for any $i\in\mathfrak{A}_2'(\tau)$ and $j\in [n]\backslash\{i\}$,
\begin{equation}\label{ent.1}
    \min\{\phi(y^{*}_{i\rightarrow j,1})+\phi(y^{*}_{i\rightarrow j,2}),\phi(y^{*}_{j\leftarrow i,1})+\phi(y^{*}_{j\leftarrow i,2})\}\geq \phi(a(\tau))+\phi(1-a(\tau)).
\end{equation}
Combining~\eqref{ent} and~\eqref{ent.1}, and noting that $\phi(x)+\phi(1-x)\geq 2\phi(1\slash 2)=-\log 2$ for any $x\in [0,1]$, we get
\begin{eqnarray}\label{resu2}
  &&  -\sum_{i,j\in [n]:i\neq j}\sum_{\ell=1}^2 \phi(y^{*}_{i\rightarrow j,\ell})-   \sum_{i,j\in [n]:i\neq j}\sum_{\ell=1}^2 \phi(y^{*}_{i\leftarrow j,\ell})\nonumber\\
  &=& -\sum_{i\in \mathfrak{A}_1'(\tau)\cup\mathfrak{A}_2'(\tau)}\sum_{j\in[n]\backslash \{i\}}\sum_{\ell=1}^2 \phi(y^{*}_{i\rightarrow j,\ell})-\sum_{i\in \mathfrak{A}_1'(\tau)\cup\mathfrak{A}_2'(\tau)}\sum_{j\in[n]\backslash \{i\}}\sum_{\ell=1}^2 \phi(y^{*}_{j\leftarrow i,\ell})\nonumber\\
  &&\hspace{0.2in}-\sum_{i\in \mathfrak{A}(\tau)}\sum_{j\in[n]\backslash \{i\}}\sum_{\ell=1}^2 \phi(y^{*}_{i\rightarrow j,\ell})-\sum_{i\in \mathfrak{A}(\tau)}\sum_{j\in[n]\backslash \{i\}}\sum_{\ell=1}^2 \phi(y^{*}_{j\leftarrow i,\ell})\nonumber\\
  &\leq& -2(n-|\mathfrak{A}(\tau)|)(n-1)(\phi(a(\tau))+\phi(1-a(\tau)))+2|\mathfrak{A}(\tau)|(n-1)\log 2\nonumber\\
  &\leq& \big(-2(1-\tau)(\phi(a(\tau))+\phi(1-a(\tau)))+2\tau\log 2\big)n(n-1),
\end{eqnarray}
where the last inequality uses the fact that $|\mathfrak{A}(\tau)|\leq \tau n$, along with the fact that $\phi(a(\tau))+\phi(1-a(\tau))\geq -\log 2$. Using the fact that $y^*_{i\rightarrow j,\ell},y^*_{i\leftarrow j,\ell}\in [0,1]$, for any $(i,j)\in (\mathfrak{A}_1'(\tau)\times\mathfrak{A}_1'(\tau))\cup(\mathfrak{A}_2'(\tau)\times\mathfrak{A}_2'(\tau))$ such that $i\neq j$, we have
\begin{equation*}
    y_{i\rightarrow j,1}^{*}y^{*}_{i\leftarrow j,2}+y_{i\rightarrow j,2}^{*}y^{*}_{i\leftarrow j,1}  \leq \min\{y_{i\rightarrow j,1}^{*}+y^{*}_{i\leftarrow j,1},y^{*}_{i\leftarrow j,2}+y_{i\rightarrow j,2}^{*}\}
    \leq
    2(1-a(\tau))<1,
\end{equation*}
where the last but one inequality uses~\eqref{minbdd} and~\eqref{minbdd.1}. Moreover, for any $i,j\in [n]$ such that $i\neq j$, we have 
\begin{equation*}
    y_{i\rightarrow j,1}^{*}y^{*}_{i\leftarrow j,2}+y_{i\rightarrow j,2}^{*}y^{*}_{i\leftarrow j,1}  \leq (y^*_{i\rightarrow j,1}+y^*_{i\rightarrow j,2})(y^{*}_{i\leftarrow j,1}+y^{*}_{i\leftarrow j,2})=1.
\end{equation*}
Hence noting that $B_{1,1}=B_{2,2}<B_{1,2}=B_{2,1}$ and $\Delta=\log B_{1,2}-\log B_{1,1}$, we get   
\begin{eqnarray}\label{resu3}
    && \sum_{i,j\in[n]:i\neq j}\sum_{\ell=1}^2\sum_{\ell'=1}^2 y^*_{i \rightarrow j,\ell}y^*_{i\leftarrow j,\ell'}\log B_{\ell,\ell'}\nonumber\\
    &=& n(n-1)\log B_{1,1}+\sum_{i,j\in [n]:i\neq j}(y^*_{i \rightarrow j,1}y^*_{i\leftarrow j,2}+y^*_{i\rightarrow j,2}y^*_{i\leftarrow j,1})\Delta\nonumber\\
    &\leq& n(n-1)\log B_{1,1}\nonumber\\
    &&\hspace{0.1in}+2(1-a(\tau))\Delta\big|\big\{(i,j)\in [n]^2:i\neq j,(i,j)\in (\mathfrak{A}_1'(\tau)\times\mathfrak{A}_1'(\tau))\cup(\mathfrak{A}_2'(\tau)\times\mathfrak{A}_2'(\tau))\big\}\big|\nonumber\\
    &&\hspace{0.1in}+\Delta\big|\big\{(i,j)\in [n]^2:i\neq j,(i,j)\notin (\mathfrak{A}_1'(\tau)\times\mathfrak{A}_1'(\tau))\cup(\mathfrak{A}_2'(\tau)\times\mathfrak{A}_2'(\tau))\big\}\big|\nonumber\\
    &=& n(n-1)\log B_{1,1}+n(n-1)\Delta\nonumber\\
    &&\hspace{0.1in}
    -(1-2(1-a(\tau)))\Delta\big(|\mathfrak{A}'_1(\tau)|(|\mathfrak{A}'_1(\tau)|-1)+|\mathfrak{A}'_2(\tau)|(|\mathfrak{A}'_2(\tau)|-1)\big)\nonumber\\
    &\leq& n(n-1)\log B_{1,1}+n(n-1)\Delta
    +(1-2(1-a(\tau)))\Delta n\nonumber\\
    &&\hspace{0.1in} -(1-2(1-a(\tau)))\Delta\frac{(1-\tau)^2n^2}{2}\nonumber\\
    &\leq& n(n-1)\log B_{1,1}+\Delta  n^2 -\frac{(1-\tau)^2(1-2(1-a(\tau)))}{2}\Delta n^2,
\end{eqnarray}
where the last but one inequality uses the bound $|\mathfrak{A}(\tau)|\leq\tau n$ along with the inequality
\begin{eqnarray*}
   |\mathfrak{A}'_1(\tau)|(|\mathfrak{A}'_1(\tau)|-1)+|\mathfrak{A}'_2(\tau)|(|\mathfrak{A}'_2(\tau)|-1)&\geq& \frac{(|\mathfrak{A}'_1(\tau)|+|\mathfrak{A}'_2(\tau)|)^2}{2}-(|\mathfrak{A}'_1(\tau)|+|\mathfrak{A}'_2(\tau)|)\nonumber\\
   &=&\frac{(n-|\mathfrak{A}(\tau)|)^2}{2}-(n-|\mathfrak{A}(\tau)|)\geq \frac{(1-\tau)^2n^2}{2}-n.
\end{eqnarray*}
With $\omega(\cdot,\cdot)$ as in~\eqref{def_omega},  using~\eqref{resu3}, \eqref{resu1}, and~\eqref{resu2}, we get
\begin{eqnarray*}
   && \sup_{y\in \hat{\mathcal{Y}}_{n,K}^{\mathrm{ff}}}\{\Omega(y)\}=\omega(y^*_{\rightarrow},y^*_{\leftarrow})\nonumber\\
   &\leq& n(n-1)\big(\log B_{1,1}+2\log(2(n-1))-2(1-\tau)(\phi(a(\tau))+\phi(1-a(\tau)))+2\tau\log 2\big)\nonumber\\
   &&\hspace{0.2in} +\Delta n^2-
\frac{(1-\tau)^2(1-2(1-a(\tau)))}{2}\Delta n^2\nonumber\\
&\leq& n(n-1)\Big(\log B_{1,1}+2\log(2(n-1))-2(1-\tau)(\phi(a(\tau))+\phi(1-a(\tau)))+2\tau\log 2\Big)\nonumber\\
   &&\hspace{0.2in} +
   \bigg(1-\frac{(1-\tau)^2(1-2(1-a(\tau)))}{2}\bigg)\Delta n^2.
\end{eqnarray*}
Taking 
\begin{eqnarray*}
    \varepsilon(\tau)&:=&\bigg(\frac{1}{2}-\frac{(1-\tau)^2(1-2(1-a(\tau)))}{2}\bigg)\Delta-2(1-\tau)(\phi(a(\tau))+\phi(1-a(\tau)))+2\tau\log 2,
\end{eqnarray*}
we thus get 
\begin{equation*}
    \sup_{y\in \hat{\mathcal{Y}}_{n,K}^{\mathrm{ff}}}\{\Omega(y)\}\leq \Big(\log B_{1,1}+\frac{\Delta}{2}+2\log(2(n-1))+\varepsilon(\tau)\Big)n(n-1)+\Delta n.
\end{equation*}
Moreover, as $\lim_{\tau\rightarrow 0^+}a(\tau)=1$, we have $\lim_{\tau\rightarrow 0^+}\varepsilon(\tau)=0$. Hence condition (ii) in the statement of the lemma holds.
\end{proof}

Now we complete the proof of Lemma \ref{LE2} using Lemma \ref{LE1}.

\begin{proof}[Proof of Lemma \ref{LE2}]  

We first note that, with $\Omega(\cdot)$ as in~\eqref{def_Omega}, for any $t\in [0,1]$, if we take $y\in \hat{\mathcal{Y}}_{n,K}$ such that for every $i,j\in [n]$ with $i\neq j$, $y_{i,j;1,1}=y_{i,j;2,2}=\frac{1-t}{2}$ and $y_{i,j;1,2}=y_{i,j;2,1}=\frac{t}{2}$, then $\widetilde{N}_{i,\ell}(y)=n-1$ (see~\eqref{Nil_y_def} for the definition of $\widetilde{N}_{i,\ell}(y)$) for any $i\in[n],\ell\in[2]$, and consequently
\begin{eqnarray*}
    \Omega(y)&=&n(n-1)\bigg((1-t)\log B_{1,1}+t\log B_{1,2}\bigg)+2n(n-1)\log(n-1)\nonumber\\
    &&\hspace{0.2in}-n(n-1)\bigg(2\phi\Big(\frac{1-t}{2}\Big)+2\phi\Big(\frac{t}{2}\Big)\bigg)\nonumber\\
    &=& n(n-1)\bigg(\log B_{1,1}+2\log(n-1)+t\Delta-2\phi\Big(\frac{1-t}{2}\Big)-2\phi\Big(\frac{t}{2}\Big)\bigg).
\end{eqnarray*}
Let $h(t):=t\Delta-2\phi\big(\frac{1-t}{2}\big)-2\phi\big(\frac{t}{2}\big)$ for any $t\in[0,1]$. As $h'(t)=\Delta+\log\big(\frac{1-t}{t}\big)$, $h(t)$ takes its maximum value on $[0,1]$ at $t=\frac{e^{\Delta}}{1+e^{\Delta}}$, and $h\Big(\frac{e^{\Delta}}{1+e^{\Delta}}\Big)>h\Big(\frac{1}{2}\Big)$. Hence
\begin{eqnarray}\label{nneq1}
    \sup_{y\in\hat{\mathcal{Y}}_{n,K}}\{\Omega(y)\} &\geq& n(n-1)\bigg(\log B_{1,1}+2\log(n-1)+h\bigg(\frac{e^{\Delta}}{1+e^{\Delta}}\bigg)\bigg)\nonumber\\
    &=& n(n-1)\bigg(\log B_{1,1}+2\log(n-1)+h\Big(\frac{1}{2}\Big)+h\bigg(\frac{e^{\Delta}}{1+e^{\Delta}}\bigg)-h\Big(\frac{1}{2}\Big)\bigg)\nonumber\\
    &=& n(n-1)\bigg(\log B_{1,1}+\frac{\Delta}{2}+2\log(2(n-1))+h\bigg(\frac{e^{\Delta}}{1+e^{\Delta}}\bigg)-h\Big(\frac{1}{2}\Big)\bigg).
\end{eqnarray}

In Lemma \ref{LE1}, we take $\tau\in \Big(0,\frac{1}{1+e^{2\Delta}}\Big)$ sufficiently small (depending on $\Delta$) so that $h\Big(\frac{e^{\Delta}}{1+e^{\Delta}}\Big)>h\big(\frac{1}{2}\big)+2\varepsilon(\tau)$. If condition (ii) in Lemma \ref{LE1} holds, by the lemma along with~\eqref{nneq1}, we get  
\begin{equation*}
    \sup_{y\in\hat{\mathcal{Y}}_{n,K}}\{\Omega(y)\}-\sup_{y\in\hat{\mathcal{Y}}_{n,K}^{\mathrm{ff}}}\{\Omega(y)\}\geq \varepsilon(\tau)n(n-1)-\Delta n,
\end{equation*}
which yields the desired conclusion of Lemma \ref{LE2}. 

Thus, without loss of generality, we can assume that condition (i) in Lemma \ref{LE1} holds. Let $\mathfrak{B}$ be the set of $(i,j)\in [n]^2$ such that $i\neq j$ and~\eqref{cond1} holds. Note that
\begin{equation}\label{Bdds}
    |\mathfrak{B}|\geq \tau n(\tau n-1). 
\end{equation}
In the following, for any $t\in (0,\tau^2\slash 2)$, we construct $y^{\dagger}(t)=(y_{i,j;\ell,\ell'}^{\dagger}(t))_{i,j\in [n],i\neq j;\ell,\ell'\in [K]}\in\hat{\mathcal{Y}}_{n,K}$. For every $(i,j)\in [n]^2$ such that $i\neq j$ and $(i,j)\notin\mathfrak{B}$, we take $y^{\dagger}_{i,j;\ell,\ell'}(t):=y^{*}_{i\rightarrow j,\ell}y^{*}_{i\leftarrow j,\ell'}$ for every $\ell,\ell'\in[2]$. For every $(i,j)\in\mathfrak{B}$ and $\ell,\ell'\in [2]$, we take $y^{\dagger}_{i,j;\ell,\ell'}(t):=y^{*}_{i\rightarrow j,\ell}y^{*}_{i\leftarrow j,\ell'}+(-1)^{\mathbbm{1}_{\ell=\ell'}}t$ (note that $y^{\dagger}_{i,j;\ell,\ell'}(t)\geq 0$ thanks to~\eqref{cond1}). Note that for every $(i,j)\in [n]^2$ such that $i\neq j$ and every $\ell\in [2]$,
\begin{equation*}
    \sum_{\ell'=1}^2y^{\dagger}_{i,j;\ell,\ell'}(t)=y^*_{i\rightarrow j,\ell},\qquad \sum_{\ell'=1}^2y^{\dagger}_{i,j;\ell',\ell}(t)=y^*_{i\leftarrow j,\ell}. 
\end{equation*}
Hence with $\widetilde{N}_{i,\ell}(\cdot)$ as in~\eqref{Nil_y_def}, we have
\begin{equation}\label{ress1}
    \sum_{i=1}^n\sum_{\ell=1}^2\phi\big(\widetilde{N}_{i,\ell}\big(y^{\dagger}(t)\big)\big)=\sum_{i=1}^n\sum_{\ell=1}^2\phi\bigg(\sum_{j\in[n]\backslash\{i\}}y^*_{i\rightarrow j,\ell}+\sum_{j\in[n]\backslash\{i\}}y^*_{j\leftarrow i,\ell}\bigg).
\end{equation}

For any $a>0,s\in\mathbb{R}$ such that $|s|\leq a\slash 2$, we define $\varphi_a(s):=\phi(a+s)-\phi(a)-(1+\log a)s$. As $\varphi'_a(s)=\log\Big(1+\frac{s}{a}\Big)$, using the inequality $|\log(1+x)|\leq 2|x|,\forall x\in [-1\slash 2,1\slash 2]$ (which can be proved by elementary calculus), we get 
\begin{equation}\label{lem1}
  |\phi(a+s)-\phi(a)-(1+\log a)s|=|\varphi_a(s)|= \bigg|\int_0^s\varphi_a'(s')ds'\bigg|\leq \int_0^{|s|}\frac{2s'}{a}ds'=\frac{s^2}{a}.
\end{equation}
Now for any $(i,j)\in [n]^2$ such that $i\neq j$ and $(i,j)\notin\mathfrak{B}$,
\begin{eqnarray}\label{ress2}
   && \sum_{\ell=1}^2\sum_{\ell'=1}^2 y^{\dagger}_{i,j;\ell,\ell'}(t)\log B_{\ell,\ell'}-\sum_{\ell=1}^2\sum_{\ell'=1}^2\phi(y^{\dagger}_{i,j;\ell,\ell'}(t))\nonumber\\
   &=& \sum_{\ell=1}^2\sum_{\ell'=1}^2 y^*_{i \rightarrow j,\ell}y^*_{i\leftarrow j,\ell'}\log B_{\ell,\ell'}-\sum_{\ell=1}^2 \phi(y^*_{i\rightarrow j,\ell})-\sum_{\ell=1}^2 \phi(y^*_{i\leftarrow j,\ell}). 
\end{eqnarray}
For any $(i,j)\in\mathfrak{B}$, 
\begin{eqnarray}\label{ress3}
    &&\sum_{\ell=1}^2\sum_{\ell'=1}^2 y^{\dagger}_{i,j;\ell,\ell'}(t)\log B_{\ell,\ell'}-\sum_{\ell=1}^2\sum_{\ell'=1}^2\phi(y^{\dagger}_{i,j;\ell,\ell'}(t))\nonumber\\
    &&\hspace{0.2in}-\Bigg(\sum_{\ell=1}^2\sum_{\ell'=1}^2 y^*_{i \rightarrow j,\ell}y^*_{i\leftarrow j,\ell'}\log B_{\ell,\ell'}-\sum_{\ell=1}^2 \phi(y^*_{i\rightarrow j,\ell})-\sum_{\ell=1}^2 \phi(y^*_{i\leftarrow j,\ell})\Bigg)\nonumber\\
    &=& 2t\Delta-\sum_{\ell=1}^2\sum_{\ell'=1}^2\phi\big(y^{*}_{i\rightarrow j,\ell}y^{*}_{i\leftarrow j,\ell'}+(-1)^{\mathbbm{1}_{\ell=\ell'}}t\big)+\sum_{\ell=1}^2 \phi(y^*_{i\rightarrow j,\ell})+\sum_{\ell=1}^2 \phi(y^*_{i\leftarrow j,\ell})\nonumber\\
    &\geq&  2t\Delta-\sum_{\ell=1}^2\sum_{\ell'=1}^2\bigg(\phi(y^{*}_{i\rightarrow j,\ell}y^{*}_{i\leftarrow j,\ell'})+\big(1+\log\big(y^{*}_{i\rightarrow j,\ell}y^{*}_{i\leftarrow j,\ell'}\big)\big)(-1)^{\mathbbm{1}_{\ell=\ell'}}t+\frac{t^2}{\tau^2}\bigg)\nonumber\\
    &&\hspace{0.2in}+\sum_{\ell=1}^2 \phi(y^*_{i\rightarrow j,\ell})+\sum_{\ell=1}^2 \phi(y^*_{i\leftarrow j,\ell})\nonumber\\
    &=& 2t\Delta-\frac{4t^2}{\tau^2},
\end{eqnarray}
where the inequality in the fourth line uses~\eqref{lem1} (upon noting that $y^{*}_{i\rightarrow j,\ell}y^{*}_{i\leftarrow j,\ell'}\geq \tau^2$ for every $\ell,\ell'\in [2]$ (by~\eqref{cond1}) and $t\in(0,\tau^2\slash 2)$), and the equality in the last line uses the fact that
\begin{eqnarray*}
&&\sum_{\ell=1}^2\sum_{\ell'=1}^2\big(1+\log\big(y^{*}_{i\rightarrow j,\ell}y^{*}_{i\leftarrow j,\ell'}\big)\big)(-1)^{\mathbbm{1}_{\ell=\ell'}}\nonumber\\
&=&\big(1+\log\big(y^*_{i\rightarrow j,1}y^*_{i\leftarrow j,2}\big)\big)+\big(1+\log\big(y^*_{i\rightarrow j,  2}y^*_{i\leftarrow j,  1})\big)\nonumber\\
&&\hspace{0.2in}-\big(1+\log\big(y^*_{i\rightarrow j,  1}y^*_{i\leftarrow j,  1}\big)\big)-\big(1+\log\big(y^*_{i\rightarrow j,  2}y^*_{i\leftarrow j,  2}\big)\big)\nonumber\\
&=& \log y^*_{i\rightarrow j,1}+\log y^*_{i\leftarrow j,2}+\log  y^*_{i\rightarrow j,  2}+\log y^*_{i\leftarrow j,  1}\nonumber\\
&&\hspace{0.2in}-\log y^*_{i\rightarrow j,  1}-\log y^*_{i\leftarrow j,  1}-\log y^*_{i\rightarrow j,  2}-\log y^*_{i\leftarrow j,  2}=0.
\end{eqnarray*}
With $\Omega(\cdot)$ and $\omega(\cdot,\cdot)$ as in~\eqref{def_Omega} and~\eqref{def_omega}, combining~\eqref{ress1}, \eqref{ress2}, and~\eqref{ress3}, we get for any $t\in (0,\tau^2\slash 2)$, 
\begin{equation*}
    \Omega\big(y^{\dagger}(t)\big)-\sup_{y\in\hat{\mathcal{Y}}_{n,K}^{\mathrm{ff}}}\{\Omega(y)\}=\Omega\big(y^{\dagger}(t)\big)-\omega(y^*_{\rightarrow},y^*_{\leftarrow})\geq |\mathfrak{B}|\bigg(2t\Delta-\frac{4t^2}{\tau^2}\bigg).
\end{equation*}
Taking $t=\min\{\Delta,1\}\tau^2\slash 4$ in the above display (note that $4t^2 \slash\tau^2\leq t\Delta$), and noting~\eqref{Bdds}, we get
\begin{eqnarray*}
    \sup_{y\in\hat{\mathcal{Y}}_{n,K}}\{\Omega(y)\}-\sup_{y\in\hat{\mathcal{Y}}_{n,K}^{\mathrm{ff}}}\{\Omega(y)\}\geq \Omega\big(y^{\dagger}(t)\big)-\sup_{y\in\hat{\mathcal{Y}}_{n,K}^{\mathrm{ff}}}\{\Omega(y)\}\geq|\mathfrak{B}|t\Delta\geq\tau n (\tau n-1)t\Delta,
\end{eqnarray*}
from which the desired conclusion of Lemma \ref{LE2} follows. 
\end{proof}

\section{Auxiliary lemmas}\label{Appendix_E}
In this section, we present some auxiliary lemmas used in the proofs of Theorems \ref{Theorem_LDA_UBD}-\ref{Theorem_MMSB_S}.  

\begin{lemma}\label{L3.2}
There is an absolute positive constant $C$, such that for any $x\geq 1\slash 2$, 
\begin{equation*}
    \left|\log\Gamma(x)-\Big(x\log{x}-x-\frac{1}{2}\log{x}\Big)\right|\le  C,
\end{equation*}
and for any $x>0$,
\begin{equation*}
    \log\Gamma(x)\geq -1-\log x.
\end{equation*} 
\end{lemma}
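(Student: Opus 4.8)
The plan is to handle the two bounds separately, reducing each to standard properties of the Gamma function.

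For the first bound, I would define the ``Stirling defect''
\[
g(x):=\log\Gamma(x)-\Big(x\log x-x-\tfrac{1}{2}\log x\Big),\qquad x\ge \tfrac12,
\]
and prove that it is bounded on $[1/2,\infty)$ by a soft ``continuity plus finite limit at infinity'' argument rather than by estimating the error term directly. First I would note that, since $\Gamma(x)>0$ and $x>0$ for all $x\ge 1/2$, the function $g$ is continuous on $[1/2,\infty)$. Next I would invoke the classical Stirling expansion $\log\Gamma(x)=x\log x-x-\tfrac12\log x+\tfrac12\log(2\pi)+O(1/x)$ as $x\to\infty$, which shows that $g(x)\to\tfrac12\log(2\pi)$, a finite limit. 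Consequently there exists $M\ge 1/2$ with $|g(x)|\le \tfrac12\log(2\pi)+1$ for all $x\ge M$, while on the compact interval $[1/2,M]$ continuity forces $g$ to be bounded. Taking $C$ to be the larger of these two bounds yields the first inequality.

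For the second bound I would exploit the functional equation $\Gamma(x)=\Gamma(x+1)/x$, so that $\log\Gamma(x)=\log\Gamma(x+1)-\log x$; thus it suffices to show $\log\Gamma(x+1)\ge -1$ for every $x>0$, i.e.\ $\log\Gamma(t)\ge -1$ for every $t\ge 1$. Here I would use that $\log\Gamma$ is convex on $(0,\infty)$ together with the values $\log\Gamma(1)=\log\Gamma(2)=0$ and the derivative $(\log\Gamma)'(1)=-\gamma$ (Euler--Mascheroni). On $[1,2]$, convexity gives $\log\Gamma(t)\ge \log\Gamma(1)+(\log\Gamma)'(1)(t-1)=-\gamma(t-1)\ge-\gamma>-1$, and for $t\ge 2$ the function $\log\Gamma$ is increasing (its minimizer lies strictly below $2$), so $\log\Gamma(t)\ge\log\Gamma(2)=0>-1$. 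Combining the two ranges gives $\log\Gamma(t)\ge-1$ for all $t\ge1$, and hence $\log\Gamma(x)\ge-1-\log x$ for all $x>0$.

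The only genuinely nontrivial ingredient is the existence of a finite limit for $g$ at infinity, which is exactly the content of Stirling's asymptotic formula; everything else is continuity, compactness, and elementary convexity. Since this is an auxiliary lemma, I would cite the standard Stirling expansion rather than reprove it (e.g.\ via Euler--Maclaurin or the Binet integral), and I expect that invocation to be the main, but routine, obstacle. I would note in passing that the argument is robust: any constant lower bound for $\log\Gamma$ on $[1,\infty)$ that exceeds $-1$ suffices for the second claim, so the precise value $-\gamma$ is used only for concreteness.
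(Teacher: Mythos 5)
Your proof is correct. For the first inequality you argue exactly as the paper does: the paper cites Stirling in the form $\Gamma(x)/(x^x e^{-x}x^{-1/2})\to\sqrt{2\pi}$ and uses positivity and continuity of $\Gamma$ to get two-sided multiplicative bounds $c\,x^x e^{-x}x^{-1/2}\le\Gamma(x)\le C\,x^x e^{-x}x^{-1/2}$ on $[1/2,\infty)$, which is your ``finite limit at infinity plus compactness'' argument phrased for the ratio rather than for the additive defect $g$; these are the same proof. For the second inequality you take a genuinely different route. The paper's proof is a one-line truncation of the Euler integral,
\[
\Gamma(x)=\int_0^\infty t^{x-1}e^{-t}\,dt \;\ge\; \int_0^1 t^{x-1}e^{-t}\,dt \;\ge\; e^{-1}\int_0^1 t^{x-1}\,dt \;=\; e^{-1}x^{-1},
\]
which needs nothing beyond $e^{-t}\ge e^{-1}$ on $[0,1]$ and directly produces the $-1-\log x$ bound. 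You instead use the functional equation $\Gamma(x)=\Gamma(x+1)/x$ to isolate the $1/x$ blow-up, then lower-bound $\log\Gamma$ on $[1,\infty)$ by $-\gamma$ via log-convexity and the tangent line at $t=1$ (with monotonicity past the minimizer for $t\ge 2$). Both arguments are sound; the paper's buys brevity and requires no knowledge of $\psi(1)=-\gamma$ or the location of the minimizer, while yours is more structural and, as you note, robust: any constant lower bound for $\log\Gamma$ on $[1,\infty)$ exceeding $-1$ suffices, and the same reduction would yield the sharper constant $\log\Gamma(x)\ge \log\Gamma(x_0)-\log x$ with $x_0$ the minimizer of $\Gamma$. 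One small tightening: to justify that $\log\Gamma$ is increasing on $[2,\infty)$ you can avoid appealing to the numerical location of the minimizer by noting $(\log\Gamma)'(2)=\psi(2)=1-\gamma>0$ and that $(\log\Gamma)'$ is nondecreasing by convexity.
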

\begin{proof}

By Stirling's approximation \cite{karatsuba2001asymptotic}, 
\begin{equation*}
    \lim_{x\rightarrow\infty} \frac{\Gamma(x)}{x^x e^{-x} x^{-1\slash 2}} = \sqrt{2\pi}. 
\end{equation*}
As $\Gamma(x)>0$ and $\Gamma(x)$ is continuous for $x\in (0,\infty)$, there exist absolute positive constants $C,c$, such that for any $x\geq 1\slash 2$,
\begin{equation}\label{Bdd1}
  c x^x e^{-x}x^{-1\slash 2}\leq  \Gamma(x)\leq Cx^x e^{-x}x^{-1\slash 2}. 
\end{equation}

Now note that for any $x>0$, 
\begin{equation}\label{Bdd2}
  \Gamma(x)=\int_0^{\infty} t^{x-1} e^{-t}dt \geq  \int_0^1t^{x-1} e^{-t}dt \geq e^{-1}\int_0^1 t^{x-1}dt = e^{-1} x^{-1}.
\end{equation}

The conclusion of the lemma follows from~\eqref{Bdd1} and~\eqref{Bdd2}. 
\end{proof}

\begin{lemma}\label{L3.3}
For any $m,i\in\mathbb{N}^{*}$ such that $i\leq m$, 
\begin{equation*}
     \binom{m}{i}\leq \Big(\frac{em}{i}\Big)^i.
\end{equation*}
\end{lemma}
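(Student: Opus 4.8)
The final statement to prove is the elementary combinatorial inequality
\[
\binom{m}{i}\leq \Big(\frac{em}{i}\Big)^i,\qquad\text{for all }m,i\in\mathbb{N}^{*}\text{ with }i\leq m.
\]

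The plan is to bound the binomial coefficient directly from its definition and then absorb the factorial in the denominator using a standard lower bound on $i!$. First I would write
\[
\binom{m}{i}=\frac{m(m-1)\cdots(m-i+1)}{i!}\leq \frac{m^i}{i!},
\]
since each of the $i$ factors in the numerator is at most $m$. This reduces the problem to showing $\frac{m^i}{i!}\leq \big(\frac{em}{i}\big)^i$, which after cancelling $m^i$ is equivalent to the clean inequality $i!\geq (i/e)^i$, i.e.\ $i^i\leq e^i\, i!$.

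The remaining step is to establish $i^i \le e^i\, i!$ for every positive integer $i$. The quickest route is the series expansion of the exponential function: since $e^i=\sum_{k=0}^\infty \frac{i^k}{k!}$ is a sum of nonnegative terms, it dominates its single $k=i$ term, giving
\[
e^i \geq \frac{i^i}{i!}\quad\Longleftrightarrow\quad i^i\leq e^i\, i!.
\]
Combining this with the earlier display yields
\[
\binom{m}{i}\leq \frac{m^i}{i!}\leq \frac{m^i}{i^i/e^i}=\Big(\frac{em}{i}\Big)^i,
\]
which is exactly the claim.

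This argument is entirely self-contained and requires no results from earlier in the paper; there is no genuine obstacle. The only point deserving a moment's care is the justification of $i!\geq (i/e)^i$, where I would favor the exponential-series comparison above over an induction argument, as it is both shorter and avoids estimating $(1+1/i)^i\leq e$ as an auxiliary fact. (Were one to prefer induction, the inductive step reduces precisely to $(1+1/i)^i\leq e$, which is the standard monotone-limit characterization of $e$; the series comparison sidesteps this.)
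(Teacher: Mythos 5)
Your proof is correct and follows essentially the same route as the paper: bound $\binom{m}{i}\leq m^i/i!$ and then invoke $i!\geq (i/e)^i$, which the paper uses in the form $i^i/i!\leq e^i$ without explicit justification. Your exponential-series argument $e^i=\sum_{k\geq 0} i^k/k!\geq i^i/i!$ simply makes that implicit step explicit, and it is a clean way to do so.
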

\begin{proof}
We have
\begin{equation*}
    \binom{m}{i}=\frac{m(m-1)\cdots(m-i+1)}{i!}\leq \frac{i^i}{i!}\cdot\frac{m^i}{i^i}\leq e^i\cdot\frac{m^i}{i^i}=\Big(\frac{em}{i}\Big)^i.
\end{equation*}
\end{proof}

\section{Implementation of CAVI for partially grouped VI in MMSB}\label{Appendix_F}

In this section, we present the implementation details of the CAVI algorithm for partially grouped VI in the MMSB model. 

Let $\Pi_K:=\big\{(u_{\ell})_{\ell\in [K]}: u_{\ell}\geq 0 \text{ for every }\ell\in [K], \sum_{\ell=1}^K u_{\ell}=1\big\}$. For every $i,j\in [n]$ such that $i\neq j$, let $Z_{i,j}:=(Z_{i\rightarrow j}, Z_{i\leftarrow j})\in [K]^2$. For partially grouped VI, the variational distribution is of the form $\big(\bigotimes\limits_{i,j\in[n]:i\neq j}Q_{i,j}\big)\bigotimes \big(\bigotimes_{i\in [n]} R_i\big)$, where $Q_{i,j}$ is a probability distribution on $[K]^2$ (representing the distribution of $Z_{i,j}$) for every distinct $i,j\in [n]$, and $R_i$ is a probability distribution on $\Pi_K$ (representing the distribution of $\pmb{\pi}_{i}$) for every $i\in [n]$.  

For each $t\in\mathbb{N}^{*}$, we denote by $Q_{i,j}^{(t)}$ (where $i,j\in[n]$ and $i\neq j$) and $R_i^{(t)}$ (where $i\in [n]$) the components of the variational distribution for the $t$th iteration of the CAVI algorithm. For each $i,j\in [n]$ with $i\neq j$, we let $y_{i,j}^{(t)}=(y_{i,j;\ell,\ell'}^{(t)})_{\ell,\ell'\in [K]}$ with $y_{i,j;\ell,\ell'}^{(t)}=Q_{i,j}^{(t)}(\ell,\ell')$ for each $\ell,\ell'\in [K]$. For each $i\in [n]$, it can be shown that (see \cite[Section 2.4]{blei2017variational}) $R_i^{(t)}$ is a Dirichlet distribution, and we denote by $\gamma_{i}^{(t)}=\big(\gamma_{i,1}^{(t)},\gamma_{i,2}^{(t)},\cdots,\gamma_{i,K}^{(t)}\big)$ the parameters of this Dirichlet distribution. For the $(t+1)$st iteration, we update the parameters $y_{i,j}^{(t+1)}$ and $\gamma_i^{(t+1)}$ based on those from the $t$th iteration as follows:
\begin{itemize}
    \item For each $i\in [n]$ and $\ell\in [K]$, we compute 
    \begin{equation*}
         \mathbb{E}_{R_i^{(t)}}[\log \pi_{i,\ell}]=\frac{\Gamma'\Big(\gamma_{i,\ell}^{(t)}\Big)}{\Gamma\Big(\gamma_{i,\ell}^{(t)}\Big)}-\frac{\Gamma'\Big(\sum_{s=1}^K\gamma_{i,s}^{(t)}\Big)}{\Gamma\Big(\sum_{s=1}^K\gamma_{i,s}^{(t)}\Big)}.
    \end{equation*}
    Then for each $i,j\in [n]$ with $i\neq j$, we update $y_{i,j}^{(t+1)}$ so that 
    \begin{equation*}
        y_{i,j;\ell,\ell'}^{(t+1)}  \propto \exp\Big(\mathbb{E}_{R_i^{(t)}}[\log \pi_{i,\ell}]+\mathbb{E}_{R_j^{(t)}}[\log \pi_{j,\ell'}]\Big) B_{\ell,\ell'}^{X_{i,j}} (1-B_{\ell,\ell'})^{1-X_{i,j}}
    \end{equation*}
    for all $\ell,\ell'\in [K]$, where $\big(y_{i,j;\ell,\ell'}^{(t+1)}\big)_{\ell,\ell'=1}^K$ are normalized to sum to $1$.
    \item For each $i\in [n]$ and $\ell\in [K]$, we update \begin{equation*}
        \gamma_{i,\ell}^{(t+1)} = \sum_{j\in [n]\backslash \{i\}}\sum_{\ell'=1}^K y_{i,j;\ell,\ell'}^{(t+1)}+\sum_{j\in [n]\backslash \{i\}}\sum_{\ell'=1}^K y_{j,i;\ell',\ell}^{(t+1)}+\alpha_{\ell}.
    \end{equation*}
\end{itemize}

\end{appendix}

\end{document}